\documentclass[12pt]{amsart}
\usepackage{amssymb}
\usepackage{amsmath}
\usepackage{amsthm}
\usepackage{times}
\usepackage{changepage}
\usepackage{caption}
\usepackage{geometry}
\usepackage[curve,all]{xy}
\xyoption{curve}
\xyoption{line}
\xyoption{arc}
\xyoption{color}
\usepackage{array}
\usepackage{enumitem}
\usepackage{graphicx}
\usepackage[dvipdfmx]{color}
\usepackage{color}
\usepackage{amsthm}
\newtheorem{theorem}{Theorem}[section]
\newtheorem{lemma}[theorem]{Lemma}
\newtheorem{corollary}[theorem]{Corollary}
\newtheorem{proposition}[theorem]{Proposition}
\newtheorem{prop}[theorem]{Proposition}

\usepackage{subfig}
 
\theoremstyle{definition}

\newtheorem{example}[theorem]{Example}

\theoremstyle{remark}
\newtheorem{remark}[theorem]{Remark}

\numberwithin{equation}{section}

\newcommand{\calP}{\mathcal{P}}

\newcommand{\la}{\langle}
\newcommand{\ra}{\rangle}

\def\deg{{\text{deg}}}

\def\I{{\text{I}}}
\def\II{{\text{II}}}
\def\III{{\text{III}}}
\def\IV{{\text{IV}}}

\begin{document}
\title [Coble surfaces]{Coble surfaces in characteristic two}

\author{Toshiyuki Katsura}
\address{Graduate School of Mathematical Sciences, The University of Tokyo,
Meguro-ku, Tokyo 153-8914, Japan}
\email{tkatsura@ms.u-tokyo.ac.jp}
\thanks{}

\author{Shigeyuki Kond\=o}
\address{Graduate School of Mathematics, Nagoya University, Nagoya,
464-8602, Japan}
\email{kondo@math.nagoya-u.ac.jp}
\thanks{Research of the first author is partially supported by JSPS Grant-in-Aid 
for Scientific Research (C) No.20K03530 and the second author by JSPS
Grant-in-Aid for Scientific Research (A) No.20H00112.}

\begin{abstract}
We study Coble surfaces in characteristic $2$, in particular, singularities of their canonical coverings.  As an application we classify Coble surfaces with finite automorphism group in characteristic $2$. There are exactly 9 types of such surfaces.
\end{abstract}

\maketitle

\section{Introduction}\label{sec1}

A {\it Coble surface} $S$ is a rational surface with $|-K_S|=\emptyset$ and 
$|-2K_S|=\{ B_1+\cdots + B_n\}$ where $B_i$ is a non-singular rational curve and 
$B_i\cap B_j =\emptyset$ $(i\not=j)$. This is called a terminal Coble surface of $K3$ type 
in Dolgachev and Zhang \cite{DZ}.
We call $B_1,\ldots, B_n$ the boundary components of $S$.  Morrison \cite{Morrison} showed that a Coble surface appears as a degeneration of Enriques surfaces.
Over the complex numbers, the moduli space of complex Enriques surfaces can be described as an open subset of an arithmetic quotient of a 10-dimensional bounded symmetric domain of type IV which is the complement of a Heegner divisor.  
A general point of the Heegner divisor corresponds to a Coble surface. This is a peculiar
phenomenon of Enriques surfaces which does not occur in case of $K3$ surfaces.
Coble surfaces inherit many properties of Enriques surfaces and complement 
the theory of Enriques surfaces.
Thus it is interesting to study Coble surfaces like as Enriques surfaces.

By definition any Coble surface has a canonical double covering $\pi : X\to S$ defined by
$-K_S$.  In case that the characteristic $p$ of the
base field $k$ is not equal to 2, $\pi$ is a separable double covering branched along a 
non-singular divisor $B_1+\cdots + B_n$ and hence $X$ is smooth.  In particular $X$ is a $K3$ surface.
Thus one can apply the theory of $K3$ surfaces to the case of Coble surfaces.
On the other hand, in case of $p=2$, $\pi$ is a purely inseparable $\mu_2$-covering, 
and $S$ has always singularities and might be non-normal.  
This situation is similar to the case of
Enriques surfaces in characteristic 2.  Recall that Enriques surfaces 
$Y$ in characteristic 2 are classified into three types of classical, $\mu_2$- or $\alpha_2$-surfaces according to ${\rm Pic}_Y^\tau \cong {\bf Z}/2{\bf Z}$, $\mu_2$ or $\alpha_2$, respectively (Bombieri and Mumford \cite{BM}).
A Coble surface in charcteristic 2 is an analogue of classical Enriques surfaces.  
The purpose of this paper is to study the double covering $\pi$ and, as an application,
to give the classification of Coble surfaces with finite automorphism group in characteristic 2.

In the following we assume the the ground field $k$ is an algebraically closed field in
characteristic 2.  First of all, we show that the canonical covering $X$ of a Coble surface 
is a $K3$-like surface, that is, the dualizing sheaf $\omega_X$ is trivial and 
${\rm H}^1(X, {\mathcal O}_X) = 0$ (Theorem \ref{K3-like}).  One can define a rational 1-form $\eta$ on $S$ naturally whose divisor is written as
$$(\eta) = -\sum_{i=1}^n B_i + B$$
with $(\sum_i B_i) \cap B=\emptyset$ and $B=2A$ for a divisor $A$ (Theorem \ref{eta}).  We call $A$ the conductrix of $S$.   We will show the following theorem which is an analogue of the case of Enriques surfaces.

\begin{theorem}\label{main1}
\begin{itemize}
\item[$({\rm 1})$]
Assume that $A=0$.  
Then the singularities of $X$ are all rational double points 
and the minimal non-singular model of $X$ is a supersingular $K3$ surface.
\item[$({\rm 2})$] Assume that $A\ne 0$.  Then $X$ is a rational surface, $A^2=-2$
and the isolated singularities of $X$ are all rational double points of type $A_1$ and the number of isolated singularities is equal to $4 -n$. 
\end{itemize}
\end{theorem}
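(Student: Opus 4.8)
The plan is to analyze the inseparable double covering $\pi\colon X\to S$ entirely through the rational $1$-form $\eta$, whose divisor $(\eta)=-\sum_{i=1}^n B_i+2A$ records where the covering degenerates. Over an affine chart of $S$ the $\mu_2$-covering is given by an equation $z^2=f$, and a point of $X$ is singular exactly where $f$ together with its differential degenerate; this locus is controlled by the zeros and poles of $\eta$. The divisorial zero part $2A$ produces singularities of $X$ along a curve (the part lying over the conductrix), whereas the isolated zeros of $\eta$ away from $A$ and from $\sum_i B_i$ produce the isolated singular points. The first step is to make this dictionary precise and to read off, from the local form of $\eta$ and of $f$, that each isolated singularity is a rational double point.

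For part $(1)$, assume $A=0$, so that $(\eta)=-\sum_i B_i$ has no divisorial zeros. Then $X$ is normal and its only singularities are isolated rational double points. Let $\tilde X\to X$ be the minimal resolution. Since rational double points are canonical and rational, the resolution is crepant, giving $K_{\tilde X}=0$, while $H^1(\tilde X,\mathcal O_{\tilde X})=H^1(X,\mathcal O_X)=0$ by Theorem~\ref{K3-like}; hence $\tilde X$ is a minimal $K3$ surface. For supersingularity I would invoke unirationality: since $S$ is rational we may write $k(S)=k(x,y)$, and as $\pi$ is purely inseparable of degree $2$ we have $k(X)=k(x,y)(\sqrt f)\subseteq k(x^{1/2},y^{1/2})$, a purely transcendental extension, so $X$ and $\tilde X$ are unirational. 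A unirational $K3$ surface in positive characteristic is known to be supersingular, which completes $(1)$.

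For part $(2)$, assume $A\ne0$. First I would compute $A^2=-2$: combining the triviality of $\omega_X$, the divisor $(\eta)=-\sum_i B_i+2A$, and the relation $2K_S\sim-\sum_i B_i$ coming from $|-2K_S|$, a pushforward and adjunction computation along the conductrix yields $A^2=-2$. Next I would prove $X$ is rational. Because $A\ne0$, the singularities of $X$ over the conductrix are no longer rational double points, and resolving them forces a nontrivial negative contribution to the canonical class of $\tilde X$; thus $K_{\tilde X}$ is not effective, so $P_2(\tilde X)=0$ and $\kappa(\tilde X)=-\infty$. Together with $q=H^1(\mathcal O_{\tilde X})=0$, Castelnuovo's rationality criterion (valid in every characteristic) gives that $\tilde X$ is rational. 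A local study of $z^2=f$ at the isolated zeros of $\eta$ then shows each isolated singularity is an ordinary node, i.e.\ a rational double point of type $A_1$. Finally I would count them by an Euler-characteristic argument: rationality fixes $e(\tilde X)$ and $K_{\tilde X}^2$ through Noether's formula, while the purely inseparable $\pi$ is a universal homeomorphism, so $e(X)=e(S)$; balancing $e(\tilde X)$ against $e(S)$, the $n$ boundary curves, the resolution of the divisorial locus over $A$, and the $A_1$-points (each contributing $+1$) forces the number of isolated singularities to equal $4-n$.

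The main obstacle I expect lies entirely in part $(2)$. Both the exact value $A^2=-2$ and, above all, the count $4-n$ require simultaneous bookkeeping of how the covering behaves over $\sum_i B_i$, over the conductrix $A$, and at the isolated zeros of $\eta$. The local classification in characteristic $2$ — verifying that the isolated singularities are precisely $A_1$ and no worse — is delicate, and the crux is to match the global Euler characteristic of the rational surface $\tilde X$ against these separate local contributions, with the conductrix absorbing exactly the divisorial defect so that the remaining isolated $A_1$-points number $4-n$.
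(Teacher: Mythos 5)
Your overall skeleton (read everything off the $1$-form $\eta$, local analysis of $z^2=f$, Castelnuovo for rationality) matches the paper, but two of the hardest steps are asserted rather than proved, and in both cases the mechanism you propose would not deliver them. In part $(1)$ you write that since $A=0$, ``$X$ is normal and its only singularities are isolated rational double points.'' Normality and ``double point'' are immediate, but \emph{rationality} of the singularities is not: a normal K3-like surface can a priori carry an elliptic (non-rational) double point, in which case by the paper's Lemma \ref{non-rational singularity} the surface would simply be rational instead of K3. Excluding this is the crux of the paper's proof: it passes to the Frobenius base change $S_F$ and the relative Jacobian $J(S)_F$ (using Lemma \ref{rational elliptic} and Matsumoto's comparison), invokes Schr\"oer's results and Lang's classification of extremal rational elliptic fibrations to isolate the single candidate case (a fiber of type ${\rm II}$ with discriminant $12$, all other fibers supersingular), and kills it using tameness of the multiple fiber of a Coble surface. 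None of this is replaceable by the local dictionary between $\eta$ and singular points, which sees only the type of the equation, not rationality. (Your unirationality argument for supersingularity of the resolved K3 is fine and is indeed the intended completion.)

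In part $(2)$ the quantitative claims rest, in the paper, on two ingredients you do not have. First, $A^2\le -2$ comes from the normalization exact sequence $0\to\mathcal{O}_S\to(\pi\circ\nu)_*\mathcal{O}_{X'}\to\mathcal{L}'\to 0$ with $\mathcal{L}'\cong\omega_S\otimes\mathcal{O}_S(A)$, which forces ${\rm H}^0(S,\omega_S\otimes\mathcal{O}_S(A))=0$ and then Riemann--Roch; your ``pushforward and adjunction computation'' names no such vanishing. Second, both the lower bound $A^2>-3$ and the count $4-n$ come from Igusa's formula applied \emph{on $S$ itself}: $c_2(S)=\deg\langle\eta\rangle+K_S\cdot(\eta)-(\eta)^2$, which with $(\eta)=-\sum B_i+2A$ and $c_2(S)=12+n$ gives $\deg\langle\eta\rangle=12-n+4A^2=4-n$; each $A_1$ point contributes exactly $1$ to $\langle\eta\rangle$ (and a worse point would contribute $\ge 4$, which is how the $A_1$ classification is proved). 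Your substitute --- balancing Euler characteristics of $\tilde{X}$ against $e(X)=e(S)$ --- cannot work as stated: rationality of $\tilde{X}$ fixes only $K_{\tilde{X}}^2+c_2(\tilde{X})=12$, not each term, and the contribution of resolving the non-normal locus over $A$ depends on the (unknown at this stage) configuration of the conductrix, so nothing is ``forced.'' Two smaller inaccuracies: the negativity of $K_{\tilde{X}}$ comes from the conductor of the normalization, $\omega_{X'}\cong(\pi\circ\nu)^*\mathcal{O}_S(-A)$, not from the singularities over $A$ failing to be rational double points (the paper in fact proves the singularities of $X'$ \emph{are} rational); and $q(\tilde{X})=0$ needs the Albanese argument via the dominating rational map from $S^{(1/2)}$, since ${\rm H}^1(X,\mathcal{O}_X)=0$ does not pass through normalization and resolution for free.
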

We will study more details of conductrices, and can obtain the list of possible conductrices $A$, 
which is the same as the one of Enriques surfaces
given in Ekedahl and Shepherd-Barron \cite[Theorems 2.2, 3.1]{ES} for $A^2 = -2$.

Next we will discuss Coble surfaces with finite automorphism group.
A general Coble surface has an infinite group of automorphisms (e.g. Dolgachev and Kond\=o \cite[Theorem 9.6.1]{DK}) and hence it is natural to classify Coble surfaces with finite automorphism group as in the case of Enriques surfaces.  Recall that 
Katsura, Kond\=o and Martin \cite{KKM} classified classical Enriques surfaces in characteristic 2 with finite automorphism group: there are exactly 8 classes of such
Enriques surfaces.  
On the other hand, in case of characteristic $p \ne 2$, the second author \cite{Kon2} recently gave the classification of such Coble surfaces.  

Let $Y$ be an Enriques surface and let ${\rm Num}(Y)$ be the quotient group of the
N\'eron-Severi group by the torsion subgroup.  Then ${\rm Num}(Y)$ is a lattice of signature $(1,9)$ which is an important role to study Enriques surfaces.  Also
any non-singular rational curve on $Y$ has the
self-intersection number $-2$. 
On the other hand,
a non-singular rational curve on a Coble surface $S$ has the self-intersection number $-1, -2$ or $-4$ and the Picard number $\rho(S)$ is greater than 10.  Instead of the N\'eron-Severi lattice ${\rm NS}(S)$ we consider a lattice ${\rm CM}(S)$ of signature $(1,9)$
called Coble-Mukai lattice and defined by
the orthogonal complement of $B_1,\ldots, B_n$ in the quadratic space generated by
${\rm NS}(S)$ and ${1\over 2}B_1,\ldots, {1\over 2}B_n$.
An effective class $\alpha$ 
in ${\rm CM}(S)$
with $\alpha^2=-2$
 is called an effective root, and an effective root $\alpha$ is called irreducible if
$|\alpha -\beta|=\emptyset$ for any other effective root $\beta$.  The Coble-Mukai lattice and
effective irreducible roots work like as ${\rm Num}(Y)$ for an Enriques surface $Y$ and
non-singular rational curves on $Y$.

The following is the second main theorem of this paper.  In Table \ref{mainTable}, ``dual graph" means the one of effective irreducible roots, ``Type" means the type of the dual graph, ${\rm Aut}(S)$ is the group of automorphisms of $S$, $n$ is the number of boundary components and ``dim" is the dimension of the modli space of Coble surfaces given type.

\begin{theorem}\label{main2}
Coble surfaces with finite automorphism group in characteristic $2$ are classified 
as in the following Table {\rm \ref{mainTable}}.  The moduli space of Coble surfaces 
with $n$ boundary components of each type is irreducible and  of dimension given in the Table.  
Every Coble surface is a specialization of classical
Enriques surfaces with finite automorphism group.

\vspace{-0.5cm}

\hspace{-1.0cm}
\hspace{-1.0cm}\begin{table}[!htb]
 \begin{center}
  \includegraphics[width=180mm]{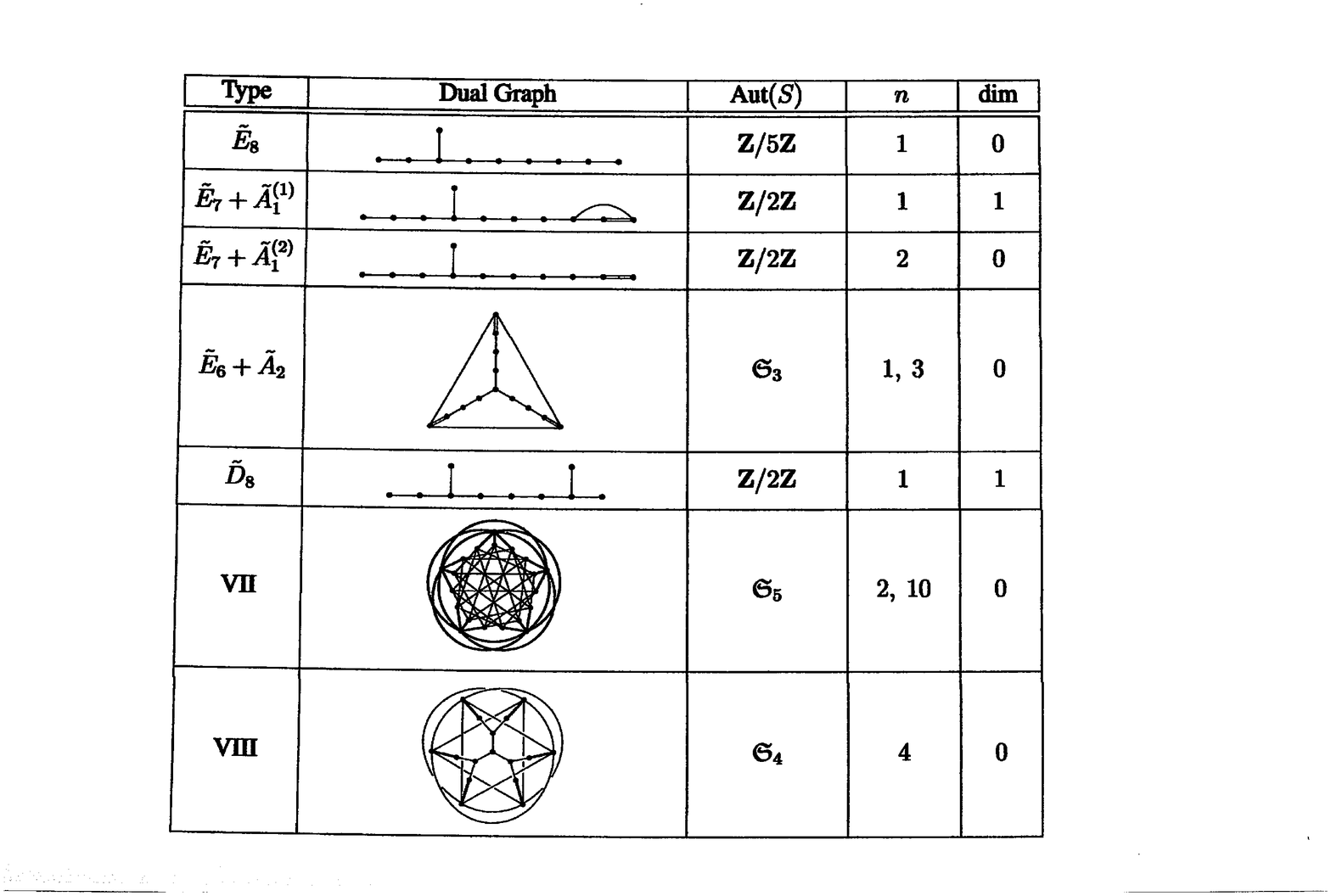}
 \end{center}
 \caption{Coble surfaces with finite automorphism group ($p=2$)}
 \label{mainTable}
\end{table}


\end{theorem}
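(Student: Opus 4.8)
The plan is to transport the lattice-theoretic method used for Enriques surfaces with finite automorphism group (as in \cite{KKM} and its predecessors) to the Coble--Mukai lattice ${\rm CM}(S)$ of signature $(1,9)$. First I would record that ${\rm Aut}(S)$ acts on ${\rm CM}(S)$ preserving the set of effective irreducible roots, hence preserving the chamber decomposition of the positive cone cut out by the reflections in these roots. The reflections generate a group $W$ whose fundamental chamber is the Coble analogue of the nef cone, and the dual graph $\Gamma$ of effective irreducible roots is the combinatorial shadow of this chamber. As for Enriques surfaces, finiteness of ${\rm Aut}(S)$ is governed by the finiteness and shape of $\Gamma$, via the correspondence between ${\rm Aut}(S)$ and the symmetries of the nef chamber of ${\rm CM}(S)$: the group is finite precisely when the chamber has finite volume and finitely many walls. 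The completeness half of the classification (that only the nine listed graphs can occur) would then be obtained by a Vinberg-type enumeration of all admissible graphs of roots inside a signature $(1,9)$ lattice.

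Next, for each candidate graph I would construct the surface explicitly and verify its existence in characteristic $2$, using the canonical covering $\pi\colon X\to S$ together with the conductrix theory of Theorems \ref{K3-like}, \ref{eta} and \ref{main1}. The case $A=0$ produces an $X$ with rational double points whose minimal model is a supersingular $K3$ surface, so $S$ is recovered as a $\mu_2$-quotient (equivalently, a quotient by a suitable rational vector field) of that surface; the case $A\ne 0$ forces $A^2=-2$ and pins down the isolated singularities of $X$ together with the relation $n = 4-(\#\,\text{isolated singularities})$. Combining the list of possible conductrices with explicit elliptic fibrations on $X$, I would build each of the nine surfaces, read off the number $n$ of boundary components from the conductrix data, and obtain the moduli dimension from the number of free parameters in the explicit family.

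I would then compute ${\rm Aut}(S)$ for each type. The image of ${\rm Aut}(S)$ in $O({\rm CM}(S))$ is controlled by the symmetries of $\Gamma$ that extend to isometries preserving the nef chamber, and these can be determined graph by graph. Separately one must analyze the kernel, i.e.\ the automorphisms acting trivially on ${\rm CM}(S)$, which in characteristic $2$ need not be trivial and must be studied through the induced action on $X$ and on its elliptic fibrations. Irreducibility of each moduli stratum, and the value of its dimension, would follow from the explicit parametrization used in the construction, cross-checked against the conductrix count.

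Finally, for the specialization statement I would degenerate each of the eight families of classical Enriques surfaces with finite automorphism group from \cite{KKM}, exhibiting the corresponding Coble surface as the limit in which a half-fiber (or a specified $(-2)$-curve) degenerates so that $|-2K_S|$ acquires a disjoint union of smooth rational curves; at the level of dual graphs this is a controlled vertex/edge modification of the Enriques graph, and the same degenerating families can be arranged to do double duty as the explicit constructions of the preceding step. I expect the main obstacle to lie in the construction and automorphism computation in characteristic $2$: the purely inseparable structure of $\pi$, the wild ramification, and the presence of $(-2)$-curves and numerically trivial automorphisms special to $p=2$ make both the existence proofs and the analysis of the kernel of ${\rm Aut}(S)\to O({\rm CM}(S))$ substantially more delicate than in the complex case.
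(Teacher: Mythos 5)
Your skeleton coincides with the paper's (classification of dual graphs of effective irreducible roots in ${\rm CM}(S)$ via reflection groups, explicit constructions, specialization from the families of \cite{KKM}), but the completeness half of your plan has a genuine gap. You rest it on the equivalence ``${\rm Aut}(S)$ is finite precisely when the chamber has finite volume and finitely many walls'' plus a purely lattice-theoretic Vinberg enumeration in a signature $(1,9)$ lattice. The paper only has one implication of that equivalence (Proposition \ref{FiniteIndex}); for the converse it substitutes geometry: Lemma \ref{specialfibration} produces a special genus $1$ fibration, Proposition \ref{MW-Dolgachev} forces every genus $1$ fibration to be extremal, and the case analysis is then driven by Lang's and Ito's lists (Propositions \ref{Lang}, \ref{Ito}) together with the classification of conductrices (Tables \ref{TableQ-elliptic}--\ref{TableElliptic2}), following \cite[Theorem 4.1]{KKM}. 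An abstract root enumeration cannot replace this: it cannot decide which Vinberg-admissible graphs are actually realized --- the paper must geometrically exclude the graph of Figure \ref{graphD4D4}, where the parabolic subdiagram $\tilde{D}_4\oplus\tilde{D}_4$ yields a quasi-elliptic fibration which, because the $2$-section $N$ is a $(-2)$-curve, comes from a Halphen surface of index $2$, contradicting two multiple fibers of type ${\rm I}_0^*$ --- and it cannot decide vertex-by-vertex whether a root is a $(-2)$-curve or a $(-1)$-root (Lemmas \ref{roots2}, \ref{D4diagram}). That last datum, not the lattice, is what determines the number $n$ of boundary components in Theorem \ref{Numberboundcomp} (e.g.\ $n=2$ or $10$ for type VII according to which vertices are $(-1)$-roots), so $n$ is not ``read off from the conductrix data'' as you propose.

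Your construction step also diverges from the paper in a way that matters. For $A\neq 0$ the canonical covering $X$ is a \emph{singular rational} surface (Proposition \ref{singularityY'}), so it cannot serve as the smooth input of a $\mu_2$-quotient construction; your picture is accurate only for type VII ($A=0$), where $S$ is indeed a derivation quotient of the supersingular $K3$ surface with Artin invariant $1$ (Theorems \ref{VII10thm}, \ref{VII2thm}). For the other types the paper constructs each surface twice and directly: as blow-ups of explicit pencils of sextics on ${\bf P}^2$ via Proposition \ref{Halphen} (which gives irreducibility and the dimension $0$ or $1$ of the moduli from the coordinates of the pencil), and as quotients $Y^D$ of smooth rational surfaces by explicit derivations obtained by specializing parameters (such as $a=0$ or $b=c=0$) in the \cite{KKM} families --- the latter simultaneously proving the specialization statement, exactly the ``double duty'' you anticipated. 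Finally, the paper sidesteps your kernel analysis of ${\rm Aut}(S)\to {\rm O}({\rm CM}(S))$: since the blow-down to ${\bf P}^2$ is determined by the configuration of negative curves, every automorphism is induced by a projective transformation preserving the sextic pencil, and the groups ${\bf Z}/2{\bf Z}$, ${\bf Z}/5{\bf Z}$, $\mathfrak{S}_3$, $\mathfrak{S}_4$, $\mathfrak{S}_5$ are computed as stabilizers of the pencils, with Mordell--Weil groups of Jacobian fibrations supplying the generators.
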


\vspace{-0.8cm}
We remark that a Coble surface of type VII appears in case $p=2, 5$ and
other Coble surfaces in the Table \ref{mainTable} appear only in characteristic 2.  
Moreover the minimal resolution of the canonical covering $X$ is the supersingular $K3$ surface with Artin invariant 1 if $S$ is of type VII (i.e. $A=0$) and rational in other cases ($A\ne 0$).

The proof of classification of possible dual graphs in Table \ref{mainTable} is the same as that for Enriques surfaces given in Katsura, Kond\=o and Martin \cite{KKM} by using the list of conductrices.  We can determine the possible number $n$ of boundary components of Coble surfaces from its dual graph.  
On the other hand, since Coble surfaces are rational, one can
construct Coble surfaces with finite automorphism group by blowing up ${\bf P}^2$, 
which allow us to determine the moduli space of such Coble surfaces.
On the other hand, in \cite{KKM}, the authors gave examples of families of Enriques 
surfaces with finite automorphism group by taking the quotients of rational surfaces by
derivations.  We show that every Coble surface in Table \ref{mainTable} is also obtained as a specialization of these Enriques surfaces.  

The plan of the paper is as follows.  In \S \ref{sec2} we summarize known results used in the paper.  In Section \ref{sec3} we study the canonical coverings of Coble surfaces in characteristic 2 and their singularities, and in Section \ref{sec4} discuss
the properties of the conductrices of Coble surfaces and give a classification of
possible conductrices.  Section \ref{sec5} devotes the classification of possible dual graphs of $(-2)$-curves  on Coble surfaces with finite automorphsim and the number of boundary components.  Finally in Section \ref{sec6} we give a construction of Coble surfaces with finite automorphism group and determine their moduli spaces..

The authors would like to thank N. Shepherd-Barron for pointing out
Lemma \ref{exclude}.

\section{Preliminaries}\label{sec2}

A surface is a non-singular complete algebraic surface defined over an algebraically
closed field $k$ of characteristic $p\geq 0$.  A non-singular rational curve $C$ on a surface $S$ is called $(-m)$-curve if $C^2=-m$.  We denote by $K_S$ the canonical divisor
of $S$.

\subsection{Vector fields}\label{derivation}

Assume ${\rm char}(k)=p > 0$ and let $S$ be a surface defined over $k$.
A rational vector field $D$ on $S$ is said to be $p$-closed if there exists
a rational function $f$ on $S$ such that $D^p = fD$. 
A vector field $D$ is of additive type (resp. of multiplicative type) if $D^p=0$ (resp. $D^p=D$).
Let $\{U_{i} = {\rm Spec} A_{i}\}$ be an affine open covering of $S$. We set 
$A_{i}^{D} = \{\alpha \in A_{i} \mid D(\alpha) = 0\}$. 
The affine surfaces $\{U_{i}^{D} = {\rm Spec} A_{i}^{D}\}$ glue together to 
define a normal quotient surface $S^{D}$.

Now, we  assume that $D$ is $p$-closed. Then,
the natural morphism 
\begin{equation}\label{naturalmapPI}
\pi : S \longrightarrow S^D 
\end{equation}
is a purely
inseparable morphism of degree $p$. 
If the affine open covering $\{U_{i}\}$ of $S$ is fine enough, then
taking local coordinates $x_{i}, y_{i}$
on $U_{i}$, we see that there exist $g_{i}, h_{i}\in A_{i}$ and 
a rational function $f_{i}$
such that the divisors defined by $g_{i} = 0$ and by $h_{i} = 0$ have no common components,
and such that
$$
 D = f_{i}\left(g_{i}\frac{\partial}{\partial x_{i}} + h_{i}\frac{\partial}{\partial y_{i}}\right)
\quad \mbox{on}~U_{i}.
$$
By Rudakov and Shafarevich \cite[Section 1]{RS}, the divisors $(f_{i})$ on $U_{i}$
give a global divisor $(D)$ on $S$, and the zero-cycles defined
by the ideal $(g_{i}, h_{i})$ on $U_{i}$ give a global zero cycle 
$\langle D \rangle $ on $S$. A point contained in the support of
$\langle D \rangle $ is called an isolated singular point of $D$.
If $D$ has no isolated singular point, $D$ is said to be divisorial.
Rudakov and Shafarevich \cite[Theorem 1, Corollary]{RS} 
showed that $S^D$ is non-singular
if $\langle D \rangle  = 0$, i.e. $D$ is divisorial.
When $S^D$ is non-singular,
they also showed a canonical divisor formula
\begin{equation}\label{canonical}
K_{S} \sim \pi^{*}K_{S^D} + (p - 1)(D),
\end{equation}
where $\sim$ means linear equivalence.
As for the Euler number $c_{2}(S)$ of $S$, we have a formula
\begin{equation}\label{euler}
c_{2}(S) = \deg \langle D \rangle  - K_{S}\cdot (D) - (D)^2
\end{equation}
(cf. Katsura and Takeda \cite[Proposition 2.1]{KT}). 

Now we consider an irreducible curve $C$ on $S$ and we set $C' = \pi (C)$.
Take an affine open set $U_{i}$ above such that $C \cap U_{i}$ is non-empty.
The curve $C$ is said to be integral with respect to the vector field $D$
if $g_{i}\frac{\partial}{\partial x_{i}} + h_{i}\frac{\partial}{\partial y_{i}}$
is tangent to $C$ at a general point of $C \cap U_{i}$. Then, Rudakov-Shafarevich
\cite[Proposition 1]{RS} showed the following proposition:

\begin{prop}\label{insep}
\begin{itemize}
\item[$({\rm i})$]
If $C$ is integral, then $C = \pi^*(C')$ and $C^2 = pC'^2$.
\item[$({\rm ii})$] 
If $C$ is not integral, then $pC = \pi^*(C')$ and $pC^2 = C'^2$.
\end{itemize}
\end{prop}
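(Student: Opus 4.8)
The plan is to reduce the whole statement to a single local computation at a general point of $C$ and then read off the self-intersections from the projection formula. Since $\pi\colon S\to S^D$ is purely inseparable of degree $p$, it is a universal homeomorphism, so $\pi^{-1}(C')$ is set-theoretically equal to $C$ and we may write $\pi^*(C')=mC$ for a single positive integer $m$. Comparing lengths over the generic point of $C'$ (where $\pi$ is finite of degree $p$) gives $p=m\cdot[k(C):k(C')]$, and since $p$ is prime we are forced into exactly one of the cases $(m,[k(C):k(C')])=(1,p)$ or $(p,1)$. Thus the entire content of the proposition is to show that $C$ integral forces $m=1$ while $C$ non-integral forces $m=p$, and then to convert these into the relations between $C^2$ and $(C')^2$.

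For the local analysis I would work at a general point $P$ of $C$, chosen away from the divisor $(f_i)$, away from the isolated singular points in the support of $\langle D\rangle$, and away from $\mathrm{Sing}(C)$. There the field $\tilde D:=g_i\partial_{x_i}+h_i\partial_{y_i}$ is nowhere zero, so $P$ lies off the isolated singular locus of $D$ and, by the Rudakov--Shafarevich result quoted above, $S^D$ is smooth near $\pi(P)$. Choosing a regular system of parameters $(u,v)$ at $\pi(P)$, the function $y:=\pi^*v$ is a local first integral of $\tilde D$, since it lies in $\ker\tilde D=\pi^*\mathcal O_{S^D}$, and $dy\ne 0$ at $P$; completing $y$ by any coordinate $x$ transverse to it, we obtain $\tilde D=\tilde D(x)\,\partial/\partial x$ with $\tilde D(x)$ a unit at $P$. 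As the $\partial_x$-invariant functions are exactly the power series in $x^p$ and $y$, the quotient map is locally $(x,y)\mapsto(x^p,y)$.

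With this normal form the two cases are immediate. If $C$ is integral, then $\tilde D$ is tangent to $C$ at $P$, so locally $C=\{y=c\}$; then $\pi^*(v-c)=y-c$ shows $\pi^*(C')=C$, that is $m=1$, and $\pi|_C$ is the degree-$p$ map $x\mapsto x^p$. If $C$ is not integral, then $\tilde D$ is transverse to $C$, and after the substitution $x\mapsto x-\phi(y)$ (which preserves $\partial_x$) we may take $C=\{x=0\}$; then $\pi^*(u)=x^p$ gives $\pi^*(C')=pC$, that is $m=p$, and $\pi|_C$ is an isomorphism. Because $P$ is a general point of $C$, the multiplicity computed here is the generic multiplicity of $C$ in $\pi^*(C')$, hence the global one.

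Finally I would feed these into the projection formula $(\pi^*C')^2=p\,(C')^2$, valid since $\deg\pi=p$. In the integral case $C=\pi^*(C')$ yields $C^2=p\,(C')^2$, while in the non-integral case $pC=\pi^*(C')$ yields $p^2C^2=p\,(C')^2$, i.e. $pC^2=(C')^2$. I expect the one genuinely delicate step to be the production of the local normal form in characteristic $p$: a naive appeal to a flow-box theorem is not available, and it is precisely the Rudakov--Shafarevich smoothness of $S^D$ at non-isolated singular points that guarantees the local first integral $y$, and with it the coordinates in which $\pi$ becomes $(x,y)\mapsto(x^p,y)$.
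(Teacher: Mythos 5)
The paper offers no proof of this proposition at all: it is quoted verbatim from Rudakov--Shafarevich \cite[Proposition 1]{RS}, so there is no in-paper argument to compare against. Your proof is correct, and it is in substance the classical argument underlying the cited result: the reduction to $\pi^*(C')=mC$ with $p=m\cdot[k(C):k(C')]$, forcing $(m,[k(C):k(C')])\in\{(1,p),(p,1)\}$ by primality of $p$; the local normal form $(x,y)\mapsto(x^p,y)$ at a general point $P$ of $C$ avoiding $\langle D\rangle$, $\mathrm{Sing}(C)$ and the divisorial part; and the projection formula $(\pi^*C')^2=p\,(C')^2$. Since the multiplicity $m$ is computed at the generic point of $C$, the passage from the local picture to the global divisor identity is legitimate.

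Two steps deserve more care than you give them. First, the existence of a first integral $y$ with $dy\neq 0$ at $P$ is not a purely formal consequence of the smoothness of $S^D$ at $\pi(P)$: a priori both regular parameters of $\mathcal{O}_{S^D,\pi(P)}$ could pull back into $\mathfrak{m}_P^2$ (for $p=2,3$ a length count on the fiber, which is a complete intersection of length $p$, excludes this, but for $p\geq 5$ it does not). The clean repair is to rectify directly: choose a coordinate $x$ with $\tilde D(x)$ a unit at $P$ and rescale so that $\tilde D(x)=1$; rescaling by a unit preserves $p$-closedness by Hochschild's formula $(c\tilde D)^p=c^p\tilde D^p+((c\tilde D)^{p-1}c)\tilde D$, and then $\tilde D^p(x)=\tilde D^{p-1}(1)=0$ forces $\tilde D^p=0$, after which $E(a)=\sum_{i=0}^{p-1}(-1)^i\frac{x^i}{i!}\tilde D^i(a)$ lies in $\ker\tilde D$ for every $a$, and $y:=E(y_0)$ has $dy(P)=dy_0(P)-\tilde D(y_0)(P)\,dx(P)\neq 0$. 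Second, in the integral case, tangency of $\tilde D=c\,\partial/\partial x$ along $C$ does \emph{not} give $C=\{y=\mathrm{const}\}$ in characteristic $p$, only $C=\{y=\chi(x^p)\}$ for some power series $\chi$ (for instance $y=x^p$ is everywhere tangent to $\partial/\partial x$); since $\chi(x^p)$ is itself invariant, you may replace the first integral $y$ by $y-\chi(x^p)$ and your conclusions $m=1$ and $\deg(\pi|_C)=p$ survive unchanged. With these two repairs your argument is a complete and self-contained proof of the statement the paper imports from \cite{RS}.
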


\subsection{Genus 1 fibrations}

We recall  elliptic and quasi-elliptic fibrations on a rational surface.  For simplicity, we call an elliptic or a quasi-elliptic fibration a genus 1 fibration. 
A genus 1 fibration is called relatively minimal if any fiber contains no $(-1)$-curves. 
Let $f : S \to {\bf P}^1$ be a genus 1 fibration on a rational surface and let $F$ be a general fiber.  A non-singular rational curve $C$ on $S$ is called a section (resp. a 2-section) if $C\cdot F=1$ (resp. $C\cdot F=2$).  

We use the symbols $\I_n$ $(n\geq 1)$, $\I_n^*$ $(n\geq 0)$, $\II$, $\III$, $\IV$, $\II^*$, 
$\III^*$, $\IV^*$ of relatively minimal singular fibers of $f$ in the sense of Kodaira.  
The dual graph of $(-2)$-curves in a singular fiber of type $\I_n$ $(n\geq 2)$, $\I_n^*$ $(n\geq 0)$, 
$\III$, $\IV$, $\II^*$, $\III^*$ or $\IV^*$ is an extended Dynkin diagram of type 
$\tilde{A}_{n-1}$, $\tilde{D}_{n+4}$, 
$\tilde{A_1}$, $\tilde{A_2}$, $\tilde{E_8}$, $\tilde{E_7}$ or $\tilde{E_6}$, respectively.
A fiber with multiplicity 2 is called a multiple fiber.
For a multiple singular fiber of type $F$, we write $2F$.
If, for example, $f$ has a multiple singular fiber of type $\III$ and a singular fiber of type $\IV^*$, then we say that $f$ has singular fibers of type $(2\III, \IV^*)$ or
of type $(2\tilde{A}_2, \tilde{E}_6)$.
If $f$ has a section and its Mordell-Weil group is torsion, then $f$ is called extremal.
We use the following classifications of extremal rational elliptic and rational quasi-elliptic fibrations.

\begin{prop}\label{Lang}{\rm (Lang \cite{La2}, \cite{La3})}
The following are the singular fibers of  relatively minimal extremal elliptic fibrations on rational surfaces in characteristic $2:$
$$({\rm II}^*),\ ({\rm II}^*, {\rm I}_1),\ ({\rm III}^*, {\rm I}_2),\ ({\rm IV}^*, {\rm IV}),\ ({\rm IV}^*, {\rm I}_3, {\rm I}_1),\ ({\rm I}_4^*),\ ({\rm I}_1^*, {\rm I}_4),$$
$$({\rm I}_9, {\rm I}_1, {\rm I}_1, {\rm I}_1),\ ({\rm I}_8, {\rm III}),\ ({\rm I}_6, {\rm IV}, {\rm I}_2),\ ({\rm I}_5, {\rm I}_5, {\rm I}_1, {\rm I}_1),\ ({\rm I}_3, {\rm I}_3, {\rm I}_3, {\rm I}_3).$$
\end{prop}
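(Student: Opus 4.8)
The plan is to realise $f$ as a rational elliptic surface and to carve out the list from three numerical constraints---the Shioda--Tate rank, the Euler-number balance, and the torsion of the Mordell--Weil group---and then to settle existence by explicit Weierstrass equations in characteristic $2$. First I would use that a relatively minimal elliptic fibration with a section on a rational surface $S$ makes $S$ a rational elliptic surface, so that $\rho(S)=10$, $-K_S$ is a fibre and $c_2(S)=12$. The Shioda--Tate formula
\[
\rho(S)=2+\rank\,\mathrm{MW}(f)+\textstyle\sum_v(m_v-1),
\]
with $m_v$ the number of components of the fibre over $v$, shows that extremality ($\rank\,\mathrm{MW}(f)=0$) is equivalent to $\sum_v(m_v-1)=8$; that is, the root lattices $R_v$ of the reducible fibres have total rank $8$. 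Their orthogonal sum $R=\bigoplus_v R_v$ embeds into the frame lattice $E_8$ as a finite-index sublattice, so in the extremal case $\mathrm{MW}(f)\cong E_8/R$ is finite and $\det(R)=[E_8:R]^2$ is a perfect square. The rank-$8$ root sublattices of $E_8$ are classified by Borel--de Siebenthal, giving a finite set of candidate lattices $R$, each carrying its forced torsion group $E_8/R$.

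Next I would impose $\sum_v e_v=12$, where $e_v$ is the local Euler number. Ogg's formula gives $e_v=(m_v-1)+1$ at a multiplicative fibre and $e_v=(m_v-1)+2+\delta_v$ at an additive one, with $\delta_v\ge 0$ the wild (Swan) contribution; subtracting $\sum_v(m_v-1)=8$ yields the sharp relation
\[
\#\{\text{mult. fibres}\}+2\,\#\{\text{add. fibres}\}+\textstyle\sum_v\delta_v=4.
\]
Over $\bbC$ one has $\delta_v\equiv 0$ and this recovers the Miranda--Persson list; the characteristic-$2$ input is that an additive fibre of type $\II,\III,\I_n^*,\III^*$ or $\II^*$ (those with nontrivial $2$-part in the local monodromy) is necessarily wild, $\delta_v\ge 1$, whereas $\IV$ and $\IV^*$ (monodromy of order $3$) stay tame, $\delta_v=0$. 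Feeding this back already forces most realisations: $E_8$ can only be $(\II^*)$ or $(\II^*,\I_1)$, never $(\II^*,\I_1,\I_1)$ which would need $\delta=0$ at $\II^*$; $E_7A_1$ must be $(\III^*,\I_2)$ and $D_8$ must be $(\I_4^*)$; and it is exactly the tameness of $\IV^*$ that leaves $E_6A_2$ with the two realisations $(\IV^*,\IV)$ and $(\IV^*,\I_3,\I_1)$.

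The torsion then removes the remaining lattice candidates. Since the generic fibre is an elliptic curve over $k(t)$ with $\mathrm{char}\,k=2$, its full $2$-torsion over an algebraic closure has order at most $2$, so the $2$-primary part of $\mathrm{MW}(f)_{\mathrm{tors}}=E_8/R$ must be cyclic. This discards every full-rank $R$ for which $E_8/R$ has non-cyclic $2$-part---in particular $D_4\oplus D_4$, $D_6\oplus A_1^2$ and $A_3^2\oplus A_1^2$ (each containing $(\bbZ/2)^2$), as well as $A_1^{8}$---and leaves exactly the ten lattices $E_8$, $E_7A_1$, $E_6A_2$, $D_8$, $D_5A_3$, $A_8$, $A_7A_1$, $A_5A_2A_1$, $A_4^2$ and $A_2^4$.

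It remains to realise each surviving lattice by an actual surface in characteristic $2$ and, where the numerics allow several options, to decide which configuration occurs; this is the genuine obstacle and is characteristic-$2$ in nature. For the odd-torsion lattices $A_8,A_4^2,A_2^4$ (torsion $\bbZ/3,\bbZ/5,(\bbZ/3)^2$) the semistable models $(\I_9,\I_1,\I_1,\I_1)$, $(\I_5,\I_5,\I_1,\I_1)$, $(\I_3,\I_3,\I_3,\I_3)$ survive, whereas for the even-torsion lattices the presence of a rational $2$-torsion section forces an additive fibre, so the naive semistable models $(\I_8,\I_2,\I_1,\I_1)$ and $(\I_6,\I_3,\I_2,\I_1)$ are replaced by $(\I_8,\III)$ and $(\I_6,\IV,\I_2)$. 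Verifying all of this---that the required wild or additive degeneration of the discriminant is both attainable and forced, and that the Mordell--Weil group is exactly the predicted finite group of rank $0$---is a case-by-case computation with Weierstrass equations over $k(t)$, checked by Tate's algorithm. This explicit construction is precisely what Lang carries out in \cite{La2} and \cite{La3}, which I would invoke to finish both the existence statements and the exclusion of the numerically admissible but non-occurring configurations.
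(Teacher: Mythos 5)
The paper does not prove this proposition at all: it is quoted as a known classification from Lang \cite{La2}, \cite{La3}, so there is no internal argument to compare against. Your proposal in effect reconstructs a plausible skeleton for Lang's theorem and then, at the decisive points, cites Lang --- so in substance it rests on exactly the same source as the paper, with an expository numerical framework added on top. That framework is sound as far as it goes: Shioda--Tate correctly reduces extremality to a rank-$8$ root sublattice $R \subset E_8$ with $\mathrm{MW}(f) \cong E_8/R$; the Euler-number bookkeeping $\#\{\mathrm{mult.}\} + 2\,\#\{\mathrm{add.}\} + \sum_v \delta_v = 4$ is right; the tame/wild dichotomy in characteristic $2$ (only ${\rm IV}$ and ${\rm IV}^*$ tame, all other additive types wild) is correct, as is the cyclicity of the $2$-primary torsion of an elliptic curve over a field of characteristic $2$. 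Two caveats, however. First, you overstate what the three displayed constraints ``force'': for instance $({\rm I}_4^*, {\rm I}_1)$ with $\delta = 1$, $({\rm I}_9, {\rm II})$ with $\delta = 1$, and the semistable configurations $({\rm I}_8, {\rm I}_2, {\rm I}_1, {\rm I}_1)$ and $({\rm I}_6, {\rm I}_3, {\rm I}_2, {\rm I}_1)$ (with cyclic torsion $\mathbf{Z}/4$ and $\mathbf{Z}/6$) all pass the Shioda--Tate, Euler, and torsion tests, yet none occurs in characteristic $2$; their exclusion requires the finer local analysis (e.g.\ lower bounds on $\delta_v$ for each additive type, attainable discriminant valuations) that only the case-by-case Weierstrass/Tate-algorithm computation supplies. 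Second, your principle that ``a rational $2$-torsion section forces an additive fibre'' in characteristic $2$ is asserted, not proved, and it is precisely one of the facts one is citing Lang for. Since the proposition is attributed to Lang and the paper itself merely cites him, deferring these steps is legitimate and your proposal is consistent with the paper; just be careful not to present the intermediate ``forcing'' claims as consequences of the numerics alone.
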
 

\begin{prop}\label{Ito}{\rm (Ito \cite{Ito})}
The following are the singular fibers of relatively minimal quasi-elliptic fibrations on rational surfaces in characteristic $2:$
$$({\rm II}^*),\ ({\rm III}^*, {\rm III}),\ ({\rm I}_4^*),\ ({\rm I}_2^*, {\rm III}, {\rm III}), \ 
({\rm I}_0^*, {\rm I}_0^*),$$
$$({\rm I}_0^*, {\rm III}, {\rm III}, {\rm III}, {\rm III}),\ ({\rm III}, {\rm III}, {\rm III}, {\rm III}, {\rm III}, {\rm III}, {\rm III}, {\rm III}).$$
\end{prop}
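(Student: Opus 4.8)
The plan is to combine the special geometry of quasi-elliptic fibrations in characteristic $2$ with a lattice-theoretic enumeration inside $E_8$. First I would recall the structure theory: the generic fibre of a quasi-elliptic fibration $f:S\to{\bf P}^1$ is a cuspidal rational cubic, and the cusps sweep out a curve meeting each fibre in its singular point. Using the local analysis of cuspidal degenerations together with the work of Bombieri--Mumford \cite{BM} and Rudakov--Shafarevich \cite{RS}, one shows that the only singular fibres that can occur are the additive types $\II$, $\III$, $\I_n^*$ $(n\ge 0)$, $\III^*$ and $\II^*$; in particular the multiplicative types $\I_n$ $(n\ge 1)$ are incompatible with a cuspidal generic fibre, and $\IV,\IV^*$ are excluded in characteristic $2$ (they are the characteristic-$3$ phenomenon). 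The finite root lattices attached to the reducible fibres are then $A_1$ (for $\III$), $D_{n+4}$ (for $\I_n^*$), $E_7$ (for $\III^*$) and $E_8$ (for $\II^*$).

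Next I would pin down the total rank of the reducible fibres. Since $S$ is rational and $f$ is relatively minimal, $S$ is a rational (quasi-)elliptic surface with $\rho(S)=b_2(S)=10$ and $-K_S$ linearly equivalent to a fibre. Applying the Shioda--Tate formula
\[
\rho(S)=2+\rank\, \mathrm{MW}(f)+\sum_v (m_v-1),
\]
where $m_v$ is the number of components of the fibre over $v$, the key point is that a quasi-elliptic fibration has Mordell--Weil rank $0$: the smooth locus of the generic fibre is a form of $\bbG_a$, so the group of sections carries a trivial height pairing and contributes nothing to $\NS(S)$ beyond torsion. Hence $\sum_v(m_v-1)=8$, i.e. the reducible fibres contribute a negative-definite root lattice of rank exactly $8$. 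A direct computation of $c_2(S)=12$ via the Euler-number formula for these fibre types gives the same conclusion.

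The heart of the argument is then the lattice enumeration. The components of the reducible fibres not meeting a fixed section, together with that section and a general fibre, generate the trivial lattice $U\oplus R$ inside $\NS(S)\cong U\oplus E_8$, where $R$ is the orthogonal sum of the finite fibre root lattices; thus $R$ is a rank-$8$ root lattice admitting an orthogonal embedding $R\hookrightarrow E_8$. I would enumerate all orthogonal direct sums of copies of $A_1$, $D_m$ $(m\ge 4)$, $E_7$ and $E_8$ of total rank $8$, and test embeddability using the classification of root subsystems of $E_8$. The admissible ones are exactly $E_8$, $E_7\oplus A_1$, $D_8$, $D_6\oplus 2A_1$, $D_4\oplus D_4$, $D_4\oplus 4A_1$ and $8A_1$, which correspond precisely to $(\II^*)$, $(\III^*,\III)$, $(\I_4^*)$, $(\I_2^*,\III,\III)$, $(\I_0^*,\I_0^*)$, $(\I_0^*,\III,\III,\III,\III)$ and $(\III^{\,8})$. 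The numerically possible but non-embeddable candidates such as $D_7\oplus A_1$ (that is $\I_3^*,\III$) and $D_5\oplus 3A_1$ (that is $\I_1^*,3\III$) are discarded because in $E_8$ the complement $D_7^\perp$ contains no root, while $D_5^\perp$ is only the root system $A_3$, and $A_3\not\supset 3A_1$.

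Finally I would establish that each of the seven configurations is actually realised, by writing down explicit quasi-elliptic normal forms over ${\bf P}^1$ in characteristic $2$, reading off the fibre types, and confirming $c_2=12$ in each case. I expect the main obstacle to be the input to the second step, namely a rigorous proof that every quasi-elliptic fibration on a rational surface has Mordell--Weil rank $0$ (equivalently, that the reducible fibres fill up rank $8$); this is the conceptual crux. Once it is in place, the lattice enumeration is essentially mechanical and the realization reduces to a finite check of explicit equations.
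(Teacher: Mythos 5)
The paper offers no proof of this proposition --- it is quoted verbatim from Ito \cite{Ito} --- so there is no internal argument to compare against. Your route is in fact different in flavour from Ito's: he works with explicit quasi-elliptic Weierstrass normal forms over $k(t)$ and reads off the fibre configurations and Mordell--Weil groups directly, whereas you reduce the classification to root subsystems of $E_8$ and only invoke equations at the realization step. Your enumeration is correct: the admissible summands are $A_1$, $D_{n+4}$, $E_7$, $E_8$ (types ${\rm I}_n$, ${\rm IV}$, ${\rm IV}^*$ being impossible for quasi-elliptic fibrations in characteristic $2$), the total rank is $8$ by Shioda--Tate once one knows the Mordell--Weil rank vanishes, and the two numerically possible but non-embeddable candidates are exactly the ones you name: in $E_8$ the orthogonal complement of $D_7$ is the rank-one lattice generated by a vector of norm $4$, hence rootless, and $D_5^{\perp}\cong A_3$ contains no three mutually orthogonal roots, so $({\rm I}_3^*,{\rm III})$ and $({\rm I}_1^*,{\rm III},{\rm III},{\rm III})$ are excluded, leaving the seven listed configurations. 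The finiteness of the Mordell--Weil group is standard (the smooth locus of the generic fibre is unipotent, so the group of sections is killed by $p=2$ and is finite by Mordell--Weil; the paper records this as ``any quasi-elliptic fibration is extremal''), though your phrase ``trivial height pairing'' is a gloss rather than an argument and should be replaced by the $p$-torsion statement.

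Two defects need repair. First, a misstatement: $\NS(S)$ of a rational genus $1$ surface is the \emph{odd} unimodular lattice $\langle 1\rangle\oplus\langle -1\rangle^{\oplus 9}$, not $U\oplus E_8$, and the span of a section $O$ and a fibre $F$ is $\langle 1\rangle\oplus\langle -1\rangle$, not $U$ (since $O^2=-1$). What is true, and all you need, is that $\langle F,O\rangle^{\perp}$ is negative definite, unimodular and even (every class orthogonal to $F\sim -K_S$ has even square by adjunction), hence isometric to $E_8$, so the fibre-component lattice $R$ embeds there. Second, and more substantively, your argument presupposes a section, but the proposition as stated --- and as this paper uses it --- covers relatively minimal quasi-elliptic fibrations \emph{without} sections: quasi-elliptic Halphen surfaces of index $2$ exist (one with a multiple fibre of type ${\rm II}^*$ is constructed in Example \ref{exE8}), and the proof of Theorem \ref{dualgraph} applies Proposition \ref{Ito} to precisely such fibrations. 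You must first pass to the Jacobian fibration, which is again a relatively minimal rational quasi-elliptic surface with the same types of singular fibres, and run your lattice argument there. With these two repairs the proof goes through, the realization step amounting to the same finite list of explicit equations that Ito exhibits.
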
 

\noindent
Note that any quasi-elliptic fibration is extremal. 
Assume that $f$ is quasi-elliptic.  Then 
the closure of the set of singular points of irreducible fibers (of type II) of $f$ is
called the curve of cusps which is a 2-section of $f$.

\subsection{Coble surfaces}\label{surfaces}

Let $S$ be a Coble surface with boundary components $B_1,\ldots, B_n$.
By $|-2K_S| =\{ B_1+\cdots +B_n\}$ and the adjunction formula, $B_1^2=\cdots =B_n^2=-4$
and $K_S^2=-n$.  Since $|-2K_S| = \{B_1+\cdots+B_n\}$, the only curves with negative self-intersection on $S$ are $B_1,\ldots,B_n$, $(-2)$-curves and $(-1)$-curves.
Note that $(-2)$-curves do not intersect any boundary component $B_i$.

\begin{proposition}\label{basic}{\rm (Dolgachev and Kond\=o \cite[Proposition 9.1.2]{DK})}
Any Coble surface $S$ is a basic type, that is, there exists a birational morphism from
$S$ to ${\bf P}^2$.  Moreover $S$ admits a birational morphism $f:S\to {\bf P}^2$
such that the image of $B_1+\cdots +B_n$ is a curve $W$ of degree $6$.  
The fundamental points of $f^{-1}:{\bf P}^2\to S$ are among singular points of $W$ (including infinitely near points). 
\end{proposition}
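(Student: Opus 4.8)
My plan is to separate the birational geometry from a short intersection-theoretic computation. First I would establish that $S$ is basic; then I would upgrade this to a plane model that does not collapse the boundary; and finally I would read off the degree of $W$ and the local multiplicities directly from the class $-2K_S$.

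To see that $S$ is basic: $S$ is rational with $K_S^2=-n<0$, hence non-minimal, so there is a birational morphism onto a relatively minimal model $S_0\in\{\bbP^2,\bbF_0,\bbF_m\ (m\ge 2)\}$. If $S_0=\bbP^2$ we are done, and $\bbF_0$ becomes $\bbF_1$ after one elementary transformation. For $\bbF_m$ with $m\ge 2$ I would lower $m$ by elementary transformations: realizing $S$ from $\bbF_m$ takes $8+n\ (\ge 9)$ blow-ups, and the strict transform of the negative section $C_0$ has self-intersection $-m$ decreased by the number of centres on $C_0$; since the only curves of self-intersection $\le -3$ on $S$ are the $(-4)$-curves $B_i$, at most $4-m\le 2$ centres may lie on $C_0$. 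Thus some centre lies off $C_0$, the elementary transformation there produces a morphism onto $\bbF_{m-1}$, and iterating ends at $\bbF_1$, which blows down to $\bbP^2$. (The degenerate case in which all proper centres concentrate on $C_0$ would force long infinitely near towers over one or two points and is incompatible with the boundary configuration.)

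Next I would choose a birational morphism $f:S\to\bbP^2$ that contracts no $B_i$. This is the crux, because a $(-4)$-curve can a priori be absorbed into the exceptional locus, for instance as a class $\calE_1-\calE_2-\calE_3-\calE_4$. I would contract $(-1)$-curves one at a time while forbidding the eventual collapse of any boundary component; this is possible because the $B_i$ are mutually disjoint and disjoint from every $(-2)$-curve, and because at least three blow-downs meeting a fixed $B_i$ would be needed before its self-intersection could reach $-1$, leaving enough freedom to reroute the contractions.

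Finally the computation. Set $H=f^*\calO_{\bbP^2}(1)$ and let $\calE_1,\dots,\calE_N$ be the total transforms of the exceptional curves, so that $K_S=-3H+\sum_j\calE_j$ in the orthogonal basis $\{H,\calE_j\}$. From $-2K_S\sim B:=\sum_iB_i$ I get $B=6H-2\sum_j\calE_j$, so $\deg W=B\cdot H=6$; since no $B_i$ is contracted, $W=f(B)$ is a genuine sextic and $B$ is its strict transform, $B=6H-\sum_j({\rm mult}_{p_j}W)\,\calE_j$. Comparing the two expressions forces ${\rm mult}_{p_j}W=2$ at every centre $p_j$, so each fundamental point of $f^{-1}$ (including infinitely near ones) is a double point of $W$ and hence lies in ${\rm Sing}(W)$; as a byproduct $W$ splits into the $n\le 6$ plane images of the $B_i$. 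I expect the genuine difficulty to be the middle step, namely guaranteeing a plane model in which the boundary survives, rather than the final computation, which is forced.
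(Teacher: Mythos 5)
Your middle step is not a gap but an outright false statement, and it propagates into a false conclusion. You claim one can always choose the birational morphism $f:S\to {\bf P}^2$ so that no boundary component is contracted, and you note the ``byproduct'' $n\le 6$. For any birational morphism $f$ one has $f_*K_S=K_{{\bf P}^2}$, so $f_*(B)\in|-2K_{{\bf P}^2}|$ has degree $6$, and each non-contracted $B_i$ contributes at least $1$ to this degree; hence at most six boundary components can survive under \emph{any} plane model. But this paper's Proposition \ref{Boundary} allows $n\le 10$, and Theorem \ref{VII10thm} exhibits a Coble surface of type VII with $n=10$, obtained by blowing up the fifteen intersection points of the ten lines on the quintic del Pezzo surface; in its plane model the four $(-4)$-curves lying over the exceptional curves of the del Pezzo are necessarily contracted, each becoming a $(-1)$-curve after three blow-downs --- precisely the absorption phenomenon $\mathcal{E}_1-\mathcal{E}_2-\mathcal{E}_3-\mathcal{E}_4$ that you identified and then asserted, without proof, could always be rerouted away. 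It cannot. Consequently the proposition must be read (and proved) with $W=f_*(B)$, i.e.\ the one-dimensional part of the image, allowing contracted components to land at points of $W$; your reading of $B$ as the full strict transform of $W$, and the resulting ``multiplicity exactly $2$ at every centre,'' are artifacts of the false assumption (the quadrilateral model has triple points of $W$ at the contracted components' images).

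The correct argument needs no choice of $f$ at all for the ``moreover'' part. Write $f^*W=\tilde W+\sum_j m_j\mathcal{E}_j$, so $\tilde W\sim 6H-\sum_j m_j\mathcal{E}_j$, and decompose $B=\tilde W+B^c$, where $B^c$ is the effective, $f$-exceptional sum of contracted boundary components (a non-contracted $B_i$ is the strict transform of its image, and distinct components have distinct images). Comparing with $B\sim 6H-2\sum_j\mathcal{E}_j$ in the basis $\{H,\mathcal{E}_j\}$ gives $[B^c]=\sum_j(m_j-2)\mathcal{E}_j$; for a first-level centre $p_j\in{\bf P}^2$ (through which no earlier exceptional curve passes), the $\mathcal{E}_j$-coefficient of any effective exceptional divisor is non-negative, whence $m_j\ge 2$, so every proper fundamental point is a singular point of $W$; infinitely near fundamental points lie over such points because the cluster of centres is closed under specialization. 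Your final degree computation $\deg W=B\cdot H=6$ survives in this corrected form, and your first step (descent through $\bbF_m$ using that the only curves on $S$ with self-intersection $\le -3$ are the $B_i$, forcing $m\le 4$ and at most $4-m$ centres on $C_0$) is sound in outline --- though the parenthetically dismissed degenerate case, with all proper centres on $C_0$ and long infinitely near towers, still requires an actual argument and is a secondary gap. The paper itself gives no proof, citing Dolgachev--Kond\=o \cite[Proposition 9.1.2]{DK}, but any proof consistent with the rest of this paper must handle contracted boundary components rather than exclude them.
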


Consider 
a pencil $|3mh - m(p_1+\cdots + p_9)|$ of curves of degree $3m$ on ${\bf P}^2$ 
with points $p_i$ of multiplicity $\geq m$ $(m=1$  or $2)$ where $h$ is a line on ${\bf P}^2$.  Blowing up its base points we obtain a rational genus 1 fibration called a Halphen
surface of index $m$.
A Halphen surface of index 1 is nothing but a Jacobian fibration.  
Moreover we have the following.

\begin{prop}\label{Halphen}{\rm (Dolgachev and Kond\=o \cite[Proposition 9.1.4]{DK})}
Let $f:S\to {\bf P}^1$ be a genus $1$ fibration on a Coble surface $S$.  
Then $S$ is obtained from a Halphen surface of index $2$ by 
blowing up all singular points {\rm (}and their infinitely near points in the case of type {\rm III, IV )} of one reduced singular fiber or from a Halphen surface of index $1$, i.e. a Jacobian fibration, by blowing up all singular points {\rm (}and their infinitely near points in the case of type {\rm III, IV )} of two reduced singular fibers. 
\end{prop}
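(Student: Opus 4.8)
The plan is to reduce the statement to the theory of relatively minimal rational genus $1$ fibrations (Halphen surfaces) by passing to a relatively minimal model of $f$ and then reconstructing $S$ from it. First I would show that every boundary component is \emph{vertical} for $f$. Let $F$ be a general fibre. Adjunction on the genus $1$ curve $F$ gives $F^2+F\cdot K_S=0$, hence $F\cdot K_S=0$ since $F^2=0$. As $-2K_S=\sum_{i=1}^n B_i$, we get $\sum_i F\cdot B_i=-2\,F\cdot K_S=0$, and since $F$ is nef each term vanishes: $F\cdot B_i=0$. Thus each $B_i$ lies in a fibre of $f$. Because $B_i^2=-4$ while every component of a relatively minimal fibre is a $(-2)$-curve, the $B_i$ cannot be components of a relatively minimal fibration; so I pass to the relatively minimal model by contracting the $(-1)$-curves contained in fibres, obtaining a birational morphism $\sigma\colon S\to\bar S$ onto a rational surface carrying a relatively minimal genus $1$ fibration $\bar f\colon\bar S\to{\bf P}^1$, i.e. a Halphen surface of some index $m$.

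Next I would determine the index $m$ and which fibres are blown up. Pushing the canonical class forward along $\sigma$ gives $-2K_{\bar S}=\sigma_*(-2K_S)=\sum_i\sigma_*B_i$, an effective divisor supported on fibres of $\bar f$. On a Halphen surface of index $m$ one has $-K_{\bar S}\sim F_0$, where $F_0$ is the reduced multiple fibre and $mF_0\sim F$, so that $|-mK_{\bar S}|$ is the genus $1$ pencil and $-2K_{\bar S}\sim 2F_0$. The constraint $|-K_S|=\emptyset$ means that no blow-up centre lies on the multiple fibre, since otherwise the strict transform of $F_0$ would produce a section of $-K_S$. Combined with the fact that $\sum_i\sigma_*B_i$ is a \emph{reduced} union of fibre components representing $2F_0$ in $\Pic(\bar S)$, this forces $m\in\{1,2\}$: for $m=2$ the divisor $-2K_{\bar S}$ is a single reduced fibre $F_1\neq 2F_0$ of the pencil, whereas for $m=1$ the condition $|-K_S|=\emptyset$ pushes the centres onto two distinct fibres, giving $-2K_{\bar S}=F_1+F_2$ with $F_1,F_2$ reduced. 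These are precisely the two cases of the statement.

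Finally I would pin down the blow-up centres and verify the converse. Since the $B_i$ are smooth, pairwise disjoint $(-4)$-curves whose sum is the strict transform of the reduced fibre(s) $F_1$ (resp. $F_1+F_2$), the morphism $\sigma$ must resolve exactly the singular points of those fibres: blowing up the nodes separates the branches and lowers each component's self-intersection to $-4$, so \emph{all} singular points must be blown up, and for the non-nodal fibres of types \III\ and \IV\ one is forced to blow up the relevant infinitely near points as well in order to turn the fibre into a disjoint union of smooth $(-4)$-curves. Conversely, starting from a Halphen surface of index $2$ (resp.\ $1$) and carrying out these blow-ups, one recovers $-2K_S=\sum B_i$ with the $B_i$ disjoint smooth $(-4)$-curves, and a direct count of sections shows $|-K_S|=\emptyset$ and $|-2K_S|=\{\sum B_i\}$; hence $S$ is a Coble surface.

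The main obstacle I anticipate is the index computation in characteristic $2$. The presence of quasi-elliptic fibrations and of wild multiple fibres means the naive canonical bundle formula must be replaced by its Rudakov--Shafarevich corrected form, and one has to check that the index is still read off from $|-mK_{\bar S}|$ and that $m\le 2$ genuinely follows from $|-K_S|=\emptyset$ together with the reducedness of $\sum B_i$ (ruling out $m\ge 3$, where $2F_0$ can only be represented using the multiple fibre itself, leaving $-K_S$ effective). A secondary technical point is the explicit resolution of the type \III\ and \IV\ fibres, where tracking the infinitely near centres and the resulting self-intersections, so as to confirm the boundary components and their number, requires some care.
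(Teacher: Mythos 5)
The paper offers no proof of this proposition: it is quoted verbatim from Dolgachev--Kond\=o \cite[Proposition 9.1.4]{DK}. So the comparison is with the standard argument there, and your proposal reproduces its skeleton correctly: verticality of the $B_i$ via adjunction and nefness of the fibre; passage to a relatively minimal model $\sigma\colon S\to\bar S$ (using the standard fact that a relatively minimal rational genus~$1$ fibration is a Halphen surface of some index $m$, with a unique multiple fibre $mF_0$ and $-K_{\bar S}\sim F_0$); and identification of $-2K_{\bar S}$ with a reduced vertical divisor to force $m\le 2$. The index step is best run as follows: $\sum_i\sigma_*B_i$ is reduced (distinct non-exceptional $B_i$ have distinct birational images, and exceptional ones push forward to $0$) and is linearly equivalent to $-2K_{\bar S}\sim 2F_0$; by Zariski's lemma any vertical divisor in this class equals $aF_0+\sum_k b_kF_{(k)}$ with $F_{(k)}$ non-multiple fibres, $a,b_k\ge 0$ and $a+m\sum_k b_k=2$, and reducedness kills $a=2$, leaving exactly your two cases ($m=2$: one reduced non-multiple fibre; $m=1$: two distinct reduced fibres). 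Your anticipated obstacle about wild fibres evaporates: a wild contribution $t\ge1$ would give $-K_{\bar S}\sim(1-t)F_0$, hence $2K_{\bar S}$ linearly equivalent to an effective or trivial divisor, impossible on a rational surface (whose Picard group is torsion-free); so the multiple fibre is tame and $-K_{\bar S}\sim F_0$ as you use, also in the quasi-elliptic case.

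Two subsidiary claims are wrong as stated, though repairable within your framework. First, $|-K_S|=\emptyset$ does \emph{not} imply that no centre lies on the multiple fibre; it only says that not all centres lie on $F_0$ with the right multiplicities. What actually confines the centres is the divisor identity on $S$: writing $\sigma^*F_1=\widetilde F_1+\sum_k m_k\mathcal{E}_k$ (with $\mathcal{E}_k$ the total transforms and $m_k$ the multiplicity of the running strict transform of $F_1$ at the $k$-th centre), one gets $\sum_j B_j=\widetilde F_1+\sum_k(m_k-2)\mathcal{E}_k$, and effectivity plus reducedness and disjointness of the left side forbid any centre with $m_k=0$ (in particular any centre on $F_0$, or on a third fibre) and force precisely the configurations of Figure \ref{Halsing}. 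Second, your assertion that $\sum_iB_i$ is the strict transform of the reduced fibre(s) fails for type ${\rm IV}$: there $m_1=3$ at the triple point, and the identity yields $\sum_jB_j=\widetilde F_1+E_1'$, where $E_1'$ is the strict transform of the first exceptional curve; i.e., one boundary component ($B_4$ in Remark \ref{HalphenBlowUp}) is not the transform of a fibre component at all. So ``blow up singular points until the fibre becomes a disjoint union of smooth $(-4)$-curves'' is not quite the right invariant -- it undercounts the boundary in the ${\rm IV}$ case -- whereas the identity above pins down the centres exactly (nodes and cusps are $m_k=2$; types ${\rm III}$ and ${\rm IV}$ force the stated infinitely near centres) and closes all cases ${\rm I}_n$, ${\rm II}$, ${\rm III}$, ${\rm IV}$ as in the statement.
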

\noindent

\begin{remark}\label{HalphenBlowUp}
It follows from Proposition \ref{Halphen} that a fiber of type II, III, IV, or ${\rm I}_n$ is blown up.
The configurations of fibers blown up are given in Figure 1.
In the figure, the dotted lines are $(-1)$-curve,
and $B_i$ means $(-4)$-curves. In case of type ${\rm I}_n$,  the curves $E_i$ ($i = 1, \cdots, n$) are $(-1)$-curves
and $B_i$ ($i = 1, \cdots, n$) are the $(-4)$-curves. 
The fiber is given by 
$\sum_{i = 1}^{n}B_i + 2\sum_{i = 1}^{n}E_i$.
In case of type ${\rm II}$, the fiber is given by $B+2E$.
In case of type ${\rm III}$, $E_1$ is a $(-2)$-curve and 
the fiber is given by $B_1 + B_2  + 2(E_1 + 2E_2)$.
In case of type ${\rm IV}$, the fiber is given by $B_1 + B_2 + B_3 + 3B_4 + 4(E_1 + E_2 + E_3)$.

\begin{figure}[!htb]
\hspace{-4.0cm}  \includegraphics[width=180mm]{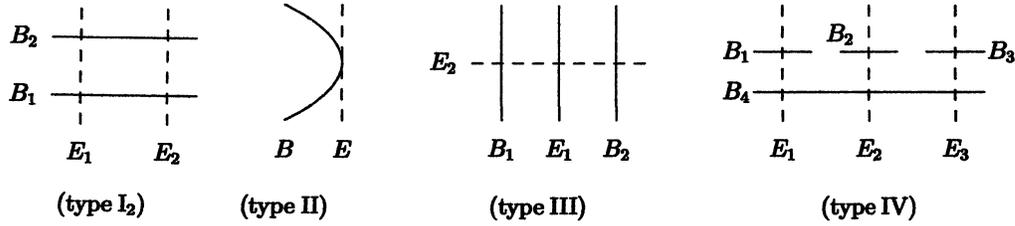}
 \vspace{-5.0cm}
 \caption{Blowing-up the singular points of a fiber}
 \label{Halsing}
\end{figure}


\end{remark}

\begin{prop}\label{Boundary}{\rm (Dolgachev and Kond\=o \cite[Corollary 9.1.5]{DK})}
Let $S$ be a Coble surface.  Then $K_S^2 \geq -10$.  In particular the number of boundary components of $S$ is at most $10$.
\end{prop}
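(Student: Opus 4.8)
Since $|-2K_S|=\{B_1+\cdots+B_n\}$ with each $B_i$ a $(-4)$-curve, the adjunction computation recalled in \S\ref{surfaces} gives $K_S^2=-n$, so the two assertions are equivalent and it suffices to show $n\le 10$. The plan is to realize $S$ as an iterated blow-up of a Halphen surface and to count the number of blow-ups. First I would fix a genus $1$ fibration $f\colon S\to\mathbf{P}^1$; its existence is part of the structure theory of Coble surfaces and is what lets me invoke Proposition \ref{Halphen}, which converts the count of boundary components into a count of blow-ups over singular fibres that can then be bounded fibre by fibre.

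By Proposition \ref{Halphen}, $S$ is obtained from a Halphen surface $H$ of index $1$ or $2$ by blowing up the singular points (and their infinitely near points) of one reduced fibre, respectively two reduced fibres, of the genus $1$ fibration on $H$. A Halphen surface is $\mathbf{P}^2$ blown up at nine points, whence $K_H^2=0$; after $N$ further blow-ups one has $K_S^2=K_H^2-N=-N$, and comparison with $K_S^2=-n$ gives $N=n$. Thus $n$ is the total number of points blown up over the chosen fibre(s). Writing $\beta(F)$ for the number of blow-ups needed over a fibre $F$, Remark \ref{HalphenBlowUp} (Figure \ref{Halsing}) gives $\beta(\I_m)=m$, $\beta(\II)=1$, $\beta(\III)=2$, $\beta(\IV)=4$. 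In terms of the rank $r(F)$ of the root lattice of $F$ (so $r=m_v-1$ with $m_v$ the number of components) this reads $\beta(F)=r(F)+1$ in the first three cases and $\beta(F)=r(F)+2$ for type $\IV$.

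In the index $2$ case only one fibre is blown up, and since the largest multiplicative fibre of a rational genus $1$ fibration is $\I_9$ (by Shioda--Tate, $\rho(H)=10$), one gets $n=\beta(F)\le 9$. In the index $1$ case $H$ is a Jacobian rational elliptic surface and $n=\beta(F_1)+\beta(F_2)$ for the two blown-up fibres. Shioda--Tate gives $r(F_1)+r(F_2)\le\sum_v r(F_v)\le 8$, hence
\[
n=\beta(F_1)+\beta(F_2)\le \big(r(F_1)+r(F_2)\big)+2+\#\{\,i:F_i\ \text{of type}\ \IV\,\}\le 10+\#\{\,i:F_i\ \text{of type}\ \IV\,\}.
\]
When no $F_i$ is of type $\IV$ this already yields $n\le 10$; when both are of type $\IV$ one has $r(F_1)+r(F_2)=4$ and the same computation gives $n=8$. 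The only dangerous case is therefore exactly one fibre of type $\IV$ together with a second fibre of rank $6$ — among the blow-up types $\I_m,\II,\III,\IV$ necessarily $\I_7$ — which would give $n=11$.

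The crux, and the single point where the Picard-number count is not enough, is to exclude this $\IV+\I_7$ pairing. If it occurred, then $r=2+6=8$ would force the Mordell--Weil rank to vanish and every remaining fibre to be irreducible, so the singular fibres would be exactly $(\IV,\I_7,\I_1)$; but this configuration is \emph{not} in the list of extremal rational elliptic fibrations in characteristic $2$ of Proposition \ref{Lang} (equivalently, the discriminant of $A_2\oplus A_6$ equals $21$, which is not a square, so the trivial lattice cannot have square index in the unimodular lattice $\NS(H)$). Hence the $\IV+\I_7$ case does not arise and $n\le 10$. For quasi-elliptic fibrations I would argue identically using Proposition \ref{Ito}: the only reduced fibres that are blown up are of type $\II$ or $\III$, so $n\le 4$. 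In every case $K_S^2=-n\ge -10$ and the number of boundary components is at most $10$. I expect the exclusion of the $\IV+\I_7$ configuration to be the main obstacle, since it is invisible to the Euler-number/Shioda--Tate bookkeeping and must be removed by the finer classification of fibrations.
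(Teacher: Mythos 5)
Your proposal is correct, and in fact it supplies a complete argument where the paper itself offers none: Proposition \ref{Boundary} is simply quoted from \cite[Corollary 9.1.5]{DK}, so there is no in-paper proof to match. Your route is the natural one suggested by the paper's own Proposition \ref{Halphen} and Remark \ref{HalphenBlowUp}: the identities $K_S^2=-n$ and $K_H^2=0$ reduce everything to counting blow-ups fibre by fibre, with $\beta({\rm I}_m)=m$, $\beta({\rm II})=1$, $\beta({\rm III})=2$, $\beta({\rm IV})=4$, and you correctly isolate the one configuration invisible to the Shioda--Tate bookkeeping, namely ${\rm IV}+{\rm I}_7$ on a Jacobian Halphen surface, which would give $n=11$. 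Your exclusion of it is sound twice over: Shioda--Tate forces Mordell--Weil rank $0$, hence extremality, and no configuration in Proposition \ref{Lang} contains both ${\rm IV}$ and ${\rm I}_7$; independently, the trivial lattice $U\oplus A_2\oplus A_6$ has rank $10=\rho(H)$ and discriminant $21$, which is not a square, so it cannot sit with finite index in the unimodular lattice ${\rm NS}(H)$ --- an argument valid in every characteristic. Two small points deserve care. First, your parenthetical claim that the singular fibres would be exactly $({\rm IV}, {\rm I}_7, {\rm I}_1)$ tacitly uses tame Euler-number bookkeeping, which can fail in characteristic $2$ because of wild ramification; fortunately your argument never needs this, since Lang's list classifies the full configurations of extremal fibrations and the discriminant argument is configuration-free. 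Second, in the index-$2$ case the bound $m\le 9$ for a fibre of type ${\rm I}_m$ requires, besides $\rho(H)=10$, the existence of a horizontal class independent of the vertical ones; this is harmless because a Halphen surface of index $2$ always carries a $2$-section (for instance the last exceptional curve of the blow-up of the nine double base points, or the curve of cusps in the quasi-elliptic case). Your separate treatment of the quasi-elliptic case via Proposition \ref{Ito} (only fibres of type ${\rm II}$ or ${\rm III}$ can be blown up, giving $n\le 4$) is also correct.
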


\subsection{Coble-Mukai lattices.}\label{CMlattice}

For a Coble surface $S$, instead of its N\'eron-Severi group, we consider a lattice ${\rm CM}(S)$ of rank 10, called Coble-Mukai lattice, which is defined by the orthogonal complement of
the boundary components $B_1, \ldots, B_n$ in the quadratic space $\widetilde{\rm Pic}(S)$ generated by 
${\rm Pic}(S)$ and ${1\over 2}B_1, \ldots, {1\over 2}B_n$. 
Then ${\rm CM}(S)$ together with the intersection form is a lattice (Dolgachev and Kond\=o \cite[\S 9.2]{DK}).  

We call an effective class $\alpha\in {\rm CM}(S)$ with $\alpha^2=-2$ an effective root. 
We say that an effective root $\alpha$ is irreducible if $|\alpha-\beta| = \emptyset$ for any other effective root $\beta$.

\begin{lemma}\label{roots}{\rm (Dolgachev and Kond\=o \cite[Lemma 9.2.1]{DK})} Let $\alpha$ be an effective irreducible root. Then $\alpha$ is either the divisor class of a $(-2)$-curve or the ${\bf Q}$-divisor class of an effective root of the form 
$2E+{1\over 2}B_j+{1\over 2}B_k$, where $E$ is a $(-1)$-curve intersecting two different boundary components $B_j,B_k$. 
\end{lemma}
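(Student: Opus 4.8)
The plan is to choose an effective $\mathbb{Q}$-divisor representing $\alpha$ and to exploit two numerical identities, using the irreducibility hypothesis as a subtraction device. Write $\alpha=H+\sum_j b_jB_j$, where $H$ gathers the components not among the boundary curves. Since $\alpha\in{\rm CM}(S)\subset\widetilde{\Pic}(S)=\Pic(S)+\sum_i\tfrac12\mathbb Z\,B_i$, the coefficients of $H$ are integers and each $b_j\in\tfrac12\mathbb Z_{\ge0}$. First I would record that $\alpha\cdot K_S=0$: from $|-2K_S|=\{B_1+\cdots+B_n\}$ one has $K_S\equiv-\tfrac12\sum_iB_i$ in $\widetilde{\Pic}(S)_{\mathbb Q}$, so $\alpha\cdot K_S=-\tfrac12\sum_i\alpha\cdot B_i=0$. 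Because the $B_i$ are disjoint with $B_i^2=-4$, the condition $\alpha\cdot B_j=0$ gives $b_j=\tfrac14H\cdot B_j$ (so $H\cdot B_j\in2\mathbb Z$), and substituting produces the two identities
\[
\alpha^2=H^2+4\sum_j b_j^2=-2,\qquad \alpha\cdot K_S=H\cdot K_S+2\sum_j b_j=0,
\]
where I used $B_j\cdot K_S=2$, which follows from adjunction and $B_j^2=-4$.

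Next I would dispose of the $(-2)$-curve case. The only negative curves on $S$ are the $B_i$, the $(-2)$-curves (each disjoint from the boundary, hence lying in ${\rm CM}(S)$ and being an effective root), and the $(-1)$-curves (with $E\cdot K_S=-1$). If some $(-2)$-curve $C$ is a component of $H$, then $C$ is an effective root and $\alpha-C$ is effective, so irreducibility forces $\alpha=C$; this is the first alternative. Hence I may assume that no $(-2)$-curve occurs. If in addition every $b_j=0$, then (once non-negative components are excluded, see below) the second identity gives $\sum_Em_E=2\sum_jb_j=0$, so $H$ is supported on $(-2)$-curves, and we are again in the first alternative.

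In the remaining case $H=\sum_Em_E\,E$ is a positive combination of $(-1)$-curves with some $b_j>0$, and the $K_S$-identity reads $\sum_Em_E=2\sum_jb_j=:2s$. Since every nonzero $b_j\ge\tfrac12$ one has $4\sum_jb_j^2\ge2s$, whence $H^2\le-2-2s=-2-\sum_Em_E$; together with $H^2\ge-\sum_Em_E^2$ this yields $\sum_Em_E(m_E-1)\ge2$, so some $(-1)$-curve has multiplicity $\ge2$. Using the parity $H\cdot B_j\in2\mathbb Z$, the relation $\sum m_E=2s$, and $\alpha^2=-2$, I would then show that $H=2E$ for a single $(-1)$-curve $E$ meeting exactly two boundary components $B_j,B_k$, each transversally in one point (every other incidence pattern makes $\alpha^2\neq-2$ or violates the parity, as a direct check of the identities shows). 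Then $\beta:=2E+\tfrac12B_j+\tfrac12B_k$ is an effective root with $\beta^2=-2$, and $\alpha-\beta$ is effective, so irreducibility gives $\alpha=\beta$, the second alternative.

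I expect the main obstacle to be excluding components of non-negative self-intersection from the chosen effective representative, since the two identities by themselves do not forbid them. For this I would use the Zariski decomposition $\alpha=P+N$ with $P$ nef and $N$ effective of negative-definite support: the effectivity of $-2K_S\sim\sum_iB_i$ forces $P\cdot K_S\le0$, while $\alpha\cdot K_S=0$ and $\alpha^2=-2$ should pin $P=0$, after which only $(-1)$-curves, $(-2)$-curves and boundary components can appear in $\alpha$. This reduction, which genuinely uses the rationality of $S$ and the effectivity of $-2K_S$, is the non-formal heart of the argument; once it is in place, the incidence analysis of $E$ and the subtraction step above are routine.
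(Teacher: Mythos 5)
You should first note that the paper does not actually prove Lemma \ref{roots}: it is quoted from Dolgachev--Kond\=o \cite[Lemma 9.2.1]{DK}, so your attempt can only be judged on its own merits. Your frame is the right one (the two identities $\alpha^2=H^2+4\sum b_j^2=-2$ and $H\cdot K_S+2\sum b_j=0$, the parity $H\cdot B_j\in 2\mathbb{Z}$, and irreducibility as a subtraction device are all correctly set up), but the decisive step is false as stated. You claim that parity, $\sum_E m_E=2s$ and $\alpha^2=-2$ alone force $H=2E$ for a single $(-1)$-curve meeting two boundary components. Counterexample: take $E_1$ a $(-1)$-curve with $E_1\cdot B_1=E_1\cdot B_2=1$ and $E_2$ a $(-1)$-curve with $E_2\cdot B_3=2$ and $E_1\cdot E_2=0$, and set $\alpha=2E_1+2E_2+\frac{1}{2}B_1+\frac{1}{2}B_2+B_3$. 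Then $b_j=\frac{1}{4}H\cdot B_j$ holds, all parities are even, $\sum m_E=4=2(\frac{1}{2}+\frac{1}{2}+1)$, and $\alpha^2=-8+4(\frac{1}{4}+\frac{1}{4}+1)=-2$. Note that $(-1)$-curves with $E\cdot B=2$ really occur on Coble surfaces (blow up the cusp of a fiber of type ${\rm II}$ of a Halphen pencil, cf.\ Remark \ref{HalphenBlowUp}; in Example \ref{exE8} the image of $E_4$ meets $\bar{E}_2$ with multiplicity $2$), so this incidence pattern cannot be dismissed. Your ``direct check'' therefore cannot classify the shape of $H$ numerically; irreducibility must enter earlier. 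The correct repair is to prove only that some $(-1)$-curve $E$ meeting two \emph{distinct} components $B_j,B_k$ occurs in $H$ with $m_E\ge 2$ (here you also need the unstated but essential fact $E\cdot\sum_i B_i=-2E\cdot K_S=2$, which limits each $(-1)$-curve to the two patterns $E\cdot B_j=E\cdot B_k=1$ or $E\cdot B_j=2$), extract the sub-root $\beta=2E+\frac{1}{2}B_j+\frac{1}{2}B_k\le\alpha$, and apply irreducibility. That existence statement is genuine work: one must rule out roots supported only on curves of the second pattern and roots in which all curves of the first pattern have multiplicity $1$, which requires a Cauchy--Schwarz/parity estimate comparing $\sum_j(H\cdot B_j)^2$ with $\sum_E m_E^2$ (e.g.\ $\sum_j(H\cdot B_j)^2\ge\sum_E m_E^2\sum_j(E\cdot B_j)^2$ after dropping cross terms, together with ``each nonzero even $c_j$ satisfies $c_j^2\ge 2c_j$''). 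None of this is in your sketch.

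The second gap is the one you flag yourself: excluding components of non-negative self-intersection. Your Zariski argument does not close. From $\alpha=P+N$ your constraints yield only $P\cdot K_S\le 0$, hence $N\cdot K_S\ge 0$, and $N^2=-2-P^2$; a nonzero nef class $P$ with $P^2=0$ and $P\cdot K_S=0$ (a half-fiber type class) is perfectly consistent with these numerics, so ``should pin $P=0$'' is an expectation, not a proof. A workable substitute, which also bypasses Zariski decomposition entirely: since $\alpha^2=-2<0$, some irreducible component $\Gamma$ of the effective representative satisfies $\alpha\cdot\Gamma<0$; then $\Gamma^2<0$, and $\Gamma\ne B_i$ because $\alpha\cdot B_i=0$, so $\Gamma$ is a $(-1)$- or $(-2)$-curve, and one runs the subtraction argument on $\Gamma$ without ever needing to show the representative has no non-negative components. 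As it stands, your proposal is an honest and well-oriented skeleton, but both the claimed numerical classification of $H$ and the self-described ``non-formal heart'' fail or are missing, so it is not yet a proof.
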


We call an effective irreducible root $\alpha$ a $(-2)$-root
if it is represented by
a $(-2)$-curve and a $(-1)$-root if it is represented by 
$2E+{1\over 2}B_j+{1\over 2}B_k$.  An easy calculation shows the following Lemma.

\begin{lemma}\label{roots2}{\rm (Kond\=o \cite[Lemma 3.3]{Kon2})} 
\begin{itemize}
\item[{\rm (1)}]
Two $(-1)$-roots have intersection number $1$ if and only if the associated $(-1)$-curves do not meet and the roots share precisely one boundary component.
\item[{\rm (2)}]
The intersection number of a $(-1)$-root and a $(-2)$-root is always even.
\end{itemize}
\end{lemma}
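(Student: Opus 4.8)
The plan is to reduce both statements to a direct computation of the intersection pairing on $\widetilde{\rm Pic}(S)$, using the explicit descriptions of the two kinds of roots from Lemma \ref{roots} together with the geometric facts recorded in Subsection \ref{surfaces}. I would first normalize the intersection numbers of a $(-1)$-root $r = 2E + \frac12 B_j + \frac12 B_k$ with the boundary components. Since $r \in {\rm CM}(S)$, the conditions $r\cdot B_i = 0$ for all $i$, combined with $B_i^2 = -4$ and $B_i\cdot B_{i'} = 0$ for $i\neq i'$, force $E\cdot B_j = E\cdot B_k = 1$ and $E\cdot B_i = 0$ for every other $i$; in other words $E$ meets exactly the two boundary components appearing in $r$, each once. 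Recording this at the outset is what makes the subsequent bookkeeping transparent.

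For part (1), I would write two distinct $(-1)$-roots as $r_a = 2E_a + \frac12 B_{j_a} + \frac12 B_{k_a}$ for $a = 1,2$ and set $P_a = \{B_{j_a}, B_{k_a}\}$. Expanding $r_1\cdot r_2$ bilinearly and using the normalization above, the two mixed curve--boundary contributions each equal $|P_1\cap P_2|$, the boundary--boundary contribution equals $\tfrac14(-4)|P_1\cap P_2| = -|P_1\cap P_2|$, and the curve contribution is $4\,E_1\cdot E_2$, giving
\[
r_1\cdot r_2 = 4\,(E_1\cdot E_2) + |P_1\cap P_2|.
\]
Since a $(-1)$-curve determines the pair of boundaries it meets and hence its root, $r_1\neq r_2$ forces $E_1\neq E_2$, so $E_1\cdot E_2\geq 0$; meanwhile $|P_1\cap P_2|\in\{0,1,2\}$. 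As the first summand is a nonnegative multiple of $4$ and the second lies in $\{0,1,2\}$, the sum equals $1$ precisely when $E_1\cdot E_2 = 0$ and $|P_1\cap P_2| = 1$ --- that is, exactly when the associated $(-1)$-curves are disjoint and the roots share a single boundary component.

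Part (2) is shorter. For a $(-1)$-root $r = 2E + \frac12 B_j + \frac12 B_k$ and a $(-2)$-root represented by a $(-2)$-curve $C$, I would invoke the fact from Subsection \ref{surfaces} that a $(-2)$-curve meets no boundary component, so $B_j\cdot C = B_k\cdot C = 0$ and therefore $r\cdot C = 2\,(E\cdot C)$, which is even.

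Given how elementary these computations are, I do not anticipate a genuine obstacle; the only points deserving care are the initial normalization of the numbers $E\cdot B_i$ and the brief argument that distinct $(-1)$-roots have $E_1\neq E_2$, which secures the nonnegativity of $E_1\cdot E_2$ that drives part (1).
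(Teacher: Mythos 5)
Your computation is correct and is precisely the ``easy calculation'' the paper alludes to (it cites Kond\=o \cite[Lemma 3.3]{Kon2} rather than spelling it out): expanding the pairing bilinearly with the normalization $E\cdot B_j=E\cdot B_k=1$, $E\cdot B_i=0$ otherwise, forced by $r\in{\rm CM}(S)$, yields $r_1\cdot r_2=4(E_1\cdot E_2)+|P_1\cap P_2|$, from which both parts follow. The two points you flag --- that a $(-1)$-curve determines its root, so distinct roots give distinct curves with $E_1\cdot E_2\geq 0$, and that $(-2)$-curves are disjoint from the boundary for part (2) --- are exactly the inputs needed, so nothing is missing.
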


Let ${\rm CM}(S)_{\bf R}^+$ be the connected component of $\{ x\in {\rm CM}(S)\otimes {\bf R} \ : \ x^2 > 0\}$
containing a nef class.
Denote by $\overline{{\rm CM}(S)}_{\bf R}^+$
the closure of ${\rm CM}(S)_{\bf R}^+$ in ${\rm CM}(S)\otimes {\bf R}$.
Let $W(S)$ be the subgroup of ${\rm O}({\rm CM}(S))$ generated by all reflections $s_{\alpha}$, where 
$\alpha$ is an effective irreducible root. 
The group $W(S)$ naturally acts on $\overline{{\rm CM}(S)}_{\bf R}^+$.
We denote by $D(S)$ the domain defined by
$$D(S)=\{ x\in \overline{{\rm CM}(S)}_{\bf R}^+\ : \ x\cdot \alpha \geq 0 \ {\rm for \ any \ effective \ irreducible \ root} \ \alpha \}.$$
Obviously, the automorphism group ${\rm Aut}(S)$ leaves the set of curves $\{B_1,\ldots,B_n\}$ invariant and hence acts on ${\rm CM}(S)$ and on $D(S)$. 
The following proposition holds.

\begin{prop}\label{NefCone}{\rm (Dolgachev and Kond\=o \cite[Proposition 9.2.2]{DK})}
Let $x \in \overline{{\rm CM}(S)}_{\bf R}^+ \cap {\rm CM}(S)$.  Then $x$ is nef if and only if $x\in D(S)$.
In other word, the intersection of the nef cone of $S$ with $\overline{{\rm CM}(S)}_{\bf R}^+$ is a fundamental domain of $W(S)$.
\end{prop}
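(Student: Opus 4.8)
The plan is to prove the two implications of the stated equivalence; the reformulation as a fundamental domain then follows from the general theory of reflection groups on a hyperbolic lattice. Throughout, recall that $x\in{\rm CM}(S)$ means $x\cdot B_i=0$ for all $i$, and that $x\in\overline{{\rm CM}(S)}_{\bf R}^+$ lies in the closure of the positive cone. First I would reduce nefness to the negative curves: by the light-cone lemma (any two classes in the closure of the same component of the positive cone of $\widetilde{\rm Pic}(S)_{\bf R}$ pair non-negatively), $x\cdot C\ge 0$ holds automatically for every irreducible curve $C$ with $C^2\ge 0$, since such a $C$ is effective and lies in that closure on the ample side. As recalled in \S\ref{surfaces}, the only irreducible curves of negative self-intersection on a Coble surface are the $B_i$, the $(-2)$-curves and the $(-1)$-curves, so it suffices to test $x$ against these three families.

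For the easy implication, suppose $x$ is nef. If $\alpha$ is an effective irreducible root, then by Lemma \ref{roots} either $\alpha=[C]$ for a $(-2)$-curve, giving $x\cdot\alpha=x\cdot C\ge0$, or $\alpha=2E+\tfrac12 B_j+\tfrac12 B_k$ for a $(-1)$-curve $E$, giving $x\cdot\alpha=2\,x\cdot E\ge0$ because $x\perp B_j,B_k$. Hence the nef cone is contained in $D(S)$.

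For the converse, assume $x\in D(S)$. Against $B_i$ we get $x\cdot B_i=0$. For a $(-1)$-curve $E$ the key numerical input is adjunction: $K_S\cdot E=-1$, so $(\sum_i B_i)\cdot E=-2K_S\cdot E=2$, and therefore either (i) $E\cdot B_j=E\cdot B_k=1$ for two distinct boundary components, or (ii) $E\cdot B_j=2$ for a single one. In case (ii) the class $2E+B_j$ is effective, lies in ${\rm CM}(S)$ (being orthogonal to every $B_i$) and is isotropic, so the light-cone lemma yields $2\,x\cdot E=x\cdot(2E+B_j)\ge0$. In case (i), and likewise for every $(-2)$-curve $C$, the curve produces an effective root ($\alpha=2E+\tfrac12 B_j+\tfrac12 B_k$, respectively $[C]$), and in each case $x\cdot(\text{curve})\ge0$ follows once we know that $x\cdot\alpha\ge0$ for \emph{every} effective root $\alpha$.

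This last point is the heart of the matter and the main obstacle, because an effective root need not be irreducible while $x\in D(S)$ controls only the irreducible ones. I would settle it by the standard Vinberg-type chamber argument. The hyperplanes $\alpha^\perp$, with $\alpha$ running over effective irreducible roots, are locally finite in the positive cone---a consequence of ${\rm CM}(S)$ being hyperbolic and the roots forming a discrete set---so they decompose the positive cone into chambers, $D(S)$ being the closure of the chamber containing the ample classes, and the reflections $s_\alpha$ generate $W(S)$, which permutes these chambers. The decisive geometric input is that distinct effective irreducible roots pair non-negatively: this follows from Lemma \ref{roots} together with Lemma \ref{roots2} (two $(-1)$-roots meet in $0$ or $1$, a $(-1)$-root and a $(-2)$-root meet in an even hence non-negative number, and two distinct $(-2)$-curves meet non-negatively). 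Thus the effective irreducible roots constitute a set of simple roots for $W(S)$, so every effective root is a non-negative integral combination of them, whence $x\cdot\alpha\ge0$ for all of them, finishing the converse. Finally, the simple transitivity of $W(S)$ on the chambers shows that $D(S)$ is a fundamental domain for the action of $W(S)$ on $\overline{{\rm CM}(S)}_{\bf R}^+$, which is the second formulation of the proposition.
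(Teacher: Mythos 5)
Note first that the paper does not prove this proposition at all: it is quoted from Dolgachev--Kond\=o \cite[Proposition 9.2.2]{DK}, so your argument can only be measured against the standard proof in that source. Much of your proposal is sound. The reduction of nefness to curves of negative self-intersection via the light-cone lemma is correct; the easy implication (nef $\Rightarrow x\in D(S)$) using $x\perp B_i$ is correct; and your treatment of a $(-1)$-curve with $E\cdot B_j=2$ via the effective isotropic class $2E+B_j\in {\rm CM}(S)$ is a genuinely nice observation and is valid ($(2E+B_j)^2=-4+8-4=0$, and the class lies in the closure of the right component).

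The gap is exactly at what you call the heart of the matter. From the pairwise non-negativity of intersection numbers of distinct effective irreducible roots you infer that they ``constitute a set of simple roots for $W(S)$, so every effective root is a non-negative integral combination of them.'' That inference is not valid as stated. ``Irreducible'' is defined here geometrically ($|\alpha-\beta|=\emptyset$ for every other effective root $\beta$), not chamber-theoretically, and the identification of these geometric irreducibles with the simple roots bounding the chamber $D(S)$ is essentially the content of the proposition you are proving; invoking it is circular. Pairwise non-negativity gives you, via Vinberg's theory, that $D(S)$ is a chamber for $W(S)$, but it says nothing about how an arbitrary effective root decomposes. If you attempt the decomposition honestly by descent, you hit a real obstruction: if $\alpha$ is reducible, pick an effective root $\beta$ with $|\alpha-\beta|\neq\emptyset$ (you may take $\beta$ irreducible by iterating, since $H$-degrees of effective roots form a discrete set of positive half-integers); then $x\cdot\beta\geq 0$ and you must handle the effective remainder $\gamma=\alpha-\beta$. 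Its components that are $(-2)$-curves yield roots of strictly smaller $H$-degree, so induction works there; but a component may be a $(-1)$-curve $E$ meeting two boundary components, and the associated root $2E+\tfrac{1}{2}B_j+\tfrac{1}{2}B_k$ can have \emph{larger} $H$-degree than $\alpha$, so the naive induction on degree breaks precisely at the $(-1)$-roots. This is where a genuine further idea (the argument supplied in \cite{DK}) is required, and your proposal passes over it. A smaller remark: your parenthetical that a $(-1)$-root and a $(-2)$-root meet in ``an even hence non-negative number'' is a non sequitur (even integers can be negative); the pairing $2E\cdot C+\tfrac{1}{2}B_j\cdot C+\tfrac{1}{2}B_k\cdot C$ is non-negative because each constituent is an intersection of distinct curves. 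Likewise two $(-1)$-roots sharing both boundary components pair to $4E\cdot E'+2\geq 2$, so the pairwise non-negativity you need is true, but for reasons beyond Lemma \ref{roots2} alone.
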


A vector $f\in D(S)\cap {\rm CM}(S)$ is called isotropic if $f^2=0$ and primitive
if $f=mf'$, $m \in {\bf Z}$, $f'\in {\rm CM}(S)$ implies $m=\pm 1$.

\begin{lemma}\label{fibration-isotropic}
The set of genus $1$ fibration on a Coble surface $S$ bijectively corresponds to the set 
of primitive isotropic vectors in $D(S)\cap {\rm CM}(S)$.  
\end{lemma}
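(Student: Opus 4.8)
The plan is to set up the correspondence in both directions explicitly and then verify that the two assignments are mutually inverse, using Proposition~\ref{NefCone} throughout to translate between nef classes and the domain $D(S)$.

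First I would treat the forward map. Let $f\colon S\to\bbP^1$ be a genus $1$ fibration with general fibre $F$. Since $p_a(F)=1$ and $F^2=0$, adjunction gives $F\cdot K_S=0$. As $-2K_S\sim B_1+\cdots+B_n$, this yields $\sum_i F\cdot B_i=-2(F\cdot K_S)=0$; because $F$ is nef (it is a fibre class) and each $B_i$ is irreducible, every summand $F\cdot B_i\ge 0$, so $F\cdot B_i=0$ for all $i$. Hence $F\in{\rm NS}(S)\subset\widetilde{{\rm Pic}}(S)$ is orthogonal to every $B_i$, i.e. $F\in{\rm CM}(S)$, and I let $\phi_f\in{\rm CM}(S)$ be the primitive class with $F=c\,\phi_f$, $c\in\bbZ_{>0}$. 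Then $\phi_f^2=0$, and since $F$ is nef Proposition~\ref{NefCone} puts $F$, hence $\phi_f$, in $D(S)$. This defines the assignment $f\mapsto\phi_f$ into the primitive isotropic vectors of $D(S)\cap{\rm CM}(S)$.

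For the backward map, let $\phi\in D(S)\cap{\rm CM}(S)$ be primitive isotropic. Writing $\phi=G+\tfrac12\sum_i\varepsilon_iB_i$ with $G\in{\rm NS}(S)$ and $\varepsilon_i\in\{0,1\}$ (using that $\widetilde{{\rm Pic}}(S)$ is generated by ${\rm NS}(S)$ and the classes $\tfrac12 B_i$), the class $D_\phi:=c\phi$, with $c\in\{1,2\}$ minimal so that $c\phi\in{\rm NS}(S)$, is an honest divisor class. By Proposition~\ref{NefCone} it is nef, and $D_\phi^2=0$ while $D_\phi\cdot K_S=0$ because $\phi\cdot\sum_iB_i=0$. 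Riemann--Roch on the rational surface $S$ gives $\chi(D_\phi)=1$; moreover, since $-2K_S$ is effective and nonzero we have $K_S\cdot H<0$ for an ample $H$, so $(K_S-D_\phi)\cdot H<0$ and $h^2(D_\phi)=h^0(K_S-D_\phi)=0$. I would then show that $|D_\phi|$ is a base-point-free pencil whose general member is an irreducible curve of arithmetic genus $1$, which produces the desired genus $1$ fibration $f_\phi$, and set $\phi\mapsto f_\phi$.

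Finally I would check that the two maps are inverse: the fibre class of $f_\phi$ is proportional to $\phi$, so its associated primitive class is again $\phi$; conversely the fibre class of $f_{\phi_f}$ is proportional to $F$, so $f_{\phi_f}=f$, using that proportional nef isotropic classes determine the same genus $1$ fibration (this also gives injectivity of the forward map). I expect the main obstacle to be the existence step in the backward direction, namely proving $h^0(D_\phi)\ge 2$ so that $|D_\phi|$ is genuinely a pencil, and that this pencil is base-point free with connected genus $1$ general fibre. I would obtain $h^0(D_\phi)\ge2$ from Riemann--Roch together with a vanishing argument for $h^1$ (via the restriction sequence $0\to\calO_S\to\calO_S(D_\phi)\to\calO_{D_\phi}(D_\phi)\to0$ and $p_a(D_\phi)=1$), or, alternatively, by realizing the pencil through the Halphen/Jacobian description of Proposition~\ref{Halphen}. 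In either route the half-integral part $\tfrac12\sum_i\varepsilon_iB_i$ --- the source of the content $c=2$, corresponding to index $2$ Halphen surfaces --- together with the quasi-elliptic case in characteristic $2$, must be handled with care.
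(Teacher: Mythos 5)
Your forward map is fine, and the paper itself treats the whole lemma in one line: since every class in ${\rm CM}(S)$ is orthogonal to $\sum_i B_i \sim -2K_S$, a primitive isotropic $\phi$ satisfies $\phi\cdot K_S=0$, and the proof of \cite[Proposition 2.2.8]{CDL} for Enriques surfaces is then quoted verbatim. The genuine gap is in your backward direction, at the step where you take $c\in\{1,2\}$ \emph{minimal with $c\phi\in{\rm NS}(S)$}, set $D_\phi=c\phi$, and claim $h^0(D_\phi)\ge 2$ with $|D_\phi|$ a base-point-free pencil. This conflates two different multiples: the minimal multiple making $\phi$ integral, and the minimal multiple making $\phi$ a fibre class. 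By Proposition \ref{Halphen}, a genus $1$ fibration on a Coble surface may come from a Halphen surface of index $2$, and then it retains its multiple fibre $2G_0$ on $S$ (the blow-ups occur on a \emph{reduced} fibre). In that case the primitive isotropic vector is $\phi=G_0$, which already lies in ${\rm NS}(S)$, so your $c=1$ and $D_\phi=G_0$; but $h^0(G_0)=1$ precisely because $2G_0$ is a multiple fibre, and the fibration is $|2G_0|$, not $|G_0|$. Your proposed mechanism is also internally inconsistent: with $\chi(D_\phi)=1$ and $h^2(D_\phi)=0$, Riemann--Roch gives $h^0(D_\phi)=1+h^1(D_\phi)$, so a \emph{vanishing} argument for $h^1$ yields $h^0=1$, the opposite of what you want; and for a (tame) half-fibre $G_0$ the sheaf $\mathcal{O}_{G_0}(G_0)$ is nontrivial $2$-torsion, so your restriction sequence gives exactly $h^0(G_0)=1$, $h^1(G_0)=0$.

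The correct uniform statement, consistent with Remark \ref{two-double}, is that the fibre class is $2\phi$ in ${\rm CM}(S)$ in \emph{all} cases: in the Halphen index $2$ case $F=2G_0$, while in the Jacobian case with two blown-up reduced fibres one checks from the configurations in Remark \ref{HalphenBlowUp} that $\tfrac12 F$ always lies in $\widetilde{\rm Pic}(S)$ (e.g.\ $\tfrac12 F=\sum_i(\tfrac12 B_i)+\sum_i E_i$ for a blown-up ${\rm I}_n$), so the primitive vector is $\tfrac12 F$ and your $c=2$ case works. The repair for the backward direction is therefore: use Riemann--Roch only to conclude that $D_\phi$ is effective, and then run the argument of \cite[Proposition 2.2.8]{CDL} to show that $|2D_\phi|$ (or $|D_\phi|$ when $\phi$ is already half a fibre class) is a genus $1$ pencil --- which is exactly the reduction the paper makes. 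Note also that your fallback of ``realizing the pencil through Proposition \ref{Halphen}'' is circular here: that proposition describes fibrations already given on $S$; it does not construct a fibration from a nef isotropic class.
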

\begin{proof}
By the assumption, $f\cdot K_S=0$ and hence the proof of Cossec, Dolgachev and Liedtke \cite[Proposition 2.2.8]{CDL}
works well in this case, too.
\end{proof}

\begin{remark}\label{two-double}
Let $f: S\to {\bf P}^1$ be a genus 1 fibration obtained from a Halphen surface
$h : H \to {\bf P}^1$.  
When we consider $f$ in ${\rm CM}(S)$, we can say that the type of the extended Dynkin 
diagram of reducible 
singular fibers of $f$
is the same as those of $h$.  
Let $F$ be the fiber of $f$ corresponding to a reducible singular fiber $F_0$ of $h$.
If $F_0$ is not blown up, then $F = F_0$.  Assume that $F_0$ is blown up.
If $F_0$ is of type ${\rm I}_n$ $(n\geq 2)$, then $F$ consists of
$n$ $(-4)$-curves $B_1,\ldots, B_n$ and $n$ $(-1)$-curves $E_1,\ldots, E_n$ with
$B_i\cdot E_i = B_{i+1}\cdot E_i=1$.  Here we use the same notation as in Remark \ref{HalphenBlowUp}.
Then 
$$F= B_1+ \cdots + B_n + 2E_1+\cdots + 2E_n=
({1\over 2}B_1+{1\over 2}B_2 + 2E_1) + \cdots + ({1\over 2}B_{n} + {1\over 2}B_1 + 2E_{n}).$$  
Thus $F$ is the sum of $n$ $(-1)$-roots forming a dual graph of type $\tilde{A}_{n-1}$.
Similarly, if $F_0$ is of type ${\rm III}$ (resp. of type ${\rm IV}$), then
$$F= 2(({1\over 2}B_1+{1\over 2}B_2 + 2E_2) + E_1)\quad 
(resp. \ \ 2(\sum_{i=1}^3 {1\over 2}B_i+{1\over 2}B_4 + 2E_i))
$$
is the double of the sum of a $(-1)$-root and a $(-2)$-root (resp. three $(-1)$-roots) 
forming a dual graph of type $\tilde{A}_1$ (resp. $\tilde{A}_2$).
Note that $F$ looks like a multiple fiber in case of type ${\rm III}, {\rm IV}$.
\end{remark}

As a corollary of Proposition \ref{NefCone} and Lemma \ref{fibration-isotropic}, we obtain the following Proposition.

\begin{prop}\label{FiniteIndex}{\rm (Dolgachev and Kond\=o \cite[Theorem 9.8.1]{DK})}
Let $S$ be a Coble surface.  Suppose $W(S)$ is of finite index in ${\rm O}({\rm CM}(S))$.  Then ${\rm Aut}(S)$ is finite.
\end{prop}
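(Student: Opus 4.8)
The plan is to study the natural representation $\rho\colon {\rm Aut}(S)\to {\rm O}({\rm CM}(S))$ and to prove separately that its image and its kernel are finite. Since ${\rm Aut}(S)$ permutes the boundary curves $B_1,\dots,B_n$ and preserves the intersection form, it acts on ${\rm CM}(S)$; moreover it permutes the $(-2)$-curves and the $(-1)$-roots, hence carries the set of effective irreducible roots to itself, and by Proposition \ref{NefCone} it preserves the nef chamber $D(S)$. Thus $\rho({\rm Aut}(S))$ is contained in the stabiliser $\Sigma=\{\gamma\in {\rm O}({\rm CM}(S))^+ : \gamma(D(S))=D(S)\}$, where ${\rm O}({\rm CM}(S))^+$ denotes the index-two subgroup preserving the component $\overline{{\rm CM}(S)}_{\bf R}^+$. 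Once $\Sigma$ and $\ker\rho$ are shown to be finite, the exact sequence $1\to\ker\rho\to {\rm Aut}(S)\to \rho({\rm Aut}(S))\to 1$ gives the claim.

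First I would bound the image. By Proposition \ref{NefCone} the chamber $D(S)$ is a fundamental domain for the action of $W(S)$ on $\overline{{\rm CM}(S)}_{\bf R}^+$, so distinct elements of $W(S)$ send $D(S)$ to chambers with disjoint interiors; in particular $W(S)\cap \Sigma=\{1\}$. Consequently the map $\Sigma\to {\rm O}({\rm CM}(S))^+/W(S)$, $\sigma\mapsto \sigma W(S)$, is injective. Under the hypothesis that $W(S)$ has finite index in ${\rm O}({\rm CM}(S))$ the target is finite, whence $\Sigma$, and therefore $\rho({\rm Aut}(S))$, is finite. This is the only place where the finite-index hypothesis is used.

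It remains to prove that $\ker\rho$ is finite, which I expect to be the main obstacle. Because ${\rm Aut}(S)$ permutes the finite set $\{B_1,\dots,B_n\}$, after passing to a subgroup of finite index I may assume that $g\in\ker\rho$ fixes each $B_i$. Then $g^\ast$ is the identity on ${\rm CM}(S)$ and on each class $[B_i]$, hence on the ${\bf Q}$-span of these classes, which is all of ${\rm NS}(S)\otimes{\bf Q}$; as ${\rm NS}(S)$ is torsion free, $g$ acts trivially on ${\rm NS}(S)$. Thus it suffices to show that the group of numerically trivial automorphisms is finite. For this I would use the genus $1$ fibrations supplied by Lemma \ref{fibration-isotropic}: a numerically trivial $g$ fixes every primitive isotropic vector of $D(S)\cap{\rm CM}(S)$, hence preserves every genus $1$ fibration $\phi\colon S\to{\bf P}^1$ and fixes the classes of all its fibre components, so the induced automorphism $\bar g$ of the base ${\bf P}^1$ fixes the images of all reducible and multiple fibres. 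Provided there are at least three such fibres — which I would verify from the description of genus $1$ fibrations on Coble surfaces in Proposition \ref{Halphen} and Remark \ref{two-double} — we obtain $\bar g={\rm id}$, so that $g$ preserves every fibre of $\phi$. Choosing two fibrations $\phi_1,\phi_2$ whose fibre classes $f_1,f_2$ satisfy $f_1\cdot f_2>0$, the morphism $(\phi_1,\phi_2)\colon S\to{\bf P}^1\times{\bf P}^1$ is generically finite and $g$ preserves every fibre of both; such a $g$ permutes the finite set $\phi_1^{-1}(t_1)\cap\phi_2^{-1}(t_2)$ for all $t_1,t_2$ and hence has finite order, so $\ker\rho$ is finite. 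The delicate points are to guarantee enough reducible or multiple fibres to rigidify the base and to control automorphisms preserving two transverse pencils fibrewise; alternatively one may descend $g$ through the birational morphism $f\colon S\to{\bf P}^2$ of Proposition \ref{basic}, where $g$ fixes every exceptional curve and hence induces a projective transformation fixing the $9+n$ centres of the blow-up and the sextic $W$, which again forces $g$ to have finite order.
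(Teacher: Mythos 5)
You should first note that the paper does not prove this proposition at all: it is quoted from Dolgachev--Kond\=o \cite[Theorem 9.8.1]{DK}, flagged only as ``a corollary of Proposition \ref{NefCone} and Lemma \ref{fibration-isotropic}''. So the comparison is with the standard argument, whose first half you reproduce correctly: the image of ${\rm Aut}(S)\to {\rm O}({\rm CM}(S))$ stabilises the chamber $D(S)$, the stabiliser meets $W(S)$ trivially because $D(S)$ is a fundamental domain, hence it injects into the coset space of $W(S)$, which is finite by hypothesis. That part is sound and is exactly how the image is bounded in \cite{DK}.

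The genuine gap is in your treatment of $\ker\rho$, and both of your proposed routes fail as written. First, the concluding inference ``every $g\in\ker\rho$ has finite order, hence $\ker\rho$ is finite'' is false in characteristic $2$: the unipotent subgroup $\{(x:y:z)\mapsto(x+az:y:z)\,:\,a\in k\}$ of ${\rm PGL}_3(k)$ is infinite of exponent $2$, so bounded torsion gives nothing; one must instead show that the identity component of the (finite-type) algebraic group stabilising an ample class acts trivially, e.g.\ by letting the Zariski closure of $\ker\rho$ act on the finite sets $\phi_1^{-1}(t_1)\cap\phi_2^{-1}(t_2)$ and using that a connected group acting on a finite set acts trivially. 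Second, the input you need for that --- ``at least three reducible or multiple fibres'' to rigidify the base --- is not available in characteristic $2$: the extremal lists in Propositions \ref{Lang} and \ref{Ito} contain fibrations with a \emph{single} singular fibre (type $({\rm II}^*)$, type $({\rm I}_4^*)$), and the paper itself invokes Lang's case of a rational elliptic fibration with exactly one singular fibre of type ${\rm II}$ and all other fibres smooth supersingular, so a numerically trivial automorphism could a priori act on the base fixing only one or two points. Your ${\bf P}^2$ fallback has the same defect: fixing the $9+n$ centres is a weak condition when the cluster is concentrated in infinitely near points --- in the paper's own type $\tilde{E}_8$ model (Example \ref{exE8}) the pencil $sx^6+t(y^6+x^5y+x^2z^4)$ has the single proper base point $(0:0:1)$, whose stabiliser in ${\rm PGL}_3$ is six-dimensional, so finiteness there really uses preservation of the pencil and a computation, not position of points alone. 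In short, the finiteness of the numerically trivial part is the actual content of \cite[Theorem 9.8.1]{DK}, and your sketch does not yet supply it.
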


Consider a genus 1 fibration on a Coble surface $f : S \to {\bf P}^1$.  Then the Mordell-Weil group of the Jacobian fibration of $f$ acts on $S$ faithfully as automorphisms.  
This implies the following Proposition.

\begin{prop}\label{MW-Dolgachev}
Assume that the automorphism group of a Coble surface $S$ is finite.  Then any genus $1$ fibration on $S$ is extremal.  
\end{prop}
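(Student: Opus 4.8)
The plan is to obtain the proposition directly from the faithful action of the Mordell--Weil group recalled immediately above. Recall that $f$ is called extremal precisely when the Mordell--Weil group of its Jacobian fibration is torsion, and that, as noted after Proposition \ref{Ito}, every quasi-elliptic fibration is automatically extremal. Thus I may assume that $f$ is elliptic and reduce the statement to proving that the Mordell--Weil group of the Jacobian fibration $J \to \mathbf{P}^1$ associated with $f$ is finite.

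First I would invoke the observation recalled just before the statement: since $S$ is a torsor over $J$, translation by a section of $J$ induces an automorphism of the whole surface $S$, and this yields a homomorphism from the Mordell--Weil group $G$ of $J$ into $\mathrm{Aut}(S)$. This homomorphism is injective, because a nonzero section already acts nontrivially on the generic fibre of $f$, so distinct elements of $G$ induce distinct automorphisms. Hence $G$ embeds into $\mathrm{Aut}(S)$.

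Now the hypothesis that $\mathrm{Aut}(S)$ is finite forces $G$ to be finite. Because $S$ is rational (Proposition \ref{basic}), the base of $f$ is $\mathbf{P}^1$ and $G$ is finitely generated, so being finite is the same as being torsion; therefore $f$ is extremal. The only step that carries real content is the passage from a Mordell--Weil section to a biregular automorphism of $S$ --- that is, checking that the translation extends across the reducible and multiple fibres coming from the Halphen construction of Proposition \ref{Halphen}. Since this is exactly the faithful action asserted in the paragraph preceding the proposition, the remaining argument is purely formal and requires no analysis by fibre type.
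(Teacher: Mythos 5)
Your proof is correct and follows exactly the paper's route: the paper derives the proposition in one line from the faithful action of the Mordell--Weil group of the Jacobian fibration on $S$ (the sentence immediately preceding the statement), which is precisely your argument. Your additional remarks --- reducing to the elliptic case since quasi-elliptic fibrations are automatically extremal, and checking that translations descend to the blown-up Halphen model --- merely make explicit what the paper leaves implicit.
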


An effective irreducible root $r$ is called a special 2-section of 
a genus 1 fibration $f:S\to {\bf P}^1$
if $r\cdot F=2$ where $F$ is a general fiber of $f$.  In this case the fibration $f$ is called special.

In case of Enriques surfaces, the following is known.

\begin{proposition}\label{Cossec}{\rm (Cossec \cite[Theorem 4]{C}, Lang \cite[Theorem A.3]{La1})}
If an Enriques surface $Y$ contains a $(-2)$-curve, then there exists a special genus $1$ fibration on $Y$.
\end{proposition}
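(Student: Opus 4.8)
The plan is to translate the statement into the lattice ${\rm Num}(Y)\cong U\oplus E_8$, which is even, unimodular, of signature $(1,9)$, and on which $K_Y$ is numerically trivial. As in Lemma \ref{fibration-isotropic} (whose Enriques analogue is \cite[Proposition 2.2.8]{CDL}), genus $1$ fibrations on $Y$ correspond bijectively to primitive nef isotropic classes $f$, with general fibre $F\equiv 2f$ in ${\rm Num}(Y)$ since every genus $1$ pencil on an Enriques surface carries a multiple fibre. On an Enriques surface an effective irreducible root is exactly a $(-2)$-curve, so a fibration is special precisely when there is a $(-2)$-curve $C$ with $C\cdot f=1$ (equivalently $C\cdot F=2$). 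Thus the goal becomes: starting from the given $(-2)$-curve $R$, produce a primitive nef isotropic class $f$ together with an irreducible $(-2)$-curve meeting $f$ with multiplicity one.

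First I would produce a numerical candidate. Because ${\rm Num}(Y)$ is unimodular and $[R]$ is a primitive vector of square $-2$, Eichler's transitivity for indefinite unimodular lattices shows that $[R]$ is $O({\rm Num}(Y))$-equivalent to $e-g$ in a hyperbolic plane $\langle e,g\rangle$ (with $e^2=g^2=0$, $e\cdot g=1$); taking $f_0$ to be the image of $g$ yields an isotropic class with $f_0\cdot[R]=1$. After replacing $f_0$ by $-f_0$ if necessary, I may assume $f_0\in\overline{C^+}$, the closure of the positive cone containing the nef classes. Using that the nef cone is a fundamental domain for the Weyl group $W$ generated by reflections in $(-2)$-curves (the Enriques analogue of Proposition \ref{NefCone}), I would reflect $f_0$ to a primitive nef isotropic class $f=w(f_0)$; writing $\delta:=w([R])$ gives a class with $\delta^2=-2$ and $f\cdot\delta=1$.

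The remaining and hardest step is to realise such a class by an actual irreducible $(-2)$-curve, that is, to pass from the numerical $\delta$ to a rational bisection of the fibration defined by $f$. This is genuinely delicate: by Riemann--Roch together with the nefness of $f$, the translates $\delta+mf$ satisfy $(\delta+mf)^2=2m-2$ and are effective only for $m\ge 1$, where they already have non-negative self-intersection; hence the naive effective representatives of a class meeting $f$ once are bisections of arithmetic genus $\ge 1$ rather than $(-2)$-curves. Consequently one cannot extract a special $2$-section from an \emph{arbitrary} nef $f$, and the class $f$ must instead be chosen using the geometry of $R$. I expect this to be the main obstacle, and I would overcome it in the spirit of Cossec \cite{C}: among the fibrations for which $R$ is vertical or a multisection, choose one minimising the relevant degree, and exploit the $(-2)$-curve structure of $R$ (through nondegenerate isotropic sequences and the invariant $\Phi$) to force the appearance of an irreducible rational bisection meeting the half-fibre exactly once. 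In characteristic $2$ this also demands separate treatment of the $\mu_2$- and $\alpha_2$-cases, where the half-fibre behaves differently, which is precisely the refinement carried out by Lang \cite{La1}.
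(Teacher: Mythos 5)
Your lattice-theoretic preparation is sound as far as it goes: ${\rm Num}(Y)\cong U\oplus E_8$ is even, unimodular, of signature $(1,9)$, so Eichler--Wall transitivity does give a single orbit of primitive $(-2)$-vectors, hence an isotropic $f_0$ with $f_0\cdot[R]=1$, and reflecting into the fundamental chamber (the Enriques analogue of Proposition~\ref{NefCone}) produces a primitive nef isotropic $f$ and a numerical root $\delta=w([R])$ with $f\cdot\delta=1$. But at that point you have discarded the only geometric datum you had: $\delta$ is merely a class of square $-2$. Since $f\cdot(K_Y-\delta)<0$ one gets $h^2(\delta)=0$, and Riemann--Roch only yields $\chi(\delta)=0$, so $\delta$ need not even be effective, let alone irreducible; and as you yourself observe via the translates $\delta+mf$, no naive effectivity argument can produce a $(-2)$-curve meeting $f$ once. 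Your diagnosis that this realization step is the entire content of the theorem is exactly right --- and that is where the proposal stops being a proof. You resolve the crux by appealing to ``the spirit of Cossec'' and to Lang's characteristic-$2$ refinement, i.e., by citing the very results being proved. The minimization argument you gesture at (choosing among all genus $1$ pencils one minimizing $f\cdot R$, using nondegenerate isotropic sequences and the invariant $\Phi$ to force the existence of an irreducible rational bisection with $C\cdot F=2$, with separate treatment of the $\mu_2$- and $\alpha_2$-cases where the half-fibre is unique) is named but never carried out, so the proposal has a genuine gap at its decisive step.

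That said, you should know how this compares with the paper: the paper does not prove Proposition~\ref{Cossec} at all. It is imported verbatim from Cossec \cite[Theorem 4]{C} and Lang \cite[Theorem A.3]{La1}, and the only related statement the paper addresses is the Coble-surface analogue, Lemma~\ref{specialfibration}, whose proof is in turn delegated to \cite[Lemma 3.13]{Kon2}. So your write-up effectively reproduces the paper's treatment --- a citation of the hard content --- prefixed by a lattice reduction that is correct but not needed for (and not part of) the paper's use of the statement. If your goal was an actual proof, what is missing is precisely the Cossec--Lang argument; if your goal was fidelity to the paper, the honest flagging of where the difficulty lies is accurate, but the Eichler and Weyl-group steps add nothing that the citation does not already carry.
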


The following lemma is the analogue of Proposition \ref{Cossec} for Coble surfaces.
For the proof we refer the reader to \cite[Lemma 3.13]{Kon2}.

\begin{lemma}\label{specialfibration} Let $S$ be a Coble surface.  
Assume that $S$ has an irreducible effective root.  
Then there exists a special genus $1$ fibration on $S$.  
\end{lemma}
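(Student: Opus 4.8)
The plan is to mimic Cossec's proof of Proposition \ref{Cossec} inside the Coble--Mukai lattice, exploiting the dictionary of \S\ref{CMlattice}. By Proposition \ref{NefCone} a class in $\overline{{\rm CM}(S)}_{\bf R}^+\cap{\rm CM}(S)$ is nef precisely when it lies in $D(S)$, and by Lemma \ref{fibration-isotropic} the genus $1$ fibrations on $S$ are exactly the primitive isotropic vectors $f\in D(S)\cap{\rm CM}(S)$. Under this correspondence the general fiber $F$ equals $f$ or $2f$ according as the fibration comes from a Halphen surface of index $1$ or $2$ (Remark \ref{two-double}). Since the given effective irreducible root $\alpha$ (with $\alpha^2=-2$) is a special $2$-section of $f$ exactly when $\alpha\cdot F=2$, the whole statement reduces to producing a primitive isotropic $f\in D(S)\cap{\rm CM}(S)$ with $\alpha\cdot F=2$; equivalently, a primitive isotropic nef $f$ with $\alpha\cdot f\in\{1,2\}$ together with the matching Halphen index.

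First I would solve the problem at the level of the lattice, forgetting nefness. Recall that ${\rm CM}(S)$ is an even lattice of signature $(1,9)$, isometric by \cite{DK} to the Enriques lattice $U\perp E_8$, which is unimodular. Fix $\alpha$, primitive with $\alpha^2=-2$. Unimodularity provides $g\in{\rm CM}(S)$ with $\alpha\cdot g=1$; adjusting $g$ by an element of $\alpha^\perp$ (a lattice of signature $(1,8)$, hence representing every integer) I would arrange a primitive isotropic $h$ with $\alpha\cdot h=1$, so that the rank-two lattice $\langle\alpha,h\rangle$ is a hyperbolic plane split off from ${\rm CM}(S)$. Thus an isotropic class meeting $\alpha$ minimally exists; what remains is to make it nef without losing control of its intersection with $\alpha$.

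The transfer to the nef cone is governed by Proposition \ref{NefCone}: $D(S)$ is a fundamental domain for $W(S)$, so some $w\in W(S)$ carries $h$ to a class $f:=w(h)\in D(S)$, necessarily primitive isotropic and hence a genus $1$ fibration. The difficulty — and the crux of the whole lemma — is that the reflections generating $W(S)$ alter $\alpha\cdot h$ (note that within the plane $\langle\alpha,h\rangle$ the only primitive isotropic vectors are $\pm h$ and $\pm s_\alpha(h)$, meeting $\alpha$ in $\pm1$, so the descent must genuinely use the $E_8$-part), so I cannot simply read off $\alpha\cdot f$. I would instead argue extremally: among all primitive isotropic $f\in D(S)\cap{\rm CM}(S)$ with $\alpha\cdot f>0$ (note $\alpha\cdot f\ge0$ automatically, as $\alpha$ is effective and $f$ is nef), choose one minimizing $m:=\alpha\cdot f$, and show $m\le2$. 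If $m\ge3$ one reflects $f$ in a suitable effective irreducible root meeting $\alpha$ to produce a nef isotropic class of strictly smaller positive intersection, contradicting minimality; running this descent, and keeping the reflected class effective and nef, is where the irreducibility of $\alpha$ is essential. I expect this minimality step to be the main obstacle.

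Finally I would conclude by the two cases of Lemma \ref{roots}. If $\alpha$ is a $(-2)$-curve $R$, then $R$ is a $(-2)$-root, and expressing the fiber class through roots as in Remark \ref{two-double} together with the parity statement of Lemma \ref{roots2}(2) forces $\alpha\cdot F$ to be even; combined with minimality and the multiple-fiber structure this pins it to $\alpha\cdot F=2$, exhibiting $R$ as a special $2$-section. If $\alpha$ is a $(-1)$-root $2E+\tfrac12B_j+\tfrac12B_k$, the same minimality together with the intersection rules of Lemma \ref{roots2}(1) yields $\alpha\cdot F=2$. In either case $f$ is special by definition, which completes the proof.
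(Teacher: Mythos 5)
The paper itself contains no proof of this lemma: it simply defers to \cite[Lemma 3.13]{Kon2}, where Cossec's argument (Proposition \ref{Cossec}) is transported to the Coble--Mukai lattice. Your general framing matches that route exactly --- identify ${\rm CM}(S)$ with the unimodular Enriques lattice $U\perp E_8$, use Lemma \ref{fibration-isotropic} and Proposition \ref{NefCone} to translate the statement into producing a nef primitive isotropic $f$ with the right intersection against $\alpha$, and your lattice-level step (finding a primitive isotropic $h$ with $\alpha\cdot h=1$) is correct and unproblematic. But the proposal has a genuine gap, and it sits precisely where you flag it: the claim that the minimum $m=\alpha\cdot f$ over nef primitive isotropic classes with $\alpha\cdot f>0$ satisfies $m\le 2$ \emph{is} the content of the lemma, and you do not prove it --- you only ``expect this minimality step to be the main obstacle.'' A proof attempt that leaves this step as a hope is an outline, not a proof.

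Worse, the one concrete mechanism you offer for the descent provably cannot work. For $\beta$ an effective irreducible root one has $s_\beta(f)=f+(f\cdot\beta)\beta$ since $\beta^2=-2$, hence $s_\beta(f)\cdot\alpha=f\cdot\alpha+(f\cdot\beta)(\beta\cdot\alpha)$. If $f$ is nef then $f\cdot\beta\ge 0$, and a direct computation with Lemma \ref{roots} (a $(-1)$-curve $E$ satisfies $E\cdot\sum_i B_i=-2K_S\cdot E=2$, so it meets no boundary component beyond the two in its root) shows that any two \emph{distinct} irreducible effective roots satisfy $\alpha\cdot\beta\ge 0$. So reflecting in any $\beta\ne\alpha$ can never decrease $\alpha\cdot f$; and reflecting in $\alpha$ itself gives $s_\alpha(f)\cdot\alpha=-f\cdot\alpha<0$, so $s_\alpha(f)$ is not nef. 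Thus no single-reflection descent exists, and the actual argument in \cite[Lemma 3.13]{Kon2} (following Cossec, cf.\ \cite{C}, \cite{CDL}) must instead build new isotropic classes as explicit integral combinations of $f$ and roots via isotropic sequences in $E_{10}$, with a case analysis. A secondary gap of the same kind: even granting $m\in\{1,2\}$, you must show the Halphen index matches --- that $m=1$ forces a fibration with half-fiber $f$ (so $F=2f$ and $\alpha\cdot F=2$) rather than an index-one fibration with $\alpha\cdot F=1$ --- and your appeal to ``the multiple-fiber structure'' and Lemma \ref{roots2} gestures at this without establishing it.
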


\subsection{Vinberg's criterion}

Let $L$ be a lattice of signature $(1,r)$.
We recall Vinberg's criterion,
which guarantees that a group generated by a finite number of reflections is
of finite index in ${\rm O}(L)$.

Let $\Delta$ be a finite set of $(-2)$-vectors in $L$.
Let $\Gamma$ be the graph of $\Delta$, that is,
$\Delta$ is the set of vertices of $\Gamma$ and two vertices $\delta$ and $\delta'$ are joined by $m$-tuple lines if $\langle \delta, \delta'\rangle=m$.
We assume that the cone
$$K(\Gamma) = \{ x \in L\otimes {\bf R} \ : \ \langle x, \delta_i \rangle \geq 0, \ \delta_i \in \Delta\}$$
is a strictly convex cone. Such a $\Gamma$ is called non-degenerate.
A connected parabolic subdiagram $\Gamma'$ in $\Gamma$ is a  Dynkin diagram of type 
$\tilde{A}_m$, $\tilde{D}_n$ or $\tilde{E}_k$ (see Vinberg \cite[p. 345, Table 2]{V}).  If the number of vertices of 
$\Gamma'$ is $r+1$, then $r$ is called the rank of $\Gamma'$.  A disjoint union of connected parabolic subdiagrams is called a parabolic subdiagram of $\Gamma$.  
We denote by $\tilde{K_1}\oplus \cdots \oplus  \tilde{K_s}$ a parabolic subdiagram 
which is a disjoint union of 
connected parabolic subdiagrams of type $\tilde{K_1}, \ldots, \tilde{K_s}$, where
$K_i$ is $A_m$, $D_n$ or $E_k$. The rank of a parabolic subdiagram is the sum of the ranks of its connected components.  Note that the dual graph of reducible singular fibers of a genus 1 fibration on 
$X$ gives a parabolic subdiagram.  
We denote by $W(\Gamma)$ the subgroup of ${\rm O}(L)$ 
generated by reflections associated with $\delta \in \Gamma$.

\begin{theorem}\label{Vinberg}{\rm (Vinberg \cite[Theorem 2.3]{V})}
Let $\Delta$ be a set of $(-2)$-vectors in $L$
and let $\Gamma$ be the graph of $\Delta$.
Assume that $\Delta$ is a finite set, $\Gamma$ is non-degenerate and $\Gamma$ contains no $m$-tuple lines with $m \geq 3$.  Then $W(\Gamma)$ is of finite index in ${\rm O}(L)$ 
if and only if every connected parabolic subdiagram of $\Gamma$ is a connected component 
of some parabolic subdiagram in $\Gamma$ of rank $r-1$ {\rm (}= the maximal one{\rm )}.
\end{theorem}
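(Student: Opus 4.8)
The statement is the classical reflection-group criterion of Vinberg, so in practice one simply invokes \cite{V}; what follows is the shape of the argument I would reconstruct if pressed. The plan is to translate the algebraic index condition into a statement about volumes in hyperbolic space. Since $L$ has signature $(1,r)$, the group ${\rm O}(L)$ acts properly discontinuously on the hyperbolic space $\mathbb{H}^r$ obtained by projectivizing one sheet of the positive cone $\{x\in L\otimes\mathbb{R} : x^2>0\}$, and as the orthogonal group of an integral lattice it is an arithmetic lattice in ${\rm O}(1,r)(\mathbb{R})$ of finite covolume (Borel--Harish-Chandra). Each $(-2)$-vector $\delta$ gives an integral reflection $s_\delta(x)=x+\langle x,\delta\rangle\delta$, so $W(\Gamma)\le {\rm O}(L)$, and $P:=K(\Gamma)\cap\mathbb{H}^r$ is a fundamental domain for its action. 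Consequently $[{\rm O}(L):W(\Gamma)]<\infty$ if and only if $\mathrm{vol}(P)<\infty$, via $\mathrm{vol}(P)=[{\rm O}(L):W(\Gamma)]\cdot\mathrm{vol}({\rm O}(L)\backslash\mathbb{H}^r)$ up to the harmless central factor $\{\pm\mathrm{id}\}$, which acts trivially on $\mathbb{H}^r$. This reduces the whole theorem to deciding when the Coxeter polyhedron $P$ has finite volume.

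First I would record that, because $\Gamma$ is non-degenerate, $K(\Gamma)$ is a strictly convex cone with nonempty interior, so $P$ is a genuine convex polyhedron cut out by the finitely many hyperplanes $H_\delta=\delta^\perp$ ($\delta\in\Delta$); the hypothesis forbidding $m$-tuple lines with $m\ge 3$ guarantees that any two bounding hyperplanes meet at dihedral angle $\pi/2$ or $\pi/3$ or are parallel (an $\tilde{A}_1$ pair), never ultraparallel, so the only parabolic configurations that arise are the standard affine diagrams $\tilde{A}_m,\tilde{D}_n,\tilde{E}_k$. The key dictionary relates the behaviour of $P$ at infinity to parabolic subdiagrams of $\Gamma$: an ideal vertex of $P$ is represented by an isotropic ray $\mathbb{R}_{\ge 0}v$ ($v^2=0$) in $\partial K(\Gamma)$, the reflections fixing $v$ come from the roots $\delta\in\Delta$ orthogonal to $v$, and in the negative-definite Euclidean space $v^\perp/\mathbb{R}v\cong\mathbb{R}^{r-1}$ they generate a discrete Euclidean reflection group. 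That group is cocompact --- equivalently the cusp has finite covolume --- precisely when the corresponding parabolic subdiagram has rank $r-1$. A connected parabolic subdiagram singles out one such isotropic direction, and the completion condition of the theorem demands that it always fit, as a connected component, into a full rank-$(r-1)$ cusp.

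With this dictionary in place, the criterion follows from the standard fact that a finitely-faced convex polyhedron in $\mathbb{H}^r$ has finite volume if and only if it is the convex hull of finitely many ordinary and ideal vertices, with no positive-dimensional face at infinity. If every connected parabolic subdiagram sits inside a rank-$(r-1)$ parabolic subdiagram as a component, then each isotropic direction in $\partial P$ completes to a finite-covolume cusp, so $P$ has only isolated ideal vertices and can be triangulated into finitely many simplices of finite volume. For the converse I would argue contrapositively: if some connected parabolic subdiagram $\Gamma'$ fails to lie in any rank-$(r-1)$ parabolic subdiagram as a component, then its isotropic direction yields a cusp whose Euclidean stabilizer on $\mathbb{R}^{r-1}$ is not cocompact, producing a cusp neighbourhood of infinite hyperbolic volume and hence infinite index. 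The main obstacle is exactly this boundary analysis at infinity: making rigorous the correspondence between "incomplete" parabolic subdiagrams and infinite-volume ends, which requires controlling how the finitely many hyperplanes $H_\delta$ accumulate along the light cone, checking that the associated isotropic vectors actually lie in $\overline{P}$, and verifying that non-degeneracy together with the $m\le 2$ hypothesis leaves no unbounded parabolic face other than the genuine rank-$(r-1)$ cusps.
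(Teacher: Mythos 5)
The paper does not prove this statement at all: it is quoted directly from Vinberg \cite[Theorem 2.3]{V}, so there is no internal argument to compare against, and invoking \cite{V} (as you do at the outset) is exactly what the paper intends. Your reconstruction is the standard route of Vinberg's own proof --- identifying finite index of $W(\Gamma)$ in ${\rm O}(L)$ with finite hyperbolic covolume of the Coxeter polyhedron $P=K(\Gamma)\cap \mathbb{H}^r$ (via Borel--Harish-Chandra for the covolume of ${\rm O}(L)$ and Poincar\'e's theorem to see $P$ is a fundamental domain, which is where the hypothesis excluding $m$-tuple lines with $m\geq 3$ enters, forcing dihedral angles $\pi/2$, $\pi/3$ or parallelism), then matching finite-covolume cusps with rank-$(r-1)$ parabolic subdiagrams --- and the outline is sound, including your correct identification of the boundary analysis along the light cone as the genuine technical content rather than a formality.
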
 
\noindent

\begin{remark}\label{Vinbergremark}
Note that $\Gamma$ as in the above proposition is automatically non-degenerate if it contains the components of the reducible fibers of a special extremal genus 1 fibration and a special $2$-section of this fibration. Indeed, these curves will generate $L\otimes {\bf Q}$ and hence $K(\Gamma)$ is strictly convex.
\end{remark}

\begin{prop}\label{Namikawa}{\rm (Cossec, Dolgachev, Liedtke \cite[Proposition 0.8.16]{CDL})}
Let $\Delta$ be a finite set of irreducible effective roots on a Coble surface $S$ and let
$\Gamma$ be the graph of $\Delta$. 
Assume that $W(\Gamma)$ is of finite index in ${\rm O}({\rm CM}(S))$.  Then $\Delta$ is the set of all irreducible effective roots on $S$.
\end{prop}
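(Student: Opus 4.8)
The plan is to realize $W(\Gamma)$ as a standard parabolic subgroup of the Coxeter group attached to the nef chamber of $S$, and then to invoke the fact that a proper standard parabolic subgroup of an infinite irreducible Coxeter group has infinite index.

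Write $\calR$ for the set of all irreducible effective roots on $S$. First I would recall from Proposition \ref{NefCone} that $D(S)$ is a fundamental domain for $W(S)$ acting on $\overline{{\rm CM}(S)}_{\bf R}^+$, and that by construction $D(S)=\{\,x : x\cdot\alpha\ge 0 \ \text{for all}\ \alpha\in\calR\,\}$. Because an effective root is irreducible precisely when it is indecomposable in the monoid of effective roots (this is the meaning of the condition $|\alpha-\beta|=\emptyset$), the hyperplanes $\alpha^{\perp}$ with $\alpha\in\calR$ are exactly the walls (facets) of $D(S)$, and the reflections $s_{\alpha}$ form a system of simple reflections making $(W(S),\{s_{\alpha}\}_{\alpha\in\calR})$ a Coxeter system. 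This is the standard Tits--Vinberg description of a reflection group through its fundamental chamber, applied to $W(S)$.

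Since $\Delta\subseteq\calR$, the group $W(\Gamma)=\langle s_{\delta} : \delta\in\Delta\rangle$ is exactly the standard parabolic subgroup $W_{\Delta}$ of this Coxeter system, and correspondingly $D(\Gamma)=\{\,x : x\cdot\delta\ge 0\ \text{for all}\ \delta\in\Delta\,\}\supseteq D(S)$. Next I would note that $W(\Gamma)$ being of finite index in ${\rm O}({\rm CM}(S))$ forces the larger group $W(S)$ to be of finite index as well; since ${\rm CM}(S)$ is a hyperbolic lattice of signature $(1,9)$, the chamber $D(S)$ then has finite volume, so $\calR$ is finite and $(W(S),\{s_{\alpha}\}_{\alpha\in\calR})$ is a finite-covolume, hence infinite and irreducible, Coxeter group.

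The remaining step is purely group-theoretic: in an infinite irreducible Coxeter group every proper standard parabolic subgroup has infinite index (Bourbaki; see also Vinberg \cite{V}). Hence if $\Delta$ were a proper subset of $\calR$, then $W(\Gamma)=W_{\Delta}$ would have infinite index in $W(S)$, and therefore in ${\rm O}({\rm CM}(S))$, contradicting the hypothesis. Thus $\Delta=\calR$, which is the assertion. I expect the main obstacle to be the first step, namely the precise identification of $\calR$ with the set of walls of $D(S)$ (so that the elements of $\Delta$ are genuine simple reflections and $W(\Gamma)$ is literally a standard parabolic), together with confirming that $W(S)$ is infinite and irreducible; once these geometric facts are secured, the parabolic-index statement closes the argument at once. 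A geometric substitute for the last step, which sidesteps the irreducibility discussion, is to argue directly that omitting a wall $\alpha\in\calR\setminus\Delta$ allows $D(\Gamma)$ to contain $D(S)\cup s_{\alpha}D(S)$ and, iterating across the newly exposed walls, infinitely many distinct $W(S)$-chambers, so that $D(\Gamma)$ would have infinite volume and $W(\Gamma)$ infinite index.
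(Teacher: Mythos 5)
The paper offers no argument of its own for this proposition --- it is quoted verbatim from Cossec--Dolgachev--Liedtke \cite{CDL} --- so the relevant comparison is with the standard Namikawa--Vinberg argument that underlies the cited result, and your proposal is essentially that argument: realize $W(S)$ as the Coxeter group of the chamber $D(S)$ via Proposition \ref{NefCone}, observe that $W(\Gamma)$ is the standard parabolic $W_{\Delta}$, deduce finite covolume from the finite-index hypothesis, and conclude by the fact that proper standard parabolic subgroups of infinite irreducible Coxeter groups have infinite index. The skeleton is sound, and you have correctly located the one place where the surface geometry (rather than pure reflection-group theory) must enter: the claim that every irreducible effective root supports a facet of $D(S)$. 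Irreducibility in the sense $|\alpha-\beta|=\emptyset$ does not \emph{formally} yield a supporting facet; one needs a geometric input, namely a class in $\overline{{\rm CM}(S)}_{\bf R}^+$ orthogonal to $\alpha$ and strictly positive on every other irreducible effective root (the analogue, via Lemma \ref{roots}, of the statement that each irreducible nodal class cuts out a face of the nodal chamber of an Enriques surface). Without this, $\{s_\alpha\}_{\alpha\in\calR}$ need not be a simple system and $W_{\Delta}$ need not be a \emph{proper} parabolic even when $\Delta\subsetneq\calR$; this facet identification is precisely the content hidden in the citation to \cite{CDL}.

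Two smaller corrections to the group-theoretic half. The infinite-index theorem for proper standard parabolics of infinite irreducible Coxeter groups is not in Bourbaki and not in Vinberg \cite{V}; in this generality it is a theorem of D.~Qi (J.~Algebra, 2007), and you should either cite it as such or restrict to the finite-covolume hyperbolic setting at hand. Relatedly, your proposed ``geometric substitute'' does \emph{not} sidestep irreducibility: in a reducible system $W_1\times W_2$ with $W_1$ finite, deleting the walls belonging to $W_1$ exposes only the $|W_1|$ chambers of a finite orbit, and the reflection-iteration closes up after finitely many steps; it is exactly connectedness of the diagram that forces the exposed region to keep growing. Fortunately irreducibility is automatic in your main line: once $W(\Gamma)$, hence $W(S)\supseteq W(\Gamma)$, has finite index in ${\rm O}({\rm CM}(S))$, the chamber $D(S)$ has finite volume in the $9$-dimensional hyperbolic space of ${\rm CM}(S)$, so it has finitely many facets (whence $\calR$ is finite), and Vinberg's indecomposability theorem for the Gram matrix of a finite-volume acute-angled hyperbolic polytope gives the connectedness you need --- but this too should be cited rather than asserted. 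With the facet identification supplied and these attributions repaired, your argument is a complete and correct proof of the proposition.
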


\section{The canonical covering of a Coble surface in characteristic 2}\label{sec3}
\subsection{The canonical covering}\label{sec4-1}
In this subsection, we examine the structure of the canonical covering of a Coble surface
in characteristic 2.
For a Coble surface $S$, let $\{U_i\}_{i \in I}$ be an affine open covering of $S$,
and we denote by $A_i$ the affine coordinate ring of $U_i$. Let $f_i$ be a defining
equation of the divisor $B_1 + \cdots +B_n$. Taking an enough fine affine open covering,
we may assume that each affine open set $U_i$ meets at most one component of the boundary
and that $f_i$ is one element of local coordinates of $U_i$ when the boundary meets $U_i$. 
Let $\{f_{ij}\}$ be a cocycle
which gives the invertible sheaf $\mathcal{O}_S(-K_S)$. Then, by definition
there exist $g_i \in \Gamma (U_i, {\mathcal O}_S^*)$ ($i\in I$) such that
\begin{equation}\label{cycle}
     f_{ij}^2 =(f_i/f_j)(g_i/g_j).
\end{equation}
We define the covering $\tilde{U}_i$ of $U_i$ by
\begin{equation}\label{covering}
       z_i^2 = f_ig_i
\end{equation}
with new variables $z_i$ ($i \in I$).
Then, we have $z_i = f_{ij}z_j$ and we get a finite flat covering $X$ of $S$:
$$
     \pi : X \longrightarrow S.
$$
We call $X$ a canonical covering of $S$. 

\begin{theorem}\label{K3-like} 
The canonical covering $X$ is a K3-like surface, that is, $\omega_X\cong {\mathcal O}_X$ and ${\rm H}^1(X, {\mathcal O}_X) = 0$.
\end{theorem}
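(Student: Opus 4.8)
The plan is to read off the structure of the finite flat degree-$2$ morphism $\pi\colon X\to S$ from its local description \eqref{covering}, compute $\pi_*{\mathcal O}_X$ as an ${\mathcal O}_S$-module, and then identify $\omega_X$ by adjunction; the characteristic-$2$ features enter only at the very last step.

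First I would observe that $\pi$ is finite, flat and of degree $2$, and that \eqref{covering} exhibits $\pi_*{\mathcal O}_X={\mathcal O}_S\oplus{\mathcal O}_S\cdot z$. Since $z_i=f_{ij}z_j$, tracking this transition identifies the second summand with $\omega_S={\mathcal O}_S(K_S)$ (this is the expected shape: $X$ is the double cover attached to ${\mathcal L}={\mathcal O}_S(-K_S)$, whose branch divisor $B_1+\cdots+B_n$ lies in $|-2K_S|=|{\mathcal L}^{\otimes 2}|$). Thus $\pi_*{\mathcal O}_X\cong{\mathcal O}_S\oplus\omega_S$. As $\pi$ is affine,
\[
{\rm H}^i(X,{\mathcal O}_X)={\rm H}^i(S,{\mathcal O}_S)\oplus{\rm H}^i(S,\omega_S).
\]
Because $S$ is rational, ${\rm H}^1(S,{\mathcal O}_S)=0$, and by Serre duality ${\rm H}^1(S,\omega_S)\cong{\rm H}^1(S,{\mathcal O}_S)^\vee=0$; hence ${\rm H}^1(X,{\mathcal O}_X)=0$. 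The same formula gives ${\rm H}^0(X,{\mathcal O}_X)=k$, so $X$ is connected, and (via the duality below) $h^0(X,\omega_X)=1$.

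For the triviality of $\omega_X$ I would use adjunction. Over $U_i$ the surface $X$ is the hypersurface $\phi_i:=z_i^2-f_ig_i=0$ in the smooth threefold $U_i\times{\bbA}^1$; in particular $X$ is a local complete intersection, hence Gorenstein, so $\omega_X$ is invertible even where $X$ is singular or non-normal, with local frame $\omega_i={\rm Res}\bigl[(dx_i\wedge dy_i\wedge dz_i)/\phi_i\bigr]$. From $z_i=f_{ij}z_j$ one gets $dx_i\wedge dy_i\wedge dz_i=(J_{ij}f_{ij})\,dx_j\wedge dy_j\wedge dz_j$, where $J_{ij}$ is the Jacobian of the coordinate change on $S$, and from \eqref{cycle} one gets $\phi_i=f_{ij}^2\phi_j$. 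Hence the transition of $\omega_X$ in these frames is $\omega_i=(J_{ij}/f_{ij})\,\omega_j$. Now $\{J_{ij}\}$ and $\{f_{ij}\}$ are two cocycles for the same line bundle $\omega_S$, so $J_{ij}=f_{ij}\,(u_i/u_j)$ for units $u_i$; therefore $\Omega:=u_i^{-1}\omega_i$ is a global, nowhere-vanishing $2$-form and $\omega_X\cong{\mathcal O}_X$.

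As an unambiguous cross-check I would verify the same conclusion by relative duality: for the flat double cover, ${\mathcal H}om_{{\mathcal O}_S}(\pi_*{\mathcal O}_X,{\mathcal O}_S)$ is free of rank one over $\pi_*{\mathcal O}_X$ and yields $\omega_{X/S}\cong\pi^*{\mathcal L}=\pi^*{\mathcal O}_S(-K_S)$, whence $\omega_X=\pi^*\omega_S\otimes\omega_{X/S}=\pi^*({\mathcal O}_S(K_S)\otimes{\mathcal O}_S(-K_S))={\mathcal O}_X$. The main obstacle is the characteristic-$2$ degeneration in the adjunction step: since $\partial\phi_i/\partial z_i=2z_i=0$, the Poincaré residue must be computed with respect to $\partial\phi_i/\partial x_i$ or $\partial\phi_i/\partial y_i$ rather than $z_i$, and I would check that adjunction for the Cartier divisor $X\subset U_i\times{\bbA}^1$ still produces an honest invertible frame independently of the singularities and non-normality of $X$. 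The other delicate point is purely one of signs: one must make sure that $\{J_{ij}\}$ and $\{f_{ij}\}$ represent $\omega_S$ with matching orientation, so that it is $J_{ij}/f_{ij}$—and not $J_{ij}f_{ij}$—that is a coboundary; the relative-duality computation guarantees that this is the case.
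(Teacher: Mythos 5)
Your proposal is correct, and it reaches the theorem by a route that only partially overlaps with the paper's. The decomposition $\pi_*\calO_X\cong\calO_S\oplus\omega_S$ from $z_i=f_{ij}z_j$ is the same first step, but from there the two arguments diverge. For $\mathrm{H}^1(X,\calO_X)=0$ you argue directly: since $\pi$ is affine, $\mathrm{H}^1(X,\calO_X)=\mathrm{H}^1(S,\calO_S)\oplus\mathrm{H}^1(S,\omega_S)$, and both summands vanish by rationality of $S$ plus Serre duality on $S$; the paper instead first proves $\omega_X\cong\calO_X$, then gets $h^0=h^2=1$ by Serre duality on $X$ and concludes $h^1=0$ from $\chi(\calO_X)=2\chi(\calO_S)=2$. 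Your version is slightly more economical in that the cohomology vanishing does not depend on the triviality of $\omega_X$. For $\omega_X\cong\calO_X$ your main argument --- the local hypersurface model $\phi_i=z_i^2-f_ig_i$ in $U_i\times\bbA^1$, adjunction for this Cartier divisor, and the \v{C}ech computation $\omega_i=(J_{ij}/f_{ij})\,\omega_j$ --- is genuinely different from the paper, which invokes duality for finite flat morphisms: $\pi_*\omega_X\cong{\mathcal H}om_{\calO_S}(\pi_*\calO_X,\omega_S)\cong\omega_S\oplus\calO_S\cong\pi_*\calO_X$ as $\pi_*\calO_X$-modules. (Your ``cross-check'' via relative duality is in fact exactly the paper's proof.) Your explicit route buys more: it produces an actual nowhere-vanishing global $2$-form, makes visible that the characteristic-$2$ degeneration of the residue is harmless (adjunction for a Cartier divisor in a smooth threefold never divides by $\partial\phi_i/\partial z_i$, so the frame is honest despite the singularities and non-normality of $X$), and connects naturally with the form $\eta=d(f_ig_i)/(f_ig_i)$ used throughout Section 3. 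What the paper's duality argument buys is exactly what you flag as your delicate point: it is convention-free, so no bookkeeping is needed to see that $J_{ij}/f_{ij}$ rather than $J_{ij}f_{ij}$ is a coboundary. If you want your main argument to stand alone without the duality cross-check, note that the coboundary claim reduces to $[f_{ij}]=[\omega_S]$ in $\Pic(S)$ (frame convention), which follows from \eqref{cycle} --- the $f_ig_i$ are local equations of $B\in|-2K_S|$, so $\{f_{ij}^2\}$ represents $\omega_S^{\otimes 2}$ --- together with the torsion-freeness of $\Pic(S)$ for the rational surface $S$; with that observation supplied, the argument is complete and independent of the paper's.
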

\begin{proof}
We denote by $\omega_S$ the dualizing sheaf of $S$. We have $\omega_S \cong {\mathcal O}_S(K_S)$.
The coordinate ring of $\tilde{U}_i$ is isomorphic to
$$
      A_i[z_i]/(z_i^2 -f_ig_i)\cong A_i \oplus A_iz_i.
$$
Since $z_j/z_i = f_{ij}^{-1}$, we see
\begin{equation}\label{direct}
    \pi_{*}{\mathcal O}_{X}\cong {\mathcal O}_S \oplus \omega_S.
\end{equation}
By the duality theorem of finite flat morphism, we have
$$
\begin{array}{rl}
  \pi_{*}(\omega_X) &\cong \pi_{*}(\pi^{!}\omega_S) 
          \cong {\mathcal H}om_{{\mathcal O}_S}(\pi_{*}{\mathcal O}_X, \omega_S) \cong 
           {\mathcal H}om_{{\mathcal O}_S}({\mathcal O}_S \oplus \omega_S, \omega_S)\\
           & \cong {\mathcal H}om_{{\mathcal O}_S}({\mathcal O}_S, \omega_S) \oplus 
            {\mathcal H}om_{{\mathcal O}_S}(\omega_S, \omega_S)\\
     & \cong \omega_S \oplus {\mathcal O}_S \cong \pi_{*}({\mathcal O}_X)
\end{array}.
$$           
Therefore, we have $\omega_X \cong {\mathcal O}_X$. By the Serre duality theorem,
we have
$$
\dim {\rm H}^2(X, {\mathcal O}_X) = \dim {\rm H}^0(X, \omega_X) =
\dim {\rm H}^0(X, {\mathcal O}_X) = 1.
$$
Since $\pi$ is a finite morphism, we have $R^i\pi_{*}{\mathcal O}_X = 0$
($i \geq 1$). Therefore,
we have
$$
\chi({\mathcal O}_X) = \chi(\pi_{*}{\mathcal O}_X) = \chi({\mathcal O}_S \oplus \omega_S) 
= 2\chi({\mathcal O}_S) = 2.
$$
Hence, we have $\dim {\rm H}^1(X, {\mathcal O}_X) = 0$.
\end{proof}

Now, let $g: R \longrightarrow {\bf P}^1$ be a genus 1 fibration on a rational surface $R$.
Let $t$ be a local coordinate of affine line ${\bf A}^1 \subset {\bf P}^1$ and 
assume that over the point $P \in {\bf P}^1$ defined by $t = 0$, we have 
a singular fiber of type
${\rm I}_n$, ${\rm II}$, ${\rm III}$ or ${\rm IV}$.

We take a rational one form $dt/t$ on ${\bf P}^1$ and we examine the poles of 
$g^{*}(dt/t)$ under the blowing-ups to make $(-4)$-curves for a Coble surface.

In the case of type ${\rm I}_n$, we blow up at the singlar points of $g^{-1}(P)$.
Then, the pull-back of $g^{*}(dt/t)$ has poles of order 1 along the proper transforms of
$g^{-1}(P)$, which are $(-4)$-curves, and it is regular along the exceptional curves
(see Figure \ref{Halsing}).

In the case of type ${\rm II}$, we blow up at the singlar point of the curve $g^{-1}(P)$
with a cusp.
Then, the pull-back of $g^{*}(dt/t)$ has a pole of order 1 along the proper transform of
$g^{-1}(P)$, which are $(-4)$-curve, and it is regular along the exceptional curve.

In the case of type ${\rm III}$, we blow up at the singlar point of $g^{-1}(P)$.
Then, the pull-back of $g^{*}(dt/t)$ has poles of order 1 along the proper transforms of
$g^{-1}(P)$, which are $(-3)$-curves, and it has not a pole
along the exceptional curve.
We again blow up at the singular point of the fiber (non-irreducible) curve.
Then, we have two $(-4)$-curves, where the pull back of the rational 1-form
has poles of order 1, and no poles along other exceptional curves.

In the case of type ${\rm IV}$, we blow up at the singlar point of $g^{-1}(P)$.
Then, the pull-back of $g^{*}(dt/t)$ has poles of order 1 along the proper transforms of
$g^{-1}(P)$, which are three $(-3)$-curves, and also it has a pole of order 1 
along the exceptional curve.
We again blow up at the three singular points of the fiber (non-irreducible) curve.
Then, we have four $(-4)$-curves, where the pull back of the rational 1-form
has poles of order 1, and no poles along other exceptional curves.

\begin{lemma}\label{1-form1}
Let $S$ be a Coble surface and let 
$f: S \longrightarrow {\bf P}^1$ be a genus $1$ fibration. 
Let $t$ be a local coordinate of ${\bf A}^1 \subset {\bf P}^1$ and
assume that for the point $P_0$ defined by $t = 0$ the singular fiber
$f^{-1}(P_0)$ contains at least one $(-4)$-curve. Then, on the fiber $f^{-1}(P_0)$
the rational $1$-form $f^{*}(dt/t)$ has poles of order $1$ along the $(-4)$-curves and is regular on other part of $f^{-1}(P_0)$.
Namely, the rational $1$-form $f^{*}(dt/t)$ has poles of order $1$ along the boundary
contained in $f^{-1}(P_0)$ and is regular on other part of $S \setminus f^{-1}(\infty)$.
\end{lemma}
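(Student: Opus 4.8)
The plan is to obtain the lemma as the global packaging of the local blow-up computations carried out in the paragraphs immediately preceding it, fed through Proposition \ref{Halphen} and Remark \ref{HalphenBlowUp}. First I would invoke Proposition \ref{Halphen} to realize $f$ through a Halphen surface $H$ of index $1$ or $2$: there is a blow-up $\sigma\colon S\to H$ of the singular points (and their infinitely near points for types $\III$, $\IV$) of one or two reduced singular fibers of the genus $1$ fibration $g\colon H\to{\bf P}^1$, with $f=g\circ\sigma$. Since the $(-4)$-curves on $S$ are exactly its boundary components and, by Remark \ref{HalphenBlowUp}, these are precisely the proper transforms of the components of a blown-up reduced fiber, the hypothesis that $f^{-1}(P_0)$ contains a $(-4)$-curve forces $P_0$ to be the image of such a fiber, hence of type $\I_n$, $\II$, $\III$ or $\IV$. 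Because $t$ is the same base coordinate, $f^{*}(dt/t)=\sigma^{*}g^{*}(dt/t)$, so the four case computations recorded above apply verbatim on $f^{-1}(P_0)$.

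To make the reason transparent I would phrase those computations through the characteristic $2$ parity principle. If a prime divisor $D\subset f^{-1}(P_0)$ occurs in the scheme-theoretic fiber with multiplicity $a$, then locally $t=uw^{a}$ for a local equation $w$ of $D$ and a unit $u$, whence $dt/t=du/u+a\,dw/w$; in characteristic $2$ the second term survives, giving a simple pole along $D$, if and only if $a$ is odd. By Remark \ref{HalphenBlowUp} (equivalently Remark \ref{two-double}) every $(-4)$-curve appears with odd multiplicity --- namely $1$, except for the central curve $B_4$ in type $\IV$, where it is $3$ --- while every exceptional $(-1)$- or $(-2)$-curve appears with even multiplicity ($2$ or $4$). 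This yields simple poles of $f^{*}(dt/t)$ along exactly the $(-4)$-curves of $f^{-1}(P_0)$ and regularity along every other component of that fiber.

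For the global statement I would note that on ${\bf P}^1$ the form $dt/t$ is regular away from $t=0$ and $t=\infty$, so $f^{*}(dt/t)$ is regular on $S\setminus\bigl(f^{-1}(0)\cup f^{-1}(\infty)\bigr)$; restricting to $S\setminus f^{-1}(\infty)$ leaves $f^{-1}(0)=f^{-1}(P_0)$ as the only possible polar locus, already analyzed, and on any other fiber $f^{-1}(Q)$ with $Q\neq 0,\infty$ the form $dt/t$ is regular at $Q$ and hence pulls back to a regular form. As the $(-4)$-curves in $f^{-1}(P_0)$ are exactly the boundary components lying in that fiber, the stated description follows. I expect the only delicate point --- and the main, though modest, obstacle --- to be the multiplicity bookkeeping in types $\III$ and $\IV$, where two successive blow-ups intervene and where one must notice that the central curve $B_4$ of type $\IV$ carries the odd multiplicity $3$ (and so still contributes a simple pole) rather than an even one; the remaining cases are immediate transcriptions of the preceding analysis.
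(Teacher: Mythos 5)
Your proof is correct, and it reaches the lemma by a route that differs in organization, though not in underlying substance, from the paper's. The paper has no separate proof environment for this lemma: it is the summary of the four case-by-case computations in the paragraphs just before it, where the authors track the pole divisor of $g^{*}(dt/t)$ explicitly through each blow-up (one blow-up per node for type ${\rm I}_n$, one for type ${\rm II}$, two successive rounds for types ${\rm III}$ and ${\rm IV}$, recording along the way the intermediate $(-3)$-curves and the pole of order $1$ on the first exceptional curve in type ${\rm IV}$). You instead compute once and for all on $S$ itself via the parity criterion: at the generic point of a component $D$ of multiplicity $a$ one has $t=uw^{a}$, so $dt/t=du/u+a\,dw/w$, which in characteristic $2$ has a pole (of order exactly $1$) along $D$ if and only if $a$ is odd; then the multiplicities recorded in Remark \ref{HalphenBlowUp} --- odd ($1$, or $3$ for $B_4$ in type ${\rm IV}$) precisely on the $(-4)$-curves, even ($2$ or $4$) on all exceptional components --- finish the fiber-by-fiber bookkeeping uniformly, including the delicate type ${\rm III}$/${\rm IV}$ cases, which you correctly flag. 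This parity principle is in fact the same computation the paper deploys later (in the proofs of Theorem \ref{forms} and Theorem \ref{ellipticsimple}), so your argument is fully within the paper's toolkit; what it buys is uniformity and immunity to bookkeeping slips across the four fiber types, at the cost of quoting the multiplicity data of Remark \ref{HalphenBlowUp} rather than re-deriving it, whereas the paper's stepwise computation simultaneously verifies that configuration data (e.g.\ that type ${\rm IV}$ produces four $(-4)$-curves) and the intermediate information it reuses in Remark \ref{G_0,G}. One step you leave implicit, harmlessly: passing from ``no pole along any other component'' to ``regular at every point off the $(-4)$-curves'' uses that $\Omega^1_S$ is locally free on the smooth surface $S$, so a rational $1$-form regular in codimension one is regular.
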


\begin{proposition}\label{1-form2}
Let $S$ be a Coble surface and let $f: S \longrightarrow {\bf P}^1$ be a genus $1$
fibration. 
Under the notation above, we assume that at the point $P_0$ defined by $t = 0$, 
the fiber $f^{-1}(P_0)$ is either a multiple fiber or a singular fiber with at least
one $(-4)$-curve, and that at the point $P_{\infty}$ defined by $t = \infty$ 
the fiber $f^{-1}(P_{\infty})$ is a singular fiber with at least one $(-4)$-curve.
Then. $f^{*}(dt/t)$ has poles of order $1$ along $(-4)$-curves and is regular
on other part.
\end{proposition}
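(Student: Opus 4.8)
The plan is to localize the statement around the two special fibers and to invoke Lemma~\ref{1-form1} on the two open pieces where it applies, adding a single local computation for the one piece it does not cover. The form $dt/t$ is a rational $1$-form on ${\bf P}^1$ whose only poles are simple poles at $t=0$ and $t=\infty$, with no zeros; hence $\eta:=f^{*}(dt/t)=df/f$ is regular on $f^{-1}({\bf P}^1\setminus\{0,\infty\})$ and can fail to be regular only along the two fibers $f^{-1}(P_0)$ and $f^{-1}(P_\infty)$. It therefore suffices to analyze $\eta$ in a neighbourhood of each of these fibers.

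First I would treat $P_\infty$. Setting $s=1/t$, a local coordinate at $P_\infty$, we have $dt/t=-ds/s$, so $\eta=-f^{*}(ds/s)$ and pole orders are unaffected by the sign. Since $f^{-1}(P_\infty)$ carries a $(-4)$-curve by hypothesis, Lemma~\ref{1-form1} applied with the coordinate $s$ shows that $\eta$ has poles of order $1$ precisely along the $(-4)$-curves of $f^{-1}(P_\infty)$ and is regular on $S\setminus f^{-1}(P_0)$ (the complement of the fiber over ``$s=\infty$''). This disposes of everything except a neighbourhood of $f^{-1}(P_0)$.

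It remains to handle $f^{-1}(P_0)$, where the hypothesis splits into two cases. If $f^{-1}(P_0)$ contains a $(-4)$-curve, Lemma~\ref{1-form1} applies verbatim with the coordinate $t$, giving poles of order $1$ along the $(-4)$-curves of $f^{-1}(P_0)$ and regularity elsewhere on $S\setminus f^{-1}(P_\infty)$; combined with the previous paragraph this covers all of $S$. If instead $f^{-1}(P_0)=2F_0$ is a multiple fiber, then it contains no $(-4)$-curve — a fiber carrying a boundary component has a component of multiplicity one (Remarks~\ref{HalphenBlowUp} and~\ref{two-double}) and so is not multiple — and the assertion near $P_0$ is simply that $\eta$ is regular there. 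For this I would argue locally: the scheme-theoretic fiber is $2F_0$ as a divisor, so near any point $p\in F_0$ one may write $f=w^{2}u$ in ${\mathcal O}_{S,p}$ with $w$ a local equation of $F_0$ and $u$ a unit. Since $\mathrm{char}(k)=2$ we have $d(w^{2})=0$, whence $df=w^{2}\,du$ and $\eta=df/f=du/u$, which is regular at $p$.

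The only content beyond Lemma~\ref{1-form1} is this last computation, and the point I would be most careful about is that it must hold uniformly over all of $F_0$, including its nodes, cusps, or points where components meet. That uniformity is exactly what $f=w^{2}u$ provides: because the whole fiber $2F_0$ has every component appearing with even multiplicity, $f$ is locally a square times a unit at every point of $F_0$, and in characteristic $2$ the square has vanishing differential, so $df/f$ has no pole no matter what the local singularity type of $F_0$ is. Letting $p$ range over $F_0$ gives regularity of $\eta$ along the entire multiple fiber. Assembling the three regions — $f^{-1}({\bf P}^1\setminus\{0,\infty\})$ and neighbourhoods of the two special fibers — then yields the proposition.
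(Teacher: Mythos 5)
Your proof is correct and follows essentially the same route as the paper: away from a possible multiple fiber you invoke Lemma~\ref{1-form1} at both $P_0$ and $P_{\infty}$, and at a multiple fiber you write $t=uh^{2}$ locally (the paper's $t=uh^{2}$ is your $f=w^{2}u$) so that $d(h^{2})=0$ in characteristic $2$ gives $f^{*}(dt/t)=du/u$, hence regularity there. Your added remarks --- the coordinate change $s=1/t$ at infinity, the observation that a fiber containing a $(-4)$-curve cannot be multiple, and the uniformity of the local computation at singular points of the half fiber --- merely make explicit what the paper's terser proof leaves implicit.
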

\begin{proof}
Assume $f : S \longrightarrow {\bf P}^1$ has a multiple fiber at the point $P_0$.
Then, we have an expression $t = uh^2$.
Here, $h$ is a defining equation of the half fiber and $u$ is a unit. 
Therefore, we have
$f^{*}(dt/t) = du/u$ and  $f^{*}(dt/t)$ is regular on the multiple fiber.
Using this fact and Lemma \ref{1-form1}, we complete our proof.
\end{proof}

Now, we define
$$
    \eta = d(f_ig_i)/(f_ig_i)\quad \mbox{on}~U_i.
$$
Then, by (\ref{cycle}), $\eta$ becomes a rational 1-form on $S$.
The rational 1-form $\eta$ has the poles of order 1
along the boundary $B_1 + \cdots + B_n$. 
When $f_i$ is part of local coordinates, $g_if_i$ is also 
part of local coordinates.
Therefore, $\eta$ has neither isolated zeros nor zero divisors on 
the boundary $B_1+ \cdots + B_n$. Therefore, over the boundary 
the covering $X$ is non-singular and the divisorial part of the rational 1-form $\eta$
is written as
\begin{equation}\label{B}
 (\eta) = - \sum_{i = 1}^{n} B_i + B
\end{equation}
with an effective divisor $B$. Note $(\sum_{i = 1}^{n} B_i) \cap B = \emptyset$.
We call $B$ a bi-conductrix.

\begin{theorem}\label{forms}
$\eta = f^{*}(dt/t)$
\end{theorem}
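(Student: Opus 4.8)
The plan is to prove the identity by showing that the rational $1$-form $\xi := \eta - f^{*}(dt/t)$ is everywhere regular and then invoking the vanishing $H^{0}(S,\Omega^{1}_{S})=0$, which holds because $S$ is rational (it is of basic type by Proposition \ref{basic}, and $H^{0}(\Omega^{1})$ is a birational invariant of smooth projective surfaces, so it agrees with $H^{0}({\bf P}^{2},\Omega^{1})=0$). Once $\xi$ is exhibited as a global regular $1$-form it must vanish, which is exactly the asserted equality. In this way the whole problem reduces to a pole comparison of the two forms along the boundary together with a residue computation, and one never needs to identify the effective ``zero parts'' of either divisor.

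First I would fix the fibration data. By Proposition \ref{Halphen} every genus $1$ fibration $f$ on $S$ is obtained from a Halphen surface by blowing up the singular points of one reduced fiber (index $2$) or of two reduced fibers (index $1$); in particular all boundary components $B_{1},\dots,B_{n}$ lie in at most two fibers of $f$. Choosing the coordinate $t$ so that these fibers sit over $t=0$ and $t=\infty$ (and, in the index $2$ case, so that the multiple fiber sits over $t=0$), Proposition \ref{1-form2} applies and gives that $f^{*}(dt/t)$ has a pole of order exactly $1$ along each $B_{i}$ and is regular off the boundary. On the other hand, by (\ref{B}) the form $\eta$ also has simple poles precisely along $B_{1}+\cdots+B_{n}$ and is regular off the boundary. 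Hence $\xi$ is regular on $S\setminus(B_{1}\cup\cdots\cup B_{n})$, and its only possible poles lie along the $B_{i}$.

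The decisive step is the residue comparison along each $B_{i}$. Near a general point of $B_{i}$ the function $f_{i}$ is part of a local coordinate system cutting out $B_{i}$, while $g_{i}$ is a unit on all of $U_{i}$, so $\eta=d(f_{i}g_{i})/(f_{i}g_{i})=df_{i}/f_{i}+dg_{i}/g_{i}$ has Poincar\'e residue $1$ along $B_{i}$. Writing $\phi=t\circ f$, so that $f^{*}(dt/t)=d\phi/\phi$, and letting $m_{i}$ be the multiplicity of $B_{i}$ in its fiber, we have $\phi=f_{i}^{\pm m_{i}}\cdot(\text{unit})$ locally, whence the residue equals $\pm m_{i}\bmod 2$. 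From the explicit fiber descriptions in Remark \ref{HalphenBlowUp} every $(-4)$-curve occurs with odd multiplicity (namely $1$ in types $\I_{n}$, $\II$, $\III$ and for the three outer components of type $\IV$, and $3$ for the central $(-4)$-curve of type $\IV$), so this residue is $1$ as well; here it is essential that we work in characteristic $2$, where the sign $\pm1$ is irrelevant and the even multiplicities of the $(-1)$- and $(-2)$-curves make those components contribute no pole. Consequently $\eta$ and $f^{*}(dt/t)$ have identical simple poles along every $B_{i}$, so $\xi$ has residue $0$ and extends holomorphically across the general point of each $B_{i}$.

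It then remains to observe that $\xi$, being regular away from the boundary and along the general points of each $B_{i}$, is defined outside a finite set of points; since $S$ is smooth and $\Omega^{1}_{S}$ is locally free, $\xi$ extends to a global section of $\Omega^{1}_{S}$, which is zero. I expect the only genuine work to be the residue computation of $f^{*}(dt/t)$, i.e.\ checking that each boundary component occurs with odd multiplicity so that its pole survives in characteristic $2$ while the residue remains $1$; this is precisely the content already recorded in the blow-up analysis preceding Lemma \ref{1-form1}, so once that is cited the rest of the argument is formal.
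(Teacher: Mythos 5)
Your proposal is correct and takes essentially the same route as the paper: both arguments show that the difference $\eta - f^{*}(dt/t)$ is a global regular $1$-form (using the choice of $t$ from (Case 1)/(Case 2), the pole analysis of Proposition \ref{1-form2}, and the odd multiplicity of each $B_i$ in its fiber so that in characteristic $2$ one has $t=u_if_i^{m_i}$ with $dt/t=df_i/f_i+du_i/u_i$) and then conclude it vanishes since $H^{0}(S,\Omega^{1}_{S})=0$ for the rational surface $S$. The paper merely writes the difference directly as $dg_i/g_i - du_i/u_i$ in local coordinates, whereas you phrase the same cancellation as a residue comparison of logarithmic forms followed by extension across a codimension-two set; the content is identical.
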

\begin{proof}
Let a boundary component $B_k$ intersect with affine open set $U_i$. Then, we have
$$
\eta = d(f_ig_i)/(f_ig_i) = df_i/f_i + dg_i/g_i.
$$
Since the boundary component $B_k$ is defined by $f_i = 0$ on $U_i$ and is contained
in a fiber as a component with odd multiplicity, we have $t = u_if_i$ with a unit $u_i$.
Therefore, we have
$$
  f^{*}(dt/t) = d(u_if_i)/(u_if_i) = df_i/f_i + du_i/u_i.
$$
Therefore, on $U_i$ we have
$$
 \eta - f^{*}(dt/t) = dg_i/g_i - du_i/u_i,
$$
and $\eta - f^{*}(dt/t)$ is a regular 1-form on $S$.
However, since $S$ is a rational surface, we have no non-zero regular 1-form on $S$.
Therefore, we have $\eta = f^{*}(dt/t)$.
\end{proof}
 
From here on, for the 1-form $f^{*}(dt/t)$ on $S$, we sometimes write it as $dt/t$
if no confusion occurs.

\subsection{The conductrix}\label{sec4-2}
In this subsection, we examine the bi-conductrix $B$ and we show that 
there exists an effective divisor $A$ on $S$ such that $B = 2A$. The arguments
are parallel to the one in Ekedahl, Shepherd-Barron \cite{ES} and
Cossec, Dolgachev, Liedtke \cite{CDL}. But we give here a down-to-earth
proof to understand the case of Coble surfaces precisely.
The following lemma is well-known (cf. Cossec, Dolgachev, Liedtke, loc. cit., for instance).
\begin{lemma}\label{Cohen-Macauley}
Let $S$, $X$ be algebraic surfaces and let $f : X \longrightarrow S$ be a finite
morphism. Assume $S$ is regular. Then, $f$ is flat if and only if $X$ is Cohen-Macauley.
Moreover, if $X$ is normal, 
then $X$ is Cohen-Macauley.
\end{lemma}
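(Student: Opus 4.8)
The plan is to reduce the statement to commutative algebra at the local rings and to invoke standard theorems. Flatness, the Cohen--Macaulay property, and normality can all be tested stalk by stalk, so fix a point $x\in X$ with image $s=f(x)\in S$ and put $A=\mathcal{O}_{S,s}$ and $B=\mathcal{O}_{X,x}$. Since $S$ is regular, $A$ is a regular local ring, in particular Cohen--Macaulay, of dimension $d=\dim A$. Because $f$ is finite, $B$ is a finite $A$-module, the induced homomorphism $A\to B$ is local, and $\mathfrak{m}_A B$ is $\mathfrak{m}_B$-primary; hence the fiber ring $B/\mathfrak{m}_A B$ is Artinian (so Cohen--Macaulay of dimension $0$), and, as $X$ and $S$ are equidimensional of dimension $2$, one has $\dim B=\dim A=d$.

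For the equivalence I would argue through depth. A finitely generated module over the local ring $A$ is flat precisely when it is free, and over the regular (hence finite global dimension) ring $A$ the Auslander--Buchsbaum formula shows that a nonzero finite $A$-module $M$ is free if and only if $\mathrm{depth}_A M=d$. Applying this to $M=B$, and using that $\mathfrak{m}_A B$ and $\mathfrak{m}_B$ share the same radical so that $\mathrm{depth}_A B=\mathrm{depth}_B B$, flatness of $f$ at $x$ becomes equivalent to $\mathrm{depth}_B B=d=\dim B$, that is, to $B$ being Cohen--Macaulay. Performing this at every point yields the asserted equivalence that $f$ is flat if and only if $X$ is Cohen--Macaulay. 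Alternatively one may quote the local criterion of flatness (miracle flatness): with $A$ regular, $B$ Cohen--Macaulay, and $\dim B=\dim A+\dim(B/\mathfrak{m}_A B)$, the map $f$ is flat; and conversely a flat local homomorphism whose base and closed fiber are Cohen--Macaulay has Cohen--Macaulay total ring.

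For the final assertion suppose $X$ is normal. By Serre's criterion, normality implies the conditions $R_1$ and $S_2$; in particular $X$ satisfies $S_2$, meaning $\mathrm{depth}\,\mathcal{O}_{X,x}\ge\min(2,\dim\mathcal{O}_{X,x})$ for all $x$. Since $X$ is a surface, every $\mathcal{O}_{X,x}$ has dimension at most $2$, and for such a local ring this inequality forces $\mathrm{depth}=\dim$; thus $S_2$ coincides with the Cohen--Macaulay condition, and $X$ is Cohen--Macaulay.

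The only point that genuinely needs care is the dimension bookkeeping behind the flatness criterion --- concretely the equalities $\dim B=d$ and $\mathrm{depth}_A B=\mathrm{depth}_B B$. This is exactly where the finiteness of $f$ is indispensable: it makes the fibers $0$-dimensional, forces $B/\mathfrak{m}_A B$ to be Artinian and $\mathfrak{m}_A B$ to be $\mathfrak{m}_B$-primary, and pins down both the dimension count and the comparison of depths over $A$ and over $B$. Once these are in place, both directions of the equivalence and the normal-implies-Cohen--Macaulay statement follow from the cited results with no further work.
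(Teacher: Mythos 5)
The paper does not actually prove this lemma: it states it as well known and points to Cossec--Dolgachev--Liedtke, so your commutative-algebra argument is the intended standard proof rather than a divergence from the text. Your overall architecture is correct: miracle flatness in one direction, ascent of the Cohen--Macaulay property along a flat local homomorphism with Cohen--Macaulay base and closed fiber in the other, and Serre's criterion (normal $\Rightarrow S_2$, and $S_2$ together with $\dim\le 2$ forces $\operatorname{depth}=\dim$) for the final assertion; your closing paragraph also correctly isolates finiteness of $f$ as the ingredient that makes the closed fiber Artinian and the dimension count work.

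One inaccuracy should be repaired. It is not true in general that $B=\mathcal{O}_{X,x}$ is a finite $A$-module for $A=\mathcal{O}_{S,s}$: if the fiber $f^{-1}(s)$ contains points other than $x$, then functions invertible at $x$ but vanishing at another fiber point lie in $B$ without being integral over $A$ (this is already visible for $\mathbb{Z}_{(5)}\to\mathbb{Z}[i]\otimes\mathbb{Z}_{(5)}$ localized at one of the two primes over $5$). Consequently the Auslander--Buchsbaum branch of your argument is not valid as written, since both ``flat $=$ free'' and the formula $\operatorname{pd}_A M+\operatorname{depth}_A M=\dim A$ require $M$ to be a \emph{finite} $A$-module. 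The repair is standard: run that argument on the stalk $C=(f_*\mathcal{O}_X)_s$, which genuinely is a finite $A$-module, and use $\operatorname{depth}_A C=\min_{\mathfrak{q}}\operatorname{depth}\,C_{\mathfrak{q}}$ over the maximal ideals $\mathfrak{q}$ of $C$ lying over $\mathfrak{m}_A$; flatness of $f$ over $s$ is then freeness of $C$, which by Auslander--Buchsbaum is equivalent to every localization $C_{\mathfrak{q}}=\mathcal{O}_{X,x'}$ being Cohen--Macaulay of depth $2$. Your alternative route via the local criterion of flatness needs no module-finiteness of $B$ over $A$ --- only that $A\to B$ is a local homomorphism of Noetherian local rings with Artinian closed fiber $B/\mathfrak{m}_AB$, which finiteness of $f$ does supply --- so that branch, together with the converse for flat local homomorphisms, is correct as written, and with the Auslander--Buchsbaum branch amended as above the whole proof goes through.
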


Let $S$ be a Coble surface and let $X$ be the canonical covering. Let $X'$ be
the normaization of $X$:
$$
\begin{array}{ccr}
   X' & \stackrel{\nu}{\longrightarrow} & X \\
   \pi \circ \nu \searrow &     &\swarrow \pi \\
     &   S.   &
\end{array}
$$
Since $X'$ is normal, 
$X'$ is Cohen-Macauley
and $\pi\circ \nu$ is flat by Lemma \ref{Cohen-Macauley}. Therefore, we have an
exact sequence
\begin{equation}\label{exactL'}
0 \longrightarrow {\mathcal O}_S \longrightarrow (\pi \circ \nu)_{*}{\mathcal O}_{X'}
\longrightarrow {\mathcal L}' \longrightarrow 0
\end{equation}
with an invertible sheaf ${\mathcal L}'$ on $S$. Therefore, by (\ref{direct})
we have a commutative diagram of exact sequences
$$
\begin{array}{ccccccccc}
  0 & \longrightarrow & {\mathcal O}_S &\longrightarrow &\pi_{*}{\mathcal O}_X & \longrightarrow & \omega_S &\longrightarrow & 0 \\
  
        &   & \parallel & & \varphi \downarrow & & \psi \downarrow & & \\
 0 &\longrightarrow &{\mathcal O}_S &\longrightarrow &(\pi \circ \nu)_{*}{\mathcal O}_{X'}&
\longrightarrow &{\mathcal L}'& \longrightarrow & 0.
\end{array}
$$

Since $\pi \circ \nu$ is a purely inseparable morphism of degree 2, 
on the affine open set $U_i$ we have a commutative diagram of exact sequences
$$
\begin{array}{ccccccccc}
   0 & \longrightarrow & A_i &\longrightarrow & A_i \oplus A_iz_i& \longrightarrow & A_iz_i&\longrightarrow & 0 \\
  
        &   & \parallel & & \varphi \downarrow & & \psi \downarrow & & \\
 0 &\longrightarrow & A_i &\longrightarrow &  A_i \oplus A_iw_i&
\longrightarrow & A_iw_i & \longrightarrow & 0.
\end{array}
$$     
Here, $w_i$ is an element such that $w_i^2 = h_i \in A_i$. Let $\varphi(z_i) = b_i + a_iw_i$ ($a_i, b_i \in A_i$).
Since $a_i w_i = \varphi(z_i) - b_i = \varphi(z_i -b_i)$, the annihilator
$Ann_{A_i}(A_i \oplus A_iw_i)/\varphi(A_i\oplus A_iz_i)$ is given by the ideal
$(a_i) = a_iA_i$. The Cartier divisors on $U_i$ defined by the ideal $(a_i)$ ($i \in I$)
glue together to give a divisor $A$ on $S$. We call $A$ a conductrix of $S$.
Since $\psi$ is given by the multiplication by $a_i$ on $U_i$, $\psi$ induces
an isomorphism $\omega_S \cong {\mathcal O}_S(-A)\otimes {\mathcal L}'$.
Therefore, we have
\begin{equation}\label{L'}
      {\mathcal L}'\cong \omega_S \otimes {\mathcal O}_S(A).
\end{equation}
\begin{theorem}\label{eta}
  $(\eta) = - \sum_{i= 1}^{n}B_i + 2A$, $B = 2A$. 
\end{theorem}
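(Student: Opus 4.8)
The plan is to prove the equivalent statement $B = 2A$ by a local computation on each chart $U_i$, differentiating the defining equation of the covering in characteristic $2$ and reading off the divisorial zeros of $\eta$ in terms of the conductrix $A$. First I would record the basic local identity. Since the vertical map $\varphi$ on $U_i$ is injective and restricts to the identity on $A_i$, applying it to the relation $z_i^2 = f_i g_i$ of (\ref{covering}) gives $\varphi(z_i)^2 = \varphi(f_i g_i) = f_i g_i$. Writing $\varphi(z_i) = b_i + a_i w_i$ with $w_i^2 = h_i$ and squaring, the cross term $2 a_i b_i w_i$ drops out, so
\[
f_i g_i = b_i^2 + a_i^2 h_i \quad \text{in } A_i.
\]
Putting $F_i := f_i g_i$ and differentiating, using $d(b_i^2) = d(a_i^2) = 0$ in characteristic $2$, yields the key formula
\[
dF_i = a_i^2\, dh_i .
\]
Because $\eta = dF_i/F_i$ on $U_i$ and the conductrix $A$ is cut out locally by the ideal $(a_i)$, the factor $a_i^2$ contributes exactly $2A$ to the divisorial zeros of $\eta$.

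Next I would show that the residual factor $dh_i$ contributes no divisorial component. The surface $X'$ is, by construction, given locally by $w_i^2 = h_i$, and in characteristic $2$ the singular locus of $\{w_i^2 = h_i\}$ is precisely the common vanishing locus of the partial derivatives of $h_i$, that is, the locus $dh_i = 0$. Since $X'$ is normal its singularities are isolated, so $dh_i = 0$ defines a zero-dimensional scheme and the $1$-form $dh_i$ has no divisorial zeros. Combined with $dF_i = a_i^2\, dh_i$, this shows that away from the boundary the divisorial zero locus of $\eta$ is exactly $2\,\mathrm{div}(a_i) = 2A$.

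Finally I would patch the local statement to the global one. Over the boundary the covering $X$ is already smooth, so $X = X'$ there, $a_i$ is a unit, and $A$ does not meet $\sum_i B_i$; this is consistent with (\ref{B}), where $\eta$ has only first-order poles along the $B_i$ and the zero divisor $B$ is disjoint from the boundary. Hence the divisorial zero part $B$ of $\eta$ equals $2A$ on all of $S$, and together with the pole part we obtain $(\eta) = -\sum_{i=1}^{n} B_i + 2A$, as desired. The step I expect to be the main obstacle is the middle one: one must be certain that $dh_i$ genuinely contributes nothing in codimension one, which is exactly where the normality of $X'$ (equivalently, that $h_i$ is the ``reduced'' datum produced by passing to the normalization) is essential; once the identity $dF_i = a_i^2\, dh_i$ is established, the remainder is routine bookkeeping of divisors.
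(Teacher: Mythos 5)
Your proposal is correct and takes essentially the same route as the paper's own proof: both square $\varphi(z_i)=b_i+a_iw_i$ to obtain $f_ig_i=b_i^2+a_i^2h_i$, differentiate in characteristic $2$ so that $d(f_ig_i)=a_i^2\,dh_i$, and use normality of $X'$ to see that $dh_i$ has only isolated zeros, forcing $B=2A$. Your middle step, identifying the vanishing locus of $dh_i$ with the singular locus of $\{w_i^2=h_i\}$, simply makes explicit the justification the paper leaves implicit in the sentence ``Since $X'$ is normal, $dh_i$ has only isolated zeros.''
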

\begin{proof}
By (\ref{covering}), we have
$$
g_if_i = \varphi (g_if_i) = \varphi (z_i^2) = (b_i + a_iw_i)^2 = b_i^2 + a_i^2w_i^2 = b_i^2 + a_i^2h_i.
$$
Therefore, we have $a_i^2h_i = g_if_i -b_i^2$, and $g_if_i - b_i^2$ is divisible by $a_i^2$.
Since $X'$ is normal, $dh_i$ has only isolated zeros.  Therefore, by
$a_i^2dh_i = d(g_if_i -b_i^2) = d(g_if_i)$, we see $d(g_if_i)$ is exactly divisible
by $a_i^2$. Hence, by (\ref{B}), we see $B = 2A$.
\end{proof}

\subsection{The conductrix of the surface by Frobenius base change}\label{case1case2}
Let $F : {\bf P}^1 \longrightarrow {\bf P}^1$ be the Frobenius morphism.
Since $\pi^*f^*(dt/t) = \pi^*(\eta) = \pi^*(d(g_if_i)/(g_if_i)) = \pi^*(dz_i^2/z_i^2) = 0$,
we have $\pi^*f^*(dt) = 0$. Therefore, there exists an element $s \in k(X)$
such that $t = s^2$. Since $X'$ is normal, we have the following diagram:
$$
\begin{array}{ccl}\label{diagram1}
  \quad  X' & \stackrel{\nu}{\longrightarrow} & X \\
\psi_0  \downarrow  &       & \downarrow \pi\\
 \quad   S\times_{{\bf P}^1} {\bf P}^1 & \stackrel{pr_1}{\longrightarrow} & S\\
pr_2 \downarrow   &      & \downarrow f  \\
    \quad \quad  {\bf P}^1   & \stackrel{F}{\longrightarrow} & {\bf P}^1.
\end{array}
$$
We set $S_F = S\times_{{\bf P}^1} {\bf P}^1$.
Over $U_i$,  if $U_i$ exists in the open set of $S$  given by $t \neq \infty$,
the covering $pr_1 : pr_1^{-1}(U_i) \longrightarrow U_i$ is given by
the homomorphism
$$
  A_i = {\mathcal O}_X(U_i) \longrightarrow 
  pr_{1*}{\mathcal O}_{X_F}(U_i) = A_i[s]/(t - s^2) 
  = A_i \oplus A_{i}s.
$$
Around the point defined by $t = \infty$, the situation is similar.
Note that $S'$ is the normalization of $S_F$.
We calculate the conductrix for $S_F$.
We have exact sequences
$$
\begin{array}{ccccccccc}
  0 & \longrightarrow & {\mathcal O}_S &\longrightarrow &pr_{1*}{\mathcal O}_{S_F} & \longrightarrow & \quad {\mathcal L} &\longrightarrow & 0 \\
  
        &   & \parallel & & \varphi' \downarrow \quad \quad  & & \psi' \downarrow  & & \\
 0 &\longrightarrow &{\mathcal O}_S &\longrightarrow &(pr_1\circ \psi_0)_{*}{\mathcal O}_{S'}&
\longrightarrow & \quad  {\mathcal L}'& \longrightarrow & 0\\
     & & & & \parallel & & & &  \\
     & & & & (\pi \circ \nu)_{*}{\mathcal O}_{S'} & & & &  
\end{array}
$$  
On $U_i$ we have exact sequences
$$
\begin{array}{ccccccccc}
   0 & \longrightarrow & A_i &\longrightarrow & A_i \oplus A_{i}s& \longrightarrow & A_{i}s&\longrightarrow & 0 \\
  
        &   & \parallel & & \varphi' \downarrow & & \psi' \downarrow & & \\
 0 &\longrightarrow & A_i &\longrightarrow &  A_i \oplus A_iw_i&
\longrightarrow & A_iw_i & \longrightarrow & 0.
\end{array}
$$ 
Let $\varphi'(s) = b'_i + a'_iw_i$ ($a'_i, b'_i \in A_i$).
Since $a'_i w_i = \varphi'(s) - b'_i = \varphi'(s -b'_i)$, the annihilator
$Ann_{A_i}(A_i \oplus A_iw_i)/\varphi'(A_i\oplus A_is)$ is given by the ideal
$(a'_i) = a'_iA_i$. The Cartier divisors on $U_i$ defined by the ideal $(a'_i)$ ($i \in I$)
glue together to give a divisor $A'$ on $S$. We call $A'$ a conductrix of $S_F$.
As before, we have
$$
t = \varphi' (t) = \varphi' (s^2) = (b'_i + a'_iw_i)^2 = {b'_i}^2 + {a'_i}^2w_i^2 
= {b'_i}^2 + {a'_i}^2h_i.
$$
Therefore, we have ${a'_i}^2h_i = t -{b'_i}^2$, and $t - {b'_i}^2$ is divisible by ${a'_i}^2$.
Since $S'$ is normal, $dh_i$ has only isolated zeros.  Therefore, by
${a'_i}^2dh_i = d(t -{b'_i}^2) = dt$, we see $dt$ is exactly divisible
by ${a'_i}^2$. 

We denote by $P_0$ (resp. $P_{\infty}$) the point on
${\bf P}^1$ defined by $t = 0$ (resp. $t = \infty$). We have the following two cases:

(Case 1) Halphen surface of index 1. 
Both fibers $f^{-1}(P_0)$ and $f^{-1}(P_{\infty})$ are the fibers
which contain $(-4)$-curves for the Coble surface $S$.

(Case 2) Halphen surface of index 2. 
The fiber $f^{-1}(P_0)$ is the multiple fiber and the fiber $f^{-1}(P_{\infty})$
is the fiber which contains $(-4)$-curves for the Coble surface $S$.

In fact, for any Coble surface, by a suitable choice of the local parameter $t$,
either (Case 1) or (Case 2) holds.

Case 1. By the structure of blowing-ups which we explain in the beginning 
of Subsection \ref{sec4-1},
the fiber $f^{-1}(P_0)$ is expressed as 
$$
     \sum_{i = 1}^{\ell}B_i + 2G_0
$$
with a positive interger $\ell  < n$ and an effective divisor $G_0$. 
Here, we arrange some boundary components $B_i$ suitably.
The fiber $f^{-1}(P_{\infty})$ is also expressed as

$$
     \sum_{i = \ell + 1}^{n}B_i + 2G_{\infty}
$$
with an effective divisor $G_{\infty}$. Then, using these notation, we have
\begin{equation}\label{f^{*}(dt/t)}
       (f^{*}(dt/t)) = 2A'  - \sum_{i = 1}^{n}B_i - 2(G_0 + G_{\infty}).
\end{equation}
On the other hand, we have 
\begin{equation}\label{dt/t=eta}
    (f^{*}(dt/t)) = (\eta) = 2A - \sum_{i=1}^{n}B_i.
\end{equation}
Therefore, we have $A = A' - G_0 - G_{\infty}$.

Case 2. We denote by $G_0$ the half fiber of the fiber defined by $t = 0$.
The fiber defined by $t = \infty$ is also expressed as

$$
     \sum_{i = 1}^{n}B_i + 2G_{\infty}
$$
with an effective divisor $G_{\infty}$.
Then, we have  again the equalities (\ref{f^{*}(dt/t)}) and (\ref{dt/t=eta}).
Therefore, we have $A = A' - G_0 - G_{\infty}$.
Summarizing these results, we get the following theorem (cf. Remark \ref{G_0,G} below).
\begin{theorem}[Comparisson theorem]
Under the notation above, $A = A' - G_0 - G_{\infty}$ holds.
\end{theorem}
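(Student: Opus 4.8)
The plan is to read off the identity by comparing two expressions for the divisor of the logarithmic one-form $f^{*}(dt/t)$ on $S$, working the two geometric situations (Case 1 and Case 2) in parallel and noting that both reduce to the same relation. The substantive content lies entirely in the two divisor formulas (\ref{dt/t=eta}) and (\ref{f^{*}(dt/t)}) assembled above; the theorem itself is their common consequence.

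The first formula (\ref{dt/t=eta}) records the conductrix $A$ of the canonical covering $X \to S$: by Theorem \ref{forms} one has $\eta = f^{*}(dt/t)$, and Theorem \ref{eta} gives $(\eta) = -\sum_{i=1}^{n}B_i + 2A$, so that
$$(f^{*}(dt/t)) = 2A - \sum_{i=1}^{n}B_i.$$
No further work is needed here, as both ingredients are already in hand.

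The second formula (\ref{f^{*}(dt/t)}) uses the conductrix $A'$ of the Frobenius base change $S_F$. The key input is the relation ${a'_i}^2 dh_i = dt$ proved above: since $S'$ is normal, $dh_i$ has only isolated zeros, so $dt$ is exactly divisible by ${a'_i}^2$ and $A' = (a'_i)$ is well defined and captures precisely the square part of $(dt)$. Feeding this together with the explicit fiber shapes over $P_0$ and $P_{\infty}$ --- in Case 1 the fibers $\sum_{i=1}^{\ell}B_i + 2G_0$ and $\sum_{i=\ell+1}^{n}B_i + 2G_{\infty}$, and in Case 2 the multiple fiber $2G_0$ over $P_0$ together with the fiber $\sum_{i=1}^{n}B_i + 2G_{\infty}$ over $P_{\infty}$ --- into the logarithmic pole structure of $f^{*}(dt/t)$ described in Lemma \ref{1-form1} and Proposition \ref{1-form2} yields
$$(f^{*}(dt/t)) = 2A' - \sum_{i=1}^{n}B_i - 2(G_0 + G_{\infty}).$$

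Finally I would equate the two formulas: the boundary terms $-\sum_{i=1}^{n}B_i$ cancel, leaving $2A = 2A' - 2(G_0 + G_{\infty})$, and halving in the free abelian group of divisors gives $A = A' - G_0 - G_{\infty}$. Because this cancellation is word-for-word identical in Case 1 and Case 2, the theorem is precisely their shared conclusion. I expect the only real obstacle to be the justification of (\ref{f^{*}(dt/t)}) --- concretely, checking that the square part of $(dt)$ is exactly $2A'$ (which relies on the normality of $S'$ forcing $dh_i$ to have isolated zeros) and correctly matching the multiplicities of the boundary components and of the effective divisors $G_0, G_{\infty}$ in each of the two fiber configurations.
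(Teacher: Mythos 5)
Your proposal is correct and takes essentially the same route as the paper: the paper's proof consists precisely of deriving $(f^{*}(dt/t)) = 2A' - \sum_{i=1}^{n}B_i - 2(G_0 + G_{\infty})$ from the exact divisibility of $dt$ by ${a'_i}^2$ (using normality of $S'$) together with the explicit fiber shapes over $P_0$ and $P_{\infty}$ in Cases 1 and 2, equating this with $(\eta) = 2A - \sum_{i=1}^{n}B_i$, and cancelling. You correctly identified both key equations and the one genuinely substantive input (that the square part of $(dt)$ is exactly $2A'$), so there is nothing to add.
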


\begin{remark}
In the case of an Enriques surface $Y$ in characteristic 2, 
we can consider a similar situation.
If $Y$  is classical, the genus 1 fibration has two multiple fibers
$2G_0$ and $2G_{\infty}$. In this case we have $A = A' - G_0 - G_{\infty}$.
If $Y$ is supersingular, then the genus 1 fibration has only one multiple fiber
$2G_0$. In this case we have $A = A' - 2G_0$. Therefore, $A$ is equal
to $A'$ modulo half fibers and we can determine $A$ from $A'$ uniquely.
\end{remark}

\begin{remark}\label{G_0,G} 
We give here an explicit form of $G_0$ and $G_{\infty}$ for the fiber with $(-4)$-curves. For the non-half fiber case, we write $G_0$ and $G_{\infty}$ as $G$ symbolically. 
The configuration of fibers which contain $(-4)$-curves are given in Figure 1.
For each case, we write up $G$.  
In case of type ${\rm I}_n$, $G = \sum_{i = 1}^{n}E_i$.
In case of type ${\rm II}$, $G=E$.
In case of type ${\rm III}$, $G = E_1 + 2E_2$.
In case of type ${\rm IV}$, $G = B_4 + 2E_1 + 2E_2 + 2E_3$. 
\end{remark}   

\subsection{The singularities of the canonical covering}\label{sec4-3}
We first compare the canonical covering $X$ with the surface 
$S_F = S\times_{{\bf P}^1} {\bf P}^1$
which is obtained by the Frobenius pull-back of $S$ as in the previous section.
In this section, for the local parameter $t$ on ${\bf A}^1 \subset {\bf P}^1$
we assume either (Case 1) or (case 2) in Subsection \ref{case1case2}, if otherwise mentions.

\begin{proposition}\label{compare-surface}
Let $\pi : X \longrightarrow S$ be a canonical covering of a Coble surface $S$, and let $f : S \longrightarrow {\bf P}^1$ be a genus $1$ fibration. 
Under the assumption for the parameter $t$, $\pi$ factors through the Frobenius pull-Back
$S_F = S \times_{{\bf P}^1}{\bf P}^1$ and $X$ is isomorphic to $S_F$
except two fibers defined by $t = 0$ and $t = \infty$.
\end{proposition}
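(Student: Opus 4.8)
The plan is to compare the two degree-$2$ purely inseparable covers $\pi:X\to S$ and $pr_1:S_F\to S$ directly over the open part of $S$ lying above ${\bf P}^1\setminus\{P_0,P_\infty\}$, using that both are controlled by the single rational $1$-form $\eta=f^{*}(dt/t)$ of Theorem \ref{forms}. Recall that in Subsection \ref{case1case2} the vanishing $\pi^{*}\eta=\pi^{*}(d(f_ig_i)/(f_ig_i))=d(z_i^2)/z_i^2=0$ was used to produce $s\in k(X)$ with $s^2=t$. The first step is to record the corresponding relation on $S$ itself: the identity $\eta=f^{*}(dt/t)$ reads $d(f_ig_i)/(f_ig_i)=dT/T$ on each $U_i$, where $T:=f^{*}t$, and this is equivalent to $d(f_ig_i/T)=0$. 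Over the perfect field $k$ of characteristic $2$ a rational function with vanishing differential is a square, so $f_ig_i/T=\rho_i^2$ for some $\rho_i\in k(S)$. Using the cocycle relation $f_ig_i=f_{ij}^2\,f_jg_j$ coming from (\ref{cycle}) one gets $\rho_i=f_{ij}\rho_j$, the same transition law obeyed by the $z_i$; hence $s=z_i/\rho_i$ is a globally defined rational function on $X$ coinciding with the $s$ above.

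Granting this, I would next obtain the factorization. The function $s$ together with $\pi$ satisfies $s^2=T=f^{*}t$, which is exactly the Frobenius relation defining $S_F=S\times_{{\bf P}^1}{\bf P}^1$; so by the universal property of the fibre product they determine a comparison morphism over $S$ from the normalization $X'$ of $X$ to $S_F$. This is precisely the morphism $\psi_0$ in the diagram of Subsection \ref{case1case2}, realizing $\pi\circ\nu=pr_1\circ\psi_0$. Thus $\pi$ factors through $S_F$ up to normalization, and the real content of the proposition is that over ${\bf P}^1\setminus\{P_0,P_\infty\}$ this comparison is already an isomorphism, with no normalization needed.

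The decisive point is to locate the locus where $\rho_i$ fails to be invertible and to see that it is confined to the two fibres. Because $\sum_i B_i=-2K_S$ and the fibre class $F$ is nef with $K_S\cdot F=0$, each boundary curve satisfies $B_i\cdot F=0$ and hence lies in a fibre; combining this with the Halphen description of $f$ (Proposition \ref{Halphen}) and the choice of the parameter $t$ (Case 1 or Case 2 of Subsection \ref{case1case2}), all boundary components are contained in $f^{-1}(P_0)\cup f^{-1}(P_\infty)$. Therefore, on $V:=f^{-1}({\bf P}^1\setminus\{P_0,P_\infty\})$ the function $f_ig_i$ is a unit — the boundary is absent there and $g_i$ is a unit by construction — while $T=f^{*}t$ is a unit because $t\neq 0,\infty$ on $V$. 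Hence $\rho_i^2=f_ig_i/T\in{\mathcal O}_V^{*}$, and since $S$ is non-singular its local rings are integrally closed, so $\rho_i$ is itself a regular unit on $V$. The substitution $z_i=\rho_i s$ then turns $z_i^2=f_ig_i$ into $s^2=T$ and yields isomorphisms of ${\mathcal O}_V$-algebras $({\mathcal O}_S[z_i]/(z_i^2-f_ig_i))|_V\cong({\mathcal O}_S[s]/(s^2-T))|_V$; as the $\rho_i$ transform by $f_{ij}$ exactly as the $z_i$ do, these glue to an isomorphism $X|_V\cong S_F|_V$, which is the assertion.

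I expect the main obstacle to be the geometric verification in the last paragraph rather than the algebra: one must check that the boundary curves sit in precisely the two fibres $f^{-1}(P_0),f^{-1}(P_\infty)$ and that $f_ig_i$ and $T$ have the claimed unit behaviour on $V$, being careful in the multiple-fibre situation (Case 2), where $t=uh^2$ and one must confirm that $T$ produces no additional vanishing away from $f^{-1}(P_0)$. Once this localization is secured, extracting the square root $\rho_i$ in characteristic $2$ and recognizing that the two covers differ only by the regular unit $\rho_i$ is routine.
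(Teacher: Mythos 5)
Your proposal is correct in substance, but it takes a genuinely different route from the paper's. The paper argues globally and sheaf-theoretically: from $\pi_{*}{\mathcal O}_X\cong{\mathcal O}_S\oplus\omega_S$ and $f_{*}\omega_S\cong{\mathcal O}_{{\bf P}^1}(-1)$ it computes $f_{*}\pi_{*}{\mathcal O}_X\cong{\mathcal O}_{{\bf P}^1}\oplus{\mathcal O}_{{\bf P}^1}(-1)$, forms $C={\rm Spec}\,f_{*}\pi_{*}{\mathcal O}_X$ so that $f\circ\pi$ factors canonically through $C\to{\bf P}^1$, identifies this purely inseparable degree-$2$ map with the Frobenius (reducedness of $X$ ruling out $z^2=0$), and then gets the isomorphism over $V=S\setminus(f^{-1}(P_0)\cup f^{-1}(P_\infty))$ by comparing $pr_{1*}{\mathcal O}_{S_F}\cong{\mathcal O}_S\oplus f^{*}{\mathcal O}_{{\bf P}^1}(-1)$ with $\pi_{*}{\mathcal O}_X$ on $V$, where $\omega_S\vert_V\cong f^{*}{\mathcal O}_{{\bf P}^1}(-1)\vert_V$. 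You instead extract everything locally from Theorem \ref{forms}: the identity $d(f_ig_i)/(f_ig_i)=dT/T$ gives $d(f_ig_i/T)=0$, hence $f_ig_i/T=\rho_i^2$ with $\rho_i\in k(S)$ (the kernel of $d$ is the set of squares over a perfect field), the cocycle (\ref{cycle}) gives $\rho_i=f_{ij}\rho_j$ by injectivity of Frobenius, and on $V$ the $\rho_i$ are regular units (normality of $S$ plus $\rho_i^2\in{\mathcal O}_V^{*}$, using that all boundary components lie in the two distinguished fibres, which your nefness argument correctly verifies). The substitution $z_i=\rho_i s$ then glues to the desired isomorphism. What your version buys is transparency: the paper's final step compares (\ref{OX_F}) with (\ref{OY}) only as sheaves of modules and is silent about the algebra structures, whereas your unit rescaling makes the algebra isomorphism explicit and even locates the degeneration locus as the divisor of $\rho_i$, whose support (involving $G_0$, $G_{\infty}$) matches the Comparison Theorem $A=A'-G_0-G_{\infty}$. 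What the paper's version buys is the canonical nature of the factorization, valid on $X$ itself.

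That last point is the one place where your write-up falls short of the literal statement: you produce $\psi_0:X'\to S_F$ with $\pi\circ\nu=pr_1\circ\psi_0$, i.e.\ a factorization of $\pi$ only ``up to normalization,'' while the proposition (and the paper, via the canonical morphism $X\to C$) factors $\pi$ itself. This is repairable within your framework: $s=z_i/\rho_i$ defines an honest morphism $X\to{\bf P}^1$, not merely a rational map, because on $\pi^{-1}(V)$ it is part of your isomorphism, along the boundary curves $z_i$ is a local coordinate vanishing to order one on the reduced preimage while $\pi^{*}\rho_i$ vanishes to even order, and along $\pi^{-1}(G_0)$ and $\pi^{-1}(G_{\infty})$ one of $s$, $1/s$ is regular by your divisor computation; then $(\pi,s)$ yields $X\to S_F$ by the universal property of the fibre product. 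Alternatively one can simply invoke, as the paper does, the canonical factorization of $f\circ\pi$ through ${\rm Spec}_{{\bf P}^1}(f\circ\pi)_{*}{\mathcal O}_X$.
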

\begin{proof}
We have $\pi_{*}{\mathcal O}_{X} \cong {\mathcal O}_S \oplus \omega_S$.  Since
$f_{*}\omega_S \cong {\mathcal O}_{{\bf P}^1}(-1)$, we have
$$
f_{*}\pi_{*}{\mathcal O}_{X} \cong f_{*}{\mathcal O}_S \oplus f_{*}\omega_S\cong 
{\mathcal O}_{{\bf P}^1} \oplus {\mathcal O}_{{\bf P}^1}(-1).
$$
Take an affine open subset $U$ of ${\bf P}^1$ and let $R$ be the coordinte ring of $U$:
$U = {\rm Spec}~R$. Then we have
$$
   f_{*}\pi_{*}{\mathcal O}_{X}(U) \cong R \oplus Rz  
   \quad (\exists z \in f_{*}\pi_{*}{\mathcal O}_{X}(U)).
$$   
Since $\pi$ is purely inseparable of degree 2, $z$ is not separable over $R$. Therefore
we have either $z^2 = a$ ($\exists a \in R$, $a\neq 0$)  or 0. Since $Y$ is reduced,
the case $z^2 = 0$ doesn't occur. Therefore, we have $z^2 = a$ ($\exists a \in R$, $a\neq 0$).

Put $C = {\rm Spec}~f_{*}\pi_{*}{\mathcal O}_{X}$. Then, we have a commutative diagram
$$
\begin{array}{ccc}
     X   &  \stackrel{\pi}{\longrightarrow}   &  S  \\
     \downarrow  &   &   \downarrow f  \\
          C  & \stackrel{F}{\longrightarrow} & {\bf P}^1.
\end{array}
$$
The morphism $F$ becomes the Frobenius morphism and $f\circ \pi$ factors through 
the Frobenius morphism. 
It is easy to see that 
$F_{*}{\mathcal O}_{{\bf P}^1} \cong {\mathcal O}_{{\bf P}^1} \oplus {\mathcal O}_{{\bf P}^1}(-1)$.
Therefore, for $pr_1 : S_F \longrightarrow S$, we have
\begin{equation}\label{OX_F}
pr_{1*}{\mathcal O}_{S_F} 
\cong {\mathcal O}_S \otimes_{{\mathcal O}_{{\bf P}^1}}({\mathcal O}_{{\bf P}^1} \oplus {\mathcal O}_{{\bf P}^1}(-1)) \cong {\mathcal O}_S \oplus f^{*}{\mathcal O}_{{\bf P}^1}(-1).
\end{equation}
Put $V = S \setminus \{f^{-1}(P_0) \cup f^{-1}(P_{\infty})\}$. 
Then, on the open set $V$, we have
\begin{equation}\label{OY}
    \pi_{*}{\mathcal O}_X\vert_{V} \cong ({\mathcal O}_S \oplus \omega_S)\vert_{V} 
    \cong ({\mathcal O}_S \oplus f^{*}{\mathcal O}_{{\bf P}^1}(-1))\vert_{V}.
\end{equation}
Comparing (\ref{OX_F}) with (\ref{OY}), we get the proof of the latter part.
\end{proof}

We state here some conditions for the non-emptiness of the conductrix $A$.
\begin{lemma}\label{hascusp}
If the genus $1$ fibration $f : S \longrightarrow {\bf P}^1$ is quasi-elliptic,
then the conductrix $A$ contains the curve of cusps as a simple component.
In particular, $A \neq 0$.
\end{lemma}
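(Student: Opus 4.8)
The plan is to compute the order of vanishing of the rational $1$-form $\eta = f^{*}(dt/t)$ along the curve of cusps and to read off the multiplicity of that curve in $A$ from Theorem \ref{eta}.

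First I would set up the bookkeeping. By Theorem \ref{forms} we have $\eta = f^{*}(dt/t)$, and by Theorem \ref{eta} its divisorial part is $(\eta) = -\sum_{i=1}^{n} B_i + 2A$. Let $\Gamma$ denote the curve of cusps. Since $\Gamma$ is a $2$-section of $f$, it is a horizontal curve: it is none of the (vertical) boundary components $B_i$, and it is not contained in any fibre. Hence the coefficient of $\Gamma$ in $(\eta)$ equals the coefficient of $\Gamma$ in $2A$. Thus it suffices to prove that $\eta$ vanishes to order exactly $2$ along $\Gamma$; this forces the coefficient of $\Gamma$ in $A$ to be exactly $1$, so $\Gamma$ is a simple component of $A$, and in particular $A\neq 0$.

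Next I would carry out a local computation near a general point $P$ of $\Gamma$, where the fibre is an ordinary cuspidal curve of type II. There I may use a Weierstrass model $y^2 = x^3 + a_2 x^2 + a_4 x + a_6$ with $a_j = a_j(t)$. Taking the exterior derivative in characteristic $2$ kills $2y\,dy$ and $2a_2 x\,dx$, giving the relation $(x^2 + a_4)\,dx + g_t\,dt = 0$ with $g_t = a_2' x^2 + a_4' x + a_6'$; hence $f^{*}(dt) = \tfrac{x^2 + a_4}{g_t}\,dx$ and $\Gamma = \{x^2 + a_4 = 0\}$. At a general $P$ one checks that $(x,y)$ is a regular system of parameters, that $g_t$ and $t$ are units there, and that $dy$ does not enter $f^{*}(dt)$, so that the order of $\eta$ along $\Gamma$ equals the order of $g_x := x^2 + a_4(t)$ along $\Gamma$. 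From $d(g_x) = a_4'\,dt = (a_4'/g_t)\,g_x\,dx$ we see that $d(g_x)$ is divisible by $g_x$, which in characteristic $2$ already forces this order to be even. To see that it is exactly $2$, restrict to a transversal $\{y = y_0\}$ and write $\xi = x - x_0$: the identity $x^2 = x_0^2 + \xi^2$ supplies a clean $\xi^2$, while $\partial_x t = g_x/g_t$ vanishes along $\Gamma$ (to order $\geq 2$), so that $t - t_0$, and hence $a_4(t) - a_4(t_0)$, is of order $\geq 3$ in $\xi$. Therefore $g_x = \xi^2 + (\text{higher order})$, and the order of $g_x$ along $\Gamma$ is precisely $2$. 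Consequently $f^{*}(dt)$, and hence $\eta$, vanishes to order exactly $2$ along $\Gamma$, which gives the claim.

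The main obstacle is this final order computation. The characteristic-$2$ features make it delicate: the map $f$ is inseparable along $\Gamma$, so $dt|_{\Gamma} = 0$, and all formal derivatives of $g_x$ at $P$ vanish, so naive Taylor expansion detects nothing and one cannot a priori exclude vanishing of order $>2$. The decisive $\xi^2$-contribution comes not from any first-order information but from the Frobenius-type identity $x^2 = x_0^2 + \xi^2$, together with the fact that the feedback term from $a_4(t)$ is pushed to order $\geq 3$. Some care is also needed to justify that $(x,y)$ really are local coordinates near a general point of $\Gamma$ and that the Weierstrass model is available there; both hold generically along $\Gamma$, away from the finitely many special fibres.
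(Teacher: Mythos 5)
Your reduction is sound and matches the paper's strategy: by Theorems \ref{forms} and \ref{eta}, since the curve of cusps $\Gamma$ is horizontal, its coefficient in $(\eta)$ is twice its coefficient in $A$, so everything comes down to showing $\mathrm{ord}_{\Gamma}(\eta)=2$; your parity argument via $d(g_x)=a_4'\,dt=(a_4'/g_t)\,g_x\,dx$ is also correct and gives that the order is even. But the decisive step --- ruling out order $\geq 4$ --- contains a genuine characteristic-$2$ error, at exactly the place you yourself flag as the main obstacle. From ``$\partial_\xi(t-t_0)$ vanishes to order $\geq 2$'' you infer ``$t-t_0$ is of order $\geq 3$ in $\xi$''. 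Antidifferentiation does not work this way in characteristic $2$: $\partial_\xi(c_2\xi^2)=0$, so a nonzero term $c_2\xi^2$ in $t-t_0$ is invisible to $\partial_\xi$, and $c_2$ (a second Hasse derivative of the implicit function $t=\varphi(x,y)$) is not controlled by any first-order data in your argument. This failure occurs in genuine quasi-elliptic examples: for $y^2=x^3+tx+t$ (whose generic fiber is regular with cusp at $x=y=t^{1/2}$, with local uniformizer $x+y$) one has $t=(y^2+x^3)/(x+1)$, the curve of cusps is $\Gamma=\{t=x^2,\ y=x\}$, and on your slice $\{y=x_0\}$ through a general point $(x_0,x_0)$ of $\Gamma$, writing $x=x_0+\xi$, a direct computation gives
$$
t-t_0=\frac{\xi^2(x_0+\xi)}{x_0+1+\xi},
$$
of order exactly $2$, not $\geq 3$. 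Consequently $a_4(t)-a_4(t_0)=a_4'(t_0)c_2\xi^2+O(\xi^3)$ with $c_2\neq 0$, and your ``clean $\xi^2$'' from $x^2=x_0^2+\xi^2$ can a priori be cancelled: $g_x|_T=\bigl(1+a_4'(t_0)c_2\bigr)\xi^2+O(\xi^3)$. In this example $1+a_4'c_2=1/(x_0+1)\neq 0$, so the conclusion happens to hold, but nothing in your proposal excludes the cancellation $1+a_4'(t_0)c_2=0$ generically along $\Gamma$, i.e., excludes multiplicity $\geq 2$ of $\Gamma$ in $A$. (A secondary, fixable issue: the slice $\{y=y_0\}$ can fail to be transverse to $\Gamma$ along \emph{all} of $\Gamma$, e.g., for $y^2=x^3+tx$ one has $y\equiv 0$ on $\Gamma$; and on the correct transversal $\{x=x_0\}$ there the entire order-$2$ part of $g_x$ comes from the very ``feedback'' term $t-t_0=y^2/x_0$ you wanted to suppress, showing the picture ``clean $\xi^2$ plus negligible feedback'' is slice-dependent and wrong in general.)

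What your correct steps do salvage is the weaker half of the lemma: $g_x$ vanishes along $\Gamma$ and its order is even, hence $\geq 2$, so $\Gamma\subset A$ and in particular $A\neq 0$. What is missing is precisely the simplicity statement (order exactly $2$), and this is the content the paper gets from the Bombieri--Mumford local normal form: at a general point of the curve of cusps the fibration is $t=y^2+x^3$, so $\eta=dt/t=x^2\,dx/t$ vanishes to order exactly $2$ along $\{x=0\}$. Repairing your Weierstrass computation would require controlling the second Hasse derivative $c_2$, which in effect amounts to reproving that normal form; the cleanest fix is to invoke \cite{BM} at this point, as the paper does.
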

\begin{proof}
The general fiber is of the form $t = y^2 + x^3$ (cf. Bombieri and Mumford \cite{BM}).
Therefore, by $dt/t = x^2dx/t$ we see that divisorial part $(\eta) = (dt/t) $
 contains the curve of cusps as a double component. Therefore, $A$ contains
 the curve of cusps as a simple component.
\end{proof}

\begin{lemma}\label{Dynkin}
Let $Z$ be a fundamental divisor of ADE configulation on a Coble
surface $S$. Then,
$$
   {\rm H}^0(Z, {\mathcal O}_Z) = 1, ~{\rm H}^1(Z, {\mathcal O}_Z) = 0.
$$
\end{lemma}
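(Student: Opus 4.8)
The plan is to reduce everything to the classical fact that an ADE configuration of $(-2)$-curves is the exceptional set of a rational double point, and to prove the two cohomological equalities by an explicit computation sequence; since the only cohomology that enters is that of ${\bf P}^1$, the argument is insensitive to the characteristic and applies verbatim in characteristic $2$.

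First I would collect the numerical data. Writing $Z=\sum_i m_iC_i$ for the fundamental cycle, each $C_i$ is a $(-2)$-curve, hence a smooth rational curve with $C_i^2=-2$ and, by adjunction, $C_i\cdot K_S=0$; therefore $Z\cdot K_S=0$. Since the dual graph is an $A$--$D$--$E$ Dynkin diagram, a purely combinatorial computation on the corresponding negative-definite root lattice gives $Z^2=-2$. Applying Riemann--Roch on $S$ to the sequence $0\to\mathcal{O}_S(-Z)\to\mathcal{O}_S\to\mathcal{O}_Z\to0$ then yields $\chi(\mathcal{O}_Z)=-\tfrac12(Z^2+Z\cdot K_S)=1$, i.e. $p_a(Z)=0$. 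Hence it suffices to prove ${\rm H}^1(Z,\mathcal{O}_Z)=0$; the value $\dim{\rm H}^0(Z,\mathcal{O}_Z)=1$ then follows from $\chi(\mathcal{O}_Z)=1$.

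Next I would build a computation sequence $0=Z_0<Z_1<\cdots<Z_N=Z$ with $Z_j=Z_{j-1}+C_{i_j}$, starting from an arbitrary component and, at each stage, adjoining a component $C_{i_j}$ with $Z_{j-1}\cdot C_{i_j}>0$; this Laufer-type algorithm terminates precisely at the fundamental cycle. The essential point is that every increment equals $1$: from $p_a(A+B)=p_a(A)+p_a(B)+A\cdot B-1$ and $p_a(C_{i_j})=0$ one gets $p_a(Z_j)-p_a(Z_{j-1})=Z_{j-1}\cdot C_{i_j}-1\ge0$, so $p_a$ is non-decreasing along the sequence; as it starts at $p_a(Z_1)=0$ and ends at $p_a(Z)=0$, every increment vanishes, forcing $Z_{j-1}\cdot C_{i_j}=1$ for all $j\ge2$.

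Finally I would run the induction. For each step the structure sequence reads
\[
0\longrightarrow \mathcal{O}_S(-Z_{j-1})|_{C_{i_j}}\longrightarrow \mathcal{O}_{Z_j}\longrightarrow \mathcal{O}_{Z_{j-1}}\longrightarrow 0,
\]
where $\mathcal{O}_S(-Z_{j-1})|_{C_{i_j}}$ is the line bundle of degree $-Z_{j-1}\cdot C_{i_j}=-1$ on $C_{i_j}\cong{\bf P}^1$. Because ${\rm H}^0({\bf P}^1,\mathcal{O}(-1))={\rm H}^1({\bf P}^1,\mathcal{O}(-1))=0$, the long exact cohomology sequence gives isomorphisms ${\rm H}^k(\mathcal{O}_{Z_j})\cong{\rm H}^k(\mathcal{O}_{Z_{j-1}})$ for $k=0,1$. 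Starting from $Z_1=C_{i_1}\cong{\bf P}^1$, for which $\dim{\rm H}^0=1$ and ${\rm H}^1=0$, the induction yields $\dim{\rm H}^0(Z,\mathcal{O}_Z)=1$ and ${\rm H}^1(Z,\mathcal{O}_Z)=0$. The one point demanding care is the existence of a computation sequence with all increments equal to $1$ --- equivalently the rationality of the ADE configuration --- but this is exactly what the monotonicity of $p_a$ delivers once $p_a(Z)=0$ is known, and since every cohomology group used is that of a line bundle on ${\bf P}^1$ nothing is special to characteristic $2$.
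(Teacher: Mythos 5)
Your proof is correct, but it follows a genuinely different route from the paper's. The paper argues globally on $S$: it shows ${\rm H}^2(S,\mathcal{O}_S(-Z))=0$ by Serre duality together with the negative definiteness of the lattice spanned by the components of $Z$, deduces ${\rm H}^1(S,\mathcal{O}_S(-Z))=0$ from Riemann--Roch using the same numerical inputs $Z^2=-2$ and $K_S\cdot Z=0$ (which it gets from $-2K_S\sim\sum_i B_i$ and the fact that $(-2)$-curves avoid the boundary, where you instead use adjunction on each $C_i$), and then reads both assertions off the long exact sequence of $0\to\mathcal{O}_S(-Z)\to\mathcal{O}_S\to\mathcal{O}_Z\to 0$, using ${\rm H}^1(S,\mathcal{O}_S)=0$ because $S$ is rational. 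You instead work intrinsically on $Z$: after computing $\chi(\mathcal{O}_Z)=1$, you build a Laufer computation sequence, use the monotonicity of $p_a$ to force every increment $Z_{j-1}\cdot C_{i_j}=1$, and conclude by induction since each decomposition sequence has kernel $\mathcal{O}_{\mathbf{P}^1}(-1)$, which is cohomologically trivial. Your argument is longer but more robust: it never uses the rationality of $S$ or the structure of $|-2K_S|$, so it establishes the lemma for an ADE configuration of $(-2)$-curves on an arbitrary smooth surface --- in effect re-proving Artin's rationality of ADE fundamental cycles --- whereas the paper's argument is shorter but leans on Coble-surface-specific vanishing. The two facts you invoke without proof (termination of the Laufer algorithm at the fundamental cycle, and $Z^2=-2$ for the highest root) are standard, and the paper likewise asserts $Z^2=-2$ without argument; both proofs are, as you observe, insensitive to the characteristic.
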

\begin{proof}
Since the lattice generated by the irreducible components of $Z$
in the N\'eron-Severi group ${\rm NS}(S)$ is negative definite, we have
${\rm H}^0(S, {\mathcal O}_S(K_S + Z)) = 0$. Therefore, by the Serre duality
we have ${\rm H}^2(S, {\mathcal O}_S(-Z)) = 0$.
Since $Z$ is a fundamental divisor of ADE configulation,
we have $Z^2 = -2$, and since any $(-2)$-curve does not intersect the boundary
divisor $\sum_{i= 1}^n B_i$, we have $K_S\cdot Z = 0$.
Therefore, by the Riemann-Roch theorem we have ${\rm H}^1(S, {\mathcal O}_S(-Z)) = 0$.
Considering the exact sequence
$$
0\longrightarrow {\mathcal O}_S(-Z) \longrightarrow {\mathcal O}_S \longrightarrow {\mathcal O}_Z
\longrightarrow 0,
$$
we have a long exact sequence
$$
\begin{array}{c}
 0 \longrightarrow k  \longrightarrow  {\rm H}^0(Z, {\mathcal O}_Z) \longrightarrow 
 {\rm H}^1(S, {\mathcal O}_S(-Z)) \\
 \longrightarrow {\rm H}^1(S, {\mathcal O}_S)
 \longrightarrow {\rm H}^1(Z, {\mathcal O}_Z) \longrightarrow {\rm H}^2(S, {\mathcal O}_S(-Z)).
\end{array}
$$
Since $S$ is rational, we have ${\rm H}^1(S, {\mathcal O}_S)= 0$.
Therefore, we have the results.
\end{proof}

\begin{proposition}\label{SchroeerThm}
Assume $S$ contains a divisor whose the dual graph is of type $D_4$.  Then the central component is contained in $A$.  In particular, $A\ne 0$.
\end{proposition}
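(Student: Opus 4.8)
The plan is to work directly with the rational $1$-form $\eta$ furnished by Theorem \ref{eta}, whose divisor is $(\eta)=-\sum_i B_i+2A$, and to restrict it to the central component. Write $C_0$ for the central $(-2)$-curve of the $D_4$ configuration and $C_1,C_2,C_3$ for the three outer components, meeting $C_0$ at three distinct points $p_1,p_2,p_3$ (and pairwise disjoint). Since a $(-2)$-curve is disjoint from the boundary, the only poles of $\eta$ (which sit along the $B_i$) avoid a neighborhood of the whole configuration, so $\eta$ is regular there. I argue by contradiction: assume $C_0\not\subseteq A$, so that the coefficient $2\,\mathrm{mult}_{C_0}(A)$ of $C_0$ in $(\eta)$ is $0$ and, for $\iota:C_0\hookrightarrow S$, the pullback $\iota^*\eta$ is a regular, nonzero section of $\iota^*\Omega^1_S$.

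First I would exploit the conormal exact sequence $0\to N^*_{C_0/S}\to \iota^*\Omega^1_S\to \Omega^1_{C_0}\to 0$. Since $C_0\cong\mathbf{P}^1$ and $C_0^2=-2$, one has $N^*_{C_0/S}\cong\mathcal{O}(2)$ and $\Omega^1_{C_0}\cong\mathcal{O}(-2)$. The image of $\iota^*\eta$ in $\Omega^1_{C_0}$ is a global section of $\mathcal{O}(-2)$, hence zero; thus $\iota^*\eta$ lies in $H^0(C_0,N^*_{C_0/S})=H^0(\mathbf{P}^1,\mathcal{O}(2))$. Concretely, in local coordinates $(x,y)$ with $C_0=\{y=0\}$ and $\eta=P\,dx+Q\,dy$, the vanishing of the tangential part says exactly that $P$ vanishes along $C_0$, and identifies $\iota^*\eta$ with the section $Q|_{C_0}$ of $\mathcal{O}(2)$, which has at most two zeros unless it is identically zero.

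The crux is the local analysis at each $p_i$, showing $\iota^*\eta$ must vanish there. Take coordinates with $C_0=\{y=0\}$ and $C_i=\{x=0\}$. If $C_i\not\subseteq A$, the same conormal argument applied to $C_i$ forces the $dy$-coefficient $Q$ to vanish along $C_i$, so $Q(p_i)=0$; if instead $C_i\subseteq A$, then $\eta$ vanishes to (even) order $\ge 2$ along $C_i$, whence $x\mid Q$ and again $Q(p_i)=0$. Either way $\iota^*\eta$ vanishes at $p_1,p_2,p_3$. Since a nonzero section of $\mathcal{O}(2)$ on $\mathbf{P}^1$ has only two zeros, I conclude $\iota^*\eta\equiv 0$, i.e.\ $Q$ also vanishes along $C_0$. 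Together with $P|_{C_0}=0$ this gives $y\mid\eta$, so $\eta$ vanishes to order $\ge 1$ along $C_0$; as this order equals the even number $2\,\mathrm{mult}_{C_0}(A)$, it is in fact $\ge 2$, forcing $\mathrm{mult}_{C_0}(A)\ge 1$ and contradicting $C_0\not\subseteq A$. Hence $C_0\subseteq A$, and in particular $A\ne 0$.

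The step I expect to be the main obstacle is the uniform treatment of the triple points: one must check that $\iota^*\eta$ is forced to vanish at each $p_i$ \emph{regardless} of whether the outer curve $C_i$ itself lies in $A$, so that the three forced zeros genuinely overdetermine a section of $\mathcal{O}(2)$. A secondary point to verify carefully is the parity argument — that the vanishing order of $\eta$ along any non-boundary curve is always even because $(\eta)=-\sum_i B_i+2A$ — which is what upgrades the bound ``order $\ge 1$'' to ``order $\ge 2$'' and lets one read off $\mathrm{mult}_{C_0}(A)\ge 1$. I would also double-check the regularity of $\iota^*\eta$ along $C_0$, namely that $\eta$ has no poles near the configuration, so that the degree and zero count on $\mathbf{P}^1$ are legitimate.
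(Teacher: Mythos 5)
Your proof is correct, and it takes a genuinely different route from the paper's. The paper forms the fundamental cycle $Z=2E_0+E_1+E_2+E_3$ of the $D_4$ configuration, uses Lemma \ref{Dynkin} to see that ${\rm H}^0(Z,\mathcal{O}_Z)=k$ and ${\rm H}^1(Z,\mathcal{O}_Z)=0$, so that $Z$ is of rational type in the sense of Schr\"oer, and then simply quotes Schr\"oer \cite[Proposition 4.6]{S} to conclude $E_0\subset A$. You instead argue directly with the $1$-form $\eta$ of Theorem \ref{eta}: since $(-2)$-curves avoid the boundary, $\eta$ is regular near the configuration; the conormal sequence kills the tangential part of the restriction of $\eta$ to each complete rational component (because ${\rm H}^0(\mathbf{P}^1,\mathcal{O}(-2))=0$), so $\iota^*\eta$ becomes a section of $N^*_{C_0/S}\cong\mathcal{O}(2)$, and the same tangential vanishing applied to the three branches $C_1,C_2,C_3$ forces zeros at the three distinct nodes --- note this makes your case split on whether $C_i\subseteq A$ superfluous, since $Q|_{C_i}=0$ follows from the conormal argument in either case --- whence $\iota^*\eta\equiv 0$ and ${\rm ord}_{C_0}(\eta)\geq 1$; and since this order equals $2\,{\rm mult}_{C_0}(A)$, already integrality (your parity upgrade is not even needed) gives ${\rm mult}_{C_0}(A)\geq 1$, the desired contradiction. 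Your argument is essentially the weight-function argument of Ekedahl and Shepherd-Barron alluded to before Lemma \ref{exclude} (a vertex of weight $0$ can be adjacent to at most two others of weight $0$), made explicit: three forced zeros overdetermine a section of a degree-two line bundle on $\mathbf{P}^1$. It has two concrete advantages: it is self-contained, using only Theorem \ref{eta} and the disjointness of $(-2)$-curves from the boundary, and it does not presuppose $A\neq 0$ --- which matters here, since the $r,s$-machinery of Section \ref{sec4} is developed under the standing assumption $A\neq 0$ and could not be used without circularity, exactly the reason the paper resorts to an external citation. What the paper's route buys in exchange is brevity and access to Schr\"oer's more general structure theory of K3-like coverings; what yours buys is transparency about why valency three at the central vertex is the true obstruction.
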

\begin{proof}
Let $E_0, E_1, E_2, E_3$ be the components of $Z$ with $E_0\cdot E_i=1$ $(i=1,2,3)$.
Then the divisor $Z=2E_0+E_1+E_2+E_3$ is a fundamental divisor of type $D_4$.
By Lemma \ref{Dynkin}, it is rational type in the sence of Schr\"oer \cite{S}.
It now follows from Schr\"oer \cite[Proposition 4.6]{S} that $E_0 \subset A$.
\end{proof}

\begin{corollary}\label{non-emptyA}
If a genus $1$ fibration $f : S \longrightarrow {\bf P}^1$ has 
a singular fiber of type ${\rm I}_m^*$, ${\rm II}^{*}$,
${\rm III}^{*}$ or ${\rm IV}^{*}$,
then we have the conductrix $A\neq 0$.
\end{corollary}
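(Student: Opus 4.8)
The plan is to reduce the statement to Proposition~\ref{SchroeerThm} by locating, inside each of the listed fibers, four $(-2)$-curves whose dual graph is of type $D_4$.

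First I recall (see \S\ref{sec2}) that a singular fiber of type ${\rm I}_m^*$, ${\rm II}^*$, ${\rm III}^*$ or ${\rm IV}^*$ has dual graph of type $\tilde{D}_{m+4}$, $\tilde{E}_8$, $\tilde{E}_7$ or $\tilde{E}_6$, respectively. By Remark~\ref{HalphenBlowUp}, in passing from a Halphen surface to the Coble surface $S$ only fibers of type ${\rm II}$, ${\rm III}$, ${\rm IV}$ or ${\rm I}_n$ are blown up; hence none of the four starred fibers is modified, and all of their components are $(-2)$-curves on $S$ realizing exactly the intersection pattern of the corresponding extended Dynkin diagram. (If such a fiber happens to be a half-fiber, the same applies to its reduced components.)

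The key observation is that each of $\tilde{D}_{m+4}$ $(m\geq 0)$, $\tilde{E}_6$, $\tilde{E}_7$, $\tilde{E}_8$ possesses a node of valency at least three. Taking such a node $E_0$ together with three of its neighbours $E_1, E_2, E_3$ yields $(-2)$-curves with $E_0\cdot E_i=1$ $(i=1,2,3)$ and $E_i\cdot E_j=0$ for distinct $i,j\in\{1,2,3\}$, so that $Z=2E_0+E_1+E_2+E_3$ is a fundamental divisor of type $D_4$ on $S$. (For $\tilde{D}_4={\rm I}_0^*$ the central node is four-valent and one simply picks any three of its four neighbours.) Proposition~\ref{SchroeerThm} then applies directly and shows $E_0\subset A$, whence $A\neq 0$.

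I expect no real obstacle here: once the $D_4$ subdiagram is identified the conclusion is immediate, and the argument is uniform across the elliptic and quasi-elliptic cases. The only point needing care is the verification that the components of the starred fibers are genuine $(-2)$-curves on $S$ rather than boundary components or blown-up configurations; this is precisely what Remark~\ref{HalphenBlowUp} guarantees, together with the fact noted in \S\ref{surfaces} that $(-2)$-curves are disjoint from the boundary.
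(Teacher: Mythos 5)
Your proposal is correct and is essentially the paper's own argument: the paper's proof of Corollary \ref{non-emptyA} consists of observing that each of the listed fibers contains a fundamental divisor of type $D_4$ and then invoking Proposition \ref{SchroeerThm}. You have merely made explicit the two points the paper leaves implicit --- the choice of a trivalent node in $\tilde{D}_{m+4}$, $\tilde{E}_6$, $\tilde{E}_7$, $\tilde{E}_8$ and the fact (via Remark \ref{HalphenBlowUp}) that these fibers survive the blow-ups as configurations of genuine $(-2)$-curves --- which is a reasonable amount of added care.
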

\begin{proof}
Obviously each of these singular fibers contains a fundamental divisor of type $D_4$.
Thus the assertion follows from Proposition \ref{SchroeerThm}.
\end{proof}

\begin{lemma}\label{rational elliptic}
Let $g : Y \longrightarrow {\bf P}^1$ be a relatively minimal rational elliptic surface
without multiple fiber, and let $F : {\bf P}^1 \longrightarrow {\bf P}^1$ 
be the Frobenius morphism.
Then, the Frobenius pull-back $Y_F = Y \times_{{\bf P}^1} {\bf P}^1$
is a K3-like surface.
\end{lemma}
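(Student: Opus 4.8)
The plan is to run the proof of Theorem \ref{K3-like} essentially verbatim, with the rational elliptic surface $Y$ playing the role of the Coble surface $S$ and $pr_1 : Y_F \to Y$ playing the role of the canonical covering $\pi : X \to S$. The two structural inputs I need are, first, that $\omega_Y \cong g^{*}{\mathcal O}_{{\bf P}^1}(-1)$, and second, that $pr_{1*}{\mathcal O}_{Y_F} \cong {\mathcal O}_Y \oplus \omega_Y$. Once these are in hand, the rest of the argument (duality for finite flat morphisms, then an Euler-characteristic count) is formally identical to what was done for $X \to S$.

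First I would identify $\omega_Y$. Since $g : Y \to {\bf P}^1$ is relatively minimal with no multiple fibers and $Y$ is rational (so $\chi({\mathcal O}_Y) = 1$), the canonical bundle formula for genus $1$ fibrations gives $\omega_Y \cong g^{*}{\mathcal O}_{{\bf P}^1}(-1)$, i.e. $K_Y$ is linearly equivalent to minus a fiber; this is exactly the place where the hypothesis ``without multiple fiber'' is used, as it is what makes $K_Y$ a pullback from the base. Next, the square defining $Y_F = Y \times_{{\bf P}^1}{\bf P}^1$ shows that $pr_1$ is the Frobenius base change of $g$, hence finite and flat of degree $2$. Applying flat base change to $F_{*}{\mathcal O}_{{\bf P}^1} \cong {\mathcal O}_{{\bf P}^1} \oplus {\mathcal O}_{{\bf P}^1}(-1)$ — the very computation already carried out in Proposition \ref{compare-surface}, equation (\ref{OX_F}) — yields
$$
pr_{1*}{\mathcal O}_{Y_F} \cong g^{*}F_{*}{\mathcal O}_{{\bf P}^1} \cong {\mathcal O}_Y \oplus g^{*}{\mathcal O}_{{\bf P}^1}(-1) \cong {\mathcal O}_Y \oplus \omega_Y,
$$
which is precisely the splitting (\ref{direct}) used in Theorem \ref{K3-like}.

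With these in place I would finish as in Theorem \ref{K3-like}. Because $pr_1$ is flat and $Y$ is regular, Lemma \ref{Cohen-Macauley} guarantees that $Y_F$ is Cohen--Macaulay, so the duality theorem for finite flat morphisms applies and gives
$$
pr_{1*}\omega_{Y_F} \cong {\mathcal H}om_{{\mathcal O}_Y}(pr_{1*}{\mathcal O}_{Y_F},\,\omega_Y) \cong {\mathcal H}om_{{\mathcal O}_Y}({\mathcal O}_Y \oplus \omega_Y,\,\omega_Y) \cong \omega_Y \oplus {\mathcal O}_Y \cong pr_{1*}{\mathcal O}_{Y_F},
$$
whence $\omega_{Y_F} \cong {\mathcal O}_{Y_F}$. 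For the cohomology vanishing I would first note $H^{0}(Y_F,{\mathcal O}_{Y_F}) = H^{0}(Y, {\mathcal O}_Y) \oplus H^{0}(Y, g^{*}{\mathcal O}_{{\bf P}^1}(-1)) = k$, the second summand vanishing since $g_{*}{\mathcal O}_Y = {\mathcal O}_{{\bf P}^1}$ and ${\mathcal O}_{{\bf P}^1}(-1)$ has no sections. Then $Y_F$ being Cohen--Macaulay with trivial dualizing sheaf, Serre duality gives $\dim H^{2}(Y_F,{\mathcal O}_{Y_F}) = \dim H^{0}(Y_F,\omega_{Y_F}) = 1$, while $\chi({\mathcal O}_{Y_F}) = \chi(pr_{1*}{\mathcal O}_{Y_F}) = \chi({\mathcal O}_Y \oplus \omega_Y) = 2\chi({\mathcal O}_Y) = 2$; comparing yields $\dim H^{1}(Y_F,{\mathcal O}_{Y_F}) = 0$, so $Y_F$ is $K3$-like.

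The only genuinely new content beyond Theorem \ref{K3-like} is establishing the two structural inputs, and the step I expect to require the most care is verifying that $\omega_Y$ is pulled back from ${\bf P}^1$: this is exactly where ``relatively minimal and no multiple fiber'' must be invoked, since a multiple fiber would contribute a non-pullback term $(m-1)P$ to $K_Y$ and break the clean splitting $pr_{1*}{\mathcal O}_{Y_F} \cong {\mathcal O}_Y \oplus \omega_Y$. I would also make sure the Cohen--Macaulayness of $Y_F$ (via Lemma \ref{Cohen-Macauley}) is explicitly recorded, since both the finite-flat duality and the Serre duality step depend on it.
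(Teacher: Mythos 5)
Your proposal is correct and follows essentially the same route as the paper's proof: both identify $\omega_Y$ with $g^{*}\mathcal{O}_{\mathbf{P}^1}(-1)$ (equivalently $\mathcal{O}_Y(-g^{-1}(P))$), deduce $pr_{1*}\mathcal{O}_{Y_F}\cong\mathcal{O}_Y\oplus\omega_Y$ from $F_{*}\mathcal{O}_{\mathbf{P}^1}\cong\mathcal{O}_{\mathbf{P}^1}\oplus\mathcal{O}_{\mathbf{P}^1}(-1)$, apply duality for the finite morphism $pr_1$ to get $\omega_{Y_F}\cong\mathcal{O}_{Y_F}$, and conclude $\mathrm{H}^1(Y_F,\mathcal{O}_{Y_F})=0$ from $\chi(\mathcal{O}_{Y_F})=2\chi(\mathcal{O}_Y)=2$. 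The only differences are cosmetic: you make explicit some steps the paper leaves implicit, namely the Cohen--Macaulayness of $Y_F$ via Lemma \ref{Cohen-Macauley} and the computation $\dim\mathrm{H}^0(Y_F,\mathcal{O}_{Y_F})=1$ needed to extract the $\mathrm{H}^1$ vanishing from the Euler characteristic.
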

\begin{proof}
We consider the cartesian diagram
$$
\begin{array}{lccc}
      Y_F = & Y \times_{{\bf P}^1} {\bf P}^1 & \stackrel{pr}{\longrightarrow} & Y \\
            & \downarrow   &       &   \downarrow g \\
            & {\bf P}^1 &   \stackrel{F}{\longrightarrow}  & {\bf P}^1.
\end{array}
$$
Let $g^{-1}(P)$ be a fiber of $g$.
It is easy to see that
$F_{*}{\mathcal O}_{{\bf P}^1} = {\mathcal O}_{{\bf P}^1} \oplus {\mathcal O}_{{\bf P}^1}(-1)$.
Therefore, we have
$$
      pr_{*}{\mathcal O}_{Y_F} \cong 
({\mathcal O}_{{\bf P}^1} \oplus {\mathcal O}_{{\bf P}^1}(-1))\otimes_{{\mathcal O}_{{\bf P}^1}}{\mathcal O}_{Y} = {\mathcal O}_{Y}\oplus {\mathcal O}_{Y}(-g^{-1}(P)).
$$
Since the dualizing sheaf $\omega_Y$ of $Y$ is given by ${\mathcal O}_{Y}(-g^{-1}(P))$ and
the morphism $pr$ is a finite morphism, the dualizing sheaf $\omega_{Y_F}$ of $Y_F$ is given by
$$
\begin{array}{cl}
   pr_{*}(\omega_{Y_F}) &\cong  {\mathcal H}om_{{\mathcal O}_Y}(pr_{*}{\mathcal O}_{Y_F}, \omega_Y)\\
     &\cong {\mathcal H}om_{{\mathcal O}_Y}({\mathcal O}_{Y}\oplus {\mathcal O}_{Y}(-g^{-1}(P)), {\mathcal O}_{Y}(-g^{-1}(P)))\\
     & \cong {\mathcal O}_{Y}(-g^{-1}(P))\oplus {\mathcal O}_{Y} \cong pr_{*}({\mathcal O}_{Y_F})
\end{array}
$$
Therefore, we have $\omega_{Y_F}\cong {\mathcal O}_{Y_F}$. Since 
$$
\chi ({\mathcal O}_{Y_F}) = \chi(pr_{*}({\mathcal O}_{Y_F}))
= \chi({\mathcal O}_Y) + \chi({\mathcal O}_{Y}(-g^{-1}(P)))= 2\chi({\mathcal O}_Y) = 2,
$$
we have ${\rm H}^1 (Y_F, {\mathcal O}_{Y_F}) = 0$. Therefore, $Y_F$ is 
a K3-like surface. 
\end{proof} 

\begin{lemma}\label{non-rational singularity}
Let $X$ be a normal K3-like surface. If $X$ has a non-rational singularity,
then $X$ is a rational surface.
\end{lemma}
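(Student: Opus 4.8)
The plan is to pass to the minimal resolution $\rho:\widetilde{X}\to X$ and prove that $\widetilde{X}$ is a smooth rational surface; since $\widetilde{X}$ is birational to $X$ this yields the assertion. First I would pin down the shape of $K_{\widetilde{X}}$. Because $\omega_X\cong\mathcal{O}_X$ we have $K_X\sim 0$, so the ramification formula for the minimal resolution reads $K_{\widetilde{X}}=\rho^{*}K_X+\sum_i a_iE_i=\sum_i a_iE_i$, where the $E_i$ are the exceptional curves and the $a_i$ are integers (the singularity is Gorenstein, since $\omega_X$ is invertible). Minimality of $\rho$ forces $K_{\widetilde{X}}$ to be $\rho$-nef, and as the intersection form on the exceptional locus is negative definite this gives $a_i\le 0$ for all $i$. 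Moreover a Gorenstein surface singularity is rational if and only if it is a rational double point, i.e. canonical, i.e. all $a_i=0$; hence the presence of a non-rational singularity forces $a_{i_0}<0$ for some $i_0$. Writing $\Delta:=-K_{\widetilde{X}}=\sum_i(-a_i)E_i$, I conclude that $\Delta$ is a nonzero effective divisor supported on the negative-definite exceptional locus of $\rho$.

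Next I would read off the Kodaira dimension. For every $m\ge 1$ we have $mK_{\widetilde{X}}=-m\Delta$ with $m\Delta>0$ effective; a nonzero global section of $\mathcal{O}_{\widetilde{X}}(-m\Delta)$ would produce an effective divisor linearly equivalent to $-m\Delta$, which is impossible on a projective surface because $m\Delta$ has positive degree against any ample class. Hence $P_m(\widetilde{X})=\dim {\rm H}^0(\widetilde{X},mK_{\widetilde{X}})=0$ for all $m\ge 1$, so $\kappa(\widetilde{X})=-\infty$ and in particular $p_g(\widetilde{X})=0$. By Castelnuovo's rationality criterion, valid in every characteristic, a smooth projective surface with $P_2=0$ is rational precisely when its irregularity vanishes. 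Thus everything is reduced to proving $q(\widetilde{X}):=\dim {\rm H}^1(\widetilde{X},\mathcal{O}_{\widetilde{X}})=0$.

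The last step is the main obstacle. Assume for contradiction that $q(\widetilde{X})\ge 1$. Since $\kappa(\widetilde{X})=-\infty$, the Enriques--Kodaira classification makes $\widetilde{X}$ a (birationally) ruled surface over a smooth curve $B$ of genus $q\ge 1$; let $h:\widetilde{X}\to B$ be the ruling and $F\cong{\bf P}^1$ a general fibre, so $F^2=0$ and, by adjunction, $K_{\widetilde{X}}\cdot F=-2$. Consequently $\Delta\cdot F=2$, so the effective divisor $\Delta$, and therefore the $\rho$-exceptional locus, must contain a curve $D$ that is horizontal for $h$, i.e. maps onto $B$. Such a $D$ is simultaneously contracted to a point by $\rho$ (so it lies in a connected, negative-definite fibre of $\rho$) and dominates the base curve $B$ of positive genus. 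The genuinely delicate point is to show that this configuration cannot occur for the surfaces at hand: this is where the remaining hypotheses must be used in full, combining the negative-definiteness and connectedness of the fibres of $\rho$ (and the injectivity of the edge map ${\rm H}^1(\widetilde{X},\mathcal{O}_{\widetilde{X}})\hookrightarrow {\rm H}^0(X,R^1\rho_*\mathcal{O}_{\widetilde{X}})$ furnished by the Leray spectral sequence together with $\rho_*\mathcal{O}_{\widetilde{X}}=\mathcal{O}_X$ and $\omega_X\cong\mathcal{O}_X$) with the concrete description of $X$ as a canonical covering built from a rational surface via a genus $1$ fibration, which forces the base $B$ of any ruling to be rational. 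Once a fibre of $\rho$ is shown to be unable to dominate a curve of positive genus, we obtain $q(\widetilde{X})=0$, and hence $\widetilde{X}$, and with it $X$, is rational.
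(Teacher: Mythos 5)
Your first two steps are correct and agree in substance with the paper's proof: your discrepancy computation (writing $K_{\tilde{X}}=\sum_i a_iE_i$ with $a_i\le 0$ by minimality plus negative definiteness of the exceptional lattice, and $a_{i_0}<0$ at a non-rational point because a rational Gorenstein surface singularity is a rational double point) is exactly what the paper extracts from Hidaka--Watanabe \cite[Lemma 1.1]{HW}, and the conclusion $P_m(\tilde{X})=0$ for all $m\ge 1$ is drawn identically. The genuine gap is the step you yourself flag as ``the main obstacle'': you never prove $q(\tilde{X})=0$, and the route you sketch toward it would not close. An exceptional curve of a resolution of a \emph{non-rational} singularity need not be rational, and it can perfectly well be horizontal for a ruling: contracting the negative section of a ruled surface over an elliptic curve produces a simple elliptic singularity whose exceptional curve dominates the positive-genus base. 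So the configuration you hope to exclude --- a $\rho$-exceptional curve in a connected negative-definite fibre of $\rho$ mapping onto $B$ --- is not impossible on general grounds. Nor does the Leray injectivity ${\rm H}^1(\tilde{X},\mathcal{O}_{\tilde{X}})\hookrightarrow {\rm H}^0(X,R^1\rho_*\mathcal{O}_{\tilde{X}})$ (which does follow from ${\rm H}^1(X,\mathcal{O}_X)=0$) yield a contradiction: when a horizontal exceptional curve is a section of the ruling, the restriction map from ${\rm H}^1(\tilde{X},\mathcal{O}_{\tilde{X}})$ to the ${\rm H}^1$ of that curve is injective, so this picture is \emph{consistent} with the injectivity of the edge map rather than contradicted by it.

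Your fallback --- invoking ``the concrete description of $X$ as a canonical covering built from a rational surface via a genus $1$ fibration'' --- is not available either, because the lemma is stated for an abstract normal K3-like surface; no covering structure is among the hypotheses. The paper does use precisely that structure, but in the separate Proposition \ref{singularityY'}: there the purely inseparable degree-$2$ covering gives a dominant rational map $S^{(1/2)}\dasharrow\tilde{X}$ from a rational surface, and the universal property of the Albanese kills ${\rm Alb}(\tilde{X})$, hence $q(\tilde{X})$, at once --- this is the clean mechanism you are groping for, and it bypasses the whole detour through rulings and horizontal exceptional curves. In the proof of the present lemma the paper instead disposes of the step you are stuck on in one line, passing from ${\rm H}^1(X,\mathcal{O}_X)=0$ to $q(X)=0$ and then asserting $q(\tilde{X})=0$; since the irregularity of a resolution can in general exceed that of the normal surface (this happens over simple elliptic singularities), that one line is itself carrying real weight, and your instinct that additional input is needed at this point is defensible --- but a proof must supply the input, and your proposal explicitly leaves it unsupplied, so the decisive vanishing $q(\tilde{X})=0$, and with it the rationality conclusion, remains unproved.
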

\begin{proof}
Let $\mu : \tilde{X} \longrightarrow X$ be a resolution of singularities.
We denote by $\cup_{i= 1}^kE_i$ the support of the exceptional divisors of the resolution $\mu$.
Here, $E_i$'s are the irreducible components. Then, by Hidaka, Watanabe \cite[Lemma 1.1]{HW}
the dualizing sheaf of $\tilde{X}$ is given by 
$\omega_{\tilde{X}}\cong {\mathcal O}_{\tilde{X}}(- \sum_{i = 1}^{k}r_i E_i)$
and some $r_i$ is positive by the fact that one of singularities is non-rational.
Therefore, we have $\dim {\rm H}^0(\tilde{X}, \omega_{\tilde{X}}^m) = 0$
for $m > 0$.
Since ${\rm H}^1(X, {\mathcal O}_X) = 0$, we have $q(X) = 0$. Therefore, we have
$q(\tilde{X}) = 0$. By Castelnuovo's criterion of rationality, we see $\tilde{X}$
is a rational surface.
\end{proof}

\begin{proposition}
Assume the conductrix $A = 0$. Then, the canonical
covering $X$ always has some singular points and the singular points are all rational
double points.
\end{proposition}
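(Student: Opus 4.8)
The plan is to identify $X$ as a normal Gorenstein surface with trivial dualizing sheaf which is singular but not rational, and then to deduce that its singularities are rational double points. First I would note that $A=0$ forces $X$ to be normal. In the notation of Subsection~\ref{sec4-2} the conductrix is defined by the ideals $(a_i)$, so $A=0$ means each $a_i$ is a unit; then $\psi$, which is multiplication by $a_i$, is an isomorphism, hence so is $\varphi\colon\pi_*{\mathcal O}_X\to(\pi\circ\nu)_*{\mathcal O}_{X'}$, and therefore $\nu\colon X'\to X$ is an isomorphism. Thus $X$ is normal, and by Theorem~\ref{K3-like} it is K3-like: $\omega_X\cong{\mathcal O}_X$ (so $X$ is Gorenstein with trivial dualizing sheaf), ${\rm H}^1(X,{\mathcal O}_X)=0$ and $\chi({\mathcal O}_X)=2$.

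Next I would show that $X$ is singular by a Chern-number computation. The purely inseparable quotient $\pi\colon X\to S$ presents $S$ as $X^{D}$ for a $2$-closed rational vector field $D$ on $X$ (Subsection~\ref{derivation}). Suppose, for contradiction, that $X$ were smooth. Then $\omega_X\cong{\mathcal O}_X$ forces $X$ to be minimal, and together with ${\rm H}^1(X,{\mathcal O}_X)=0$ this makes $X$ a smooth K3 surface, so $c_2(X)=24$. Since the quotient $S=X^{D}$ is nonsingular, $D$ has no isolated singular point, so $\langle D\rangle=0$; formula~(\ref{canonical}) applied to $D$ on $X$ gives $0=K_X=\pi^*K_S+(D)$, whence $(D)=-\pi^*K_S$ and $(D)^2=2K_S^2=-2n$, while formula~(\ref{euler}) yields
$$c_2(X)=\deg\langle D\rangle-K_X\cdot(D)-(D)^2=2n.$$
Hence $2n=24$, i.e. $n=12$, contradicting $n\le 10$ (Proposition~\ref{Boundary}). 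Therefore $X$ is singular.

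The heart of the matter is to prove that $X$ is not rational. Fix the genus $1$ fibration $f\colon S\to{\bf P}^1$ used to build $X$. By Lemma~\ref{hascusp} the hypothesis $A=0$ forces $f$ to be elliptic, and by Corollary~\ref{non-emptyA} it has no fibre of type ${\rm I}_m^*$, ${\rm II}^*$, ${\rm III}^*$ or ${\rm IV}^*$. By Proposition~\ref{compare-surface}, $\pi$ factors through the Frobenius pull-back $S_F=S\times_{{\bf P}^1}{\bf P}^1$ and $X\cong S_F$ away from the two fibres over $t=0$ and $t=\infty$. Since the generic fibre of $X\to{\bf P}^1$ is the base change along $t=s^2$ of the elliptic generic fibre of $f$, the surface $X$ is birational to the Frobenius pull-back $Y_F$ of the relatively minimal rational elliptic surface $Y$ (without multiple fibre) associated with $f$. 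By Lemma~\ref{rational elliptic}, $Y_F$ is K3-like, and its singularities over the nodes of the singular fibres of $Y$ are rational double points of type $A_1$ (local model $xy=s^2$); consequently its minimal resolution, and hence also the minimal resolution $\widetilde X$ of $X$, is a K3 surface. In particular $X$ is not rational.

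Finally, I would assemble the pieces. As $X$ is not rational, the contrapositive of Lemma~\ref{non-rational singularity} shows that $X$ has no non-rational singularity, so all singularities of $X$ are rational; being also Gorenstein with trivial dualizing sheaf, they are rational double points. Combined with the normality of the first step and the existence of a singular point from the second step, this proves that $X$ always has singular points and that these are all rational double points. I expect the third paragraph to be the main obstacle: one must control the covering over the two fibres $t=0,\infty$, where $t=s^2$ is wildly ramified (and where, in the Halphen index $2$ case, a multiple fibre intervenes), so as to be sure that the comparison with the Frobenius pull-back of a rational elliptic surface really identifies $\widetilde X$ as a K3 surface rather than a rational one. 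This is exactly the point at which the hypothesis $A=0$, equivalently the absence of $*$-fibres, is indispensable.
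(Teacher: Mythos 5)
Your opening two steps are correct. The normality argument ($A=0$ forces the conductor ideals $(a_i)$ to be trivial, so $\nu$ is an isomorphism) matches the paper's in substance, and your Chern-number argument that $X$ cannot be smooth is a legitimate alternative to the paper's simpler count: the paper just observes $b_2(X)=b_2(S)\leq 20 < 22=b_2(K3)$ using Proposition \ref{Boundary}. Note only that your version invokes the converse of the Rudakov--Shafarevich statement quoted in \S\ref{derivation} (a smooth quotient $X^{D}$ forces $\langle D\rangle=0$); this is true but is not the direction stated there and should be flagged.

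The genuine gap is in your third paragraph, and it is fatal rather than the routine verification you anticipate at the end. First, the claim that the singularities of $Y_F$ lie only over the nodes of the fibres of $Y$, with local model $xy=s^2$, is unjustified: the hypothesis $A=0$ excludes quasi-elliptic fibrations and fibres of type ${\rm I}_m^*$, ${\rm II}^*$, ${\rm III}^*$, ${\rm IV}^*$ (Lemma \ref{hascusp} and the corollary to Proposition \ref{SchroeerThm}), but fibres of type ${\rm II}$, ${\rm III}$, ${\rm IV}$ remain possible, and over these the Frobenius pull-back in characteristic $2$ can acquire worse singularities --- in particular a non-rational double point, which by Schr\"oer's analysis combined with Lang's classification genuinely occurs for a rational elliptic surface with a single fibre of type ${\rm II}$ of discriminant $12$ (Lang's Case 9C). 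Ruling out precisely this case is the entire content of the proposition, and your argument assumes it away. Second, your logic is circular: you conclude that $X$ is not rational because $\widetilde{X}$ is a K3 surface, but $\widetilde{X}$ is a K3 surface exactly when all singularities of the K3-like surface $X$ are rational, which is the statement being proved; Lemma \ref{non-rational singularity} cannot be applied in contrapositive before that is established. Third, the birational identification of $X$ with the Frobenius pull-back of the Jacobian is not automatic in the Halphen index-$2$ case, where the generic fibre of $f$ is a non-trivial torsor; the paper needs Matsumoto's argument (valid because $X$ is normal, i.e.\ $A=0$) to see that $S_F$ is birational to $J(S)_F$. The paper's actual proof runs your implication in the opposite direction: suppose some singularity of $X$ is non-rational; then $S_F$, hence $J(S)_F$, is rational with a non-rational singular point; by Schr\"oer and Lang this pins down Case 9C, in which $J(S)$ has one fibre of type ${\rm II}$ and all other fibres smooth supersingular; and no Coble surface can arise from such a Jacobian --- there are not two reduced singular fibres to blow up for index $1$, and for index $2$ the multiple fibre would have to lie over a smooth supersingular fibre, which is impossible since multiple fibres on a rational surface are tame and a supersingular elliptic curve in characteristic $2$ carries no invertible sheaf of order $2$.
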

\begin{proof}
Assume $X$ is non-singular. Then, since $\pi: X \longrightarrow S$ is 
purely inseparable and finite,
we have the Betti number $b_2(X) = b_2(S)\geq 11$, $\omega_X \cong {\mathcal O}_X$
and ${\rm H}^1(X, {\mathcal O}_X) = 0$. Therefore, $X$ must be a K3 surface 
by the classification theory of algebraic surfaces in positive characteristic.
By Proposition \ref{Boundary}, we have $K_S^2 \geq -10$. Therefore, the Betti number
$b_2(X) = b_2(S) \leq 20$, which contradicts the topological invarinant of K3 surfaces.
 
Since the covering $X \longrightarrow S$ is locally define by (\ref{covering}),
the singular points are double points.
By $A = 0$, $X$ has only isolated singularities. Therefore, $X$ is normal.
Let $f: S \longrightarrow {\bf P}^1$ be a genus 1 fibration. If this fibration
is quasi-elliptic, the conductrix $A$ contains the curve of cusps, which contradicts $A = 0$.
Therefore, $f : S \longrightarrow {\bf P}^1$ is an elliptic surface.
We may assume that this fibration has a 2-section.

Suppose that one of isolated singularities is non-rational.
We consider the Frobenius base change
$$
\begin{array}{ccc}
     S_{F}   &  \stackrel{\pi}{\longrightarrow}   &  S   \\
    \tilde{f} \downarrow  &   &   \downarrow f  \\
     
     {\bf P}^1  & \stackrel{F}{\longrightarrow} & {\bf P}^1.
\end{array}
$$
Since $S_{F}$ is birationally equivalent to $X$ which 
has a non-rational singular point, $S_F$
is a rational surface by Lemma \ref{non-rational singularity}. 
Now, we consider the relative Jacobian surface 
$J(S) \longrightarrow {\bf P}^1$ and the Frobenius base change
$J(S)_F$. We may assume that $J(S) \longrightarrow {\bf P}^1$ 
is relatively minimal. Since $J(S)$ is a rational surface by Lemma \ref{rational elliptic}, 
$J(S)_F$ is a K3-like surface. Since $X$ is normal, that is, the conductrix $A$
for $S$ is empty, by the same argument for the classical Enriques surfaces
in Matsumoto \cite[Proposition 4.3]{Matsumoto} we see that $S_{F}$ is 
birationally equivalent to $J(S)_F$ and so $J(S)_F$ is rational 
and that $J(S)_F$ has non-rational
singularity. Since $J(S) \longrightarrow {\bf P}^1$ is a
rational elliptic surface, we have a list of singular 
fibers in Lang \cite{La3}. By a result of Schr\"oer \cite{S}, the only case
where a non-rational singular point appears in $J(S)_F \longrightarrow {\bf P}^1$ 
is only Case 9C in Lang \cite{La3}.
In this case, the singular fiber is of type II with discriminant 12. 
Therefore, the rational ellptic surface $J(S) \longrightarrow {\bf P}^1$ 
has only one singular fiber of type II
and all other fibers are supersingular elliptic curves by Lang \cite{La3}.
Since the types of fibers of the relatively minimal model of
$f : S \longrightarrow {\bf P}^1$ are same
as the ones of fibers of $J(S) \longrightarrow {\bf P}^1$, and since 
a multiple fiber of Coble surface is tame, any supersingular 
elliptic curve cannot become a multiple fiber.
Therefore,
we have no chance to make a Coble surface
by blowing up the relatively minimal elliptic rational surface. Hence,
if $A = 0$, then the canonical covering of the Coble surface $S$ has
only rational singular points.
\end{proof}

\begin{corollary}
Assume the conductrix $A = 0$. 
Then, the minimal non-singular model $\tilde{X}$ of the canonical
covering $X$ is a K3 surface.
\end{corollary}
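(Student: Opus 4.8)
The plan is to resolve the singularities of $X$ and read off the classification type of the smooth model from its numerical invariants. By the preceding Proposition the hypothesis $A=0$ ensures that $X$ is normal with only rational double points; write $\mu:\tilde{X}\to X$ for its minimal resolution. The input data come from Theorem \ref{K3-like}: $\omega_X\cong{\mathcal O}_X$, ${\rm H}^1(X,{\mathcal O}_X)=0$, and, as computed in its proof, $\chi({\mathcal O}_X)=2$.

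First I would transport these invariants to $\tilde{X}$. Rational double points are canonical singularities, so the minimal resolution is crepant and $\omega_{\tilde{X}}\cong\mu^{*}\omega_X\cong{\mathcal O}_{\tilde{X}}$. They are also rational singularities, so $\mu_{*}{\mathcal O}_{\tilde{X}}={\mathcal O}_X$ and $R^{i}\mu_{*}{\mathcal O}_{\tilde{X}}=0$ for $i>0$; the Leray spectral sequence then gives ${\rm H}^{i}(\tilde{X},{\mathcal O}_{\tilde{X}})\cong{\rm H}^{i}(X,{\mathcal O}_X)$ for every $i$. Hence $q(\tilde{X})=h^1({\mathcal O}_{\tilde{X}})=0$ and $\chi({\mathcal O}_{\tilde{X}})=\chi({\mathcal O}_X)=2$, so that $p_g(\tilde{X})=1$.

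Next I would note that $\tilde{X}$ is a smooth projective minimal surface. It is projective since $X$ is finite over the projective surface $S$ and $\mu$ is a resolution; it is minimal because $\omega_{\tilde{X}}\cong{\mathcal O}_{\tilde{X}}$ forces $K_{\tilde{X}}\cdot C=0$ for every curve $C$, ruling out $(-1)$-curves. Triviality of $\omega_{\tilde{X}}$ also shows that $\tilde{X}$ has Kodaira dimension $0$.

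The decisive step is the classification, and this is where characteristic $2$ requires care: a smooth minimal surface with $\omega\cong{\mathcal O}$ and $q=0$ need not be a K3 surface, because the non-classical (supersingular and $\mu_2$) Enriques surfaces also have trivial dualizing sheaf and vanishing irregularity. The two possibilities are separated exactly by the holomorphic Euler characteristic, Enriques surfaces having $\chi({\mathcal O})=1$ while we have shown $\chi({\mathcal O}_{\tilde{X}})=2$. Among minimal surfaces of Kodaira dimension $0$, the conditions $q=0$ and $\chi({\mathcal O})=2$ (equivalently $b_2=22$) single out the K3 surfaces, so $\tilde{X}$ is a K3 surface. I expect the passage of $\chi$ through the rational singularities, rather than the classification input itself, to be the point that must be argued carefully, since it is precisely what excludes the Enriques alternative that $\omega$ and $q$ alone cannot detect.
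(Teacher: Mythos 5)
Your proposal is correct and follows essentially the same route as the paper's (very terse) proof: crepant minimal resolution of the rational double points giving $\omega_{\tilde{X}}\cong\mu^{*}\omega_X\cong\mathcal{O}_{\tilde{X}}$, vanishing of ${\rm H}^1(\tilde{X},\mathcal{O}_{\tilde{X}})$ via $R^1\mu_{*}\mathcal{O}_{\tilde{X}}=0$ and Leray, and then the Bombieri--Mumford classification. Your explicit use of $\chi(\mathcal{O}_{\tilde{X}})=2$ to rule out the non-classical ($\mu_2$- and $\alpha_2$-) Enriques surfaces simply fills in the classification step that the paper leaves implicit, and is a worthwhile clarification rather than a different argument.
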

\begin{proof}
Let $\mu :\tilde{X} \longrightarrow X$ be the minimal resolution of singularities.
This corollary follows from $\omega_{\tilde{X}}\cong \mu^{*}\omega_X \cong \mu^{*}{\mathcal{O}_X}$
and ${\rm H}^1(\tilde{X}, {\mathcal O}_{\tilde{X}}) \cong 
{\rm H}^0(X, R^1\mu_{*}{\mathcal O}_{\tilde{X}}) = 0$.
\end{proof}

Now, we consider the case with the conductrix 
$A \neq 0$.
\begin{proposition}\label{singularityY'}
Assume the conductix $A \neq 0$.
Then, the singularities of the normalization $X'$ of the canonical covering $X$
are all rational, and $X$ is a rational surface.
\end{proposition}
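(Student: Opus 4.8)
The plan is to reduce everything to the minimal resolution $\mu : Z \to X'$ and to prove that $Z$ is a rational surface; since both $\nu$ and $\mu$ are birational this immediately yields that $X$ is a rational surface, and the rationality of the singularities of $X'$ will then drop out of a comparison of Euler characteristics through $\mu$. First I would collect the numerical input coming from the normalization. Because $\pi\circ\nu$ is finite flat of degree $2$ (Lemma \ref{Cohen-Macauley}), relative duality applied to the sequence (\ref{exactL'}) together with the identification (\ref{L'}) produces a short exact sequence
$$0 \longrightarrow {\mathcal O}_S(-A) \longrightarrow (\pi\circ\nu)_*\omega_{X'} \longrightarrow \omega_S \longrightarrow 0.$$
Since $A$ is effective and non-zero and $S$ is rational (so $p_g(S)=q(S)=0$), taking global sections gives $H^0(X',\omega_{X'})=0$; as $\mu_*\omega_Z\subseteq\omega_{X'}$ this forces $p_g(Z)=0$. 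Pushing (\ref{exactL'}) itself down to $S$ and using $H^1({\mathcal O}_S)=H^2({\mathcal O}_S)=0$ identifies $H^1(X',{\mathcal O}_{X'})\cong H^1(S,\omega_S\otimes{\mathcal O}_S(A))\cong H^1(S,{\mathcal O}_S(-A))$, while Riemann--Roch and adjunction give $\chi({\mathcal O}_{X'})=1+\chi(\omega_S\otimes{\mathcal O}_S(A))=1+p_a(A)$.

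Next I would prove that $Z$ is rational. The engine here is the Frobenius comparison already prepared in Proposition \ref{compare-surface}: the surface $X'$ is the normalization of $S_F=S\times_{{\bf P}^1}{\bf P}^1$ and agrees with it away from the two fibres over $t=0,\infty$, and, exactly as in the argument of Matsumoto used for classical Enriques surfaces, $S_F$ is birationally equivalent to the Frobenius base change $J(S)_F$ of the Jacobian fibration $J(S)\to{\bf P}^1$. Now $J(S)$ is a relatively minimal rational genus $1$ fibration without multiple fibre, so $J(S)_F$ is $K3$-like by Lemma \ref{rational elliptic}. The hypothesis $A\neq 0$ is precisely what drives us into the rational case rather than the $K3$ case of Theorem \ref{main1}$(1)$: by Lemma \ref{hascusp} and Corollary \ref{non-emptyA} the fibration is either quasi-elliptic or carries an additive reducible fibre, and running through the fibre classifications of Lang and Ito as in the $A=0$ analysis one checks that the minimal resolution of $J(S)_F$ is rational rather than a supersingular $K3$ surface. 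Combined with $p_g(Z)=0$ and the vanishing $q(Z)=h^1(S,{\mathcal O}_S(-A))=0$ for the conductrix, Castelnuovo's criterion then gives that $Z$ is rational, and hence so are $X'$ and $X$.

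Finally, for the rationality of the singularities of $X'$ I would compare Euler characteristics through $\mu$. Since $\mu_*{\mathcal O}_Z={\mathcal O}_{X'}$ and $R^{i}\mu_*{\mathcal O}_Z=0$ for $i\geq 2$, the Leray spectral sequence yields $\chi({\mathcal O}_Z)=\chi({\mathcal O}_{X'})-\ell$ with $\ell=\sum_x\dim(R^1\mu_*{\mathcal O}_Z)_x\geq 0$, so that $1=\bigl(1+p_a(A)\bigr)-\ell$. The singularities of $X'$ are rational exactly when $\ell=0$, i.e. exactly when $p_a(A)=0$.

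The main obstacle is therefore this single equality $p_a(A)=0$, equivalently $H^1(A,{\mathcal O}_A)=0$: the $\chi$-comparison alone only delivers $\ell=p_a(A)$, and one genuinely needs independent knowledge that the conductrix is of rational (ADE) type. I would establish this from the structure theory of the conductrix, using Lemma \ref{Dynkin} and the local analysis of the $\mu_2$-cover (\ref{covering}) along $A$, matching the list of conductrices of Ekedahl and Shepherd-Barron; the same input forces $H^1(S,{\mathcal O}_S(-A))=0$, which was used for $q(Z)=0$ above. Once $p_a(A)=0$ is in hand we obtain $\ell=0$, so all singularities of $X'$ are rational, completing the proof.
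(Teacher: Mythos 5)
There is a genuine gap, and it sits exactly where you flagged the argument as routine: the rationality of the minimal resolution $Z$ of $X'$. First, your duality argument only yields $p_g(Z)=0$, and $p_g=q=0$ is \emph{not} enough for Castelnuovo's criterion in characteristic $2$: classical Enriques surfaces satisfy $p_g=q=0$ (indeed $b_1=0$) without being rational, so you need $P_2(Z)=0$. Second, the route you propose to supply rationality instead --- $S_F$ birational to $J(S)_F$ ``as in Matsumoto'' plus an inspection of the Lang--Ito fiber lists --- is borrowed from a context whose hypothesis is negated here. In the paper that comparison is invoked only in the $A=0$ proposition, with the explicit justification ``since $X$ is normal, that is, the conductrix $A$ for $S$ is empty''; when $A\neq 0$ the canonical cover is non-normal along $A$ and the birational identification of $S_F$ with $J(S)_F$ is not available. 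Moreover the conclusion you want to extract from the classifications is backwards: in the $A=0$ analysis Lang's list was used to \emph{exclude} non-rational singularities of $J(S)_F$, and the resolution there is a supersingular $K3$; to get rationality of $J(S)_F$ via Lemma \ref{non-rational singularity} you would need to \emph{produce} a non-rational singularity on it, which nothing in your argument does. The paper avoids all of this: flatness of $\pi\circ\nu$ (Lemma \ref{Cohen-Macauley}) makes $X'$ Gorenstein with $\omega_{X'}\cong(\pi\circ\nu)^{*}\mathcal{O}_S(-A)$ (formula (\ref{dualizingY'})), so by Hidaka--Watanabe $\omega_{\tilde X}$ is the pullback of $-A$ minus an effective exceptional divisor, giving $P_m(\tilde X)=0$ for \emph{every} $m\geq 1$; the irregularity vanishes because $\tilde X$ is dominated by the rational surface $S^{(1/2)}$ via a purely inseparable map and Albanese is functorial; Castelnuovo then applies. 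A related circularity: your identification $q(Z)=h^1(S,\mathcal{O}_S(-A))$ is unjustified, since the Leray sequence only gives $h^1(Z,\mathcal{O}_Z)\leq h^1(X',\mathcal{O}_{X'})+h^0(R^1\mu_{*}\mathcal{O}_Z)$, and the vanishing of the second term is precisely the statement being proved; this piece is repairable by the Albanese argument, but as written it assumes the conclusion.

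Your final step, by contrast, is correct and genuinely different from the paper's. Granting $Z$ rational, the comparison $\chi(\mathcal{O}_Z)=\chi(\mathcal{O}_{X'})-\ell$ with $\chi(\mathcal{O}_{X'})=1+p_a(A)$ does reduce rationality of the singularities to $p_a(A)=0$, and you do not need the Ekedahl--Shepherd-Barron list for that: $A\cdot K_S=0$ (because $2A$ is disjoint from the boundary, Theorem \ref{eta}) and $A^2=-2$ (Corollary \ref{A^2}, whose proof is independent of this proposition, so the reordering is harmless) give $p_a(A)=1+\tfrac{1}{2}(A^2+A\cdot K_S)=0$ at once. Note, though, that the paper's own finish is shorter and needs no $\chi$ bookkeeping: since $H^2(X',\mathcal{O}_{X'})\cong H^0(X',\omega_{X'})=0$ and $H^1(\tilde X,\mathcal{O}_{\tilde X})=0$, the five-term Leray sequence forces $H^0(X',R^1\mu_{*}\mathcal{O}_{\tilde X})=0$, hence $R^1\mu_{*}\mathcal{O}_{\tilde X}=0$ directly. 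To salvage your write-up, replace the $J(S)_F$ detour and the $q(Z)$ identification by the paper's pluricanonical-vanishing and Albanese arguments; the rest can stand.
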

\begin{proof} 
The proof is similar to the one for Enriques surfaces (cf. Cossec, Dolgachev, Liedtke \cite{CDL}).
We denote by $\tilde{X}$ the minimal resolution of singularities of $X'$:
$$
\begin{array}{ccccc}
\tilde{X} & \stackrel{\mu}{\longrightarrow} & X'  & \stackrel{\nu}{\longrightarrow} & X \\
   &   & \searrow &    & \swarrow \pi \\
      &  &  & S & 
\end{array}
$$
By Lemma \ref{Cohen-Macauley}, $\pi \circ \nu$ is a flat morphism.
Therefore, by Cossec, Dolgachev, Liedtke \cite[Proposition 0.2.22]{CDL},
$X'$ is Gorenstein and again by Cossec, Dolgachev, Liedtke [Proposition 0.28, loc. cit.]
and (\ref{L'}), we have
\begin{equation}\label{dualizingY'}
\omega_{X'} \cong (\pi\circ \nu)^{*}(\omega_S \otimes {{\mathcal L}'}^{-1})
\cong (\pi \circ \nu)^{*}{\mathcal O}_S(-A).
\end{equation}
We denote by $\cup_{i = 1}^{k}E_i$ the support of the exceptional divisors
of the resolution $\mu$. Here, $E_i$'s are the irreducible components.
Then, by Hidaka, Watanabe \cite[Lemma 1.1]{HW}, we have
$$
\begin{array}{cl}
     \omega_{\tilde{X}} &\sim \mu^{*}\omega_{X'} - \sum_{i= 1}^{k}r_iE_i   \quad (r_i \geq 0)\\
              & \sim \mu^{*}(\pi \circ \nu)^{*}{\mathcal O}_S(-A) - \sum_{i= 1}^{k}r_iE_i
\end{array}
$$
Therefore, we have the pluri-genus $P_m({\tilde{X}}) = 0$ for $m \geq 1$.
Since $\pi\circ \nu \circ \mu$ is a purely inseparable morphism of degree 2,
there exists a rational map $S^{(1/2)} \longrightarrow \tilde{X}$. By the universality
of Albanese variety, there exists a surjective morphism from the Albanese variety
${\rm Alb}(S^{(1/2)})$ to ${\rm Alb}(\tilde{X})$. Since $S$ is rational, $S^{(1/2)}$ is also
rational and so ${\rm Alb}(S^{(1/2)})=0 $. Therefore, we have ${\rm Alb}(\tilde{X})= 0$, 
that is,
the irregularity $q(\tilde{X}) = 0$. Therefore, by Castelnuovo's criterion of rationality
we see $\tilde{X}$ is a rational surface. In particular, we have
${\rm H}^1(\tilde{X}, {\mathcal O}_{\tilde{X}}) = 0$.

Now, we consider the Leray-spectral sequence for $\mu : \tilde{X} \longrightarrow X'$:
$$
  E_{2}^{i,j} = {\rm H}^{j}(X', R^i\mu_{*}{\mathcal O}_{\tilde{X}}) 
  \Longrightarrow {\rm H}^{i + j}(\tilde{X}, {\mathcal O}_{\tilde{X}}).
$$
Then, we have an exact sequence
$$
{\rm H}^1(X', {\mathcal O}_{X'})\longrightarrow {\rm H}^{1}(\tilde{X}, {\mathcal O}_{\tilde{X}})
\longrightarrow {\rm H}^{0}(X', R^1\mu_{*}{\mathcal O}_{\tilde{X}})
\longrightarrow {\rm H}^2(X', {\mathcal O}_{X'}).
$$
Since ${\rm H}^{1}(\tilde{X}, {\mathcal O}_{\tilde{X}}) = 0$ and
$$
{\rm H}^2(X', {\mathcal O}_{X'}) \cong {\rm H}^0(X', \omega_{X'})
\cong {\rm H}^0(X', (\pi \circ \nu)^{*}{\mathcal O}_S(-A))= 0,
$$
we have ${\rm H}^{0}(X', R^1\mu_{*}{\mathcal O}_{\tilde{X}}) = 0$,
that is, $R^1\mu_{*}{\mathcal O}_{\tilde{X}}= 0$. This means that
all singular points of $X'$ are rational.
\end{proof}

\section{Properties of the conductrix}\label{sec4}
Let $S$ be a Coble surface, $\pi : X \longrightarrow S$ the canonical covering
and $f : S \longrightarrow {\bf P}^1$ a genus 1 fibration. 
We assume this fibration satisfies the condition of (Case 1) or (Case 2) in Subsection \ref{case1case2}.
In this section, we assume the conductrix $A \neq 0$,  if otherwise mentions.
As for the general theory of conductrix, in particular for Enriques surfaces
in characteristic 2, we have the theory by Ekedahl and Shepherd-Barron \cite{ES}.
In the case of Coble surfaces, the situation is a bit different from their paper,
and we show the details, although arguments sometimes overlap with the ones 
in Ekedahl and Shepherd-Barron, loc. cit.
We use here the notation in Section \ref{sec3}. 
\begin{proposition}
The intesection number $A\cdot K_S = 0$. In particular, the boundary components $B_i$ don't
intersect the conductrix $A$, and each component of $A$ is contained
in the Coble-Mukai lattice ${\rm CM(S)}$.
\end{proposition}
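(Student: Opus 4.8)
The plan is to reduce everything to the single relation $\sum_{i=1}^n B_i \sim -2K_S$ coming from the definition of a Coble surface, combined with the precise shape of the divisor of $\eta$ recorded in Theorem \ref{eta}. First I would note that, since $|-2K_S| = \{B_1+\cdots+B_n\}$ consists of the unique effective divisor in its class, we have the linear (hence numerical) equivalence $\sum_{i=1}^n B_i \sim -2K_S$; in particular $A\cdot\bigl(\sum_i B_i\bigr) = -2\,(A\cdot K_S)$.

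Next I would extract from equation (\ref{B}) and Theorem \ref{eta} not merely the numerical identity $(\eta) = -\sum_i B_i + 2A$, but the sharper statement that the support of $2A$ is disjoint from the support of $\sum_i B_i$; this is exactly the assertion $\bigl(\sum_i B_i\bigr)\cap B = \emptyset$ with $B = 2A$. Consequently every irreducible component $C$ of $A$ is disjoint from every boundary curve $B_i$, so $C\cdot B_i = 0$, and in particular $A\cdot B_i = 0$ for each $i$.

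With these two inputs the intersection number is immediate: $A\cdot(-2K_S) = A\cdot\sum_{i=1}^n B_i = \sum_{i=1}^n A\cdot B_i = 0$, whence $A\cdot K_S = 0$. The two ``in particular'' claims then follow at once. The vanishing $A\cdot B_i = 0$ for each $i$ is precisely the statement that the boundary does not meet $A$, and for the lattice membership I would observe that each component $C$ of $A$, being an honest curve class in ${\rm NS}(S)\subset\widetilde{\rm Pic}(S)$, is orthogonal to all of $B_1,\ldots,B_n$; since ${\rm CM}(S)$ is by definition the orthogonal complement of $B_1,\ldots,B_n$ in $\widetilde{\rm Pic}(S)$, this places $C$ in ${\rm CM}(S)$.

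The only point requiring genuine care — and the one I regard as the real content — is the support-disjointness of the zero divisor $B = 2A$ of $\eta$ from the boundary, as opposed to mere numerical orthogonality: the latter would already suffice for $A\cdot K_S=0$ via the displayed computation, but the cleaner claim that no component of $A$ meets any $B_i$ is what gives lattice membership directly. This disjointness is not a formal consequence of $(\eta)=-\sum_i B_i + 2A$; it rests on the local analysis behind Lemma \ref{1-form1}. Near each $B_i$ the element $f_ig_i$ is part of a local coordinate system, so $\eta = d(f_ig_i)/(f_ig_i)$ has a pole of order exactly $1$ along $B_i$ and no zero in a neighborhood of the boundary. Once this local picture is in hand, the zero divisor $B$ cannot meet any $B_i$, and the remainder is the one-line intersection computation above.
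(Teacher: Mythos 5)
Your proof is correct and takes essentially the same route as the paper's: the paper also deduces $A\cdot K_S=0$ directly from $-2K_S\sim\sum_{i=1}^{n}B_i$ together with the support-disjointness $\bigl(\sum_{i=1}^{n}B_i\bigr)\cap B=\emptyset$ and $B=2A$, and it justifies that disjointness by exactly the local-coordinate argument you give (note only that this appears in the paragraph defining $\eta$ preceding equation (\ref{B}) in \S\ref{sec4-1}, rather than in Lemma \ref{1-form1}, which concerns the pullback form $f^{*}(dt/t)$). Your closing observation that numerical orthogonality alone would suffice for $A\cdot K_S=0$ but that support-disjointness is what yields the lattice membership is accurate and matches the logical structure of the paper's argument.
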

\begin{proof}
We have $-2K_S  \sim \sum_{i= 1}^{n}B_i$. 
Therefore, this result follows from 
$B \cap \sum_{i= 1}^{n}B_i = \emptyset$ and $B = 2A$.
\end{proof}
\begin{proposition}\label{contain}
The conductrix $A$ contains neither a fiber nor a half fiber.
\end{proposition}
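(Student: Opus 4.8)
The plan is to argue by contradiction: if $A$ contained a fiber or a half-fiber of $f$, then $\eta=f^{*}(dt/t)$ would be forced to vanish along a whole vertical curve, and I would contradict this by a local computation of $\eta$ at the generic point of each fiber component. Recall from Theorems \ref{forms} and \ref{eta} that $\eta=f^{*}(dt/t)$ and $(\eta)=-\sum_i B_i+2A$ with $A$ effective and, by the previous proposition, disjoint from the $B_i$. Hence for a prime divisor $C$ other than a boundary component one has $\mathrm{mult}_C(A)=\tfrac12\,\mathrm{mult}_C(\eta)$, so $C\subseteq A$ already forces $\eta$ to vanish along $C$; boundary components satisfy $\mathrm{mult}_{B_i}(A)=0$. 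Thus it suffices to exhibit, in each fiber and in each half-fiber, a component along which $\eta$ does not vanish to the order demanded by containment in $A$.

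First I would dispose of full fibers. At the generic point of a component $C$ of a fiber over $t=c$, occurring with multiplicity $m$, one has $t-c=v\,x^{m}$ locally with $v$ a unit and $x$ a local equation of $C$, so in characteristic $2$
$$dt=m\,v\,x^{m-1}\,dx+x^{m}\,dv .$$
When $m$ is odd the $dx$-coefficient has order exactly $m-1$ while the $dy$-coefficient has order $\ge m$, so $\mathrm{mult}_C(\eta)=m-1$ and $\mathrm{mult}_C(A)=(m-1)/2<m$. Every Kodaira fiber has a component of multiplicity one, and the multiplicity-one components of the fibers over $t=0,\infty$ are exactly the $B_i$ (by the description of the blown-up fibers); in either case some multiplicity-one component $C$ has $\mathrm{mult}_C(A)=0$, so $A$ cannot contain the whole fiber.

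The remaining, genuinely delicate, case is a half-fiber, which can only be the reduced multiple fiber $2G_0$ of Case $2$; here every fiber component has even multiplicity, so the reduced-component trick is unavailable. Suppose $G_0\subseteq A$. Then $\eta$ vanishes along every component of $G_0$, hence $\eta|_{G_0}\equiv 0$. On the other hand, near $G_0$ one has $t=u\,h^{2}$ with $h$ a local equation of $G_0$ and $u$ a unit, so $\eta=du/u$ in characteristic $2$, and $\eta|_{G_0}=d\log(u|_{G_0})$ glues to a global regular $1$-form on the arithmetic genus-one curve $G_0$ (the local pieces agree because the transition cocycle of $\mathcal O_S(G_0)|_{G_0}$ enters squared, hence trivially, in characteristic $2$). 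Since a multiple fiber of a Coble surface is tame (as used in \S\ref{sec3}), the normal bundle $\mathcal O_S(G_0)|_{G_0}$ is a nontrivial $2$-torsion line bundle of $\mu_2$-type, which forces $\eta|_{G_0}$ to be the nonzero invariant differential, contradicting $\eta|_{G_0}\equiv 0$.

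The hard part will be exactly this last step: excluding the wild situation in which $u$ is constant to first order along $G_0$ (equivalently $u|_{G_0}$ is a square), so that $\eta|_{G_0}$ vanishes. I expect to settle it via the tameness of the multiple fiber; alternatively one can pass to the Frobenius base change and use the Comparison Theorem $A=A'-G_0-G_\infty$, reducing the claim to showing that the conductrix $A'$ of $S_F$ contains each component $C_j$ of $G_0$ with multiplicity exactly its multiplicity $m_j$ in $G_0$. Since $(dt)=2A'$ on the locus $t\neq\infty$ and $dt=x^{2m_j}\,dv$ near $C_j$, this amounts once more to $dv$ not vanishing along $C_j$, which I would deduce from the normality of $X'$ together with the tameness of the multiple fiber.
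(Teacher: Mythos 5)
Your exclusion of full fibers is correct: the local computation $\mathrm{mult}_C(\eta)=m-1$ for a component of odd multiplicity $m$ is exactly the calculation the paper itself performs later (in the proof of Theorem \ref{ellipticsimple}), and combined with the existence of a multiplicity-one component in every fiber and the disjointness of $A$ from the boundary it does rule out $A\supseteq F_1$. The genuine gap is in the half-fiber case, which you yourself flag as ``the hard part'' and never close. The assertion that tameness of the multiple fiber forces $\eta|_{G_0}$ to be ``the nonzero invariant differential'' is not a formal consequence of the normal bundle $N=\mathcal{O}_S(G_0)|_{G_0}$ being nontrivial of order $2$: what you actually need is the implication that if $d\bar u_i/\bar u_i$ vanishes on $G_0$ then the units $\bar u_i=u_i|_{G_0}$ (which trivialize $N^{-2}$) are locally and compatibly squares, whose square roots would trivialize $N$ and contradict tameness. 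Making ``$d\bar u=0$ implies $\bar u$ is a square'' precise on $G_0$ is delicate, because in characteristic $2$ the half fiber need not be reduced: multiple fibers of additive type $2{\rm I}_m^*$, $2{\rm III}^*$, $2{\rm II}^*$ occur (see Tables \ref{TableQ-elliptic}, \ref{TableElliptic1}, \ref{TableElliptic2}), so neither the phrase ``invariant differential on the arithmetic genus-one curve'' nor the kernel-of-$d$ argument applies without substantial extra work. Your fallback via the Frobenius base change and the comparison $A=A'-G_0-G_\infty$ only relocates the same unproved claim ($dv$ not vanishing along the components of $G_0$), and the tameness you invoke is itself asserted rather than justified (it does hold, because $S$ rational gives ${\rm H}^1(S,\mathcal{O}_S)=0$, killing the torsion of $R^1f_*\mathcal{O}_S$, and the paper uses this fact elsewhere).

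The paper avoids all of this with a soft global argument that treats fibers and half fibers uniformly: multiplication by $\eta$ embeds $\mathcal{O}_S((\eta))$ into $\Omega_S^1$, and since $S$ is rational, ${\rm H}^0(S,\Omega_S^1)=0$, hence ${\rm H}^0(S,\mathcal{O}_S((\eta)))=0$. If $A$ contained the half fiber $G_0$, then $2A\geq f^{-1}(P_0)$ while $\sum_i B_i\leq f^{-1}(P_\infty)$, so the pullback of $1/t$, with a pole of order $1$ along $f^{-1}(P_0)$ and a zero of order $1$ along $f^{-1}(P_\infty)$, would be a nonzero section of $\mathcal{O}_S((\eta))$, a contradiction; a function with divisor $f^{-1}(P_0)+f^{-1}(P_\infty)-2F_1$ handles a full fiber $F_1$ in the same way. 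You could repair your proof by adopting this mechanism for the half-fiber case (after which your full-fiber computation becomes redundant), but as written the central case of the proposition is not established.
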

\begin{proof}
We consider the divisorial part $(\eta)$ of the rational 1-form $\eta$, and
by the multiplication by $\eta$, we have an exact sequence:
$$
   0 \longrightarrow {\mathcal O}((\eta))\stackrel{\times \eta}{\longrightarrow} \Omega_S.
$$
We get an inclusion ${\rm H}^{0}(S, {\mathcal O}_S((\eta))) \hookrightarrow 
{\rm H}^{0}(S, \Omega_S^1)$.
Since $S$ is a rational surface, we have ${\rm H}^{0}(S, \Omega_S^1)= 0$. Therefore, we have
${\rm H}^{0}(S, {\mathcal O}_S((\eta)))= 0$.

Assume $f : S \longrightarrow {\bf P}^1$ has a multiple fiber $f^{-1}(P_0)$.
If $A$ contains a half fiber,  
$B = 2A$ contains the fiber $f^{-1}(P_0)$. In this case $\sum_{i=1}^{n}B_i$ is contained
in $f^{-1}(P_{\infty})$. Then, there exists a rational function on $S$
such that it has a pole of order 1 along $f^{-1}(P_0)$ and has a zero of order 1
along $f^{-1}(P_{\infty})$. It is a non-zero function in ${\rm H}^{0}(S, {\mathcal O}_S((\eta)))$,
a contradiction. If $A$ contain a fiber $F_1$, then $B = 2A$ contains $2F_1$.
Since $A \cap \sum_{i=1}^{n}B_i = \emptyset$, $F_1$ is different from $f^{-1}(P_{\infty})$
and $\sum_{i=1}^{n}B_i$ is contained in $f^{-1}(P_{\infty})$. Therefore,
there exists a rational function such that it has a pole of order 2
along $F_1$ and has a zero of order 1 along $f^{-1}(P_{\infty})$.  
It is a non-zero function in ${\rm H}^{0}(S, {\mathcal O}_S((\eta))$,
a contradiction.

Assume $f : S \longrightarrow {\bf P}^1$ has no multiple fiber.
If $A$ contains a fiber $F_1$, then $B = 2A$ contains $2F_1$.
The fiber $F_1$ is neither $f^{-1}(P_0)$ nor $f^{-1}(P_{\infty})$
as above. Therefore, there exists a rational function
such that it has a pole of order 2
along $F_1$ and has a zero of order 1 along $f^{-1}(P_{0})$ and $f^{-1}(P_{\infty})$,
respectively.  
It is a non-zero function in ${\rm H}^{0}(S, {\mathcal O}_S((\eta))$,
a contradiction.
\end{proof}

\begin{lemma}\label{vanishing1}
${\rm H}^i(S, \omega_S\otimes {\mathcal O}_S(A)) = 0$  for any $i$.
\end{lemma}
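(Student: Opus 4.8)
The plan is to read off the lemma directly from the exact sequence (\ref{exactL'}). By (\ref{L'}) the quotient sheaf appearing there is exactly ${\mathcal L}' \cong \omega_S \otimes {\mathcal O}_S(A)$, so (\ref{exactL'}) reads
$$0 \to {\mathcal O}_S \to (\pi\circ\nu)_*{\mathcal O}_{X'} \to \omega_S \otimes {\mathcal O}_S(A) \to 0.$$
I would pass to the long exact sequence of cohomology and show that the two flanking terms ${\mathcal O}_S$ and $(\pi\circ\nu)_*{\mathcal O}_{X'}$ have matching cohomology, which forces the middle term to be acyclic. As $S$ is a surface it suffices to treat $i = 0, 1, 2$.

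For the inputs: $S$ is rational, so ${\rm H}^i(S, {\mathcal O}_S) = 0$ for $i = 1, 2$ while ${\rm H}^0(S, {\mathcal O}_S) = k$. Since $\pi\circ\nu$ is finite, $R^j(\pi\circ\nu)_*{\mathcal O}_{X'} = 0$ for $j \geq 1$ and hence ${\rm H}^i(S, (\pi\circ\nu)_*{\mathcal O}_{X'}) \cong {\rm H}^i(X', {\mathcal O}_{X'})$. For the cohomology of $X'$ I invoke Proposition \ref{singularityY'}: under the standing hypothesis $A \neq 0$, the minimal resolution $\tilde{X}$ of $X'$ is rational and $X'$ has only rational singularities, so $R^1\mu_*{\mathcal O}_{\tilde{X}} = 0$ and ${\rm H}^i(X', {\mathcal O}_{X'}) \cong {\rm H}^i(\tilde{X}, {\mathcal O}_{\tilde{X}})$, which vanishes for $i = 1, 2$; moreover ${\rm H}^0(X', {\mathcal O}_{X'}) = k$. (The vanishing ${\rm H}^2(X', {\mathcal O}_{X'}) = 0$ is in fact already recorded inside the proof of that proposition.)

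With these in hand the long exact sequence breaks into short pieces. On ${\rm H}^0$ the map ${\rm H}^0(S, {\mathcal O}_S) \to {\rm H}^0(X', {\mathcal O}_{X'})$ is the inclusion of constants between two copies of $k$, hence an isomorphism; together with ${\rm H}^1(S, {\mathcal O}_S) = 0$ this yields ${\rm H}^0(S, \omega_S \otimes {\mathcal O}_S(A)) = 0$. The segment $0 = {\rm H}^1(X', {\mathcal O}_{X'}) \to {\rm H}^1(S, \omega_S \otimes {\mathcal O}_S(A)) \to {\rm H}^2(S, {\mathcal O}_S) = 0$ settles $i = 1$, and finally $0 = {\rm H}^2(X', {\mathcal O}_{X'}) \to {\rm H}^2(S, \omega_S \otimes {\mathcal O}_S(A)) \to 0$ settles $i = 2$.

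I do not expect a serious obstacle: the real content has been front-loaded into Proposition \ref{singularityY'}, and what remains is a diagram chase. The one point deserving care is the ${\rm H}^0$ identification --- one must check that the connecting map ${\rm H}^0(X',{\mathcal O}_{X'}) \to {\rm H}^1(S,{\mathcal O}_S)$ vanishes, which holds because the two zeroth cohomologies are one-dimensional and the restriction between them is injective. I would also flag the likely purpose of the lemma: feeding this vanishing into Riemann--Roch gives $0 = \chi(\omega_S \otimes {\mathcal O}_S(A)) = 1 + \tfrac{1}{2}A^2$ (using $A\cdot K_S = 0$), which recovers $A^2 = -2$. This strongly suggests the lemma is meant to precede that computation, so I would deliberately avoid using $A^2 = -2$ anywhere in the proof.
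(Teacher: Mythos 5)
Your proof is correct, and the circularity check you ran is the right one: Proposition \ref{singularityY'} lives in Section 3 and rests only on (\ref{L'}), (\ref{dualizingY'}), Hidaka--Watanabe and Castelnuovo's criterion, none of which invoke this lemma, so importing it here is legitimate. But your route for $i=1$ is genuinely different from the paper's. The paper handles $H^0$ exactly as you do (same short exact sequence, same cokernel of $k \to k$) and kills $H^2$ by Serre duality, ${\rm H}^2(S, \omega_S\otimes {\mathcal O}_S(A)) \cong {\rm H}^0(S, {\mathcal O}_S(-A)) = 0$; for $H^1$, however, it argues numerically rather than geometrically: Riemann--Roch converts the two vanishings into $\chi(\omega_S\otimes{\mathcal O}_S(A)) = \frac{A^2}{2}+1 \leq 0$, so $A^2 \leq -2$, and then Noether's formula combined with Igusa's formula (\ref{euler}) applied to $\eta$ gives $\deg \langle \eta \rangle = 12 - n + 4A^2 \geq 0$, which with $n \geq 1$ forces $A^2 = -2$, hence $\chi = 0$ and ${\rm H}^1 = 0$. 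You instead push everything onto $X'$: finiteness of $\pi\circ\nu$ identifies ${\rm H}^i(S, (\pi\circ\nu)_*{\mathcal O}_{X'})$ with ${\rm H}^i(X', {\mathcal O}_{X'})$, and the rationality of $\tilde{X}$ together with $R^1\mu_*{\mathcal O}_{\tilde{X}} = 0$ kills these for $i = 1, 2$, giving all three vanishings uniformly and bypassing the $c_2$-computation. What the paper's approach buys is precisely its byproduct: the same computation yields Corollary \ref{A^2} ($A^2 = -2$ and $\deg\langle\eta\rangle = 4-n$), which is needed immediately afterwards (e.g.\ for the count of isolated $A_1$-singularities); your closing remark correctly recovers $A^2 = -2$ from $\chi = 0$ and $A\cdot K_S = 0$, and $\deg\langle\eta\rangle = 4 - n$ then still drops out of the Igusa formula, so nothing downstream is lost. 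What your approach buys is a cleaner, purely cohomological proof, at the cost of invoking the heavier Proposition \ref{singularityY'}. One terminological slip: the map you call ``the connecting map ${\rm H}^0(X', {\mathcal O}_{X'}) \to {\rm H}^1(S, {\mathcal O}_S)$'' does not occur in the long exact sequence --- the connecting homomorphism goes from ${\rm H}^0(S, \omega_S\otimes{\mathcal O}_S(A))$ to ${\rm H}^1(S, {\mathcal O}_S)$ --- but since ${\rm H}^1(S, {\mathcal O}_S) = 0$ anyway for the rational surface $S$, the point you actually need is the one you supplied: the unital map $k = {\rm H}^0(S, {\mathcal O}_S) \to {\rm H}^0(X', {\mathcal O}_{X'}) = k$ is nonzero, hence surjective, so its cokernel ${\rm H}^0(S, \omega_S\otimes{\mathcal O}_S(A))$ vanishes.
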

\begin{proof}
By (\ref{exactL'}) and (\ref{L'}), 
we have a long exact sequence:
$$
0 \longrightarrow {\rm H}^0(S, {\mathcal O}_S) \longrightarrow {\rm H}^0(S, (\pi \circ \nu)_{*}{\mathcal O}_{X'}) \longrightarrow {\rm H}^0(S, \omega_S\otimes {\mathcal O}_S(A)) 
\longrightarrow {\rm H}^1(S, {\mathcal O}_S).
$$
Since ${\rm H}^0(S, {\mathcal O}_S)\cong k$, ${\rm H}^0(S, (\pi \circ \nu)_{*}{\mathcal O}_{X'})\cong k$
and ${\rm H}^1(S, {\mathcal O}_S) = 0$, we have 
${\rm H}^0(S, \omega_S\otimes {\mathcal O}_S(A)) = 0$.
By the Serre duality theorem, we have
$$
   {\rm H}^2(S, \omega_S\otimes {\mathcal O}_S(A))\cong {\rm H}^0(S, {\mathcal O}_S(-A))= 0.
$$
Therefore, by the Riemann-Roch theorem, we have
$$
0 \geq \chi(\omega_S\otimes {\mathcal O}_S(A)) = \frac{A^2}{2} + 1.
$$
Therefore, we have 
$A^2 \leq -2$.

Since $K_S^2 = -n$, by Noether's formula and Igusa's formula (cf. Igusa \cite{I}), we have
$$
\begin{array}{cl}
  12 + n &= c_2(S) = \deg \la \eta \ra + (\eta)\cdot K_S - (\eta)^2 \\
    &  =\deg \la \eta \ra + (-\sum_{i= 1}^{n}B_i + 2A)\cdot K_S - (-\sum_{i= 1}^{n}B_i + 2A)^2\\
    & = \deg \la \eta \ra  + 2n -4A^2.
\end{array}
$$
Therefore we have
\begin{equation}\label{eta>}
 \deg \la \eta \ra = 12 - n + 4A^2.
\end{equation}
Since $\deg \la \eta \ra \geq 0$ and $n \geq 1$, we have $A^2 > -3$.
Hence, we conclude $A^2 = -2$, and
we have ${\rm H}^1(S, \omega_S\otimes {\mathcal O}_S(A)) = 0$
\end{proof}

By the proof of this lemma, we have the following.
\begin{corollary}\label{A^2}
$A^2 = -2$ and $\deg \la \eta \ra = 4 - n$.
\end{corollary}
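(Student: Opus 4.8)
The statement is a direct harvest of the numerical facts already established inside the proof of Lemma~\ref{vanishing1}; the plan is simply to combine the two estimates produced there, namely an upper bound $A^2\le -2$ coming from cohomology and the exact Euler-number identity~(\ref{eta>}), and then to squeeze $A^2$ between them.

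First I would record the upper bound. From the exact sequence~(\ref{exactL'}) together with the identification~(\ref{L'}) and the rationality of $S$ one gets $H^0(S,\omega_S\otimes\mathcal{O}_S(A))=0$, while Serre duality and the fact that $A$ is a nonzero effective divisor give $H^2(S,\omega_S\otimes\mathcal{O}_S(A))\cong H^0(S,\mathcal{O}_S(-A))=0$. Hence $\chi(\omega_S\otimes\mathcal{O}_S(A))=-h^1\le 0$. Applying Riemann--Roch on the rational surface $S$, and using $A\cdot K_S=0$ and $\chi(\mathcal{O}_S)=1$, the left-hand side equals $\frac{A^2}{2}+1$, so $A^2\le -2$. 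Since $A$ is an integral divisor, $A^2$ is an integer.

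Next I would produce the identity linking $A^2$ and $\deg\la\eta\ra$. Noether's formula on the rational surface $S$, together with $K_S^2=-n$, gives $c_2(S)=12+n$. On the other hand the Igusa-type Euler-number formula for the rational $1$-form $\eta$, whose divisor is $(\eta)=-\sum_i B_i+2A$ by Theorem~\ref{eta}, yields
\[
c_2(S)=\deg\la\eta\ra+(\eta)\cdot K_S-(\eta)^2 .
\]
Expanding using $\sum_i B_i\sim -2K_S$, $B_i\cdot B_j=0$, $B_i^2=-4$, and $A\cdot K_S=A\cdot B_i=0$, this becomes exactly equation~(\ref{eta>}), $\deg\la\eta\ra=12-n+4A^2$.

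Finally I would combine the two. Non-negativity $\deg\la\eta\ra\ge 0$ and $n\ge 1$ force $4A^2\ge n-12\ge -11$, hence $A^2>-3$, i.e. $A^2\ge -2$; together with $A^2\le -2$ this pins down $A^2=-2$. Substituting back into~(\ref{eta>}) gives $\deg\la\eta\ra=4-n$. There is no genuine obstacle here, since every ingredient is already in place; the only points requiring a little care are confirming the integrality of $A^2$ (so that the strict bound $A^2>-3$ upgrades to $A^2\ge -2$) and checking that the cross terms in the expansions of $(\eta)^2$ and $(\eta)\cdot K_S$ vanish because $A$ meets neither $K_S$ nor the boundary.
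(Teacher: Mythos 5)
Your proposal is correct and follows essentially the same route as the paper: the paper derives exactly these two facts inside the proof of Lemma~\ref{vanishing1} (the vanishing of $\mathrm{H}^0$ and $\mathrm{H}^2$ of $\omega_S\otimes\mathcal{O}_S(A)$ plus Riemann--Roch giving $A^2\le -2$, then Noether's and Igusa's formulas giving $\deg\langle\eta\rangle = 12 - n + 4A^2$, and finally $\deg\langle\eta\rangle\ge 0$, $n\ge 1$ forcing $A^2 = -2$), and states the corollary as an immediate consequence. Your added remarks on integrality of $A^2$ and on the vanishing of the cross terms $A\cdot K_S = A\cdot B_i = 0$ are exactly the implicit checks the paper relies on, so nothing is missing.
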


\begin{proposition}\label{numerically connected}
$A$ is numerically $1$-connected.
\end{proposition}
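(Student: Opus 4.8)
The plan is to reduce numerical $1$-connectedness of $A$ to the single cohomological statement $h^0(A,\mathcal{O}_A)=1$, which I can read off directly from the vanishings already established in Lemma \ref{vanishing1}. Recall that numerical $1$-connectedness means $A_1\cdot A_2\ge 1$ for every decomposition $A=A_1+A_2$ into nonzero effective divisors, and that for an effective divisor $D$ on a smooth surface one has the classical equivalence that $D$ is numerically $1$-connected if and only if $h^0(D,\mathcal{O}_D)=1$. Since $A$ is effective and nonzero (we are in the case $A\neq 0$), it suffices to prove $h^0(\mathcal{O}_A)=1$ and then to invoke, or reprove, this equivalence.

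First I would compute $h^0(\mathcal{O}_A)$. Taking cohomology of the ideal sequence $0\to\mathcal{O}_S(-A)\to\mathcal{O}_S\to\mathcal{O}_A\to 0$ gives the exact piece
$$0\to H^0(\mathcal{O}_S(-A))\to H^0(\mathcal{O}_S)\to H^0(\mathcal{O}_A)\to H^1(\mathcal{O}_S(-A))\to H^1(\mathcal{O}_S).$$
Here $H^0(\mathcal{O}_S(-A))=0$ since $A$ is effective and nonzero, $H^0(\mathcal{O}_S)=k$, and $H^1(\mathcal{O}_S)=0$ because $S$ is rational. The decisive input is $H^1(\mathcal{O}_S(-A))=0$: by Serre duality this group is dual to $H^1(S,\omega_S\otimes\mathcal{O}_S(A))$, which vanishes by Lemma \ref{vanishing1}. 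Hence $H^0(\mathcal{O}_S)\xrightarrow{\sim}H^0(\mathcal{O}_A)$, so $h^0(\mathcal{O}_A)=1$.

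To finish I would argue the contrapositive of the equivalence: if $A$ were not numerically $1$-connected, there would be a decomposition $A=A_1+A_2$ with $A_i>0$ and $A_1\cdot A_2\le 0$, and I would produce a second, nonconstant global section of $\mathcal{O}_A$, contradicting $h^0(\mathcal{O}_A)=1$. Such a section arises from the kernel sequence $0\to\mathcal{O}_{A_1}(-A_2)\to\mathcal{O}_A\to\mathcal{O}_{A_2}\to 0$: the constant section of $\mathcal{O}_A$ restricts to a nonzero section of $\mathcal{O}_{A_2}$, so any nonzero element of $H^0(A_1,\mathcal{O}_{A_1}(-A_2))$, which injects into $H^0(\mathcal{O}_A)$ as the sections vanishing on $A_2$, is linearly independent from it.

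The main obstacle is precisely the production of that extra section, i.e. showing $H^0(A_1,\mathcal{O}_{A_1}(-A_2))\neq 0$ for a suitably chosen decomposition. Although $\deg_{A_1}\mathcal{O}_{A_1}(-A_2)=-A_1\cdot A_2\ge 0$, a line bundle of non-negative total degree on a reducible or non-reduced curve $A_1$ can fail to have any section, so one must take $A_1$ maximal among the offending subdivisors to force one; this is exactly the content of the classical numerical-connectedness lemma of Mumford (see e.g. \cite{CDL}). An alternative, more hands-on route sidesteps this: every component $C$ of $A$ is a $(-2)$-curve, since $C\cdot B_i=0$ gives $C\cdot K_S=0$ and Proposition \ref{contain} excludes components of arithmetic genus $\ge 1$, so the sublattice generated by the components is even; if one can further show this sublattice is negative definite, then each nonzero effective $A_i$ has $A_i^2\le -2$, and $A^2=-2$ from Corollary \ref{A^2} yields $2A_1\cdot A_2=-2-A_1^2-A_2^2\ge 2$, hence $A_1\cdot A_2\ge 1$ at once. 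In that approach the difficulty merely shifts to proving negative-definiteness of the support of $A$ from Proposition \ref{contain} alone, without circularly invoking the eventual classification of conductrices.
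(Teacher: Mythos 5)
Your first route breaks at the reduction itself: the ``classical equivalence'' you invoke only holds in one direction. Mumford's lemma says that a numerically $1$-connected divisor $D$ has ${\rm H}^0(D,\mathcal{O}_D)=k$; the converse is false for non-reduced divisors, and no choice of a maximal offending subdivisor rescues it. A standard counterexample is $D=2C$ with $C$ a half-fiber of an elliptic fibration (e.g.\ on a classical Enriques surface): the only decomposition is $C+C$ with $C\cdot C=0$, yet ${\rm H}^0(C,\mathcal{O}_C(-C))=0$ because $\mathcal{O}_C(-C)$ is a nontrivial line bundle of degree $0$, so the ideal-sequence computation gives $h^0(\mathcal{O}_{2C})=1$ even though $2C$ is not $1$-connected. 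This is not a remote pathology in the present setting: the conductrix is non-reduced in almost all cases of Tables \ref{TableQ-elliptic}--\ref{TableElliptic2}, and excluding precisely this half-fiber behavior is the content of Proposition \ref{contain}, which your first route never uses. Your computation $h^0(\mathcal{O}_A)=1$ is correct (it is equivalent, via the ideal sequence, to the vanishings of Lemma \ref{vanishing1}), but it cannot by itself imply the proposition. Your second route has the right endgame but, as you concede, leaves the key input --- $A_i^2\le -2$ for every effective decomposition piece --- unproven; negative definiteness of the span of the components of $A$ is not free, since in the quasi-elliptic case $A$ contains the curve of cusps, a $2$-section, in addition to fiber components, and establishing definiteness without the later classification would require real work.

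The paper closes exactly this gap with a short cohomological argument for which you already had all the pieces: for any decomposition $A=A_1+A_2$ with $A_1,A_2>0$, the inclusion ${\rm H}^0(S,\omega_S\otimes\mathcal{O}_S(A_1))\hookrightarrow {\rm H}^0(S,\omega_S\otimes\mathcal{O}_S(A))$ together with Lemma \ref{vanishing1} gives ${\rm H}^0(S,\omega_S\otimes\mathcal{O}_S(A_1))=0$, while Serre duality gives ${\rm H}^2(S,\omega_S\otimes\mathcal{O}_S(A_1))\cong {\rm H}^0(S,\mathcal{O}_S(-A_1))=0$; hence $0\ge\chi(\omega_S\otimes\mathcal{O}_S(A_1))=1+A_1^2/2$ by Riemann--Roch, using $K_S\cdot A_1=0$, which holds because no component of $A$ meets the boundary and $-2K_S\sim\sum_i B_i$. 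Thus $A_1^2\le -2$, and likewise $A_2^2\le -2$. With $A^2=-2$ from Corollary \ref{A^2}, the identity $-2=A_1^2+2A_1\cdot A_2+A_2^2$ forces $A_1\cdot A_2\ge 1$. In short: apply the vanishing of Lemma \ref{vanishing1} to the subdivisors $A_1$ rather than to $A$ itself; that single shift substitutes both for the false converse in your first route and for the missing negative-definiteness in your second.
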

\begin{proof}
Suppose we have a decomposition $A = A_1 + A_2$ with divisors $A_1 > 0$ and $A_2 > 0$.
Then, we have an inclusion
$$
 {\rm H}^0(S, \omega_S \otimes {\mathcal O}_S(A_1)) \hookrightarrow 
 {\rm H}^0(S, \omega_S \otimes {\mathcal O}_S(A)).
$$
Therefore, by Lemma \ref{vanishing1} we have 
${\rm H}^0(S, \omega_S \otimes {\mathcal O}_S(A_1)) = 0$.
By the Serre duality theorem, we have
${\rm H}^2(S, \omega_S \otimes {\mathcal O}_S(A_1))\cong {\rm H}^0(S, {\mathcal O}_S(-A_1))= 0$.
Therefore, by the Riemann-Roch theorem, we have 
$0 \geq \chi (\omega_S \otimes {\mathcal O}_S(A_1))= A_1^2/2  + 1$. 
Therefore, $A_1^2 \leq -2$. Similarly, we have $A_2^2\leq -2$.
By Corollary \ref{A^2}, we have
$-2 = A^2 = A_1^2 + 2A_1\cdot A_2 + A_2^2$, we conclude $A_1\cdot A_2 \geq 1$.
\end{proof}
\begin{proposition}
All isolated singularities of $X$ are rational singularities of type ${\rm A}_1$ and
the number of the singularities are less than or equal to $3$.
\end{proposition}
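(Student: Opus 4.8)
The plan is to reduce the whole statement to a local computation at each isolated singular point, which in characteristic two is controlled by a rigid dichotomy on the local length, and then to eliminate every bad case using the global identity $\deg\langle\eta\rangle = 4-n$ of Corollary \ref{A^2}. First I would locate the isolated singularities precisely. Away from the boundary $\sum_i B_i$ and from the conductrix $A$, the local equation (\ref{covering}) reads $z_i^2 = f_ig_i$ with $u:=f_ig_i$ a unit, so $\eta = du/u$ and an isolated singular point of $X$ is exactly an isolated zero $p$ of $\eta$, cut out by the ideal $(u_x,u_y)$. By Proposition \ref{singularityY'} the normalization $X'$ has only rational singularities, and since $X=X'$ near such a $p$ (which lies off the one-dimensional non-normal locus $A$), the point $p$ is a rational double point of $X$, whose contribution to $\deg\langle\eta\rangle$ is the local length $\ell_p:=\dim_k \mathcal{O}_{S,p}/(u_x,u_y)$.

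The heart of the argument is the local analysis in characteristic two. Since $k$ is perfect I can complete the square: write $u = s^2 + r$ with $s$ a unit and $r$ containing no monomial $x^iy^j$ with both $i,j$ even; setting $w = z_i + s$ transforms the equation into $w^2 = r$. Because $(s^2)_x=(s^2)_y=0$ one has $u_x=r_x$ and $u_y=r_y$, so $\ell_p = \dim_k \mathcal{O}/(r_x,r_y)$ is precisely the standard invariant of the hypersurface $w^2=r$. As $du(p)=0$ and $r$ has no square monomials, $r$ has order $\geq 2$ with degree-two part a multiple of $xy$, say $r = c\,xy + (\mathrm{ord}\geq 3)$. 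The key dichotomy is then: if $c\neq 0$ then $(r_x,r_y)=(x,y)$, so $\ell_p=1$ and $w^2=c\,xy$ is an $A_1$ point; if $c=0$ then $r$ has order $\geq 3$, whence both $r_x$ and $r_y$ have order $\geq 2$, and since $p$ is isolated (so $(r_x,r_y)$ has finite colength) one gets $\ell_p \geq \mathrm{ord}(r_x)\cdot\mathrm{ord}(r_y) \geq 4$. Thus $\ell_p\in\{1\}\cup\{4,5,\dots\}$, with $\ell_p=1$ holding exactly for an $A_1$ singularity.

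Finally I would invoke the global count. The cycle $\langle\eta\rangle$ is supported away from the polar and divisorial-zero locus of $\eta$, i.e. away from $\sum_i B_i$ and $A$, and each isolated singular point contributes its full length $\ell_p$; hence $\sum_{p}\ell_p \leq \deg\langle\eta\rangle = 4-n$ by Corollary \ref{A^2}, and $4-n\leq 3$ because $n\geq 1$. Therefore no point with $\ell_p\geq 4$ can occur, so every isolated singularity has $\ell_p=1$ and is of type $A_1$, and the number of them is $\sum_p\ell_p\leq 3$. (The same inequality with equality, together with the vanishing of any isolated zeros on $A$, would upgrade the count to exactly $4-n$ as in Theorem \ref{main1}(2), but for the present statement the inequality suffices.)

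The main obstacle is the characteristic-two local step. The decisive point is that the covering has the purely inseparable form $z^2=u$ with \emph{no} term linear in $z$, so that after removing square monomials the singularity is a pure double point $w^2=r$ whose length leaps from $1$ directly to $\geq 4$, leaving no room for length $2$ or $3$; this is what forbids $A_n$ $(n\geq 2)$, $D_n$ and $E_n$ points once the budget $4-n\leq 3$ is imposed. Making this rigorous requires identifying $\ell_p$ with the local multiplicity of $\langle\eta\rangle$ through the equalities $u_x=r_x$, $u_y=r_y$, and verifying carefully that a non-$A_1$ point forces both partials to have order at least two; with this dichotomy established, the global bound closes the proof.
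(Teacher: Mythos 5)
Your proposal is correct and takes essentially the same route as the paper's own proof: complete the square so the local equation becomes $w^2 = r$ with $r$ free of square monomials, note that a nonzero degree-$2$ term (necessarily a multiple of $xy$) yields an $A_1$ point of local length $1$, while its absence forces local length $\geq 4$, contradicting $\deg\langle\eta\rangle = 4-n \leq 3$ from Corollary \ref{A^2}. You merely make explicit some steps the paper leaves implicit (the identification of each point's contribution to $\langle\eta\rangle$ with $\dim_k \mathcal{O}_{S,p}/(r_x,r_y)$, and the length bound via the product of the orders of $r_x$ and $r_y$), an argument the paper credits in idea to Ekedahl and Shepherd-Barron.
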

\begin{proof}
By Lemma \ref{A^2}, we have $\deg \la \eta \ra \leq 3$.
The idea of the proof comes from Ekedahl and Shepherd-Barron \cite[Proposition 0.5]{ES}.
Let $P \in \tilde{U}_i \subset X$ be a singular point.
The covering $\pi_i: \tilde{U}_i \longrightarrow U_i$ is given by $z_i^2 = f_ig_i$.
By putting the square part of $f_ig_i$ into $z_i$, let the equation
become $z_i^2 = k_i$. Let $(x_i, y_i, z_i)$
be local coodinates at the point $\pi_i(P)$.
Then, since $P$ is a singular point, $k_i$ has no terms of degree less than 2.
If it has a term of degree 2, then we have an expression $z_i^2 = x_iy_i + {\rm higher \ terms}$.
This is a rational singularity of type ${\rm A}_1$. If $k_i$ has no terms of degree 2,
then considering partial differentiations $k_{ix_i}$, $k_{iy_i}$, we have
$$
    \dim_{k} k[[x_i,y_i]]/(k_{ix_i}, k_{iy_i}) \geq 4.
$$
Therefore, we have $\deg \la \eta \ra \geq 4$, a contradiction.
Hence, all isolated singularities of $Y$ is rational singularities of type ${\rm A}_1$
and the number of isolated singularities is less than or equal to 3.
\end{proof}

\begin{corollary}
Under the assumptin $A \neq 0$,
the sum of the number of isolated singularity and the number of boundary components
is equal to $4$.
In particular, if a Coble surface $S$ has a quasi-elliptic fibration, then 
the sum of the number of isolated singularity and the number of boundary components
is always equal to $4$.
\end{corollary}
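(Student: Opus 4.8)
The plan is to read the count off from the zero-cycle degree already computed, reinterpreting $\deg \la \eta \ra$ as the exact number of isolated singular points of $X$. By Corollary \ref{A^2}, the standing hypothesis $A \neq 0$ gives $\deg \la \eta \ra = 4 - n$, where $n$ is the number of boundary components, so the whole statement reduces to showing that the number of isolated singularities of $X$ equals $\deg \la \eta \ra$.

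To do this I would return to the local description used in the preceding proposition. Away from the boundary one may absorb the square part of $f_ig_i$ into $z_i$ and write the covering locally as $z_i^2 = k_i$, and since $d(s^2k_i) = s^2\,dk_i$ in characteristic $2$ the form is $\eta = dk_i/k_i$ locally. The surface $X$ is singular exactly where $k_{ix_i} = k_{iy_i} = 0$, which is also where $\eta$ vanishes; the \emph{isolated} part of this locus is precisely the support of the zero-cycle $\la \eta \ra$, and its length at a point $P$ is $\dim_k k[[x_i,y_i]]/(k_{ix_i}, k_{iy_i})$. The preceding proposition shows every such isolated point is a rational double point of type $A_1$; for an $A_1$ point a coordinate change gives $k_i \equiv x_iy_i \pmod{\mathfrak{m}^3}$, so $(k_{ix_i}, k_{iy_i}) = \mathfrak{m}$ and the local length is exactly $1$. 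Thus $\deg \la \eta \ra$ counts the isolated singular points of $X$ with multiplicity one each, whence their number equals $\deg \la \eta \ra = 4 - n$, and adding $n$ yields $4$.

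For the final clause I would simply invoke Lemma \ref{hascusp}: a quasi-elliptic fibration forces the conductrix to contain the curve of cusps, so $A \neq 0$ holds automatically and the computation above applies. The sum is then $4$ no matter which fibration is used, which is what the ``in particular'' assertion claims.

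The only point requiring care --- rather than a genuinely new computation --- is the identification of the local length of $\la \eta \ra$ with $\dim_k k[[x_i,y_i]]/(k_{ix_i}, k_{iy_i})$ together with its value $1$ in the $A_1$ case. This is already built into the preceding proposition, whose bound $\deg \la \eta \ra \geq 4$ for a non-$A_1$ point is exactly a lower estimate of the same length; here one needs only the complementary exact evaluation in the $A_1$ case to convert that inequality-based argument into a precise count.
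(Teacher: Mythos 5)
Your argument is exactly the paper's: the paper also deduces the first claim directly from $\deg \la \eta \ra = 4 - n$ (Corollary \ref{A^2}), implicitly identifying $\deg \la \eta \ra$ with the number of isolated singular points via the local computation $z_i^2 = k_i$ in the preceding proposition, and it handles the quasi-elliptic clause by the same appeal to Lemma \ref{hascusp}. Your explicit check that an $A_1$ point contributes length exactly $1$ to $\la \eta \ra$ merely spells out what the paper's two-line proof leaves implicit.
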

\begin{proof}
The former part  follows from $\deg \la \eta \ra + n = 4$. The latter part
follows from Lemma \ref{hascusp}.
\end{proof}

\begin{lemma}\label{OA}
  $\dim {\rm H}^0(S, {\mathcal O}_S(A)) =\dim {\rm H}^1(S, {\mathcal O}_S(A)) = 1$ and 
  $\dim {\rm H}^2(S, {\mathcal O}_S(A)) = 0$.
\end{lemma}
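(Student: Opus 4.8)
The plan is to nail down $\chi({\mathcal O}_S(A))$ and ${\rm H}^2$ first, so that the problem collapses to separating $h^0$ from $h^1$, and then to extract $h^0=1$ from the effectivity and numerical $1$-connectedness of $A$. First I would compute the Euler characteristic by Riemann--Roch on $S$. Since $S$ is rational, $\chi({\mathcal O}_S)=1$; by the preceding proposition $A\cdot K_S=0$ and by Corollary \ref{A^2} $A^2=-2$, so
$$\chi({\mathcal O}_S(A))=\chi({\mathcal O}_S)+\tfrac12\,A\cdot(A-K_S)=1+\tfrac12(-2)=0.$$
Next I would kill the top group by Serre duality: ${\rm H}^2(S,{\mathcal O}_S(A))\cong {\rm H}^0(S,{\mathcal O}_S(K_S-A))^{\vee}$, and this vanishes because $K_S-A$ cannot be effective. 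Indeed $A>0$, so an effective representative of $K_S-A$ would force $K_S$ to be linearly equivalent to $A+(\text{effective})$, hence effective, contradicting $p_g(S)=0$ for the rational surface $S$. At this stage $h^0=h^1$ and it remains to show the common value is $1$.

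For the lower bound, $A$ is effective, so its tautological section gives $h^0(S,{\mathcal O}_S(A))\ge 1$. For the upper bound I would use the structure sequence
$$0\longrightarrow {\mathcal O}_S\longrightarrow {\mathcal O}_S(A)\longrightarrow {\mathcal O}_A(A)\longrightarrow 0$$
coming from multiplication by the section cutting out $A$. Passing to cohomology and using ${\rm H}^0(S,{\mathcal O}_S)=k$ and ${\rm H}^1(S,{\mathcal O}_S)=0$ (rationality of $S$) yields $h^0(S,{\mathcal O}_S(A))=1+h^0(A,{\mathcal O}_A(A))$, so everything reduces to the vanishing $h^0(A,{\mathcal O}_A(A))=0$. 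I would also record that Lemma \ref{vanishing1}, via Serre duality and the sequence $0\to{\mathcal O}_S(-A)\to{\mathcal O}_S\to{\mathcal O}_A\to 0$, gives ${\rm H}^{\bullet}(A,{\mathcal O}_A)\cong{\rm H}^{\bullet}(S,{\mathcal O}_S)$, so $h^0({\mathcal O}_A)=1$ and $\chi({\mathcal O}_A)=1$ (i.e. $A$ has arithmetic genus $0$); this is the input I would feed into the last step. Once $h^0(A,{\mathcal O}_A(A))=0$ is known, $h^0(S,{\mathcal O}_S(A))=1$, and then $h^1=h^0-\chi+h^2=1-0+0=1$, completing the lemma.

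The main obstacle is precisely the vanishing $h^0(A,{\mathcal O}_A(A))=0$. The normal sheaf ${\mathcal O}_A(A)$ has total degree $A^2=-2<0$ on $A$, so for an \emph{integral} $A$ the vanishing is immediate; the real difficulty is that $A$ may be reducible and non-reduced. I would run the standard maximal-vanishing-subdivisor argument of Ekedahl--Shepherd-Barron \cite{ES} and Cossec--Dolgachev--Liedtke \cite{CDL}: a nonzero section vanishes identically on a maximal subdivisor $A_1<A$ and restricts to a non-zero-divisor section of ${\mathcal O}_{A_2}(A_2)$ on $A_2:=A-A_1$, which forces $A_2^2\ge 0$; numerical $1$-connectedness (Proposition \ref{numerically connected}) gives $A_1\cdot A_2\ge 1$, and together with $A^2=-2$ this confines $A_2$ to a fiber- or half-fiber-like class, excluded by Proposition \ref{contain}. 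Equivalently, using $\chi({\mathcal O}_A)=1$ one may dualize on the Gorenstein curve $A$ to rewrite $h^0({\mathcal O}_A(A))=h^1({\mathcal O}_A(K_S))$ and reduce to showing ${\mathcal O}_A(K_S)\cong{\mathcal O}_A$ (degree zero on each component of the genus-$0$ configuration $A$). Either way, the delicate points are the non-reduced structure and the borderline case $A_2^2=0$, where Proposition \ref{contain} and the Hodge index theorem are needed; all the remaining steps are formal Riemann--Roch and duality bookkeeping.
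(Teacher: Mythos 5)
Your computation of $\chi({\mathcal O}_S(A))=0$ and the vanishing of ${\rm H}^2(S,{\mathcal O}_S(A))$ coincides with the paper's proof: the paper likewise applies Riemann--Roch with $A\cdot K_S=0$ and $A^2=-2$ (Corollary \ref{A^2}), and kills ${\rm H}^2$ via the injection ${\rm H}^0(S,\omega_S\otimes{\mathcal O}_S(-A))\hookrightarrow{\rm H}^0(S,\omega_S)=0$, which is the same observation as your ``$K_S-A$ cannot be effective.'' Where you genuinely diverge is the step $h^0=1$. The paper disposes of it in one line: $A$ is a connected effective divisor such that \emph{every} effective subdivisor $A_1\subset A$ satisfies $A_1^2\le -2$ --- this per-subdivisor bound is exactly what the proof of Proposition \ref{numerically connected} establishes, via ${\rm H}^0(S,\omega_S\otimes{\mathcal O}_S(A_1))\hookrightarrow{\rm H}^0(S,\omega_S\otimes{\mathcal O}_S(A))=0$ (Lemma \ref{vanishing1}) and Riemann--Roch --- so the moving part of $|A|$ would be an effective subdivisor of nonnegative self-intersection and must vanish. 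Your longer route through the restriction sequence and $h^0(A,{\mathcal O}_A(A))=0$ is a legitimate alternative, but it is only as strong as its final exclusion step, and that is where your proposal has a real soft spot.

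Concretely: after the maximal-vanishing-subdivisor argument yields either $A^2\ge 0$ (excluded by $A^2=-2$) or a decomposition $A=A_1+A_2$ with $A_1,A_2>0$ and $A_2^2\ge 0$, you propose to finish using only numerical $1$-connectedness, $A^2=-2$, Proposition \ref{contain} and Hodge index. That does not close: the values $A_1^2=-4$, $A_1\cdot A_2=1$, $A_2^2=0$ are perfectly consistent with $A^2=-2$ and $1$-connectedness, so no numerical contradiction arises; and the ``fiber- or half-fiber-like class'' exclusion is unavailable in general, because at this point in the paper $A$ is not known to be vertical --- when the genus $1$ fibration is quasi-elliptic, Lemma \ref{hascusp} shows $A$ contains the curve of cusps, a $2$-section, so an isotropic effective $A_2\subset A$ need not be orthogonal to the fiber class and the Hodge-index proportionality to a fiber fails. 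The repair is immediate from the paper's own material: the proof of Proposition \ref{numerically connected} gives $A_2^2\le -2$ for \emph{any} decomposition $A=A_1+A_2$ with both parts positive, which flatly contradicts $A_2^2\ge 0$; substituting this for your exclusion step makes your argument correct. For the same reason of logical ordering, your side remark reducing to ${\mathcal O}_A(K_S)\cong{\mathcal O}_A$ on the ``genus-$0$ configuration'' cannot be used here: that all components of $A$ are $(-2)$-curves is Theorem \ref{structureA}, which is proved after, and using, the present lemma.
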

\begin{proof}
Since $A$ is a connected effective divisor such that $A_1^2 \leq -2$ holds 
for any effective divisor $A_1 \subset A$, 
we have $\dim {\rm H}^0(S, {\mathcal O}_S(A)) = 1$.
By an exact sequence 
$0 \longrightarrow \omega_S \otimes {\mathcal O}_S(-A) \longrightarrow \omega_S$,
we have an exact sequence 
$$
0 \longrightarrow {\rm H}^0(S,\omega_S \otimes {\mathcal O}_S(-A)) \longrightarrow 
{\rm H}^0(S, \omega_S).
$$
Since $S$ is rational, we have ${\rm H}^0(S, \omega_S)= 0$. Therefore, we have
${\rm H}^0(S,\omega\otimes {\mathcal O}_S(-A)) = 0$. By the Serre duality theorem
we have ${\rm H}^2(S, {\mathcal O}_S(A)) = 0$. Hence, by the Riemann-Roch theorem
we have $\dim {\rm H}^1(S, {\mathcal O}_S(A)) = 1$
\end{proof}
\begin{theorem}\label{structureA}
The irreducible components of the conductrix $A$ are all $(-2)$-curves.
If different two irreducible components intersect, then they intersect each other 
at a point transeversely.
\end{theorem}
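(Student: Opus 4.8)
The plan is to reduce everything to the single numerical estimate already implicit in Proposition~\ref{numerically connected}: \emph{for every effective divisor $A_1$ with $0 < A_1 \leq A$ one has $A_1^2 \leq -2$.} I would first isolate this as the working lemma. Since $A - A_1 \geq 0$, choosing a section cutting it out gives an inclusion $H^0(S,\omega_S\otimes\mathcal{O}_S(A_1)) \hookrightarrow H^0(S,\omega_S\otimes\mathcal{O}_S(A))$, and the target vanishes by Lemma~\ref{vanishing1}; hence $H^0(S,\omega_S\otimes\mathcal{O}_S(A_1))=0$. Serre duality gives $H^2(S,\omega_S\otimes\mathcal{O}_S(A_1))\cong H^0(S,\mathcal{O}_S(-A_1))=0$ because $A_1>0$, so Riemann--Roch yields $\chi(\omega_S\otimes\mathcal{O}_S(A_1)) = 1 + \tfrac{1}{2}A_1^2 \leq 0$, using $\chi(\mathcal{O}_S)=1$ and $A_1\cdot K_S=0$ (see below). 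This forces $A_1^2\leq -2$.

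First I would show each irreducible component is a $(-2)$-curve. By Theorem~\ref{eta} we have $B=2A$ with $(\sum_i B_i)\cap B=\emptyset$, so every component $C$ of $A$ is disjoint from the boundary; consequently $C\cdot B_j=0$ for all $j$, and since $-2K_S\sim\sum_j B_j$ this gives $C\cdot K_S = 0$. Applying the working lemma with $A_1=C$ yields $C^2\leq -2$, while adjunction gives $C^2 = 2p_a(C)-2-C\cdot K_S = 2p_a(C)-2 \geq -2$. Therefore $C^2=-2$ and $p_a(C)=0$, so $C$ is a smooth rational curve, i.e.\ a $(-2)$-curve.

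Next, for two distinct components $C\neq C'$ of $A$ that meet, I would apply the working lemma to $A_1 = C+C' \leq A$. Expanding and using $C^2=C'^2=-2$ gives $(C+C')^2 = -4 + 2\,C\cdot C' \leq -2$, hence $C\cdot C'\leq 1$; since they intersect, $C\cdot C'=1$. As both curves are smooth, an intersection number equal to $1$ means they meet at a single point transversely, which is the claim.

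I do not expect a genuine obstacle here: the entire content is recognizing that the vanishing/Riemann--Roch argument of Proposition~\ref{numerically connected} applies verbatim to \emph{every} subdivisor $A_1\leq A$, and verifying $C\cdot K_S=0$ so that adjunction upgrades the inequality $C^2\leq -2$ to the equality $C^2=-2$. The only points demanding mild care are deducing smoothness and rationality of $C$ from $p_a(C)=0$ for an integral curve, and reading off transversality from $C\cdot C'=1$ once smoothness is known; both are standard once Step~1 is in place.
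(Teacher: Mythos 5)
Your argument is correct, and it is genuinely leaner than the paper's in the one place where the paper has to work hard. The first half coincides in substance with the paper: your subdivisor estimate $A_1^2\le -2$ for $0<A_1\le A$ is exactly the computation in the proof of Proposition \ref{numerically connected} (inclusion into ${\rm H}^0(S,\omega_S\otimes{\mathcal O}_S(A))=0$ from Lemma \ref{vanishing1}, Serre duality, Riemann--Roch with $A_1\cdot K_S=0$), only stated for arbitrary subdivisors rather than for decompositions $A=A_1+A_2$; and your use of adjunction with $p_a(C)\ge 0$ to force $C^2=-2$ replaces the paper's appeal to the structural fact that the only negative curves on a Coble surface are $(-1)$-, $(-2)$- and $(-4)$-curves, the $(-4)$-curves being boundary components which cannot lie in $A$ --- both routes are sound. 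The real divergence is in bounding $C\cdot C'$. The paper twists by ${\mathcal O}_S$ rather than $\omega_S$: from $\dim {\rm H}^0(S,{\mathcal O}_S(C_1+C_2))\le \dim {\rm H}^0(S,{\mathcal O}_S(A))=1$ (Lemma \ref{OA}) it can only exclude $C_1\cdot C_2\ge 3$, since for $C_1\cdot C_2=2$ one gets $\chi({\mathcal O}_S(C_1+C_2))=1$, which is not yet a contradiction; the case $C_1\cdot C_2=2$ is then eliminated geometrically, using $(C_1+C_2)^2=0$, Proposition \ref{contain}, and a quasi-elliptic analysis in which $C_1$ is the curve of cusps and $C_2$ would be a simple fiber component, which cannot occur in $A$. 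Your $\omega_S$-twist buys exactly the missing unit: ${\rm H}^0(S,\omega_S\otimes{\mathcal O}_S(C+C'))=0$ outright, so $\chi=1+\tfrac{1}{2}(C+C')^2\le 0$ rules out $(C+C')^2=0$ as well, disposing of $C\cdot C'\ge 2$ in one stroke and bypassing the fibration theory entirely. This is a genuine simplification of the proof of this particular theorem; what the paper's longer route buys is geometric information (behavior of the curve of cusps, non-occurrence of simple fiber components in $A$) that is reused later in classifying the conductrices, but none of it is needed for the statement at hand. In a final write-up you should only make explicit the injectivity of multiplication by the canonical section of ${\mathcal O}_S(A-A_1)$ on global sections, and the standard fact that an integral curve with $p_a=0$ is smooth and rational.
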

\begin{proof}
Let $C$ be an irreducible curve contained in $A$. 
In the same way as in the proof of Proposition \ref{numerically connected}
we see $C^2 \leq -2$. On $S$, there exists no curve with self-intersection number $-3$.
The irreducible curves with self-intersection number $-4$ are only
boundary components $B_i$. Since $A$ does not contain $(-4)$-curves, we have
$C^2 = -2$. Since $C \subset A$, we have $C\cdot K_S = 0$,
and by the adjunction formula, $C$ is a non-singular rational curve
with $C^2 = -2$. 

Let $C_1$ and $C_2$ be two different irreducible curves which are contained in $A$.
If $C_1\cdot C_2 \geq 3$, then we have $(C_1 + C_2)^2 \geq 2$.
By the Riemann-Roch theorem, we have
$\chi({\mathcal O}_S(C_1 + C_2)) \geq 2$. Therefore we have
$\dim {\rm H}^0(S, {\mathcal O}_S(C_1 + C_2)) \geq 2$.
On the other hand, by the exact sequence
$0 \longrightarrow {\mathcal O}_S(C_1 + C_2) \longrightarrow {\mathcal O}_S(A)$ we have 
an inclusion 
${\rm H}^0(S, {\mathcal O}_S(C_1 + C_2)) \hookrightarrow {\rm H}^0(S, {\mathcal O}_S(A))$.
By Lemma \ref{OA}, we see $\dim {\rm H}^0(S, {\mathcal O}_S(C_1 + C_2)) = 1$, a contradiction.

If $C_1 \cdot C_2 = 2$, then we have $(C_1 + C_2)^2 = 0$. 
If $C_1$ and $C_2$ are contained in a same fiber $F_1$, then there exists a rational number
$\lambda$ such that $C_1 + C_2 = \lambda F_1$, which contradicts Proposition \ref{contain}.
Since $C_1 \cdot C_2 = 2$, they cannot be containd in different fibers.
Therefore, the genus 1 fibration is quasi-elliptic, and 
we may assume that $C_1$ is the curve of cusps and $C_2$ is contained in a fiber $F_1$.
By the general theory of quasi-elliptic surface, we have $C_1 \cdot F_1 = 2$.
Therefore, $C_2$ must be a simple irreducible component of $F_1$.
However, since $A$ is given by a part of the divisor $(dt/t)$, 
any simple irreducible component cannot be contained in $A$.
Hence, we have $0 \leq C_1 \cdot C_2 \leq 1$.
\end{proof}

We give here a remark for the case that 
the genus 1 fibration $f : S \longrightarrow {\bf P}^1$
is an elliptic surface.

\begin{proposition}\label{ellipticcase}
Let $f : S \longrightarrow {\bf P}^1$ be an elliptic fibration.
If $A$ contains a component of the multiple fiber $2F_1$, then $F_1 \supset A$ holds.
If $A$ contains a component of a non-multiple fiber $F_1$, then $F_1 \supset A$ holds.
\end{proposition}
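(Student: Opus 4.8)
The plan is to show that, under the elliptic hypothesis, the whole conductrix $A$ is vertical for $f$, and then to use the numerical $1$-connectedness of $A$ (Proposition~\ref{numerically connected}) to force it into a single fibre. Throughout, let $F$ denote the class of a general fibre of $f$, so that $F^2=0$ and, for an irreducible curve $C$, one has $F\cdot C\ge 0$ with $F\cdot C=0$ exactly when $C$ is contained in a fibre.

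The key step is to prove that the divisor $(\eta)$ is vertical, that is, supported on fibres of $f$. By Theorem~\ref{forms} we have $\eta=f^{*}(dt/t)$, the pull-back of a $1$-form from the base. Since $f$ is elliptic its general fibre is smooth, so $f$ is smooth at a general point $P$ of a general fibre $f^{-1}(c)$ with $c\neq 0,\infty$; choosing local coordinates $(x,y)$ at $P$ with $x=t-c$ and $y$ a fibre coordinate gives $\eta=dx/(c+x)$, which is regular and nowhere vanishing near $P$. Hence $(\eta)$ has no horizontal component, so it is vertical and $F\cdot(\eta)=0$. This is precisely where ellipticity enters: in the quasi-elliptic case $dt$ acquires a double zero along the curve of cusps (cf.\ Lemma~\ref{hascusp}), which is horizontal, and the conclusion fails.

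Next I would combine this with Theorem~\ref{eta}. In both (Case~1) and (Case~2) every boundary curve $B_i$ is a component of $f^{-1}(P_0)$ or $f^{-1}(P_\infty)$, hence vertical, so $F\cdot B_i=0$. Writing $(\eta)=-\sum_i B_i+2A$ therefore yields
\[
0=F\cdot(\eta)=-\sum_i F\cdot B_i+2\,F\cdot A=2\,F\cdot A ,
\]
so $F\cdot A=0$. By Theorem~\ref{structureA} each irreducible component of $A$ is a $(-2)$-curve; since all coefficients of $A$ are positive, the vanishing $F\cdot A=0$ forces every component of $A$ to lie in a fibre.

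Finally I would invoke connectedness. As $A$ is numerically $1$-connected it is connected, so if its components met two distinct fibres the decomposition $A=A'+A''$ sorting components by fibre would give $A'\cdot A''=0$ (distinct fibres are disjoint), contradicting $A'\cdot A''\ge 1$. Hence all components of $A$ lie in one fibre, which must be $F_1$ because $A$ contains a component of $F_1$ by hypothesis; thus $F_1\supset A$. The multiple-fibre case $2F_1$ is identical, the only difference being $F\equiv 2F_1$, which leaves the criterion ``$F\cdot C=0\Leftrightarrow C$ vertical'' unchanged. The main obstacle is the verticality of $(\eta)$ in the second paragraph; everything after it is formal intersection theory, and the one point deserving care is confirming that smoothness of the general fibre really makes $f$ smooth at a general point in characteristic $2$ — exactly the dichotomy separating the elliptic from the quasi-elliptic case.
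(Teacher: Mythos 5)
Your proof establishes only the first, easy half of the proposition, and stops exactly where the paper's proof begins its real work. In this paper the conclusion $F_1\supset A$ means containment of \emph{divisors}, $A\le F_1$ with multiplicities (in the multiple-fibre case, bounded by the \emph{half} fibre $F_1$, not by the fibre $2F_1$), whereas your argument via $F\cdot A=0$ and connectedness yields only that the \emph{support} of $A$ lies in a single fibre; your final sentence conflates the two. The support statement is what the paper dispatches in its opening line (``$A$ is connected and does not contain any points in a general fiber''), and your computation with $\eta=f^{*}(dt/t)$ is a correct and welcome justification of it. But the whole body of the paper's proof is the multiplicity bound: by Proposition \ref{contain} the conductrix $A$ does not contain $F_1$, so one may write $F_1-A=A_1-A_2$ with $A_1>0$, $A_2\ge 0$ effective without common components; since $A$ is vertical in the fibre through $F_1$ one has $F_1^2=F_1\cdot A=0$, and with $A^2=-2$ (Corollary \ref{A^2}) this gives $(F_1-A)^2=-2$, while if $A_2\ne 0$ the Riemann--Roch vanishing argument used for Proposition \ref{numerically connected} gives $A_1^2\le -2$ and $A_2^2\le -2$, hence $(F_1-A)^2\le A_1^2+A_2^2\le -4$, a contradiction; therefore $A_2=0$, i.e. $A\le F_1$. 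Nothing of this kind appears in your proposal — indeed you never invoke Proposition \ref{contain} or $A^2=-2$, both of which are indispensable.

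The gap is most damaging precisely in the multiple-fibre case that you declare ``identical'': with $F\equiv 2F_1$, the criterion $F\cdot C=0$ cannot distinguish $A\le F_1$ from $A\le 2F_1$, so your argument does not exclude, for instance, $A$ containing a reduced component of $F_1$ with coefficient $2$. Yet the bound by the half fibre is exactly what the paper uses afterwards: in Corollary \ref{eachtype}, in compiling the conductrix tables, and in the worked example for a multiple fibre of type ${\rm I}_1^*$, where $A$ is written with coefficients $a_i$ resp. $1+a_i$ subject to $0\le a_i\le 1$ precisely because Proposition \ref{ellipticcase} gives $A\le F$. So the proposal is not wrong in what it proves, but it proves a strictly weaker statement; to complete it you must add the numerical argument above (or some substitute controlling the coefficients of $A$ against those of the half fibre).
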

\begin{proof}
Since $A$ is connected and does not contain any points in a general fiber,
$A$ consists of some components of one fiber, say $F_1$ or $2F_1$ if it
is a multiple fiber.
By Theorem \ref{structureA} $F_1$ contains at least two irreducible curves.
By Proposition \ref{contain}, $A$ does not contain $F_1$.
Therefore,
there exist an effective divisor $A_1$ and a non-negative divisor 
$A_2$ such that $A_1$ and $A_2$
have no common components and such that $F_1 - A = A_1 - A_2$.
On the one hand, we have
$$
(F_1 - A)^2 = F_1^2 -2F_1\cdot A + A^2 = -2.
$$
On the other hand, we have
$$
(F_1 - A)^2 = (A_1 - A_2)^2 = A_1^2 - 2A_1\cdot A_2 + A_2^2 \leq A_1^2 + A_2^2.
$$
If $A_2$ is an effective divisor, we have $A_1^2 \leq -2$ and $A_2^2 \leq -2$.
Therefore, we have $(F_1 - A)^2\leq -4$, a contradiction.
Therefore, we have $A_2 = 0$.
\end{proof}

For a rational number $a$, we denote by $[a]$ the integral part of $a$.

\begin{theorem}\label{ellipticsimple} 
Let $f : S \longrightarrow {\bf P}^1$ be an elliptic fibration
of a Coble surface $S$. Assume that the conductrix $A \neq 0$ and that $A$ is contained
in a simple fiber $F_1 = \sum_{i = 1}^{\ell}m_iC_i$ with irreducible components $C_i$.
Then, $A = \sum_{i = 1}^{\ell}[\frac{m_i}{2}]C_i$.
\end{theorem}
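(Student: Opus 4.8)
The plan is to reduce the statement to a local computation of the divisorial zeros of the rational $1$-form $\eta = f^{*}(dt/t)$ along the components of $F_1$, and then to eliminate the ambiguity coming from wild ramification by a global numerical argument. First I would fix the point $P_1 = f(F_1)\in {\bf P}^1$ and a local parameter $s = t - t(P_1)$ vanishing at $P_1$; since $P_1\neq P_0,P_\infty$, the form $dt/t$ is regular and nonvanishing at $P_1$, so near $F_1$ one has $\eta = d(f^{*}s)/(f^{*}s + c)$ with $c = t(P_1)\neq 0$ a unit. Hence the divisorial part of $(\eta)$ near $F_1$ equals that of $(dg)$ with $g = f^{*}s$, and by Theorem \ref{eta} this divisorial part is $2A$. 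Thus $2a_i = \mathrm{ord}_{C_i}(dg)$ for every component of $F_1 = \sum_i m_iC_i$, where $A = \sum_i a_iC_i$.

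Working at the generic point of $C_i$ with a local equation $x$ of $C_i$ and writing $g = x^{m_i}u$ with $u$ a unit, in characteristic $2$ one gets $dg = m_i x^{m_i-1}u\,dx + x^{m_i}du$. If $m_i$ is odd the first term survives and its $dx$-coefficient is a unit along $C_i$, so $\mathrm{ord}_{C_i}(dg) = m_i - 1$ and $a_i = [m_i/2]$ exactly; if $m_i$ is even the first term vanishes, $dg = x^{m_i}du$, and one only gets $\mathrm{ord}_{C_i}(dg)\geq m_i$, i.e. $a_i \geq m_i/2 = [m_i/2]$. Consequently $A \geq A_0 := \sum_i [m_i/2]\,C_i$, with equality forced on every odd component, so that $A = A_0 + E$ where $E = \sum_{m_i\ \mathrm{even}} e_i C_i \geq 0$ is supported on the even components.

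It remains to show $E = 0$, and the plan here is purely numerical. Writing $2A_0 = F_1 - R$ with $R = \sum_{m_i\ \mathrm{odd}} C_i$ the reduced sum of the odd components, and using $F_1^2 = 0$ and $F_1\cdot R = F_1\cdot E = 0$ (both are vertical), I expand
$$ A^2 = \Big(\tfrac12(F_1 - R) + E\Big)^2 = \tfrac14 R^2 - R\cdot E + E^2 = A_0^2 - R\cdot E + E^2. $$
By Corollary \ref{A^2}, $A^2 = -2$, so $E^2 = -2 - A_0^2 + R\cdot E$. Since $A_0$ is a nonzero effective subdivisor of $A$, the argument of Proposition \ref{numerically connected} (via ${\rm H}^0(S,\omega_S\otimes {\mathcal O}_S(A_0))\hookrightarrow {\rm H}^0(S,\omega_S\otimes {\mathcal O}_S(A)) = 0$ and Riemann--Roch) gives $A_0^2\leq -2$, hence $-2 - A_0^2\geq 0$; and $R\cdot E\geq 0$ because $R$ and $E$ are effective with no common component. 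Therefore $E^2\geq 0$. But $E$ is supported on the fiber $F_1$, so by Zariski's lemma $E^2\leq 0$ with equality only if $E$ is a rational multiple of $F_1$; since $F_1$ is a simple (non-multiple) fiber, $\gcd_i m_i = 1$, so $R\neq 0$ and $F_1$ carries an odd component that is absent from $E$, whence $E$ can be a multiple of $F_1$ only if $E = 0$. Thus $E = 0$ and $A = A_0 = \sum_i [m_i/2]\,C_i$.

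The main obstacle is exactly the even-multiplicity components: in characteristic $2$ the order of vanishing of $f^{*}(dt)$ along such a component is governed by wild ramification and can a priori exceed $m_i$, so the clean value $a_i = m_i/2$ cannot be read off the local computation alone. The device that resolves this is the combination of $A^2 = -2$ with the subdivisor inequality $A_0^2\leq -2$ and Zariski's lemma, which together force the wild excess $E$ to vanish. At the outset I would also record, from the hypothesis together with Propositions \ref{ellipticcase} and \ref{contain}, that $A$ is a proper effective subdivisor of $F_1$, so that $F_1$ is reducible; this, with simplicity of $F_1$, is what guarantees $R\neq 0$ in the last step.
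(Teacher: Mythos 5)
Your proposal is correct, and its first half coincides with the paper's proof: both reduce the statement to the divisorial zeros of $\eta = f^{*}(dt/t)$ along $F_1$ via Theorem \ref{eta} (after noting that $F_1$ lies over neither $P_0$ nor $P_\infty$), and both perform the same generic-point computation, obtaining ${\rm ord}_{C_i}(dg) = m_i-1$ exactly for odd $m_i$ and only the inequality ${\rm ord}_{C_i}(dg) \geq m_i$ for even $m_i$, hence $A = A_0 + E$ with $A_0 = \sum_i [m_i/2]\,C_i$ and $E \geq 0$ supported on the even-multiplicity components. Where you genuinely diverge is in killing $E$. The paper does this case by case: it first notes that $A \neq 0$ restricts $F_1$ to the types ${\rm I}_\nu^*$, ${\rm II}^*$, ${\rm III}^*$, ${\rm IV}^*$, then for each type writes $A$ as the forced part plus unknown nonnegative excesses $a_i$, expands $A^2 = -2$ into a sum of manifestly nonnegative terms (written out only for ${\rm II}^*$, with ``the proofs of the other cases are similar''), and concludes $a_i = 0$. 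You replace this by a single type-free computation: $A^2 = A_0^2 - R\cdot E + E^2$ with $R$ the reduced odd part, the subdivisor bound $A_0^2 \leq -2$ obtained by running the argument of Proposition \ref{numerically connected} (via Lemma \ref{vanishing1}) on $A_0 \leq A$ --- legitimate, since $K_S\cdot A_0 = 0$ because every component of $F_1$ is disjoint from the boundary --- together with $R\cdot E \geq 0$ and Zariski's lemma, forcing $E^2 = 0$, $E \in {\bf Q}F_1$, and then $E = 0$ because a simple fiber has $\gcd_i m_i = 1$ and hence an odd-multiplicity component absent from $E$. Your version buys uniformity, and it identifies the structural reason the paper's per-type identities come out as sums of nonnegative terms: that is exactly the negative semidefiniteness of the fiber lattice with radical spanned by $F_1$; the cost is the extra inputs (Zariski's lemma, and the cohomological bound applied to the subdivisor $A_0$ rather than only to $A$). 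The one assertion you leave unproved is $A_0 \neq 0$, which your Riemann--Roch step needs; it is a one-liner: if $E \neq 0$ then some even-multiplicity component occurs in $E$, and that same component already occurs in $A_0$ with coefficient $m_i/2 \geq 1$, while if $E = 0$ then $A_0 = A \neq 0$. With that line added, the proof is complete.
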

\begin{proof}
By the assumption, the fiber is neither $f^{-1}(P_{0})$ nor $f^{-1}(P_{\infty})$.
Therefore, the zero divisor of $dt$ on the open set defined by $t \neq {\infty}$
is equal to $2A$. By a suitable translation of $t$,
we may assume the singular fiber $F_1$
is given by $t = 0$, and let $A = \sum _{i = 1}^{\ell}\ell_i C_i$.
Then, on a general point $P$ of $C_i$ we have the expression $t = uf_i^{m_i}$. Here,
$f_i$ is a defining equation of $C_i$ at $P$ and $u$ is a unit.
If $m_i$ is odd, then we have $dt = f_i^{m_i - 1}(u df_i + f_i du)$. Since we can regard
$f_i$ is one element of local coodinates at $P$, the zero divisor which is given by
$dt$ at $P$ is $(m_i - 1)C_i$. Therefore we have $\ell_i = (m_i - 1)/2$.
Therefore, the possiblity of the types of singular fibers for $A \neq 0$ is only
type ${\rm I}_{\nu}^{*}$, ${\rm II}^{*}$, ${\rm III}^{*}$ and ${\rm IV}^*$.

In case that $m_i$ is even, we have $dt = f_i^{m_i} du$. Therefore, 
the order of the zero divisor
which is given by $dt$ is greater than or equal to $m_i$. Therefore, $\ell_i$ is
greater than or equal to $m_i/2$. 
We will show that if $m_i$ is even, then $\ell_i = m_i/2$.
Now, we consider the case of type ${\rm II}^{*}$. Then, the fiber is given by
$$
 C_1 + 2C_2 + 3C_3 + 4C_2 + 5C_5 + 6C_6 + 4C_7 + 2C_8 + 3C_9.
$$
Here, $C_i^2 = -2$ ($1\leq i \leq 9$), $C_i \cdot C_{i + 1} = 1$ ($ 1\leq i \leq 7$) 
and $C_6\cdot C_9 =1$.
The other intersection numbers are all 0. Considering the argument above, we set
$$
 A = (1 + a_2)C_2 + C_3 + (2 + a_4)E_4 + 2C_5 + (3 + a_6)E_6 + (2 + a_7)E_7 + (1 + a_8)E_8 + C_9.
$$
with non-negative integer $a_i$ ($i = 2, 4, 6, 7, 8$).
Since we have $A^2 = -2$, calculating the self-intersection numbers of both sides,
we have an equation:
$$
a_2(1 + a_2) + a_4(1 + a_4)+ a_6^2/ 2 + a_6 + (a_6 - a_7)^2/2 + (a_8 - a_7)^2/2 + a_8^2/2  = 0.
$$
Since $a_i \geq 0$ for all $i$, we have $a_2 = a_4 = a_6 = a_7 = a_8 = 0$.
This shows the claim for the type ${\rm II}^*$.
The proofs of the other cases are similar.
\end{proof}

Now, we examine the properties of $(-2)$-curves on a Coble surface $S$, 
following the method in Ekedahl and Shepherd-Barron \cite[Definition-Lemma 0.8]{ES}.
As before, we denote by $X'$ the normalization of the canonical covering
of $S$. We donote by $\tilde{X}$ the minimal resolution of $X'$.
Since the singularities of $X'$ are only rational double points,
the minimal dissolution $\tilde{S}$ gives the minimal resolution of singularities, 
and we get the following commutative diagram:
$$
\begin{array}{ccccc}
\tilde{C} & \subset & \tilde{X} &\stackrel{\mu}{\longrightarrow} & X' \\
  &  & \quad  \downarrow \tilde{\rho}&  & \quad \downarrow \rho \\
  & & \tilde{S} & \stackrel{\theta}{\longrightarrow} & S\\
  & & \cup &   &   \cup \\
   & & C' & \longrightarrow  & C
\end{array}
$$
Here, $C$, $C'$ and $\tilde{C}$ are irreducible curves such that
$\tilde{\rho}({\tilde{C}}) = C'$ and $\theta (C') = C$. The morphism $\theta$
is a morphism given by a finite number of blowing-ups.
We denote by $r$ the number of blowing-ups on the curve $C$ which includes
blowing-ups at the infinitesimal near points. 
We also denote by $s$ the degree of the morphism $\rho\circ \mu : \tilde{C} \longrightarrow C$.
This number $s$ is equal to the degree of 
the morphism $\tilde{\rho}: \tilde{C} \longrightarrow C'$ and it is either 1 or 2.
By (\ref{dualizingY'}) we have $\omega_{X'} \cong \rho^{*}({\mathcal O}_S(-A))$. 
Since the singularities of $X'$ are rational double points, we have
$\omega_{\tilde{X}} \cong \mu^{*}\omega_{X'}$. Therefore, we have
$$
\omega_{\tilde{X}} \cong \mu^{*}\rho^{*}({\mathcal O}_S(-A)).
$$
Since $(C')^2 = C^2 - r$,  we have
$$
\left(\frac{2}{s}\tilde{C}\right)^2 =({\tilde{\rho}}^{-1}(C'))^2 = \deg\ \tilde{\rho}\cdot (C')^2 = 2(C^2 - r).
$$
Therefore, we have 
$$
 \tilde{C}^2 = (C^2 - r)s^2/2.
$$
By the adjunction formula, we have
$$
\omega_{\tilde{C}} \cong (\omega_{\tilde{X}}\otimes {\mathcal O}_{\tilde{X}}(\tilde{C}))\vert_{\tilde{C}}
\cong (\mu^{*}\rho^{*}({\mathcal O}_S(-A))\otimes {\mathcal O}_{\tilde{X}}(\tilde{C}))\vert_{\tilde{C}}.
$$
We denote by $g(\tilde{C})$ the genus of the curve $\tilde{C}$.
Taking the degrees of both sides, we have
$$
\begin{array}{cl}
 2(g(\tilde{C}) - 1)  & = \mu^{*}\rho^{*}({\mathcal O}_S(-A))\cdot\frac{s}{2}\mu^{*}\rho^{*}({\mathcal O}_S(C)) + {\tilde{C}}^2\\
  & = 2\cdot \frac{s}{2}\{(- A)\cdot C\} + (C^2 - r)s^2/2\\
  & = -s A\cdot C + (C^2 - r)s^2/2.
\end{array}
$$
Therefore, we have
\begin{equation}\label{genusC}
g(\tilde{C}) = - sA\cdot C/2 + (C^2 - r)s^2/4 + 1.
\end{equation}
Note that this formula holds even if $A = 0$.
\begin{remark} In Ekedahl, Shepherd-Barron \cite[Definition-Lemma 0.8]{ES},
they give similar results to the theorem below. But the $A$ which they treat
is not the conductrix. Therefore, we give here the complete proof of the theorem for
our conductrix $A$ to make the situation clear. However, in the case of Enriques surfaces 
the difference between these two A's is just the factor of the dualizing sheaf,
which is numerically trivial. Therefore, the numerical results are same
for these two A's. In fact, in Katsura, Kond\=o and Martin \cite[Lemma 3.4]{KKM},
these facts are used.
\end{remark}

\begin{theorem}
Let $C$ be a $(-2)$-curve on $S$. 
\begin{itemize}
\item[{\rm (i)}] If $s = 1$, then $A\cdot C = 1 - \frac{r}{2}$ holds.
\item[{\rm (ii)}] If $s = 2$, then $A\cdot C = - 1 - r$ holds.
\end{itemize}
\end{theorem}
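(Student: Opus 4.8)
The plan is to read both identities directly off the genus formula (\ref{genusC}) once the genus of $\tilde{C}$ is pinned down. Substituting $C^2=-2$ into (\ref{genusC}) gives, in the two cases,
\[
g(\tilde{C}) = -\frac{A\cdot C}{2} + \frac{-2-r}{4} + 1 \quad (s=1), \qquad
g(\tilde{C}) = -A\cdot C + (-2-r) + 1 \quad (s=2).
\]
Solving each equation for $A\cdot C$ under the hypothesis $g(\tilde{C})=0$ yields exactly $A\cdot C = 1 - r/2$ and $A\cdot C = -1-r$, respectively. Thus the entire theorem reduces to the single claim that $g(\tilde{C})=0$, i.e. that $\tilde{C}$ is a smooth rational curve; everything else is the routine arithmetic just displayed.

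To establish $g(\tilde{C})=0$ I would separate rationality from smoothness. Since $C$ is a $(-2)$-curve it is smooth rational, and its strict transform $C'\subset\tilde{S}$ is again smooth rational. The restriction $\rho\circ\mu|_{\tilde{C}}\colon \tilde{C}\to C\cong {\bf P}^1$ is finite and purely inseparable of degree $s\in\{1,2\}$, so the field extension $k(C)\hookrightarrow k(\tilde{C})$ is purely inseparable; hence $k(\tilde{C})$ is rational and the normalization of $\tilde{C}$ is ${\bf P}^1$. This already gives geometric genus $0$, so what remains is to see that $\tilde{C}$ has no singular points, since only then does the adjunction-theoretic (arithmetic) genus $g(\tilde{C})=p_a(\tilde{C})$ vanish. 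For the smoothness I would work locally along $C'$ with the defining equation $z^2=h$ of the purely inseparable cover $\tilde{\rho}\colon\tilde{X}\to\tilde{S}$: the reduced preimage $\tilde{C}$ is cut out by $z^2=h|_{C'}$, and a singular point of $\tilde{C}$ occurs precisely where $h|_{C'}$ vanishes to even order $\ge 2$. Here the two cases correspond, via Proposition \ref{insep}, to whether $C$ is integral for the associated vector field: for $s=1$ the curve is non-integral and $\tilde{C}\to C'$ is birational, while for $s=2$ it is integral and $\tilde{C}\to C'$ has degree $2$. In each case I would exploit that $\tilde{X}$ is smooth (for $A\ne 0$ the singularities of $X'$ are rational double points by Proposition \ref{singularityY'}) together with the transversality of the components of $A$ from Theorem \ref{structureA} to force $h|_{C'}$ to have at worst simple zeros along $C'$, whence $\tilde{C}$ is smooth. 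Granting this, $g(\tilde{C})=0$ and substitution into (\ref{genusC}) completes the argument.

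The hard part will be exactly this smoothness statement. The genus formula (\ref{genusC}) by itself only determines $A\cdot C$ modulo the unknown $g(\tilde{C})$, and a purely inseparable double cover of ${\bf P}^1$ can perfectly well be singular (for instance the cuspidal curve $z^2=x^3$), so the rationality of $\tilde{C}$ alone is insufficient. The genuine work is therefore to bound the order of vanishing of the cover's equation along $C'$ and to rule out such cusps; this is where the geometry of the conductrix $A$ and the rational double point structure of $X'$ must be used, and I expect the careful bookkeeping of these local vanishing orders to be the main obstacle.
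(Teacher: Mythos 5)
Your reduction is exactly the paper's proof. The printed argument consists of the single sentence that $C$ is a non-singular curve of genus $0$ with $C^2=-2$, followed by substitution into (\ref{genusC}); the two identities are read off precisely as in your first display. What you add --- the observation that (\ref{genusC}) was derived by taking the degree of the dualizing sheaf $\omega_{\tilde{C}}$, so that the $g(\tilde{C})$ appearing there is the \emph{arithmetic} genus, and hence one needs $\tilde{C}$ smooth rational rather than merely rational --- is a genuine point, but it is not a divergence from the paper's method: the paper silently identifies the genus of $\tilde{C}$ with that of $C$ via the purely inseparable (hence homeomorphic) map $\tilde{C}\to C$, which only controls the geometric genus. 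So your proposal is the paper's route together with an attempt, left unfinished by you and unaddressed by the paper, to justify its tacit step; in that sense it is at least as complete as the printed proof.

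Two remarks on that unfinished step. For $s=1$ no local bookkeeping is needed: there $\tilde{\rho}^{-1}(C')=2\tilde{C}$, so $\tilde{C}\to C'$ is finite and birational onto the smooth curve $C'\cong {\bf P}^1$, and a finite birational morphism onto a normal variety is an isomorphism; hence $\tilde{C}$ is smooth rational and case (i) closes immediately. For $s=2$ your caution is warranted: smoothness of $\tilde{X}$ alone cannot force $\tilde{C}$ smooth (the surface $z^2=x^3+y$ is smooth, yet the reduced preimage of $y=0$ is the cuspidal curve $z^2=x^3$, exactly your example), so some input beyond smoothness of the ambient cover is genuinely required, and neither your sketch nor the paper supplies it; the paper in effect defers to Ekedahl and Shepherd-Barron \cite[Definition-Lemma 0.8]{ES}, as its preceding Remark acknowledges. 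One small slip in your local criterion: a singular point of $z^2=h(x)$ at $x_0$ occurs iff $h-h(x_0)$ vanishes to order $\geq 2$ at $x_0$, i.e.\ iff $h'(x_0)=0$; the parity of the vanishing order is irrelevant. In short, modulo the smoothness of $\tilde{C}$ in the $s=2$ case --- a step the published proof also leaves implicit --- your argument coincides with the paper's.
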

\begin{proof}
Since $C$ is a non-singular curve of genus 0 with $C^2 = -2$, we get the results
form (\ref{genusC}).
\end{proof}

\begin{corollary}\label{conductrix-detail}
Let $C$ be a $(-2)$-curve.
\begin{itemize}
\item[{\rm (1)}] If $A \not\supset C$, then $s = 1$ and only the following cases occur.
\begin{itemize}
\item[{\rm (i)}] $A\cdot C = 0$ and $r = 2$.
\item[{\rm (ii)}] $A\cdot C = 1$ and $r = 0$.
\end{itemize}
\item[{\rm (2)}] If $A \supset C$, then only the following cases occur.
\begin{itemize}
\item[{\rm (a)}] Case $s = 1$.
\begin{itemize}
\item[{\rm (i)}] $A\cdot C = -2$, $A = C$ and $r = 6$.
\item[{\rm (ii)}] $A\cdot C = -1$ and $r = 4$.
\item[{\rm (iii)}] $A\cdot C = 0$ and $r = 2$.
\item[{\rm (iv)}] $A\cdot C = 1$ and $r = 0$.
\end{itemize}
\item[{\rm (b)}] Case $s = 2$.
\item[{\rm (i)}] $A\cdot C = -2$, $A = C$ and $r = 1$.
\item[{\rm (ii)}] $A\cdot C = -1$ and $r = 0$.
\end{itemize}
\end{itemize}
\end{corollary}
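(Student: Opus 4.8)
The plan is to read everything off from the preceding theorem, which expresses $A\cdot C$ in terms of the two discrete invariants $s\in\{1,2\}$ and the blow-up count $r\ge 0$, and then to prune the possibilities using two elementary sign bounds on $A\cdot C$ together with integrality. Since $A$ is an effective divisor and $C$ an irreducible curve, $A\cdot C\in{\bf Z}$, and this is the only arithmetic input I need beyond the two formulas $A\cdot C=1-r/2$ (for $s=1$) and $A\cdot C=-1-r$ (for $s=2$).

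First I would establish the sign bounds. If $C\not\subset A$, then $A\cdot C\ge 0$ because $C$ is not a component of the effective divisor $A$. If instead $C\subset A$, I write $A=C+(A-C)$ with $A-C\ge 0$; when $A\ne C$ this is a decomposition into two nonzero effective divisors, so numerical $1$-connectedness of $A$ (Proposition \ref{numerically connected}) gives $C\cdot(A-C)\ge 1$, whence $A\cdot C=C^2+C\cdot(A-C)\ge -2+1=-1$. Together with the case $A=C$, where $A\cdot C=C^2=-2$, this shows that for any component $C$ of $A$ one has $A\cdot C\ge -2$, with equality precisely when $A=C$.

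Next I would combine these bounds with the two formulas and $r\ge 0$. Suppose $C\not\subset A$. The case $s=2$ would give $A\cdot C=-1-r\le -1<0$, contradicting $A\cdot C\ge 0$; hence $s=1$, and then $A\cdot C=1-r/2\ge 0$ forces $r\le 2$, while integrality of $A\cdot C$ forces $r$ even, so $r\in\{0,2\}$, yielding cases (i) and (ii) of part~(1). Now suppose $C\subset A$. For $s=1$, integrality again gives $r$ even, and the two-sided bound $-2\le A\cdot C=1-r/2\le 1$ forces $r\in\{0,2,4,6\}$; the extreme value $A\cdot C=-2$ occurs exactly for $r=6$ and there forces $A=C$, producing the four cases of (2)(a). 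For $s=2$ the bound $-2\le A\cdot C=-1-r\le -1$ forces $r\in\{0,1\}$, with $A\cdot C=-2$ (i.e. $r=1$) again forcing $A=C$; these are the two cases of (2)(b).

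The argument is essentially bookkeeping once the inequalities are in hand, so the only genuine content—and the step I would single out as the crux—is the lower bound $A\cdot C\ge -2$ for components $C$ of $A$, with equality exactly when $A=C$. This is where numerical $1$-connectedness (Proposition \ref{numerically connected}) does the real work: without it one could not exclude a component $C$ with $A\cdot C\le -3$, which would admit spuriously large values of $r$ outside the stated lists. Everything else is then dictated by $r\ge 0$ and the integrality of intersection numbers.
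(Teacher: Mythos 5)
Your proposal is correct and follows essentially the same route as the paper's proof: the formulas $A\cdot C = 1-\tfrac{r}{2}$ ($s=1$) and $A\cdot C=-1-r$ ($s=2$) from the preceding theorem, the bound $A\cdot C\geq 0$ when $C\not\subset A$, and for $C\subset A$ with $A\neq C$ the decomposition $A=C+(A-C)$ with numerical $1$-connectedness (Proposition \ref{numerically connected}) giving $A\cdot C\geq -1$, followed by integrality and $r\geq 0$. The only (cosmetic) difference is that you treat $A=C$ as the extreme case $A\cdot C=-2$ forced at $r=6$ (resp.\ $r=1$) rather than splitting it off at the start as the paper does.
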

\begin{proof}
(1) By assumption  $A \not\supset C$, we have $A\cdot C\geq 0$. Therefore, we have $s = 1$.
Therefore, by $0 \leq A \cdot C = 1 - \frac{r}{2}$, we have $r \leq 2$. 
Since $A\cdot C$ is an integer and $r \geq 0$, we have $r = 2$ or $0$, and
we get our results.

(2) If $A = C$, then we have clearly $A\cdot C = -2$ and $r = 6$ if $s = 1$,
$r = 1$ if $s = 2$. Now, we assume $A \neq C$ and $A \supset C$. 
Since $A$ is numerically connected, we have
$(A - C) \cdot C \geq 1$. Therefore,  we have $A\cdot C \geq -1$ by $C^2 = -2$.
If $s = 1$, then we have
$- 1 \leq A \cdot C = 1 - \frac{r}{2}$ and we have $r \leq 4$.
Therefore, we have $r = 0, 2$, or $4$.
If $ s = 2$, we have $-1 \leq A \cdot C = -1 -r$. Therefore, we have $r = 0$,
and we get our result.
\end{proof}

\begin{corollary}\label{-1and 1}
Let $C$ be a $(-2)$-curve on $S$. Then, if $A \neq C$, then we have $-1 \leq A \cdot C \leq 1$.
If $s = 2$, then we have $C \subset A$.
\end{corollary}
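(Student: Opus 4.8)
The plan is to read off both assertions directly from the exhaustive case analysis already established in Corollary \ref{conductrix-detail}; no new computation is required, since that corollary lists every possible value of $(s, A\cdot C, r)$ for a $(-2)$-curve $C$.

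First I would prove the bound $-1 \leq A\cdot C \leq 1$ under the hypothesis $A\neq C$. Inspecting Corollary \ref{conductrix-detail}, the only entries in which the value $A\cdot C = -2$ occurs are case (2)(a)(i) (with $s=1$, $r=6$) and case (2)(b)(i) (with $s=2$, $r=1$), and in both of these the corollary forces $A = C$. Hence the assumption $A\neq C$ eliminates precisely these two cases. In every remaining entry the intersection number satisfies $-1 \leq A\cdot C \leq 1$: when $A\not\supset C$ part (1) gives $A\cdot C\in\{0,1\}$, and when $A\supset C$ with $A\neq C$ the surviving subcases of part (2) give $A\cdot C\in\{-1,0,1\}$. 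This yields the first assertion.

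For the second assertion I would simply invoke part (1) of Corollary \ref{conductrix-detail}, which states that $A\not\supset C$ forces $s=1$. Taking the contrapositive, $s=2$ forces $A\supset C$, that is, $C\subset A$, as claimed.

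The only point that deserves a moment's care is that Corollary \ref{conductrix-detail} is genuinely exhaustive over the admissible pairs $(s,r)$; but this is already guaranteed by its derivation from the genus formula (\ref{genusC}) together with the numerical $1$-connectedness of $A$ (Proposition \ref{numerically connected}) and the fact that $s\in\{1,2\}$. Consequently both claims follow by inspection, and I expect the entire proof to occupy only a couple of lines.
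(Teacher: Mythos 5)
Your proposal is correct and is exactly the paper's argument: the authors also deduce both claims by inspection of Corollary \ref{conductrix-detail}, noting that the only cases with $A\cdot C=-2$ force $A=C$, and that part (1) (i.e.\ $A\not\supset C$ implies $s=1$) gives the contrapositive $s=2\Rightarrow C\subset A$. Your additional remark on the exhaustiveness of the case list, resting on the genus formula (\ref{genusC}) and Proposition \ref{numerically connected}, is a sound (if implicit in the paper) sanity check.
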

\begin{proof}
This follows from Corollary \ref{conductrix-detail}.
\end{proof}

The following corollary is shown in Ekedahl and Shepherd-Barron \cite[Definition-Lemma 0.8 (iii)]{ES}.
\begin{corollary}\label{AC=1}
$({\rm i})$ Let $C_1$ be a $(-2)$-curve on $S$ with $A\cdot C_1 = 1$. Then, any $(-2)$-curve $C_2$
which intersects $C_1$ only at a point transversely has $s = 2$ with $A\cdot C_2 = -1$
unless $A = C_2$.
Moreover, $C_2$ is an irreducible component of $A$.

$({\rm ii})$ If two $(-2)$-curves $C_1$, $C_2$ on $S$ which meet transversely at a point 
have $s$-invariant $1$, then their intersection is blown-up.

\end{corollary}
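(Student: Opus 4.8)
The plan is to prove part (ii) first and then deduce part (i) from it, the common engine being the behaviour of the purely inseparable double cover $\tilde{\rho}:\tilde{X}\to\tilde{S}$ on curves whose $s$-invariant equals $1$. Recall from the discussion preceding the statement that for a $(-2)$-curve $C$, with reduced total transform $\tilde{C}$ and strict transform $C'$ on $\tilde{S}$, the scheme-theoretic preimage satisfies $\tilde{\rho}^{-1}(C')=\tfrac{2}{s}\tilde{C}$; in particular $\tilde{\rho}^{*}C'=2\tilde{C}$ exactly when $s=1$. Hence, if two $(-2)$-curves $C_1,C_2$ both have $s=1$, the projection formula gives
\begin{equation*}
\tilde{C}_1\cdot\tilde{C}_2=\tfrac14\,(\tilde{\rho}^{*}C_1')\cdot(\tilde{\rho}^{*}C_2')=\tfrac14\,\deg(\tilde{\rho})\,(C_1'\cdot C_2')=\tfrac12\,(C_1'\cdot C_2').
\end{equation*}
Since the left-hand side is a non-negative integer, the intersection number $C_1'\cdot C_2'$ on $\tilde{S}$ must be even. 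This parity constraint is the heart of both assertions.

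For part (ii), I would observe that $C_1$ and $C_2$ meet transversely at a single point $P$, so $C_1\cdot C_2=1$ on $S$. If $P$ were not among the centres of the blow-ups comprising $\theta:\tilde{S}\to S$, then the strict transforms would still meet transversely only above $P$, giving $C_1'\cdot C_2'=1$, which is odd and contradicts the parity constraint. Hence $P$ must be blown up; conversely, blowing up the transverse point $P$ separates the strict transforms, so $C_1'\cdot C_2'=0$, consistent with the constraint. The only point requiring care is the equality $C_1'\cdot C_2'=1$ when $P$ is not blown up: this holds because blow-ups at points off $C_1\cap C_2$ do not alter the local intersection at $P$, and transversality prevents any infinitely near common point from appearing.

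For part (i), I would start from $A\cdot C_1=1$. Feeding this into the genus formula (\ref{genusC}) for a $(-2)$-curve rules out $s_1=2$, which would force $A\cdot C_1=-1-r_1\le-1$, and so forces $s_1=1$ with $r_1=0$ (this is case (1)(ii) or (2)(a)(iv) of Corollary \ref{conductrix-detail}). Now suppose $A\ne C_2$ and, for contradiction, that $s_2=1$. Then both curves have $s=1$ and meet transversely at one point, so by part (ii) that point is blown up; but such a blow-up lies on $C_1$ and therefore contributes to $r_1$, contradicting $r_1=0$. Hence $s_2=2$. By Corollary \ref{-1and 1}, $s_2=2$ implies $C_2\subset A$, so $C_2$ is an irreducible component of $A$; the same corollary gives $A\cdot C_2\ge-1$ (as $A\ne C_2$), while the genus formula yields $A\cdot C_2=-1-r_2$. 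Combining these forces $r_2=0$ and $A\cdot C_2=-1$, as claimed.

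The main obstacle I anticipate is not any single deep step but the bookkeeping linking the three layers $S$, $\tilde{S}$, $\tilde{X}$: one must be sure that $\tilde{\rho}^{*}C'=2\tilde{C}$ holds precisely when $s=1$, and that $C_1'\cdot C_2'$ is correctly tracked through the blow-ups $\theta$ (in particular that transversality on $S$ yields $C_1'\cdot C_2'=1$ unless $P$ is a centre). Once the parity identity $\tilde{C}_1\cdot\tilde{C}_2=\tfrac12\,C_1'\cdot C_2'$ is in hand, both statements reduce to the fact that $1$ is odd, together with the numerical constraints already supplied by Corollaries \ref{conductrix-detail} and \ref{-1and 1}.
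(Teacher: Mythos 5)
Your proposal is correct and rests on essentially the same engine as the paper's proof: the parity of intersection numbers pulled back through the purely inseparable double cover $\tilde{\rho}$ (where $s=1$ means $\tilde{\rho}^{-1}(C')=2\tilde{C}$), combined with the numerical constraints of Corollaries \ref{conductrix-detail} and \ref{-1and 1}, and your part (ii) is word-for-word the paper's contradiction $2=4(\tilde{C}_1\cdot\tilde{C}_2)$. The only difference is organizational: the paper proves (i) by the direct computation $2=2(\tilde{C}_1\cdot\tilde{\rho}^{-1}(C'_2))$, whose oddness rules out $s_2=1$, whereas you first prove (ii) and then deduce $s_2=2$ in (i) from (ii) together with $r_1=0$ --- a logically equivalent rearrangement of the same argument.
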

\begin{proof}
We denote by $C'_1$ (resp. $C'_2$) the proper transform of $C_1$ (resp. $C_2$)
for the morphism $\theta$. 

(i) By Corollary \ref{conductrix-detail}, for $C_1$  we have $r = 0$ and $s=1$. 
This means that 
we don't blow up at the intersection point of $C_1$ and $C_2$, and that there exists
an irreducible curve $\tilde{C}_1$ on ${\tilde{X}}$ such that 
$\tilde{\rho}^{-1}(C'_1) = 2\tilde{C}_1$.
Therefore, we have $C'_1\cdot C'_2 = C_1 \cdot C_2 = 1$ and
$$
2 = \deg(\tilde{\rho})(C'_1\cdot C'_2) =  \tilde{\rho}^{-1}(C'_1)\cdot \tilde{\rho}^{-1}(C'_2)
  = 2 (\tilde{C_1}\cdot \tilde{\rho}^{-1}(C'_2)).
$$
Therefore, we have $\tilde{C_1}\cdot \tilde{\rho}^{-1}(C'_2)= 1$. Hence, we have
$s = 2$ for the curve $C_2$ and 
by Corollary \ref{conductrix-detail} we conclude $A\cdot C_2 = -1$ and $C_2 \subset A$.

(ii) Suppose that their intersection is not blown-up. Then, we have $C'_1\cdot C'_2 = 1$.
Since we have $s = 1$ for both $C_1$ and $C_2$, there exist
irreducible curves $\tilde{C}_1$ and $\tilde{C}_2$ on ${\tilde{X}}$ such that 
$\tilde{\rho}^{-1}(C'_1) = 2\tilde{C}_1$ and $\tilde{\rho}^{-1}(C'_2) = 2\tilde{C}_2$.
Therefore, we have
$$
  2 = \deg~ \tilde{\rho} ~(C'_1\cdot C'_2) = \tilde{\rho}^{-1}(C'_1)\cdot  \tilde{\rho}^{-1}(C'_2) = 4 (\tilde{C}_1\cdot \tilde{C}_2),   
$$
a contradiction.
\end{proof}

\begin{corollary}\label{eachtype} 
Let $F$ be a fiber such that $F\cap A \neq \emptyset$.
Assume that $F$ is of type ${\rm I}_m^{*}$, ${\rm II}^{*}$,
${\rm III}^{*}$ or ${\rm IV}^{*}$.
Then, all irreducible components except irreducible components at the end
of $F$ are contained in $A$.
\end{corollary}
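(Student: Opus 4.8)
The plan is to reduce the statement to Theorem \ref{ellipticsimple}, which already computes the conductrix explicitly once one knows that it lies inside a simple fibre, and then to read off the answer from the multiplicities of the components of $F$. Throughout I work with the fixed fibration $f$ satisfying (Case 1) or (Case 2) of Subsection \ref{case1case2}, and I recall that $A\neq 0$ here by Corollary \ref{non-emptyA}.

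First I would show $A\subseteq F$. By Theorem \ref{forms} we have $\eta=f^{*}(dt/t)$, and since $dt/t$ is a rational $1$-form on ${\bf P}^1$ whose restriction to a general fibre vanishes, the divisorial part $(\eta)=-\sum_i B_i+2A$ is supported on fibres of $f$; in particular every component of $A$ is vertical. By Proposition \ref{numerically connected} the divisor $A$ is numerically $1$-connected, hence connected, so it is contained in a single fibre. As $A\cap F\neq\emptyset$ and distinct fibres are disjoint, that fibre must be $F$, so $A\subseteq F$. (Equivalently, once one knows that $A$ shares a component with $F$, the containment $A\subseteq F$ follows directly from Proposition \ref{ellipticcase}.)

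Next I would verify that $F$ is a \emph{simple} fibre, so that Theorem \ref{ellipticsimple} is applicable. In (Case 1) both $f^{-1}(P_0)$ and $f^{-1}(P_\infty)$ carry the boundary $(-4)$-curves, while in (Case 2) the fibre $f^{-1}(P_\infty)$ carries them and $f^{-1}(P_0)$ is the multiple fibre. By Remark \ref{HalphenBlowUp} a fibre containing $(-4)$-curves is obtained by blowing up a fibre of type ${\rm I}_n$, ${\rm II}$, ${\rm III}$ or ${\rm IV}$, so its reducible part is of type $\tilde{A}$; none of these is of type ${\rm I}_m^{*}$, ${\rm II}^{*}$, ${\rm III}^{*}$ or ${\rm IV}^{*}$. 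Moreover in (Case 2) one has $\eta=du/u$ along the multiple fibre by Proposition \ref{1-form2}, so $A$ is disjoint from $f^{-1}(P_0)$, and therefore $F\neq f^{-1}(P_0)$ since $A\cap F\neq\emptyset$. Hence $F$ lies over a point different from $P_0$ and $P_\infty$ and is a simple fibre.

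Now Theorem \ref{ellipticsimple} applies to $F=\sum_i m_iC_i$ and gives $A=\sum_i[\tfrac{m_i}{2}]C_i$. Thus $C_i\subseteq A$ if and only if $m_i\geq 2$, and the components omitted from $A$ are exactly those of multiplicity $1$. In each of the types ${\rm I}_m^{*}$ $(\tilde{D}_{m+4})$, ${\rm II}^{*}$ $(\tilde{E}_8)$, ${\rm III}^{*}$ $(\tilde{E}_7)$, ${\rm IV}^{*}$ $(\tilde{E}_6)$ the multiplicity-$1$ components are precisely the extremal components at the ends of the dual graph (the four fork tips for ${\rm I}_m^{*}$, the single end for ${\rm II}^{*}$, the two ends of the long arm for ${\rm III}^{*}$, and the three arm tips for ${\rm IV}^{*}$), whereas all remaining components have multiplicity $\geq 2$. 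Hence every irreducible component of $F$ except those at the ends is contained in $A$, as asserted. The main obstacle is the structural step rather than the final combinatorics: one must be certain that $A$ really lies in $F$ and that $F$ is a simple fibre, so that Theorem \ref{ellipticsimple} can be invoked; once verticality and connectedness of $A$ are in place and the boundary and multiple fibres are excluded via Remark \ref{HalphenBlowUp} and Proposition \ref{1-form2}, the formula $A=\sum_i[\tfrac{m_i}{2}]C_i$ makes the statement immediate.
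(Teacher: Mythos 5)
Your reduction to Theorem \ref{ellipticsimple} has a genuine gap: that theorem only covers \emph{simple} fibres of \emph{elliptic} fibrations, whereas Corollary \ref{eachtype} is stated for, and later applied to, multiple fibres and quasi-elliptic fibrations as well. Your argument that $F$ must be simple rests on a misreading of Proposition \ref{1-form2}: the fact that $\eta = du/u$ is \emph{regular} along the multiple fibre does not make $A$ disjoint from it, because $A$ is half the \emph{zero} divisor of $\eta$, and $du/u$ may perfectly well vanish along components of the half fibre. Indeed the multiple fibre $2G_0$ in (Case 2) can be of type ${\rm I}_m^*$, ${\rm II}^*$, ${\rm III}^*$ or ${\rm IV}^*$ with nonzero conductrix inside $G_0$ --- this is exactly the ``multiple'' columns of Tables \ref{TableElliptic1} and \ref{TableElliptic2}, and the paper's worked example of a multiple fibre $2F$ of type ${\rm I}_1^*$, with $F = E_1+E_2+E_3+E_4+2E_5+2E_6$, yields $A = E_3+E_4+E_5+E_6$, contradicting your formula $A=\sum_i[\frac{m_i}{2}]C_i = E_5+E_6$; note moreover that the computation in that example itself invokes Corollary \ref{eachtype}, so your route would be unavailable precisely where the corollary is needed. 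In the quasi-elliptic case the situation is worse: $A$ contains the curve of cusps (Lemma \ref{hascusp}), which is a horizontal $2$-section, so your first step $A\subseteq F$ is simply false, the verticality argument for $(\eta)$ breaks down (the non-smooth locus of a quasi-elliptic fibration is horizontal), and both Theorem \ref{ellipticsimple} and Proposition \ref{ellipticcase} are stated only for elliptic fibrations --- yet the corollary is one of the tools used to compute Table \ref{TableQ-elliptic}.

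The paper's proof avoids all case distinctions by a purely local argument: if some non-end component of $F$ lay outside $A$, one could choose such a component $C$ meeting $A$; since the fibre is a tree and $A$ is connected (Proposition \ref{numerically connected}), $C$ has a neighbour $G$ in $F$ with $G\not\subset A$; from $A\cdot C>0$ and $C\not\subset A$, Corollary \ref{-1and 1} forces $A\cdot C=1$, and then Corollary \ref{AC=1}(i) forces $G\subset A$, a contradiction. This works uniformly for simple and multiple fibres and for elliptic and quasi-elliptic fibrations. Your final combinatorial step (non-end components of ${\rm I}_m^*$, ${\rm II}^*$, ${\rm III}^*$, ${\rm IV}^*$ have multiplicity $\geq 2$) is correct, and your argument does establish the corollary for simple fibres of elliptic fibrations; but that is strictly weaker than what is claimed and what the later applications (Corollary \ref{endcomponent}, Lemma \ref{exclude2}, the conductrix tables) require.
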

\begin{proof}
Let $C$ be an irreducible component of $F$ which does not exist at the end of $F$.
Suppose that $C$ is not contained in $A$. By choosing a suitable irreducible component,
we may assume that $C$ intersects $A$.
Since $A$ is connected, 
there exists an irreducible
component $G$ of $F$ such that $G$ intersects $C$ and is not contained in $A$.
Since $A\cdot C > 0$, we have $A\cdot C = 1$ by Corollary \ref{-1and 1}.
Therefore, by Corollary \ref{AC=1} we have $G \subset A$, a contradiction.
\end{proof}

\begin{corollary}\label{endcomponent} 
Let $f: S \longrightarrow {\bf P}^1$ be an elliptic fibration of a Coble surface $S$. 
Assume this fibration has a $(-2)$-curve $C_s$ as a 2-section.
Let $2F$ be a multiple fiber such that $F\cap A \neq 0$ and 
 $2F$ be of type $2{\rm I}_m^{*}$, $2{\rm II}^{*}$,
$2{\rm III}^{*}$ or $2{\rm IV}^{*}$.
Then the simple irreducible component $C$ of the half fiber $F$ which intersects
the special 2-section $C_s$ is contained in $A$.
\end{corollary}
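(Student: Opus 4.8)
The plan is to argue by contradiction, using that the $2$-section meets the half fiber only once, the bound of Corollary \ref{-1and 1}, and the rigidity of Corollary \ref{AC=1}; the contradiction will be that $A$ is confined to a single fiber and hence cannot contain a multisection. The general fiber is linearly equivalent to $2F$, so the $2$-section property gives $C_s\cdot F=1$. Writing $F=\sum_i m_iC_i$ with the $C_i$ the irreducible components, this reads $\sum_i m_i(C_s\cdot C_i)=1$; as every summand is a non-negative integer, exactly one is nonzero and equals $1$, forcing $m_i=1$ and $C_s\cdot C_i=1$ for a single index. Thus $C$ is the unique simple component met by $C_s$, and the two meet transversally at a single point.

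Next I would record the location of $A$. Inspecting the diagrams $\tilde{D}_{m+4},\tilde{E}_8,\tilde{E}_7,\tilde{E}_6$ of types $\I_m^{*},\II^{*},\III^{*},\IV^{*}$, the simple (degree one) component $C$ has a single neighbor $C'$ in $F$, and $C'$ has multiplicity $2$, so it is not an end component of $F$. By Corollary \ref{eachtype} every non-end component of $F$ lies in $A$; in particular $A$ contains components of the half fiber, so Proposition \ref{ellipticcase} shows $A\subset F$. Consequently $A$ is supported on a single fiber.

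Now suppose, for contradiction, that $C\not\subset A$. Since $C'\subset A$ and $C'\cdot C=1$, we obtain $A\cdot C\ge 1$, while $C\not\subset A$ gives $A\neq C$, so Corollary \ref{-1and 1} gives $A\cdot C\le 1$; hence $A\cdot C=1$ (and, consistently, $s=1$, $r=0$ for $C$ by Corollary \ref{conductrix-detail}). Applying Corollary \ref{AC=1}(i) with $C_1=C$ and $C_2=C_s$, a $(-2)$-curve meeting $C$ transversally at one point, forces $C_s\subset A$ unless $A=C_s$.

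Both alternatives are impossible, since $A\subset F$ lies in a single fiber whereas the $2$-section $C_s$ satisfies $C_s\cdot(\text{general fiber})=2\neq 0$ and so is contained in no fiber. This contradiction rules out $C\not\subset A$ and proves $C\subset A$. I expect the delicate step to be the use of Corollary \ref{AC=1}: one must be sure that $C_s$ meets $C$ in exactly one transverse point---provided by $C_s\cdot C=1$---and that the forced conclusion $C_s\subset A$ really conflicts with $C_s$ being a multisection, which is precisely where the confinement $A\subset F$ enters.
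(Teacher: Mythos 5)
Your proof is correct and follows essentially the same route as the paper: assume $C\not\subset A$, use Corollary \ref{eachtype} (via the multiplicity-$2$ neighbor of $C$) to force $A\cdot C=1$, and then apply Corollary \ref{AC=1} to the pair $(C, C_s)$ to reach a contradiction. The only cosmetic difference is the final step: the paper contradicts $A\cdot C_s=0$ against the conclusion $A\cdot C_s=-1$ of Corollary \ref{AC=1}, while you contradict the containment $C_s\subset A$ (or $A=C_s$) against $A\subset F$ from Proposition \ref{ellipticcase} --- both consequences of the same corollary.
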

\begin{proof}
Suppose that $C$ is not contained in $A$. Then, we have $A \cdot C_s = 0$.
By Corollary \ref{eachtype},
we have $A\cdot C > 0$. Therefore, by Corollary \ref{conductrix-detail}, we have
$A \cdot C = 1$. Since $C\cdot C_s = 1$,  by Corollary \ref{AC=1}
we have $A\cdot C_s = -1$, a contradiction.
\end{proof}

\begin{remark}\label{non-emptyA}
Let $f: S \longrightarrow {\bf P}^1$ be an elliptic fibration of a Coble surface $S$.
If a simple fiber $F$ contains the conductrix $A$, then as we see in the proof 
of Theorem \ref{ellipticsimple}, the conductrix $A$ for the simple fiber $F$
does not contain any simple irreducible component of $F$. Therefore,
in the case of elliptic fibration with a $(-2)$-curve of 2-section,
we can distinguish the type of conductrix of multiple fiber from the one of
simple fiber by Corollary \ref{endcomponent}.
\end{remark}

Let $A = \sum_i m_iC_i$ be a conductrix.  
The graph of $A$ is the dual graph of irreducible curves $\{C_i\}$ with 
the multiplicity $m_i$.  We use graphs of conductrices in the following 
Lemma \ref{exclude}, Tables \ref{TableQ-elliptic}, \ref{TableElliptic1}, \ref{TableElliptic2}.

We learned the following lemma from Shepherd-Barron. In Ekedahl, Shepherd-Barron \cite{ES},
they condidered
the weight of the conductrix $A$ which is a function on the N\'eron-Severi group 
${\rm NS}(S)$ defined 
by $w(x) = x\cdot A$. The weight satisfies several conditions in which a vertex 
of weight $0$ is adjacent to at most two other vertices of weight $0$,
from which the lemma follows. We give here a direct proof.

\begin{lemma}\label{exclude}
For the multiple fiber of type ${\rm I}_4^{*}$ of elliptic fibration,
the following graph with given multiplicities is not represented by a conductrix.

\xy
(-40,20)*{};
@={(10,10),(20,10),(30,10),(40,10), (50, 10), (5,15),(5, 5), (55, 15), (55, 5)}@@{*{\bullet}};
(10,10)*{};(50,10)*{}**\dir{-};
(5,5)*{};(10,10)*{}**\dir{-};
(5,15)*{};(10,10)*{}**\dir{-};
(50,10)*{};(55,15)*{}**\dir{-};
(50,10)*{};(55,5)*{}**\dir{-};
(10,14)*{1};
(2,15)*{0};
(2,5)*{0};
(20,14)*{1};
(30,14)*{2};
(58,5)*{1};
(40,14)*{2};
(50,14)*{2};
(58,15)*{1};
\endxy

\end{lemma}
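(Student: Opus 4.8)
The plan is to argue by contradiction: suppose the displayed graph, with the indicated multiplicities, is the conductrix $A$ of some Coble surface $S$ whose elliptic fibration has a fiber of type ${\rm I}_4^*$ (dual graph $\tilde{D}_8$) containing $A$. I would label the five central fiber components $C_1,\dots,C_5$ from left to right, the two left-hand fork components $D_1,D_2$ (meeting $C_1$), and the two right-hand fork components $E_1,E_2$ (meeting $C_5$); the asserted multiplicities then read
$$A = C_1 + C_2 + 2C_3 + 2C_4 + 2C_5 + E_1 + E_2,$$
with $D_1,D_2$ absent. As a preliminary check one verifies $A^2=-2$, consistent with Corollary \ref{A^2}, so that the structural results of this section genuinely apply to $A$.

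First I would compute the intersection numbers $A\cdot C$, using that every component is a $(-2)$-curve meeting its neighbours transversally at one point (Theorem \ref{structureA}). The outputs that matter are $A\cdot C_4 = A\cdot C_5 = A\cdot E_1 = A\cdot E_2 = 0$; these are one-line calculations, e.g. $A\cdot C_5 = 2(C_4\cdot C_5) + 2C_5^2 + (E_1\cdot C_5) + (E_2\cdot C_5) = 2 - 4 + 1 + 1 = 0$, and similarly for the other three.

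Next I would feed these into the invariants $s,r$. Since $C_5\subset A$ and $A\cdot C_5 = 0$, Corollary \ref{conductrix-detail} forces $s=1$ and $r=2$ for $C_5$. The same corollary applied to $C_4$, $E_1$, $E_2$ (each contained in $A$ with intersection number $0$) gives $s=1$ for each of them as well. Now $C_4,E_1,E_2$ meet $C_5$ transversally at three distinct nodes of the fiber, and each such pair is a pair of $(-2)$-curves of $s$-invariant $1$; hence Corollary \ref{AC=1}(ii) forces each of these three intersection points to be blown up. Thus at least three blow-ups lie on $C_5$, i.e. $r\geq 3$ for $C_5$, contradicting $r=2$. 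This contradiction shows the graph is not represented by a conductrix.

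The crux — and the place where these specific multiplicities are essential — is the single vertex $C_5$: it is a weight-zero vertex ($A\cdot C_5=0$) with three weight-zero neighbours $C_4,E_1,E_2$, exactly the configuration to be excluded. In the language of the Ekedahl--Shepherd-Barron weight function $w(x)=A\cdot x$ this is precisely the statement that a weight-$0$ vertex may be adjacent to at most two weight-$0$ vertices; the direct proof above realizes that bound concretely through the blow-up count $r$. I expect no serious obstacle once the four intersection numbers are in hand, the only care being to confirm that $C_4,E_1,E_2$ indeed meet $C_5$ at three \emph{distinct} points so that the blow-ups genuinely accumulate to $r\geq 3$.
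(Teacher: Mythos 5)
Your proof is correct and is essentially the paper's own argument: the paper likewise isolates the $D_4$ subdiagram consisting of $C_5$ and its three neighbours $C_4, E_1, E_2$ (all contained in $A$ with $A\cdot C=0$), deduces $s=1$ and $r=2$ for each from Corollary \ref{conductrix-detail}, and obtains the contradiction by noting that the blown-up intersection points (Corollary \ref{AC=1}(ii)) force $r\geq 3$ on the central curve. Your version merely spells out the intersection-number computations and the distinctness of the three nodes, which the paper leaves implicit.
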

\begin{proof}
Assume that this graph is represented by a conductrix $A$.
The graph contains a Dynkin diagram of type $D_4$. 
The four curves $C_i$ ($1\leq i \leq 4$) 
satisfy $C_i \subset A$ and  $A\cdot C_i = 0$. Therefore, by Corollary \ref{conductrix-detail}
they have all $s = 1$ and $r = 2$. Therefore, by Corollary \ref{conductrix-detail}
the intersection points are blown-up. However, 
the midle curve in the Dynkin diagram of $D_4$ has 3 intersetion points and it must have $r\geq 3$, a contradiction.
\end{proof}

Now, we can classify all possible conductrices for Coble surfaces.
In Ekedahl and Shepherd-Barron \cite{ES}, they already classify
the conductrices up to a multiple of the Kodaira-N\'eron cycle
of the special fiber as a general theory. To make clear the results 
for Coble surfaces,
we list up the possibilities of the conductrices for Coble surfaces,
although our list is a part of the one in Ekedahl and Shepherd-Barron \cite{ES}.
Our results are not up to a multiple of the Kodaira-N\'eron cycle
of the special fiber, but give the precise divisors.
Under the results of Propositions \ref{contain}, \ref{numerically connected}, 
\ref{ellipticcase},
Lemma \ref{hascusp} and Corollary \ref{endcomponent}, we can calculate 
the possibilities of the conductrices for Coble surfaces
by using Corollaries \ref{-1and 1}, \ref{AC=1} (i) and \ref{eachtype} 
by a tedious routine calculation. Note that since Coble surfaces are rational,
the maximun number of irreducible components of the singular fibers is 9.

In case of quasi-elliptic fibration, 
there are exactly two types of conductrices for each singular fiber of type ${\rm III}$, 
${\rm I}_0^*$, ${\rm I}_2^*$, ${\rm I}_4^*$, ${\rm III}^*$, ${\rm II}^*$ according to whether
the fiber is simple or multiple. 
Hence, we get Table \ref{TableQ-elliptic}, where 
the hollow vertices are the curves of cusps.

\begin{table}[!htb]
\hspace{-3.0cm}  \includegraphics[width=160mm]{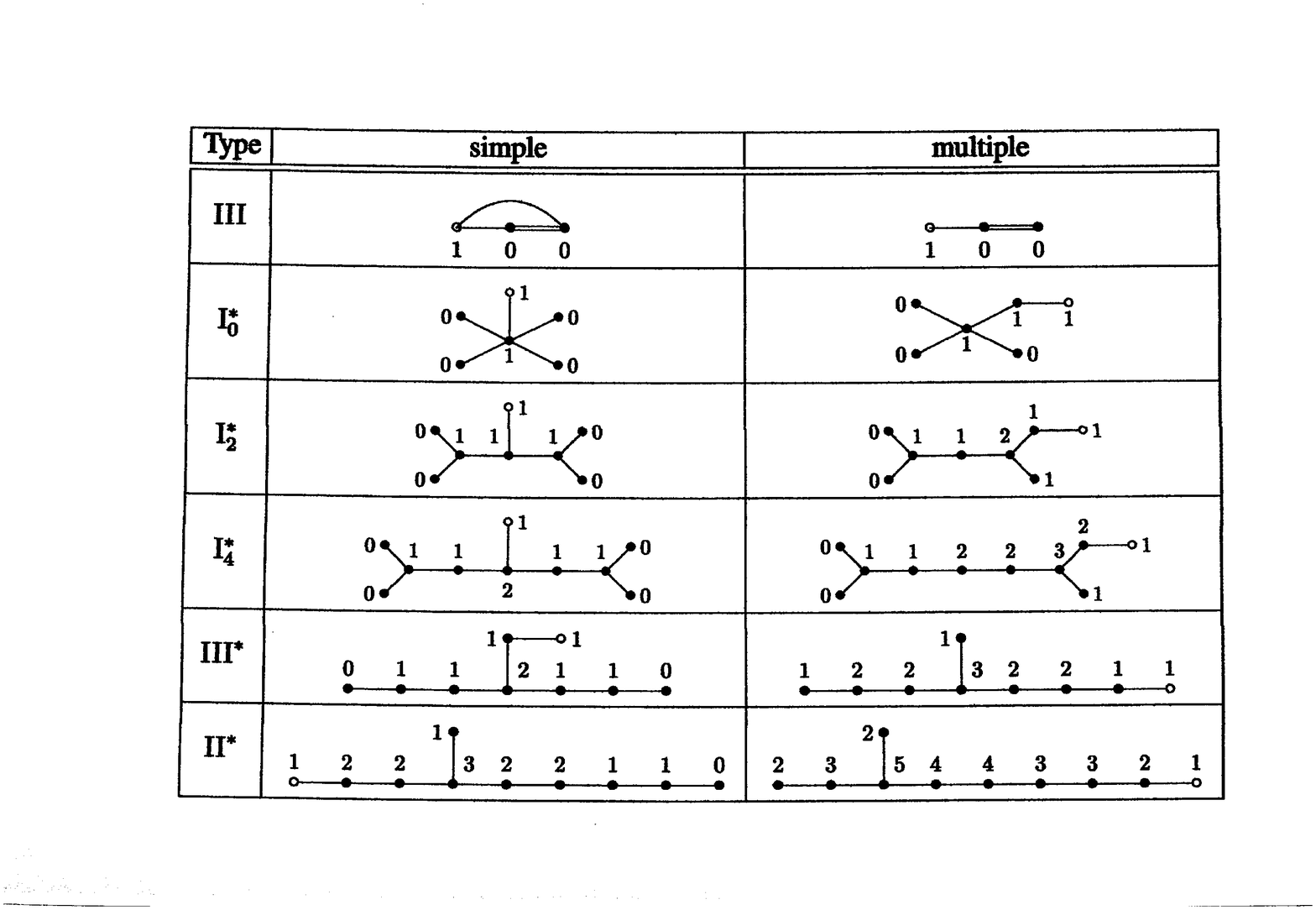}
\vspace{-1.0cm}
 \caption{Conductrices for quasi-elliptic fibrations}
 \label{TableQ-elliptic}
\end{table}

\begin{table}[!htb]
\hspace{-2.9cm}  \includegraphics[width=160mm]{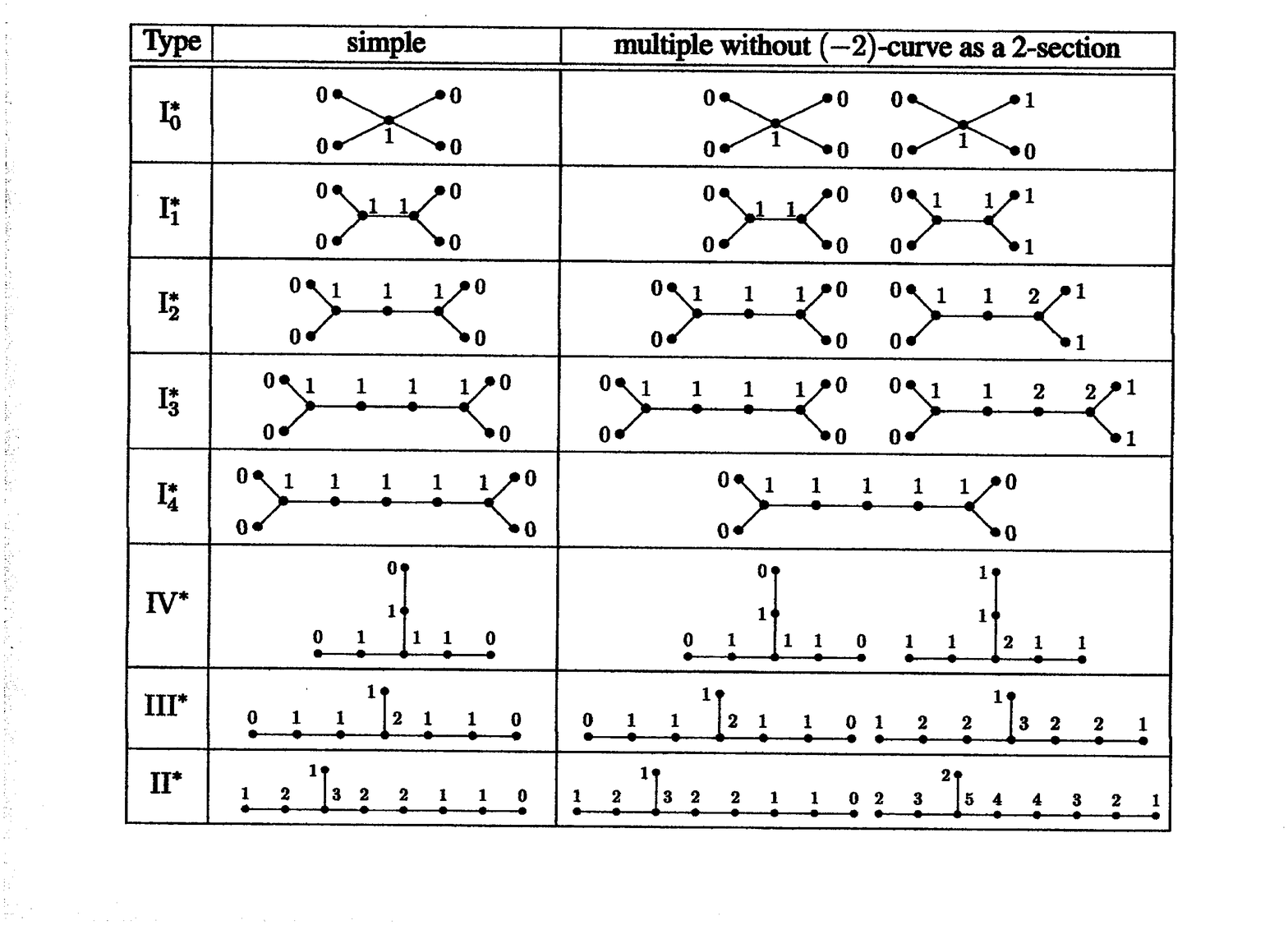}
 \vspace{-1.0cm}
 \caption{Conductrices for elliptic fibrations ($A\ne 0$)}
 \label{TableElliptic1}
\end{table}


In case of elliptic fibration, if the singular fiber is simple,
we already calculated the conductrix $A$
in Theorem \ref{ellipticsimple}. Namely,
if the singular fiber of type ${\rm I}_0^*$, 
${\rm I}_1^*$, ${\rm I}_2^*$, ${\rm I}_3^*$, ${\rm I}_4^*$, 
${\rm IV}^*$, ${\rm III}^*$ or ${\rm II}^*$ is simple, we have exactly one type
of conductrix for each singular fiber. 
If the singular fiber of type ${\rm I}_0^*$, 
${\rm I}_1^*$, ${\rm I}_2^*$, ${\rm I}_3^*$, 
${\rm IV}^*$, ${\rm III}^*$ or ${\rm II}^*$ is multiple
and the  fibration has no $(-2)$-curve of 2-section, then
there exist two possibilities of the type of conductrix for each singular fiber.
If the singular fiber of type ${\rm I}_4^{*}$ is multiple, then we have exactly one type
of conductrix. Because one possibility is excluded by Lemma \ref{exclude}.
Hence, we get Table \ref{TableElliptic1}.
For a multiple fiber of elliptic fibration, if the fibration has 
a $(-2)$-curve of 2-section, then the situation is different.
If the elliptic fiberation with the multiple fiber of type
type ${\rm I}_0^*$, ${\rm I}_1^*$ or
${\rm IV}^*$ has a $(-2)$-curve of 2-section, then 
we have exactly one type of conductrix for each multiple fiber,
and there exists no elliptic fibration with multiple fiber of type
${\rm I}_3^*$, ${\rm I}_4^*$, ${\rm III}^*$ or ${\rm II}^*$
such that the fibration has a $(-2)$-curve of 2-section.
This follows from Lemma \ref{exclude2} below.
Hence, we get Table \ref{TableElliptic2}.

In Tables \ref{TableQ-elliptic}, \ref{TableElliptic1} and \ref{TableElliptic2},
the numbers are the multiplicities of the irreducible components
of $A$, ``simple'' means a simple fiber and ``multiple'' means
a multiple fiber.
We will give two examples of concrete calculations
of conductrices of multiple fibers of elliptic fiber space below.

\begin{lemma}\label{exclude2}
There are no special elliptic fibrations on Coble surfaces with a multiple fiber $2F$
of type $2{\rm II}^*, 2{\rm III}^*$, $2{\rm I}_m^*$ $(m = 2, 3, 4)$.
\end{lemma}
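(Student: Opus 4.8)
The plan is to assume for contradiction that $f\colon S\to{\bf P}^1$ is such a special elliptic fibration, with half-fiber $F$ (so the multiple fiber is $2F$) of one of the listed types, and to show that a special $2$-section is incompatible with the conductrix $A$. First I would show the special $2$-section must be a $(-2)$-curve. By definition it is an effective irreducible root $r$ with $r\cdot F_{\mathrm{gen}}=2$ for a general fiber $F_{\mathrm{gen}}$; by Lemma~\ref{roots} either $r$ is a $(-2)$-curve or $r=2E+\frac12 B_j+\frac12 B_k$ with $E$ a $(-1)$-curve meeting two distinct boundary curves $B_j,B_k$. In the latter case $(B_j+B_k)\cdot F_{\mathrm{gen}}=0$ forces $E\cdot F_{\mathrm{gen}}=1$, so $E$ is a section. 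By Proposition~\ref{Halphen} and Remark~\ref{HalphenBlowUp} the star-type half-fiber $F$ carries no $(-4)$-curve, hence all boundary components lie in the single reduced fiber $F_\infty$ that is blown up; since $E\cdot F_\infty=1$ and the $B_i$ are pairwise disjoint, $E$ meets at most one boundary curve, a contradiction. Thus $r$ is a $(-2)$-curve $C_s$.

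Because $F_{\mathrm{gen}}\sim 2F$, we have $C_s\cdot F=1$, so $C_s$ meets $F$ at a single multiplicity-$1$ component $C$, which for each of these fiber types is an end of the dual graph. Corollary~\ref{endcomponent} then gives $C\subset A$. Writing $A=\sum_i a_i\Theta_i$, supported on $F$ by Proposition~\ref{ellipticcase}, and using that $C_s$ meets $F$ only in $C$ with $C\cdot C_s=1$, we get $A\cdot C_s=a_C$; since $C_s\not\subset A$ this lies in $\{0,1\}$ by Corollary~\ref{conductrix-detail}, forcing $a_C=1$ and $A\cdot C_s=1$. Corollary~\ref{AC=1}(i) now yields $A\cdot C=-1$, and for the multiplicity-$1$ end $C$ this reads $-2+a_{C'}=-1$, i.e.\ the unique neighbour $C'$ of $C$ has $a_{C'}=1$.

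From this seed I would propagate the coefficients inward along the chain toward the trivalent branch vertex of $F$, using the recursion $a_{i+1}=A\cdot\Theta_i+2a_i-a_{i-1}$ together with three constraints applied repeatedly: Corollary~\ref{eachtype}, so that every non-end component lies in $A$ and thus has coefficient $\ge 1$; Corollary~\ref{conductrix-detail}, restricting each weight $A\cdot\Theta_i$ to $\{-1,0,1\}$ (as $A$ is not a single $(-2)$-curve); and Corollary~\ref{AC=1}(i), which forces each transversal neighbour of a weight-$1$ component to have weight $-1$. The point of the argument is that for $2{\rm I}_m^{*}$ $(m=2,3,4)$, $2{\rm II}^{*}$ and $2{\rm III}^{*}$ the weights emanating from $C$ cannot be reconciled at the trivalent branch vertex: completing the assignment forces either an interior component to have coefficient $0$ (contradicting Corollary~\ref{eachtype}), a component outside $A$ to have weight $\ge 2$ (impossible by Corollary~\ref{conductrix-detail}), or an odd weight on the short branch-arm whose weight is pinned to be even, a parity contradiction. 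For $2{\rm I}_4^{*}$ the resulting configuration is precisely the one ruled out by Lemma~\ref{exclude}, where a trivalent weight-$0$ component of $A$ would require three blown-up intersection points.

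I expect the main obstacle to be organizing this finite but multi-branch case analysis cleanly and uniformly across the five fiber types; the natural device is to localize the inconsistency at the trivalent vertex, exactly as Lemma~\ref{exclude} does for the $D_4$ subdiagram, while tracking the parity of the coefficient at the branch. The reason the admissible multiple fibers $2{\rm I}_0^{*}$, $2{\rm I}_1^{*}$, $2{\rm IV}^{*}$ escape is that their chains are too short for the forced alternating pattern to reach such a contradiction, consistent with their appearance in Table~\ref{TableElliptic2}.
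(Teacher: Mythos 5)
Your proposal is correct and is essentially the paper's own argument. The paper likewise works with a $(-2)$-curve $2$-section $C_s$ meeting the half-fiber $F$ in a multiplicity-one end component $C$, invokes Corollary \ref{endcomponent} to get $C\subset A$ (which already excludes the simple-fiber conductrices), deduces $A\cdot C_s=1$ from Corollary \ref{-1and 1} and then $A\cdot C=-1$ from Corollary \ref{AC=1}(i), and concludes by inspecting Table \ref{TableElliptic1}: no listed conductrix for a multiple fiber of type $2{\rm II}^*$, $2{\rm III}^*$, $2{\rm I}_m^*$ $(m=2,3,4)$ is compatible with these constraints. You differ in two places, both to the good. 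First, where the paper finishes by a table lookup, you re-derive the incompatibility by propagating the coefficients $a_i$ from the seed $a_C=a_{C'}=1$, $A\cdot C=-1$; this is precisely the finite search that produced the table, and it does close (for instance, for $2{\rm I}_4^*$ the three cases $A\cdot C_3\in\{1,0,-1\}$ at the fork vertex nearest $C$ contradict, respectively, Corollary \ref{eachtype}, Corollary \ref{AC=1}(i), and Corollary \ref{eachtype} again) --- though note that in the paper Lemma \ref{exclude} serves to prune Table \ref{TableElliptic1} itself, and for $2{\rm I}_4^*$ your propagation reaches a contradiction directly, so you do not actually need Lemma \ref{exclude} at this point. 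Second, you dispose of the possibility that the special $2$-section is a $(-1)$-root, which the paper's proof passes over by immediately taking $C_s$ to be a $(-2)$-curve (in line with its intended application); your argument --- the associated $(-1)$-curve would be a section, while the presence of the multiple fiber forces the Halphen index to be $2$, so all boundary components lie in a single non-multiple fiber, which a section meets with total multiplicity one --- is correct and completes the lemma as literally stated.
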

\begin{proof}
Let $C_s$ be a $(-2)$-curve of 2-section. Then, in Table 3,
the graphs of conductrices for simple fibers 
are excluded by Corollary \ref{endcomponent}.
Let $A \supset C$ be the curve of the fiber $2F$ which intersects $C_s$. 
The curve $C$ is the one which exists at the end of $F$ and the multiplicity in $A$
is 1.
Then, since $A\cdot C_s > 0$,
by Corollary \ref{-1and 1} we have $A\cdot C_s = 1$. Therefore, by Corollary \ref{AC=1} (i)
we have $A\cdot C = -1$. Hence, in Table 3, we have no possibilities for the multiple fibers
of type $2{\rm II}^*$, $2{\rm III}^*$, $2{\rm I}_m^*$ ($m = 2, 3, 4$)
\end{proof}

\begin{table}[!htb]
 \hspace{-2.5cm} \includegraphics[width=160mm]{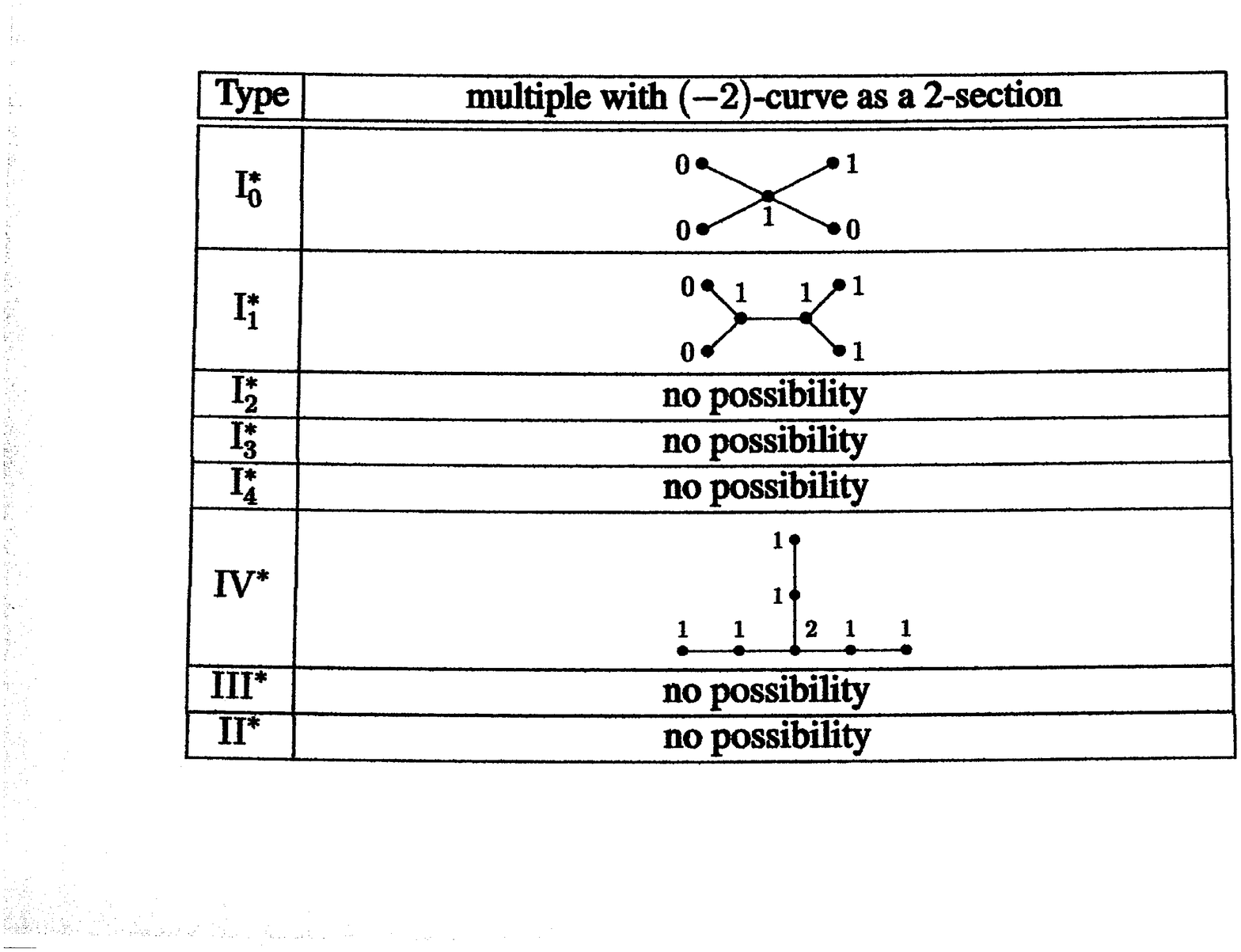}
 \vspace{-2.0cm}
 \caption{Conductrices of multiple fibers for elliptic fibrations with 2-section ($A\ne 0$)}
 \label{TableElliptic2}
\end{table}

\begin{example}
We consider the multiple fiber $2F$ of type ${\rm I}_1^{*}$ 
of an elliptic fibration $f : S \longrightarrow {\bf P}^1$ and
calculate the conductrix in this case.
We have
$$
    F = E_1 + E_2 + E_3 + E_4 + 2E_5 + 2E_6
$$
with irreducible components $E_i$ ($1 \leq i \leq 6$).
Here, $E_i$ ($1 \leq i \leq 6$) are $(-2)$-curves
such that $E_1 \cdot E_5 = E_2 \cdot E_5 = 1$, $E_3 \cdot E_6 = E_4 \cdot E_6 = 1$
and $E_5\cdot E_6 = 1$.
We assume there exists a special 2-section $C$, and we may assume $C$ intersects
$E_4$. Let $A$ be the conductrix. Then, by Proposition \ref{ellipticcase}
we have $F \supset A$ and by Corollary \ref{eachtype} we have the expression
$$
  A =a_1E_1 + a_2E_2 + a_3E_3 + a_4E_4 + (1 + a_5)E_5 + (1 + a_6)E_6 \quad (0\leq \forall a_i\leq 1).
$$
Since $A \not\supset C$ and $A \cdot C > 0$, we have $A\cdot C = 1$ by
Corollary \ref{conductrix-detail}. Therefore,  we have $a_4 = 1$ and 
$A \cdot E_4 = -1$ by Corollary \ref{AC=1}.
Therefore, we have  $(1 + a_6) -2 = -1$, that is, we have $a_6 = 0$.
If $a_3 = 0$, then by $A \not\supset E_3$ and $A\cdot E_3 > 0$, we have
$A\cdot E_3 = 1$ as above. Therefore, we have $A \cdot E_6 = - 1$.
However, $A \cdot E_6 = (1 + a_5) + 1 - 2= a_5\geq 0$, a contradiction.
Therefore, we have $a_3 = 1$. Now, we have
$$
   A\cdot E_6 = (1 + a_5) + 1 + 1 - 2 = 1 + a_5.
$$
By Corollary \ref{-1and 1}, we have $a_5 = 0$ and $A\cdot E_6 = 1$.
Therefore, again by Corollary \ref{AC=1} we have
$ -1 = A\cdot E_5 = a_1 + a_2 + 1 -2$.
Therefore, we have $a_1+ a_2 = 0$. Since $a_1$ and $a_2$ are non-negative,
we have $a_1 = a_2 = 0$, and we conclude $A = E_3 + E_4 + E_5 + E_6$.
\end{example}

\begin{example}
We consider the multiple fiber $2F$ of type ${\rm IV}$ 
of an elliptic fibration $f : S \longrightarrow {\bf P}^1$ and
show that the conductrix $A$ is empty for this fiber.
We have
$$
    F = E_1 + E_2 + E_3
$$
with irreducible components $E_i$ ($1 \leq i \leq 3$).
Here, $E_i$ ($1 \leq i \leq 3$) are $(-2)$-curves
such that $E_i \cdot E_{j} = 1$ ($i \neq j$).
Suppose $A$ is non-empty.
Since $A\subset F$ and $A \neq F$, we may assume
either $A = E_1 + E_2$ or $A = E_1$ without loss of generality. 
Then, since we have $A \cdot E_{3}\leq 1$
by Corollary \ref{conductrix-detail}, (1), the first case is excluded.
Therefore, we have $A\cdot E_i = 1$ ($i = 2, 3$).
Therefore, again by Corollary \ref{conductrix-detail}, we have $s = 1$ and $r = 0$
for the curves $E_2$ and $E_3$. However, by Corollary \ref{AC=1} (ii),
the intersection point of $E_2$ and $E_3$ is blown-up, and we have $r \geq 1$
for these 2 curves, a contradiction. Hence, we have $A = 0$.
\end{example}

\section{Possible dual graphs and the number of boundary components}\label{sec5}

In this section we will determine the dual graph of effective irreducible roots on
Coble surfaces with finite automorphism group and their numbers of boundary components.

We begin with showing some lemmas.

\begin{lemma}\label{D4diagram}
Assume  that 
a Dynkin diagram of type $D_4$ is represented by effective irreducible roots.  Then it can be realized by only  $(-2)$-curves.
\end{lemma}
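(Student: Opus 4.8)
The plan is to leverage the two structural facts about effective irreducible roots recorded in Lemmas \ref{roots} and \ref{roots2}: every such root is either a $(-2)$-curve or a $(-1)$-root of the form $2E+\frac12 B_j+\frac12 B_k$; the intersection number of a $(-1)$-root with a $(-2)$-root is always even; and two $(-1)$-roots meet with intersection $1$ exactly when their $(-1)$-curves are disjoint and the roots share precisely one boundary component. I label the four roots of the $D_4$ diagram so that $\alpha_0$ is the central (degree-three) vertex, meeting each outer vertex with $\alpha_0\cdot\alpha_i=1$ for $i=1,2,3$, while $\alpha_i\cdot\alpha_j=0$ for $1\le i<j\le 3$. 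The aim is to force all four to be $(-2)$-curves.

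The first and decisive step is to show that the central root $\alpha_0$ is a $(-2)$-curve. I argue by contradiction: suppose $\alpha_0$ is a $(-1)$-root. Since each $\alpha_0\cdot\alpha_i=1$ is odd, Lemma \ref{roots2}(2) prevents any $\alpha_i$ from being a $(-2)$-root, so all three outer roots are $(-1)$-roots as well. A $(-1)$-root involves exactly two boundary components, so let $\{B_p,B_q\}$ be those of $\alpha_0$. By Lemma \ref{roots2}(1), each relation $\alpha_0\cdot\alpha_i=1$ forces $\alpha_i$ to share precisely one boundary component with $\alpha_0$, which is $B_p$ or $B_q$. With three outer roots and only two possible shared components, a pigeonhole argument yields two of them, say $\alpha_1$ and $\alpha_2$, that both contain the same boundary component, say $B_p$.

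The contradiction then comes from a direct intersection computation for two $(-1)$-roots sharing a boundary component. Writing $\alpha_1=2E_1+\frac12 B_p+\frac12 B_j$ and $\alpha_2=2E_2+\frac12 B_p+\frac12 B_k$ and using $B_p^2=-4$, the disjointness of distinct boundary components, and $E_\ell\cdot B_p=1$, I expect to obtain $\alpha_1\cdot\alpha_2=4\,E_1\cdot E_2+\epsilon$ with $\epsilon\in\{1,2\}$ according to whether the two roots share one or two boundary components. Since a $(-1)$-curve meets $\sum B_i$ with total intersection $2$, it meets at most two boundary components, so $\alpha_1\ne\alpha_2$ forces $E_1\ne E_2$ and hence $E_1\cdot E_2\ge 0$; thus $\alpha_1\cdot\alpha_2\ge 1>0$, contradicting orthogonality in the $D_4$ diagram. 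Therefore $\alpha_0$ is a $(-2)$-curve.

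Finally, knowing $\alpha_0$ is a $(-2)$-root and $\alpha_0\cdot\alpha_i=1$ is odd, Lemma \ref{roots2}(2) once more rules out any $\alpha_i$ being a $(-1)$-root, so each $\alpha_i$ is a $(-2)$-curve, which completes the proof. I expect the genuine obstacle to be the middle step, namely excluding the all-$(-1)$-roots configuration via the pigeonhole count on boundary components together with the short intersection calculation; by contrast, the parity statement of Lemma \ref{roots2}(2) makes the propagation from $\alpha_0$ to the $\alpha_i$ essentially automatic.
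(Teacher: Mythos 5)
Your proof is correct and follows essentially the same route as the paper: Lemma \ref{roots2}(2) forces all four roots to be of the same kind, and the contradiction in the all-$(-1)$-roots case is exactly the paper's observation that the central root would need three boundary components, which you obtain via pigeonhole on its two boundary components plus the explicit positivity computation $\alpha_1\cdot\alpha_2=4E_1\cdot E_2+\epsilon>0$. Your version merely spells out the intersection calculation that the paper leaves implicit in its one-line appeal to Lemma \ref{roots2}(1).
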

\begin{proof}
By Lemma \ref{roots2} (2), 
a diagram of type $D_4$ is generated by only $(-2)$-curves or by only $(-1)$-roots.
If $D_4$ is generated by only $(-1)$-roots, then the central $(-1)$-root of $D_4$ should contain three boundary components by Lemma \ref{roots2} (1) which is a contradiction.
\end{proof}

\begin{lemma}\label{ellII*}
If $A\ne 0$, then a special extremal elliptic fibration with a multiple fiber 
 of type $\tilde{A}_2$ cannot exist.
\end{lemma}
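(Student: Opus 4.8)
The plan is to show that such a fibration would have to carry a Kodaira type $\IV$ multiple fiber and then to contradict the conductrix calculus of Section \ref{sec4}. A multiple fiber of type $\tilde{A}_2$ is a fiber $2F$ of type $\IV$ whose half-fiber $F=E_1+E_2+E_3$ consists of three $(-2)$-curves meeting pairwise at one common point $P$, exactly as in the second Example of Section \ref{sec4}. Let $C_s$ denote the special $2$-section; since $2F$ is numerically a general fiber, $C_s\cdot F=1$. First I would pin down $C_s$: were it a $(-1)$-root, then $C_s\cdot E_i$ would be even for every $i$ by Lemma \ref{roots2}(2), forcing $C_s\cdot F$ to be even and contradicting $C_s\cdot F=1$. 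Hence $C_s$ is a $(-2)$-curve and meets exactly one component, say $E_1$, transversally at a point $R\ne P$.

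Next I would locate the conductrix relative to $F$. If $A\supset E_i$ for some $i$, then $A\subset F$ by Proposition \ref{ellipticcase}, and the second Example of Section \ref{sec4} forces $A=0$, a contradiction. If instead $A\not\supset E_i$ but $A\cdot E_i=1$ for some $i$, then Corollary \ref{AC=1}(i), applied to $E_i$ and one adjacent component $E_j$ (which meets $E_i$ transversally at $P$), gives $E_j\subset A$, again contradicting $A\not\supset E_j$. Thus $A\cdot E_i=0$ and $A\not\supset E_i$ for every $i$, whence each $E_i$ has $s=1$ by Corollary \ref{conductrix-detail}(1)(i), and $C_s$ has $s=1$ by Corollary \ref{-1and 1} (being a $2$-section, $C_s\not\subset A$).

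For the endgame I would bring in extremality. Being connected and contained in the vertical divisor $(\eta)$, the nonzero conductrix lies in a single fiber $F_1$ (cf. the proof of Proposition \ref{ellipticcase}), which by Theorem \ref{ellipticsimple} and the disjointness $A\cap F=\emptyset$ just established must be of type $\I_m^*$, $\II^*$, $\III^*$ or $\IV^*$. The Jacobian is an extremal rational elliptic surface, so by Proposition \ref{Lang} the only configuration carrying both a type $\IV$ fiber and such a starred fiber is $(\IV^*,\IV)$; hence $F_1$ is of type $\IV^*$ and $2F$ is the type $\IV$ fiber. On $\IV^*$ one has $A=C_0+C_1+C_2+C_3$ (the central $D_4$), with $A\cdot C_0=1$, $A\cdot C_i=-1$ and $A\cdot C_i'=1$ on the three end components $C_i'$. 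Examining how the $(-2)$-curve $C_s$ meets this fiber ($C_s\cdot\IV^*=2$): if $C_s$ meets a multiplicity-two node $C_i$, then $A\cdot C_s=1$ and Corollary \ref{AC=1}(i) applied to $C_s$ and $E_1$ gives $E_1\subset A$, contradicting $A\cdot E_1=0$; if $C_s$ meets two distinct end components, Corollary \ref{AC=1}(i) forces $C_s\subset A$, impossible for a $2$-section.

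The single remaining case, where $C_s$ is tangent to one end component $C_i'$ (so $C_s\cdot C_i'=2$ at one point), is the step I expect to be the main obstacle. Here $f|_{C_s}\colon C_s\to\mathbf{P}^1$ would be totally ramified both over the base point of $2F$, since for a multiplicity-two fiber $(f|_{C_s})^\ast$ of that point equals $2[R]$, and over the base point of $\IV^*$, by the tangency; for a separable degree-two map from the rational curve $C_s$ this violates Riemann--Hurwitz in characteristic $2$, as each such wild ramification contributes at least $2$ to the different. To exclude the inseparable alternative, in which $f|_{C_s}$ is Frobenius, I would use the factorization of the canonical covering through the Frobenius base change $S_F$ of Proposition \ref{compare-surface}; alternatively, I would replace the whole endgame by a direct count of the blow-up centres of $\theta$: by Corollary \ref{AC=1}(ii) the points $P$ and $R$ are centres, already accounting for the value $r=2$ on $E_1$, while $E_2$ and $E_3$ meet neither $C_s$ nor $A$ and so pass through $P$ alone, yet must also satisfy $r=2$ by Corollary \ref{conductrix-detail}; bounding the number of centres by $\deg\langle\eta\rangle=4-n\le 3$ from Corollary \ref{A^2} then yields the contradiction.
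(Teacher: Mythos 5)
Your proof misidentifies the object the lemma is about, and this is a genuine gap rather than a stylistic difference. In this paper fibers are read in the Coble--Mukai lattice, and by Remark \ref{two-double} the blow-up of a fiber of type ${\rm IV}$ has class $2\bigl(\sum_{i=1}^{3}(\frac{1}{2}B_i+\frac{1}{2}B_4+2E_i)\bigr)$, i.e.\ it \emph{is} a multiple fiber of type $\tilde{A}_2$, whose half-fiber consists of three $(-1)$-roots rather than three $(-2)$-curves. This is exactly the case the lemma has to exclude, and it is the one the paper's proof spends its effort on: Proposition \ref{Lang} together with the conductrix tables reduces the singular fibers to $({\rm IV}^*,{\rm IV})$ or their blow-ups; the un-blown-up case is impossible because a Coble surface has $n\geq 1$ boundary $(-4)$-curves, all vertical, and fibers consisting only of $(-2)$-curves leave them no place (only the reduced fiber of type ${\rm IV}$ can be blown up, by Proposition \ref{Halphen} and Remark \ref{HalphenBlowUp}); hence the half-fiber $F$ is a sum of three $(-1)$-roots, and a special $2$-section $\alpha$ with $\alpha\cdot F=1$ is then impossible, since a $(-2)$-curve meets a sum of $(-1)$-roots evenly (Lemma \ref{roots2}(2)), while a $(-1)$-root $\alpha$ would involve a $(-4)$-curve not contained in $F$, which would have to be a component of the ${\rm IV}^*$ fiber --- absurd. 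Your proposal never considers the $(-1)$-root half-fiber (nor the type ${\rm I}_3$, whose dual graph is also $\tilde{A}_2$), so even if your case analysis were watertight the lemma would remain unproved. Note moreover that the honest Kodaira configuration $(2\,{\rm IV},{\rm IV}^*)$ you do analyze dies instantly by the same boundary-component observation --- it has no room for any $(-4)$-curve --- which is the paper's one-line dismissal of that case; your entire endgame is devoted to an empty configuration.

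Even within the case you treat, the endgame has holes that you partially acknowledge. In the analysis of how $C_s$ meets the ${\rm IV}^*$ fiber you omit the subcase where $C_s$ meets one end component at two \emph{distinct} points: Corollary \ref{AC=1}(i) requires intersection at exactly one point transversally, so it applies neither there nor in the tangency case. For the tangency case you concede that the purely inseparable possibility for $f|_{C_s}$ is not excluded, and the proposed fallback is unsound: the invariant $r$ of Corollary \ref{conductrix-detail} counts blow-ups over a single curve and can be as large as $6$, and the paper establishes no bound of the total number of centres of $\theta$ by $\deg\langle\eta\rangle=4-n$; the cycle $\langle\eta\rangle$ of Corollary \ref{A^2} counts the isolated $A_1$-singularities of $X$, not the blow-up centres of $\theta$. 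So the argument fails twice over: it addresses the wrong case, and the case it does address is left unfinished.
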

\begin{proof}
It follows from  Proposition \ref{Ito} and the assumption $A\ne 0$ 
that the fibration has
singular fibers of type $({\rm IV}, {\rm IV}^*)$ or their blow-ups.  Obviously the first
case does not occur.  Hence the fiber $2F$ of type $\tilde{A}_2$ is 
obtained by blowing-up a fiber of type ${\rm IV}$, that is, 
the fiber $F$ consists of $(-1)$-roots.
If a special $2$-section $\alpha$ is a $(-2)$-curve, then
$F\cdot \alpha \geq 2$ which contradicts the fact that $F$ is a multiple fiber.
If $\alpha$ is a $(-1)$-root, then
$\alpha$ contains a $(-4)$-curve $B$ not contained in $F$. Since $B$ is contained in  
a singular fiber, $B$ should be a component of the fiber of type ${\rm IV}^*$ which is impossible.
\end{proof}

\begin{lemma}\label{multiple-fiber1}
Let $f : S\to {\bf P}^1$ be an elliptic fibration on a Coble surface $S$ and $F$ a fiber of
$f$.  Assume that $F$ is of type ${\rm I}_n$ or the blowing-ups of the 
singular points of a fiber of type ${\rm I}_n$ $(n\geq 2)$.
Then $\alpha\cdot F\geq 2$ for any effective irreducible root $\alpha$ with $\alpha\cdot F > 0$.
\end{lemma}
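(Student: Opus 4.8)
The plan is to reduce the statement to a parity assertion: I would show that $\alpha\cdot F$ is \emph{even} for every effective irreducible root $\alpha$ and every fiber $F$ of the stated type, so that the hypothesis $\alpha\cdot F>0$ immediately forces $\alpha\cdot F\ge 2$. By Lemma \ref{roots} an effective irreducible root is either a $(-2)$-curve (a $(-2)$-root) or a $(-1)$-root $2E+\frac{1}{2}B_j+\frac{1}{2}B_k$, so I would treat these two cases separately, keeping the parity statement as the common goal.

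The key auxiliary fact I would isolate first is that every boundary component is orthogonal to the fiber class, i.e. $B_i\cdot F=0$ for all $i$. Indeed, since $-2K_S\sim\sum_i B_i$ and a genus $1$ fiber satisfies $K_S\cdot F=0$ (from $F^2=0$ and $p_a(F)=1$ via adjunction, as already used for Lemma \ref{fibration-isotropic}), one gets $\sum_i B_i\cdot F=-2K_S\cdot F=0$; as $F$ is nef and each $B_i$ is irreducible, every term $B_i\cdot F$ is non-negative, so they all vanish. In particular every boundary component lies in a fiber of $f$.

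Granting this, the $(-1)$-root case is immediate: for $\alpha=2E+\frac{1}{2}B_j+\frac{1}{2}B_k$ one has $\alpha\cdot F=2\,(E\cdot F)+\frac{1}{2}(B_j+B_k)\cdot F=2\,(E\cdot F)$, which is even (and positive exactly when $E\cdot F\ge 1$, giving $\alpha\cdot F\ge 2$). For a $(-2)$-root $\alpha$ I would use that $\alpha$ meets no boundary component (recorded in \S\ref{surfaces}) together with the fact that, by Proposition \ref{Halphen}, the fibration $f$ always carries at least one boundary-containing fiber $F'$, necessarily linearly equivalent to $F$. Reading off the shapes listed in Remark \ref{HalphenBlowUp}, in any such $F'$ (of type $\mathrm{I}_m$, $\mathrm{II}$, $\mathrm{III}$ or $\mathrm{IV}$) every component that is \emph{not} a $(-4)$-curve occurs with even multiplicity, while the $(-4)$-curves satisfy $\alpha\cdot B_i=0$; hence $\alpha\cdot F=\alpha\cdot F'\equiv 0\pmod 2$. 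When $F$ is itself the blown-up $\mathrm{I}_n$ fiber $\sum_i B_i+2\sum_i E_i$ of Remark \ref{two-double} this is visible on $F$ directly, and one may alternatively phrase the $(-2)$-root case through Remark \ref{two-double} and Lemma \ref{roots2}(2), which makes each $\alpha\cdot r_l$ even when $F=\sum_l r_l$ is written as a sum of $(-1)$-roots of type $\tilde A_{n-1}$.

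In every case $\alpha\cdot F$ is even and positive, hence $\alpha\cdot F\ge 2$, proving the lemma. I expect the only delicate point to be the $(-2)$-root case when $F$ is a \emph{reduced} $\mathrm{I}_n$ fiber, all of whose components are $(-2)$-curves: there the evenness is not visible on $F$ itself, and one must pass to the linearly equivalent boundary fiber $F'$. I would therefore be careful to invoke Proposition \ref{Halphen} to guarantee the existence of $F'$ and Remark \ref{HalphenBlowUp} to verify the parity of its multiplicities uniformly across the fiber types $\mathrm{I}_m,\mathrm{II},\mathrm{III},\mathrm{IV}$.
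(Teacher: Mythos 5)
Your parity strategy works exactly where the fiber $F$ is a full (simple) fiber, but the lemma also has to cover the case where $F$ is the \emph{half-fiber} of a multiple fiber $2F$ of type ${\rm I}_n$ --- the paper uses "fiber of type ${\rm I}_n$" loosely to include half-fibers (compare the proof of Lemma \ref{ellII*}, where the half of $2F$ is itself called a multiple fiber), the label \texttt{multiple-fiber1} signals this, and in the KKM-style classification the lemma is invoked precisely to exclude multiple fibers of type ${\rm I}_n$ on special fibrations, where the special $2$-section $r$ satisfies $r\cdot F=1$ for the half-fiber $F$. In that case your reduction breaks at the step "$F'$ necessarily linearly equivalent to $F$": the boundary-carrying fiber satisfies $F'\sim 2F$, so $\alpha\cdot F'=2\,\alpha\cdot F$ is even automatically and tells you nothing beyond integrality of $\alpha\cdot F$; parity cannot rule out $\alpha\cdot F=1$. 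Your $(-1)$-root computation is unaffected (there $\alpha\cdot F=2E\cdot F$ directly, with $B_i\cdot F=0$ as you argue, or via Lemma \ref{roots2}(2) since the components of $F$ are $(-2)$-roots), and your treatment of the blown-up ${\rm I}_n$ case matches the paper's; the gap is exactly the $(-2)$-root versus half-fiber subcase that you flagged as "delicate."

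The paper closes that subcase not by parity but by the conductrix machinery. If a $(-2)$-curve $\alpha$ had $\alpha\cdot F=1$, then neither $\alpha$ (which is horizontal, while for an elliptic fibration $A$ is vertical) nor any component of the ${\rm I}_n$ configuration lies in the conductrix $A$; hence by Corollary \ref{conductrix-detail}(1) all these curves have $s=1$ and $r\leq 2$. Corollary \ref{AC=1}(ii) then forces every transversal intersection point between two such $s=1$ curves to be blown up under $\tilde{S}\to S$. Since $n\geq 2$, the component $C$ of $F$ met by $\alpha$ already carries two intersection points inside the cycle (its two neighbours, or the other component twice when $n=2$), plus the point $C\cap\alpha$, so $r\geq 3$ for $C$, contradicting $r\leq 2$. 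This local blown-up-points argument is insensitive to whether $F$ is simple or multiple, which is why the paper's proof covers the half-fiber case that your linear-equivalence reduction cannot reach; to repair your proof you would need to import exactly this step.
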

\begin{proof}
First assume that $F$ is of type ${\rm I}_n$.  By Lemma \ref{roots2} (2), 
$\alpha\cdot F \geq 2$ if $\alpha$ is a $(-1)$-root.  If $\alpha$ is a $(-2)$-curve with 
$\alpha\cdot F =1$, then $\alpha$ and any component of $F$ are not contained 
in the conductrix $A$.  Therefore three points on the component 
of $F$ meeting $\alpha$ are blown-up by Lemma \ref{AC=1} (ii) which contradicts Corollary
\ref{conductrix-detail}.  Next assume that $F$ is the blowing-ups of the 
singular points of a fiber of type ${\rm I}_n$.  
Then $F$ is the sum of $(-1)$-roots (Remark \ref{two-double}) and hence $F\cdot \alpha \geq 2$ if $\alpha$ is a 
$(-2)$-curve (Lemma \ref{roots2}).  If $\alpha$ is a $(-1)$-root, then the $(-1)$-curve appeared in 
$\alpha$ meets a $(-4)$-curve in $F$ or
a $(-1)$-curve in $F$.  In both cases, $F\cdot \alpha \geq 2$.
\end{proof}

\begin{theorem}\label{dualgraph}
Let $S$ be a Coble surface in characteristic $2$ with a finite automorphism group.
Then the dual graph of irreducible effective roots on $S$ is one of the dual graphs given in Theorem {\rm \ref{main2}}.  
\end{theorem}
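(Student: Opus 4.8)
The plan is to mirror the classification of classical Enriques surfaces with finite automorphism group in Katsura, Kond\=o and Martin \cite{KKM}, working throughout with the Coble-Mukai lattice ${\rm CM}(S)$ and effective irreducible roots in place of ${\rm Num}(Y)$ and $(-2)$-curves, and feeding in the list of conductrices obtained in Section \ref{sec4}. First I would reduce to a distinguished fibration. Since ${\rm Aut}(S)$ is finite, Proposition \ref{MW-Dolgachev} shows that every genus $1$ fibration on $S$ is extremal. If $S$ had no irreducible effective root, then $D(S)$ would equal $\overline{{\rm CM}(S)}_{\bf R}^+$ and $W(S)$ would be trivial, which would force ${\rm Aut}(S)$ to be infinite; hence $S$ carries an irreducible effective root and, by Lemma \ref{specialfibration}, a special genus $1$ fibration $f:S\to {\bf P}^1$, which is then extremal.

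Next I would enumerate the candidates for $f$ and set up an initial graph. By Propositions \ref{Lang} and \ref{Ito} the reducible singular fibers of $f$ lie in the explicit lists of extremal rational elliptic and quasi-elliptic fibrations in characteristic $2$, and Remark \ref{two-double} records how each reducible fiber is written inside ${\rm CM}(S)$ as a sum of effective irreducible roots spanning an extended Dynkin diagram. Adjoining a special $2$-section of $f$ to these fiber diagrams produces a finite graph $\Gamma_0$ of $(-2)$-vectors which is non-degenerate by Remark \ref{Vinbergremark}, since the fiber components and the $2$-section span ${\rm CM}(S)\otimes {\bf Q}$. The conductrix classification of Section \ref{sec4} then restricts the admissible cases sharply: by Lemma \ref{hascusp} the case $A=0$ forces $f$ to be elliptic and leads to type VII, whose canonical cover resolves to a supersingular $K3$ surface, whereas for $A\ne 0$ the possible fiber types are limited by Tables \ref{TableQ-elliptic}, \ref{TableElliptic1} and \ref{TableElliptic2} and further cut down by Lemmas \ref{ellII*}, \ref{exclude}, \ref{exclude2} and \ref{multiple-fiber1}, which eliminate the fibrations incompatible with a conductrix on a Coble surface.

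I would then enlarge $\Gamma_0$ to the complete dual graph $\Gamma$ of all irreducible effective roots. For each surviving candidate this amounts to locating the remaining $(-2)$-curves and $(-1)$-roots meeting the fiber components or the $2$-section and pinning down their intersection numbers; here Lemma \ref{roots2}, Lemma \ref{D4diagram}, and the conductrix estimates of Corollaries \ref{-1and 1}, \ref{AC=1} and \ref{eachtype} control the local configurations, while the parity statement of Lemma \ref{roots2} and Lemma \ref{multiple-fiber1} rule out spurious sections. Since ${\rm Aut}(S)$ is finite, $W(S)$ is of finite index in ${\rm O}({\rm CM}(S))$ (the converse of Proposition \ref{FiniteIndex}, valid by the standard theory of hyperbolic reflection groups), so the finite graph $\Gamma$ must satisfy Vinberg's criterion (Theorem \ref{Vinberg}): every connected parabolic subdiagram of $\Gamma$ must be a connected component of a parabolic subdiagram of rank $8$. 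Any candidate configuration violating this corresponds to an infinite automorphism group and is discarded, and for the configurations that survive Proposition \ref{Namikawa} guarantees that the roots collected are exactly all irreducible effective roots, so that $\Gamma$ is the genuine dual graph.

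The principal difficulty is the exhaustive case analysis underlying the previous paragraph: for each extremal fibration on the Lang--Ito lists one must track precisely which boundary components, $(-1)$-roots and $(-2)$-curves occur, verify that the resulting graph closes up with no further roots, and then test the parabolic condition of Theorem \ref{Vinberg}. The subtle points are establishing \emph{completeness} of the root set in each case --- that no irreducible effective root has been overlooked --- and keeping the $(-1)$-root contributions consistent with the multiple-fiber structure via Remark \ref{two-double}; this is exactly where the conductrix tables carry the weight. Performing this analysis for every admissible special fibration yields precisely the nine dual graphs listed in Theorem \ref{main2}, which completes the proof.
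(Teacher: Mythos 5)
Your overall scaffolding---special extremal fibration via Lemma \ref{specialfibration} and Proposition \ref{MW-Dolgachev}, the Lang--Ito lists, the conductrix tables, Lemmas \ref{ellII*}, \ref{exclude}, \ref{exclude2}, \ref{multiple-fiber1}, then Vinberg's criterion plus Proposition \ref{Namikawa} to certify completeness of the root set---is exactly the paper's strategy, which explicitly reduces to the proof of \cite[Theorem 4.1]{KKM}. But there is a genuine gap at the final step, where you assert that the case analysis ``yields precisely the nine dual graphs.'' Running the KKM analysis verbatim leaves one additional candidate alive: the graph consisting of two $\tilde{D}_4$ configurations joined through a central vertex $N$ (Figure \ref{graphD4D4}), i.e.\ the type $\tilde{D}_4+\tilde{D}_4$ that \emph{does} occur among classical Enriques surfaces with finite automorphism group in characteristic $2$. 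Because that graph is realized by actual Enriques surfaces, it satisfies Vinberg's criterion, so your proposed discarding mechanism---testing the parabolic condition of Theorem \ref{Vinberg} and throwing out violators---cannot eliminate it; nor do the prepared exclusion lemmas touch it. A Coble-specific geometric argument is required, and this is the only genuinely new content in the paper's proof: by Lemmas \ref{D4diagram} and \ref{roots2}\,(2) all vertices of that graph are $(-2)$-curves; the parabolic subdiagram of type $\tilde{D}_4\oplus\tilde{D}_4$ gives a quasi-elliptic fibration with singular fibers $({\rm I}_0^*,{\rm I}_0^*)$ (Propositions \ref{Lang}, \ref{Ito}); the $(-2)$-curve $N$ is a $2$-section meeting a simple component of each ${\rm I}_0^*$ fiber once, which forces \emph{both} of these fibers to be multiple, whereas a genus $1$ fibration on a Coble surface arises from a Halphen surface of index at most $2$ and so admits at most one multiple fiber---a contradiction. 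Without this step your classification would contain a tenth graph.

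A secondary, lesser concern: twice you lean on the converse of Proposition \ref{FiniteIndex} (``${\rm Aut}(S)$ finite implies $W(S)$ of finite index''), once to produce an irreducible effective root and once to discard configurations. The paper states only the forward direction and never needs the converse: existence of a root is taken as the starting input to Lemma \ref{specialfibration}, and all discarding in the KKM-style analysis is done geometrically (extremality of every fibration via Proposition \ref{MW-Dolgachev}, conductrix incompatibilities), not by appeal to a failed Vinberg test. If you want to use the converse you should either prove it or cite it precisely, since in positive characteristic it is not an immediate consequence of ``standard hyperbolic reflection group theory'' alone.
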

\begin{proof}
Let $S$ be a Coble surface with finite automorphism group.  Let $A$ be the conductrix of $S$ including the case $A=0$ and let $n$ be the number of boundary components.
It follows from Lemma \ref{specialfibration} that there exists a special genus 1 fibration on $S$ which
is extremal by Proposition \ref{MW-Dolgachev}.  Note that every configration of effective irreducible roots forming an extended Dynkin diagram is a fiber of a genus 1 fibration on $S$
(Lemma \ref{fibration-isotropic}).  These data can allow us to control effective irreducible roots.
Now starting from a special genus 1 fibration together with the possibilities of the conductrix $A$ (Tables \ref{TableQ-elliptic}, \ref{TableElliptic1} and \ref{TableElliptic2}), 
we arrive at a contradiction or $S$ contains a dual graph of irreducible effective roots in Theorem \ref{main2}.  We observe that  all dual graphs satisfy the condition in Theorem \ref{Vinberg}, and hence the automorphism group of the Coble surface with one of these dual graphs is finite by Proposition \ref{FiniteIndex}.  
Proposition \ref{Vinbergremark} implies that these are all effective irreducible 
roots on the Coble surface $S$.
This is a story of the proof.

We have shown the same properties of $A$ as in the case of Enriques surfaces holds in the previous section and have prepared Lemmas \ref{exclude2}, \ref{multiple-fiber1}, \ref{ellII*} which were used in the proof of the case of Enriques surfaces.  
Thus the proof of \cite[Theorem 4.1]{KKM} works well in this case, and hence the assertion
on the dual graphs has been proved except the non-existence of the one case  
(of course, we should consider $(-1)$-roots, but all arguments are easy exercise.  We leave the details to the reader).
The dual graph  
we want to exclude is given by the following
Figure \ref{graphD4D4}.
\begin{figure}[!htb]
 \begin{center}
\xy
(-45,25)*{};
@={(0,10),(10,0),(10,10),(20,10),(30,10),(40,10),(50,10),(10,20),(60,10),(50,20),(50,0)}@@{*{\bullet}};
(0,10)*{};(60,10)*{}**\dir{-};
(10,0)*{};(10,20)*{}**\dir{-};
(50,0)*{};(50,20)*{}**\dir{-};
(30,6)*{N};
\endxy
 \end{center}
 \caption{}
 \label{graphD4D4}
\end{figure}

\noindent
The diagram contains a Dynkin diagram of type $D_4$.  It follows from lemma \ref{D4diagram} and \ref{roots2} (2) that all vertices are represented by $(-2)$-curves.  
Note that there exists a parabolic subdiagram of type $\tilde{D}_4\oplus \tilde{D}_4$
which corresponds to a quasi-elliptic fibration with singular fibers of 
type $({\rm I}_0^*, {\rm I}_0^*)$ by Propositions \ref{Lang}, \ref{Ito}.  
Since $N$ is represented by a $(-2)$-curve by Lemma \ref{roots2} (2), 
the fibration is a Halphen surface of index 2.  This contradicts the existence of
two multiple fibers of type ${\rm I}_0^*$.
Thus this case does not occur.
\end{proof}

\begin{theorem}\label{Numberboundcomp}
Let $S$ be a Coble surface in characteristic $2$ with a finite automorphism group.
Then the number of boundary components of $S$ is given as in Theorem {\rm \ref{main2}}.
\end{theorem}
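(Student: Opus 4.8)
The plan is to determine, for each of the nine dual graphs produced by Theorem \ref{dualgraph}, the admissible values of $n$, using the interplay between the conductrix and a special extremal genus $1$ fibration. By Lemma \ref{specialfibration} and Proposition \ref{MW-Dolgachev}, $S$ carries a special extremal genus $1$ fibration $f : S \to {\bf P}^1$, and by Lemma \ref{fibration-isotropic} every extended Dynkin configuration visible in the dual graph is realized as the reducible fiber of such a fibration. First I would record the arithmetic constraint already in hand: when $A \ne 0$, Corollary \ref{A^2} gives $\deg\langle\eta\rangle = 4 - n$, and Theorem \ref{main1}(2) identifies $4 - n$ with the number of isolated $A_1$-singularities of the canonical covering $X$. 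Since both quantities are nonnegative and $n \ge 1$, this already forces $n \in \{1,2,3,4\}$ for the eight types with $A \ne 0$. For the single type with $A = 0$ (type VII), the same Euler-number computation specializes to $\deg\langle\eta\rangle = 12 - n$.

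The geometric engine for pinning down $n$ within these bounds is the Halphen description of Proposition \ref{Halphen}. Since $K_S \cdot F = 0$ for a general fiber $F$ and $\sum_i B_i = -2K_S$, each $B_i \cdot F \ge 0$ has vanishing sum, so every boundary component lies in a singular fiber; moreover the boundary $(-4)$-curves arise exactly from the reduced fibers of the underlying Halphen surface that are blown up. By Remark \ref{HalphenBlowUp} and Remark \ref{two-double}, a blown-up fiber of type ${\rm I}_m$ contributes $m$ boundary components (and appears in ${\rm CM}(S)$ as a cycle of $m$ $(-1)$-roots of type $\tilde{A}_{m-1}$), while a blown-up fiber of type ${\rm II}$, ${\rm III}$ or ${\rm IV}$ contributes $1$, $2$ or $4$ boundary components respectively. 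Whether a given reducible fiber is a blown-up simple fiber (contributing $(-4)$-curves) or remains a multiple fiber (contributing none) is read off from the conductrix: its local shape is governed by the classification in Tables \ref{TableQ-elliptic}, \ref{TableElliptic1} and \ref{TableElliptic2}, and Corollary \ref{endcomponent} distinguishes the simple from the multiple case for a fibration possessing a $(-2)$-curve as a $2$-section.

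With these tools I would proceed through the nine dual graphs one at a time. For each I would choose a convenient special extremal fibration, read off its reducible singular fibers from Propositions \ref{Lang} and \ref{Ito}, use the conductrix already determined for that type in Section \ref{sec4} to decide which reducible fibers are blown up and which occur as multiple fibers, and then sum the boundary contributions tabulated above to obtain $n$ (and, when a dual graph admits more than one realization, the full set of admissible $n$). The Halphen-index dichotomy of Proposition \ref{Halphen} is what keeps this finite: in index $1$ exactly two reduced fibers are blown up, while in index $2$ exactly one reduced fiber is blown up and a multiple fiber is present. As a consistency check, every value of $n$ obtained for an $A \ne 0$ type must satisfy $\deg\langle\eta\rangle = 4 - n$, and for type VII the count must satisfy $\deg\langle\eta\rangle = 12 - n$.

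I expect the main obstacle to be the bookkeeping that separates, fibration by fibration, the blown-up simple fibers from the multiple fibers, since the dual graph alone does not record the multiplicity of a fiber; this is precisely where the conductrix tables and Corollary \ref{endcomponent} must be applied with care. A secondary difficulty is type VII, where $A = 0$ and the sharp relation $n = 4 - \deg\langle\eta\rangle$ is unavailable, so $n$ must be extracted directly from the $\tilde{A}$-type fiber of the extremal elliptic fibration and cross-checked against $\deg\langle\eta\rangle = 12 - n$. Once the admissible $n$ for each dual graph are listed, they match the entries of Table \ref{mainTable}, which completes the proof.
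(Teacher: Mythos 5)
Your global strategy is the paper's: go through the nine dual graphs one at a time, pick a special extremal genus $1$ fibration (Lemma \ref{specialfibration}, Proposition \ref{MW-Dolgachev}), read its fiber types from Propositions \ref{Lang} and \ref{Ito}, use the Halphen dichotomy of Proposition \ref{Halphen} together with the boundary contributions of Remark \ref{HalphenBlowUp} ($1$, $2$, $4$, $m$ for blown-up fibers of type ${\rm II}$, ${\rm III}$, ${\rm IV}$, ${\rm I}_m$), and sum. Your a priori bound $n = 4 - \deg\langle\eta\rangle \le 4$ for $A \ne 0$ is correct and is a nice consistency check. But the decisive mechanism you propose --- deciding which fibers are multiple and which are blown up via the conductrix tables and Corollary \ref{endcomponent} --- is not the one that actually carries the paper's proof, and it fails in exactly the cases you would need it most. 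The paper's engine is Lemma \ref{roots2}(2) (a $(-1)$-root always meets a $(-2)$-root evenly) together with Lemma \ref{D4diagram}: these identify, vertex by vertex, which nodes of each dual graph are $(-2)$-curves and which are $(-1)$-roots, and the Jacobian-versus-Halphen-index-$2$ alternative is then settled by contradiction. For instance, for type $\tilde{E}_8$ (and similarly $\tilde{E}_7+\tilde{A}_1$, $\tilde{D}_8$), if the fibration came from a Jacobian fibration with two irreducible fibers blown up, the $2$-section vertex $E_{10}$ would be a $(-1)$-root meeting the $(-2)$-curve $E_9$ with odd intersection number, contradicting Lemma \ref{roots2}(2); hence the surface is Halphen of index $2$ and $n=1$. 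Your conductrix-based substitute cannot reproduce this: the conductrix sits inside the big fiber and, e.g., for type $\tilde{E}_6+\tilde{A}_2$ it is entirely blind to whether the ${\rm I}_1$ or the ${\rm I}_3$ fiber is blown up (both $n=1$ and $n=3$ occur), and it cannot by itself exclude the $({\rm IV}^*,{\rm IV})$ configuration, which the paper rules out because a $(-2)$-curve would then meet a boundary component.

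The gap is sharpest for type ${\rm VII}$, which you flag but do not resolve: there $A=0$, so Tables \ref{TableQ-elliptic}--\ref{TableElliptic2} and Corollary \ref{endcomponent} say nothing, and your fallback $\deg\langle\eta\rangle = 12 - n$ only yields $n \le 12$, nowhere near $n \in \{2,10\}$. The paper instead applies Lemma \ref{roots2}(2) to the fibration of type $({\rm I}_5,{\rm I}_5,{\rm I}_1,{\rm I}_1)$ to show the fifteen single-edge vertices are \emph{all} $(-1)$-roots or \emph{all} $(-2)$-curves: in the first case both ${\rm I}_5$ fibers are blown up, giving $n=10$; in the second, only ${\rm I}_1$ fibers can be blown up, and the complete graph with double edges forces $n=2$. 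Likewise for type ${\rm VIII}$ the count $n=4$ comes from the six vertices $E_{11},\ldots,E_{16}$ being $(-1)$-roots, so the ${\rm I}_4$ fiber of the $({\rm I}_1^*,{\rm I}_4)$ fibration is the one blown up. So to repair your proof you must add, as the central step, the root-type identification of the vertices of each dual graph via Lemmas \ref{D4diagram} and \ref{roots2}(2); the conductrix tables alone do not determine $n$.
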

\begin{proof}
We prove the assertion for each type of dual graphs.

\smallskip

(1) \ {\bf The case of type $\tilde{E}_8$.}  
The dual graph is given by the following Figure \ref{graphE8}.

\begin{figure}[!htb]
 \begin{center}
\xy
(-45,25)*{};
@={(-10,10),(0,10),(10,10),(20,10),(30,10),(40,10),(50,10),(60,10),(70,10),(10,20)}@@{*{\bullet}};
(-10,10)*{};(70,10)*{}**\dir{-};
(10,10)*{};(10,20)*{}**\dir{-};
(60,6)*{E_9};(50,6)*{E_8};(40,6)*{E_7};
(30,6)*{E_6};(20,6)*{E_5};(10,6)*{E_3};
(0,6)*{E_2};(-10,6)*{E_1};(70,6)*{E_{10}};(15,20)*{E_{4}};
\endxy
 \end{center}
 \caption{}
 \label{graphE8}
\end{figure}
It follows from Lemma \ref{D4diagram} and \ref{roots2} (2) that all vertices are represented by $(-2)$-curves. 
There exists a parabolic subdiagram of type $\tilde{E}_8$ which corresponds to  
a quasi-elliptic fibration with a singular fiber of type ${\rm II}^*$
(Lemmas \ref{fibration-isotropic}, \ref{exclude2}).
If the fibration is obtained from a Jacobian fibration by blowing up
the singular points of
two irreducible fibers, then the vertex $E_{10}$ is a $(-1)$-root which contradicts Lemma \ref{roots2} (2).
Thus the fibration is induced from a Halphen surface of index 2 with a multiple fiber of type ${\rm II}^*$, and hence the number of boundary component is one.

\smallskip

(2)\ {\bf The case of type $\tilde{E}_7+\tilde{A}_1^{(2)}$.}
The dual graph is given by the following Figure \ref{graphE7-2}.

\begin{figure}[!htb]
 \begin{center}
\xy
(-30,25)*{};
@={(80,10),(90,10),(0,10),(10,10),(20,10),(30,10),(40,10),(50,10),(60,10),(70,10),(30,20)}@@{*{\bullet}};
(0,10)*{};(80,10)*{}**\dir{-};
(90,10)*{};(80,10)*{}**\dir{=};
(30,10)*{};(30,20)*{}**\dir{-};
(60,6)*{E_8};(50,6)*{E_7};(40,6)*{E_6};
(30,6)*{E_4};(20,6)*{E_3};(10,6)*{E_2};
(0,6)*{E_1};(80,6)*{E_{10}};(70,6)*{E_9};(35,20)*{E_5};(90,6)*{E_{11}};
\endxy 
 \end{center}
 \caption{}
 \label{graphE7-2}
\end{figure}
It follows from lemma \ref{D4diagram} and \ref{roots2} (2) that the vertices 
$E_1,\ldots, E_{10}$ are represented by $(-2)$-curves. 
There exists a parabolic subdiagram of type $\tilde{E}_7\oplus \tilde{A}_1$
which corresponds to a quasi-elliptic fibration with two singular fibers of 
type ${\rm III}^*$ and of type ${\rm III}$ (Lemmas \ref{fibration-isotropic}, \ref{exclude2}).  If the fibration is obtained from a Jacobian
fibration, then
the vertex $E_9$ is represented by a $(-1)$-root which contradicts Lemma \ref{roots2} (2).  
Therefore the fibration is induced from a Halphen surface of index 2, $E_9$ is the proper transform of the curve of cusps and the boundary components are the 
proper transforms of the components of the fiber of type ${\rm III}$.  Thus the number of boundary components is two.

\smallskip

(3)\ {\bf The case of type $\tilde{E}_7+\tilde{A}_1^{(1)}$.}
The dual graph is given by the following Figure \ref{graphE7-1}.

\begin{figure}[!htb]
 \begin{center}
\xy
(-30,25)*{};
@={(80,10),(90,10),(0,10),(10,10),(20,10),(30,10),(40,10),(50,10),(60,10),(70,10),(30,20)}@@{*{\bullet}};
(0,10)*{};(80,10)*{}**\dir{-};
(90,10)*{};(80,10)*{}**\dir{=};
(30,10)*{};(30,20)*{}**\dir{-};
(70,10)*{};(90,10)*{}**\crv{(80,20)};
(60,6)*{E_8};(50,6)*{E_7};(40,6)*{E_6};
(30,6)*{E_4};(20,6)*{E_3};(10,6)*{E_2};
(0,6)*{E_1};(80,6)*{E_{10}};(70,6)*{E_9};(35,20)*{E_5};(90,6)*{E_{11}};
\endxy
 \end{center}
 \caption{}
 \label{graphE7-1}
\end{figure}

It follows from lemma \ref{D4diagram} and \ref{roots2} (2) that all vertices 
are represented by $(-2)$-curves. 
There exists a parabolic subdiagram of type $\tilde{E}_7\oplus \tilde{A}_1$
which corresponds to a quasi-elliptic fibration with reducible fibers of type 
$(2{\rm III}^*, {\rm III})$
(Lemmas \ref{fibration-isotropic}, \ref{exclude2}).  Lemma \ref{roots2} (2) implies that a fiber of type ${\rm II}$ is blown up, the fiber of type ${\rm III}$ is not
and the 2-section $E_9$ is the proper transform of the curve of cusps.
Thus the fibration is a Halphen surface of index 2 
with a multiple fiber of type ${\rm III}^*$ 
and the number of boundary components is one.

\smallskip

(4)\ {\bf The case of type $\tilde{E}_6+\tilde{A}_2$.}
The dual graph is given by the following Figure \ref{graphE6}.

\begin{figure}[!htb]
 \begin{center}
  \includegraphics[width=70mm]{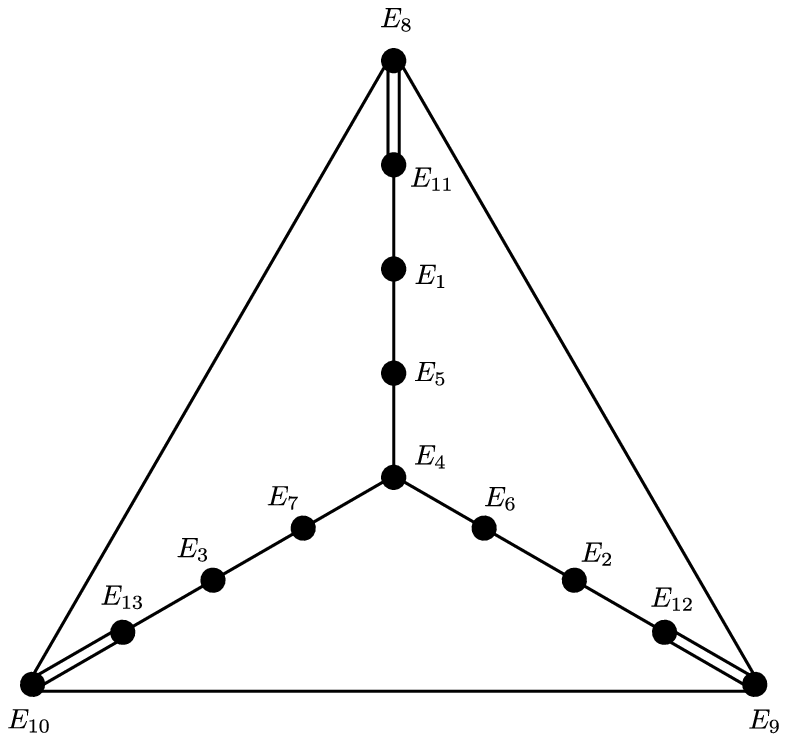}
 \end{center}
 \caption{}
 \label{graphE6}
\end{figure}

It follows from lemma \ref{D4diagram} and \ref{roots2} (2) that the vertices 
$E_1,\ldots, E_7, E_{11}, E_{12}, E_{13}$ are represented by $(-2)$-curves. 
There exists a parabolic subdiagram of type $\tilde{E}_6\oplus \tilde{A}_2$
which corresponds to an elliptic fibration with singular fibers of 
type $({\rm IV}^*, {\rm IV})$ or of type $({\rm IV}^*, {\rm I}_3, {\rm I}_1)$
by Propositions \ref{Lang}, \ref{Ito}.  If the fibration is obtained from a Jacobian fibration, then
a 2-section $E_{11}$, $E_{12}$ or $E_{13}$ is represented by a $(-1)$-root which 
contradicts Lemma \ref{roots2} (2).  
Therefore the fibration is obtained from a Halphen surface of index 2.
Note that the fiber of type ${\rm IV}$ does not occur because, otherwise,
the fiber of type ${\rm III}$ is blown up and then the $(-2)$-curve $E_{11}$ meets the
boundary component which is a contradiction.
Hence the number of boundary components is
one or three according to that the fiber blown up is of type ${\rm I}_1$ or of type
${\rm I}_3$.

\smallskip

(5) \ {\bf The case of type $\tilde{D}_8$.}
The dual graph is given by the following Figure \ref{graphD8}.

\begin{figure}[!htb]
 \begin{center}
\xy
(-50,25)*{};
@={(-10,10),(0,10),(10,10),(20,10),(30,10),(40,10),(50,10),(10,20),(60,10),(50,20)}@@{*{\bullet}};
(-10,10)*{};(60,10)*{}**\dir{-};
(10,10)*{};(10,20)*{}**\dir{-};
(50,10)*{};(50,20)*{}**\dir{-};
(60,6)*{E_2};(50,6)*{E_3};(40,6)*{E_4};
(30,6)*{E_5};(20,6)*{E_6};(10,6)*{E_7};
(0,6)*{E_9};(-10,6)*{E_{10}};(55,20)*{E_1};(15,20)*{E_8};
\endxy
 \end{center}
 \caption{}
 \label{graphD8}
\end{figure}

It follows from lemma \ref{D4diagram} and \ref{roots2} (2) that all vertices 
are represented by $(-2)$-curves. 
There exists a parabolic subdiagram of type $\tilde{D}_8$ which corresponds to 
a quasi-elliptic fibration with singular fibers of type $({\rm I}_4^*)$
(Lemmas \ref{fibration-isotropic}, \ref{exclude2}).  If the fibration is induced from a Jacobian fibration, then the vertex $E_{10}$ is a 
$(-1)$-root which contradicts Lemma \ref{roots2} (2).  Thus the fibration is obtained from a Halphen surface of index 2 with a multiple fiber of type ${\rm I}_4^*$, $E_{10}$ is the proper transform of the curve of cusps and the number of boundary component is one.

\smallskip

(6)\ {\bf The case of type VII.} 
The dual graph of type VII is given by the following Figure \ref{graphVII}.
\begin{figure}[!htb]
 \begin{center}
  \includegraphics[width=60mm]{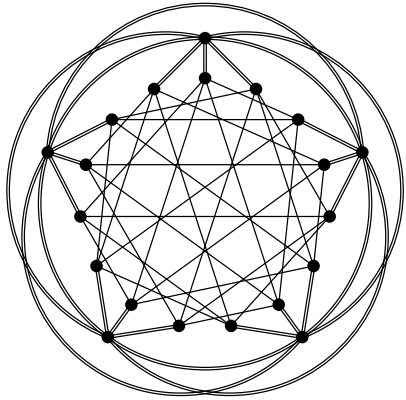}
 \end{center}
 \caption{Type VII}
 \label{graphVII}
\end{figure}

Note that the dual graph contains five vertices forming a complete graph with double edges
and the remaining fifteen vertices meet together with a single edge. 
Also the dual graph contains parabolic subdiagrams of type $\tilde{A}_4\oplus \tilde{A}_4$ 
corresponding to
elliptic fibrations of type $({\rm I}_5,{\rm I}_5, {\rm I}_1, {\rm I}_1)$ 
by Propositions \ref{Lang}, \ref{Ito}.  
By Lemma \ref{roots2} (2), all of the fifteen vertices are represented by only 
$(-1)$-roots or by only $(-2)$-curves.  
Therefore if they are represented by only $(-1)$-roots, then 
the Coble surface is obtained by blowing up all the singular points of two fibers of type 
${\rm I}_5$.  This implies that the number of boundary components is 10.  If they are represented by only $(-2)$-curves, then 
the surface is obtained by blowing up one or two fibers of type ${\rm I}_1$.  
To get the complete graph with the five vertices and double edges,
the number of the boundary components is 2.
Thus we conclude that the number of boundary components is 2 or 10.

\smallskip

(7)\ {\bf The case of type VIII.} 
The dual graph of type VIII is given by the following
Figure \ref{graphVIII}.

\begin{figure}[!htb]
 \begin{center}
  \includegraphics[width=70mm]{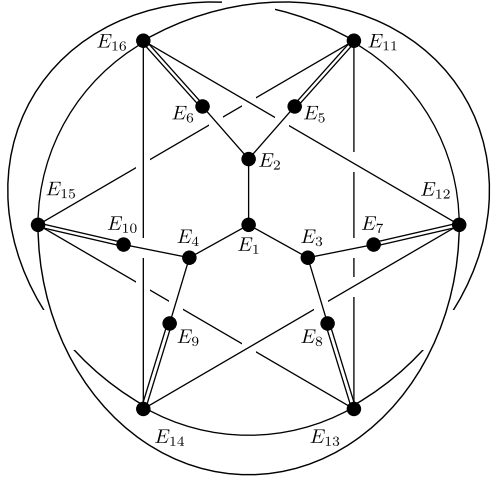}
 \end{center}
 \caption{Type VIII}
 \label{graphVIII}
\end{figure}

It follows from lemma \ref{D4diagram} and \ref{roots2} (2) that the vertices 
$E_1,\ldots, E_{10}$ are represented by $(-2)$-curves. 
The dual graph contains parabolic subdiagrams of type $\tilde{D}_5\oplus \tilde{A}_3$
which correspond to
elliptic fibrations with singular fibers of type $({\rm I}_1^*,{\rm I}_4)$ by Propositions \ref{Lang}, \ref{Ito}.  This 
implies that the number of boundary components is 4.  

Thus we have proved that the number of boundary components of a Coble surface with finite automorphism group is given as in Theorem \ref{main2}.
\end{proof}

\section{Existence and moduli}\label{sec6}

In this section we show the existence of Coble surfaces with finite automorphism group
in Table \ref{mainTable} and determine their moduli.
For each type, we give two constructions of such surfaces, that is, as blowing-ups of a pencil of sextic curves on ${\bf P}^2$ (Proposition \ref{Halphen}) and as a quotient of a non-singular surface by a rational derivation (\S \ref{derivation}).

\begin{example}\label{exE8} {\bf A Coble surface of type $\tilde{E}_8$ with one boundary component.}

A Coble surface of type $\tilde{E}_8$ with one boundary component has a unique quasi-elliptic fibration with a multiple fiber of type ${\rm II}^*$. It is obtained from a Halphen surface of index 2 with a multiple fiber of
type ${\rm II}^*$ by blowing up the singular point of a fiber $F$ of type ${\rm II}$.  
The vertex $E_{10}$ in Figure \ref{graphE8} is represented by the proper transform of the curve of cusps.
Contracting the $(-1)$-curve (the curve of cusps) on the Halphen surface and then the images of
$E_9,\ldots, E_5, E_3, E_2, E_1$ except $E_4$ successively, we obtain ${\bf P}^2$.  The image of $F$
is a sextic curve $C$ with the unique singular point $q$ and the image of $E_4$ is a line $\ell$.  The curves $C$ and $\ell$ meet only at the point $q$.
Let $(x:y:z)$ be homogeneous coordinates of ${\bf P}^2$.
We may assume that $q=(0:0:1)$ and $\ell$ is defined by $x=0$.
An elementary but long calculation shows that 
such curve $C$ is given by 
\begin{equation}\label{E8eq}
C : \ y^6 + x^5y + x^2z^4 = 0.
\end{equation}
Note that the pencil of lines through the point $q$ gives a purely inseparable
covering $C \to {\bf P}^1$ of degree 4 and hence $C$ is a rational curve.
Consider the pencil of sextic curves given by 
$$\calP = \{ sx^6 + t(y^6 + x^5x + x^2x^4)\}_{(s:t)\in {\bf P}^1}.$$
Blowing up the base points of $\calP$, 
we obtain a quasi-elliptic fibration (a Halphen surface of index 2) with a unique 
multiple singular fiber of type ${\rm II}^*$ over
the point $(s:t)=(1:0)$.  
Let $E$ be the $(-1)$-curve which is the exceptional curve of the last blowing-up.
Then $E$ is the curve of cusps of this fibration.  
Blowing up the singular point of the fiber $F$ of type ${\rm II}$ over the point 
$(s:t)=(0:1)$, we obtain a Coble surface
$S$ with a boundary component $B$ where $B$ is the proper transform of $F$.
The surface $S$ contains 10 $(-2)$-curves forming the desired dual graph.
Thus we obtain a Coble surface $S$ of type $\tilde{E}_8$.
By construction, any automorphism of $S$ is induced from one of ${\bf P}^2$ preserving 
the pencil of sextic curves.  
Let $\zeta_5$ be a primitive 5th root of unity.  Then 
$$\sigma: (x:y:z)\to (x:\zeta_5 y: \zeta_5^{-1}z)$$
induces an automorphism of $\calP$ of order 5.  A direct calculation shows that $\sigma$ is the only
such automorphisms.

Next we give a Coble surface of type $\tilde{E}_8$ as a quotient of
a rational surface by a derivation.  In the following
we use the same notation as in \cite[\S 10.2]{KKM}.  In \cite{KKM}, 
to construct classical Enriques surfaces of type $\tilde{E}_8$ (the ones with the dual graph in Figure \ref{graphE8}), we
considered the surface $Y$, blown up a smooth quadric surface, 
with the canonical divisor
$$K_{Y} = -(2F_0+2E_1+E_3+3E_5+4E_6+3E_7+2E_8+4E_9+5E_{10}+6E_{11}+8E_{12}+4E_{13}+6E_{14})$$
where $F_0, E_i$ are given in \cite[Figure 18]{KKM}, and the following derivation defined by
\begin{equation}\label{E8C-der}
D= {1\over x^3y^2} \left(x^4y^2\frac{\partial}{\partial x} + (x^2 + ax^4y^4 + y^4)\frac{\partial}{\partial y}\right), \quad  a  \in k^*,
\end{equation}
where $(x,y)$ is inhomogeneous coordinates of the quadric surface.

Now consider the case $a=0$.  Then we have the following Lemma (compare this with \cite[Lemmas 10.6, 10.7]{KKM}:

\begin{lemma}\label{E8pole}
{\rm (i)} The integral curves with respect to $D$ are 
$E_1, E_3, E_5, E_7, E_8, E_9, E_{10}$.

{\rm (ii)} $(D) =-(2F_0+3E_1-E_2+2E_3+4E_5+4E_6+3E_7+2E_8+4E_9+5E_{10}+6E_{11}+8E_{12}+4E_{13}+6E_{14})$.

{\rm (iii)} $(D)^2 = -12$. 

{\rm (iv)} $K_{Y}\cdot (D) = -4.$
\end{lemma}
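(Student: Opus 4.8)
The plan is to do everything in the affine chart of the quadric with coordinates $(x,y)$ used in \cite[\S 10.2]{KKM}, and then to propagate the local data of $D$ through the sequence of blow-ups producing $Y$, whose configuration $F_0,E_1,\dots,E_{14}$ is recorded in \cite[Figure 18]{KKM}. Setting $a=0$ in (\ref{E8C-der}) gives
$$
D=\frac{1}{x^3y^2}\left(x^4y^2\frac{\partial}{\partial x}+(x^2+y^4)\frac{\partial}{\partial y}\right),
$$
and the crucial feature of the $a=0$ case is that in characteristic $2$ we have $x^2+y^4=(x+y^2)^2$, so the second coefficient is a perfect square cutting out a doubled parabola. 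This is exactly where the computation departs from \cite[Lemmas 10.6, 10.7]{KKM}, in which $a\in k^*$ keeps the coefficient generic; the remaining bookkeeping runs in parallel to \emph{loc. cit.}

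For part (ii) I would first read off the divisorial part in the starting chart. Since $x^4y^2$ and $(x+y^2)^2$ are coprime, the reduced vector field is $\tilde D=x^4y^2\frac{\partial}{\partial x}+(x^2+y^4)\frac{\partial}{\partial y}$ and the factored-out function is $f=1/(x^3y^2)$, so by the recipe of \S\ref{derivation} the divisor $(D)=(f)$ starts with poles of order $3$ and $2$ along $x=0$ and $y=0$. I would then blow up the base points following \cite[Figure 18]{KKM}; at each stage I introduce local coordinates (substitutions of the form $x=x'$, $y=x'y'$ or their transposes), recompute $D$, and read the order of the pole or zero of the new factored-out function along the freshly created exceptional curve. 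Assembling the contributions along $F_0$ and each $E_i$ should yield
$$
(D)=-(2F_0+3E_1-E_2+2E_3+4E_5+4E_6+3E_7+2E_8+4E_9+5E_{10}+6E_{11}+8E_{12}+4E_{13}+6E_{14}).
$$
The delicate entries are the coefficient $3E_1$ (larger than the $2E_1$ appearing in $K_Y$) and the genuinely negative coefficient $-E_2$, which encode subtle pole–zero behaviour of the factored-out function at the corresponding blow-up centres forced by the square factor $(x+y^2)^2$; verifying these two is the step most sensitive to the $a=0$ degeneration and is the main obstacle of the whole lemma.

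For part (i), a curve $C$ in the configuration is integral precisely when $\tilde D$ is tangent to it, i.e.\ $\tilde D(\varphi)\equiv 0 \pmod{\varphi}$ for a local defining function $\varphi$, the criterion recalled in \S\ref{derivation}. In the starting chart $\tilde D(x)=x^4y^2$ vanishes on $x=0$, whereas $\tilde D(y)=x^2+y^4$ does not vanish on $y=0$; performing this tangency test for each of $F_0,E_1,\dots,E_{14}$ in the local charts of the blow-ups (and checking anew the newly created exceptional curves, since integrality need not be inherited) should single out exactly $E_1,E_3,E_5,E_7,E_8,E_9,E_{10}$. Finally, once $(D)$ is known as an explicit class, parts (iii) and (iv) are pure intersection theory: expanding in the basis $\{F_0,E_1,\dots,E_{14}\}$ with the intersection numbers determined by \cite[Figure 18]{KKM} gives $(D)^2=-12$ and $K_Y\cdot(D)=-4$. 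As an independent consistency check I would substitute the outcome into the Euler-number formula (\ref{euler}), $c_2(Y)=\deg\langle D\rangle-K_Y\cdot(D)-(D)^2$, and confirm agreement with $c_2(Y)$ computed from the number of blow-ups.
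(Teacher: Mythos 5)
Your proposal is correct and is essentially the paper's own (omitted) argument: Lemma \ref{E8pole} is stated without proof as a direct recomputation of \cite[Lemmas 10.6, 10.7]{KKM} at $a=0$, and your plan — the Rudakov--Shafarevich local recipe of \S\ref{derivation} for the divisor $(D)$ and the tangency test for integrality, propagated chart by chart through the blow-up tower of \cite[Figure 18]{KKM}, followed by intersection theory for (iii) and (iv) — is exactly that routine calculation. You also correctly isolate the one genuinely delicate point, the characteristic-$2$ degeneration $x^2+y^4=(x+y^2)^2$ responsible for the coefficients $3E_1$ and $-E_2$, and your stated $(D)$ is consistent with the paper's subsequent use of it, since $K_Y-(D)=E_1-E_2+E_3+E_5=\pi^*K_{Y^D}$ as claimed in Example \ref{exE8}.
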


Let $Y^D$ be the quotient surface of $Y$ by $D$ and $\pi:Y\to Y^D$ the canonical map.  
By the formula (\ref{euler}) and Lemma \ref{E8pole} (iii), (iv), 
$D$ is divisorial and hence $Y^D$ is smooth.
It follows from the canonical divisor formula
(\ref{canonical}) and Lemma \ref{E8pole} that
$\pi^*K_{Y^D}= E_1 -E_2 + E_3+E_5$.  Denote by $\bar{E}_i$ the image of $E_i$ on $Y^D$.
Since, by construction, $E_1^2=E_2^2=E_3^2=E_5^2=-2$ (see \cite[Figure 18]{KKM}) and $E_1, E_3, E_5$ are integral and $E_2$ is not, we have 
$\bar{E}_1^2=\bar{E}_3^2=\bar{E}_5^2=-1$ and $\bar{E}_2^2=-4$ (Proposition \ref{insep}).
Again by Proposition \ref{insep}, we have $2K_{Y^D}= 2\bar{E}_1-\bar{E}_2+2\bar{E}_3+2\bar{E}_5$.
By contracting $(-1)$-curves $\bar{E}_1, \bar{E}_3, \bar{E}_5$, we obtain a non-singular surface $S$ with $|-2K_S| = \{\bar{E}_2\}$.  The images of the curves 
$F_0, E_6,\ldots, E_{14}$ on $S$ are $(-2)$-curves forming the dual graph given 
in Figure \ref{graphE8}.
The image of $E_4$ is a $(-1)$-curve meeting with $\bar{E}_2$ at a point
with multiplicity 2.  Thus we have a Coble surface with the dual graph given in Figure \ref{graphE8} and have the following theorem.

\begin{theorem}\label{E8thm}
The surface $S$ is a unique Coble surface with the dual graph given in Figure {\rm \ref{graphE8}}.  The automorphism group 
${\rm Aut}(S)$ is isomorphic to ${\bf Z}/5{\bf Z}$.  The surface $S$ is a specialization of
classical Enriques surfaces of type $\tilde{E}_8$.
\end{theorem}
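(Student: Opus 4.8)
The plan is to treat the three assertions of Theorem~\ref{E8thm} separately, building on the two explicit constructions already carried out in this example. Existence is supplied by either construction, so the substance of the theorem is uniqueness, the identification of $\Aut(S)$, and the specialization statement. Throughout I would use that, by the classification, the dual graph in Figure~\ref{graphE8} is realized by $(-2)$-curves and forces a quasi-elliptic fibration with a multiple fiber of type $\II^*$.

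First I would establish uniqueness. Let $S'$ be any Coble surface whose dual graph of irreducible effective roots is the one in Figure~\ref{graphE8}. The parabolic subdiagram of type $\tilde{E}_8$ yields, via Lemmas~\ref{fibration-isotropic} and~\ref{exclude2}, a quasi-elliptic fibration with a multiple fiber of type $\II^*$, and as noted at the start of the example this fibration is unique on $S'$. By Proposition~\ref{Halphen} it comes from a Halphen surface of index $2$ with multiple fiber $\II^*$ by blowing up the singular point of a fiber $F$ of type $\II$, the proper transform of $F$ being the unique boundary component. Contracting the curve of cusps and the images of $E_9,\dots,E_5,E_3,E_2,E_1$ (all but $E_4$) reverses the construction and brings $S'$ down to $\bbP^2$, carrying $F$ to a sextic $C$ with one singular point $q$ and $E_4$ to a line $\ell$ meeting $C$ only at $q$. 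The elementary normal-form computation producing equation~(\ref{E8eq}) shows that, after placing $q=(0:0:1)$ and $\ell=\{x=0\}$, the curve $C$ is projectively equivalent to the displayed sextic; since every blow-up center is then intrinsically determined, $S'\cong S$.

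Next I would compute $\Aut(S)$. Because the dual graph satisfies Vinberg's criterion, $W(S)$ has finite index in $\O({\rm CM}(S))$, so $\Aut(S)$ is finite by Proposition~\ref{FiniteIndex}. Any automorphism preserves the unique quasi-elliptic fibration and the boundary, hence descends under the canonical blow-down to a projective transformation of $\bbP^2$ fixing $q$, preserving $\ell$, and preserving the pencil $\calP$; conversely every such transformation lifts to $S$ since all centers are determined by $\calP$. It therefore suffices to compute the stabilizer of $\calP$ in $\PGL_3$, which the direct calculation identifies with $\langle\sigma\rangle\cong\bbZ/5\bbZ$.

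Finally, for the specialization statement I would use the derivation construction. The surface $Y$ and the derivations $D$ in~(\ref{E8C-der}) depend on a parameter $a$: for $a\in k^*$ the quotient $Y^D$ yields, after the contractions of \cite{KKM}, a classical Enriques surface of type $\tilde{E}_8$, whereas for $a=0$ the same recipe yields the Coble surface $S$ above. Organizing these quotients into a flat family over the $a$-line exhibits $S$ as the limit at $a=0$ of classical Enriques surfaces of type $\tilde{E}_8$. I expect the main obstacle to be the uniqueness step: one must check that the contractions genuinely invert the construction and, above all, carry out the normal-form calculation yielding~(\ref{E8eq}) to force projective equivalence of $C$, together with verifying that $\sigma$ exhausts the stabilizer of $\calP$ in $\PGL_3$.
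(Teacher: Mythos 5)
Your proposal reproduces the paper's own argument in all three parts: uniqueness by contracting the curve of cusps and $E_9,\dots,E_5,E_3,E_2,E_1$ down to ${\bf P}^2$ and normalizing the sextic to equation~(\ref{E8eq}), the computation of $\Aut(S)\cong{\bf Z}/5{\bf Z}$ as the stabilizer of the pencil $\calP$ in $\PGL_3$ generated by $\sigma$, and the specialization statement via the derivation~(\ref{E8C-der}), where $a\in k^*$ gives classical Enriques surfaces of type $\tilde{E}_8$ and $a=0$ gives the Coble surface $S$. This is essentially the same approach as the paper, and it is correct.
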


\end{example}

\begin{example}\label{exE7-2} {\bf A Coble surface of type $\tilde{E}_7+\tilde{A}_1^{(2)}$ with two boundary components.}

A Coble surface $S$ of type $\tilde{E}_7+\tilde{A}_1^{(2)}$ 
with two boundary components $B_1+B_2$ has a unique quasi-elliptic fibration $f$ 
with a multiple fiber of type ${\rm III}^*$.
It is obtained from a Halphen surface of index 2 with a multiple fiber of
type ${\rm III}^*$ and a fiber $F$ of type ${\rm III}$ 
by blowing up the singular point (and infinitely near point) of $F$ (see Figure \ref{Halsing} (type {\rm III})).  Then $E_9$ is the proper transform of
the curve of cusps of $f$ and $E_{10}$ is the proper transform of the exceptional 
curve of the first blowing-up.  Let $E$ be the exceptional curve of the second blowing-up.
Then $B_1, B_2$ are the proper transforms of the components of $F$
and $E_{11} = 2E + {1\over 2}B_1+{1\over 2}B_2$.

On the other hand, there exists a genus 1 fibration $f'$ on $S$ 
with a singular fiber of type ${\rm II}^*$ given by
$$2E_2+4E_3+6E_4+3E_5+5E_6+4E_7+3E_8+2E_9+E_{10}.$$
This fibration is quasi-elliptic because the fiber of type ${\rm II}^*$ does not
contain the component $E_1$ of the conductrix (in fact $E_1$ is the curve of cusps of 
$f'$).  Moreover the fibration $f'$ is Jacobian because $E$ is a section of $f'$.
Let $F_i$ be the $(-1)$-curve such that $2F_i + B_i$ is the fiber of $f'$ 
containing $B_i$ $(i=1,2)$.  We can easily see that 
$F_1\cdot E_1=F_2\cdot E_1 =1, F_1\cdot F_2=F_1\cdot E = F_2\cdot E=0$, and 
$F_i$ and $B_i$ meet at a point with multiplicity 2.

Now we blow down curves $F_1, F_2, E, E_{10}, E_9,\ldots, E_6, E_4, E_3, E_2$ successively.  
Then we obtain ${\bf P}^2$.  Let $(x:y:z)$ be homogeneous coordinate of ${\bf P}^2$.
The images of $B_1, B_2$ are cubics $C_1, C_2$ with a cusp
and those of $E_5, E_1$ are lines $\ell_1, \ell_2$, respectively. Denote by $p_1, p_2$ the
cusp of $C_1, C_2$ respectively. 
The two cubics $C_1, C_2$ meet together
at a point $p_0$ with multiplicity 9, the line $\ell_1$ meets $C_1, C_2$ at $p_0$ with multiplicity 3, and the line $\ell_2$ passes through $p_0, p_1, p_2$.
By changing the coordinates we may assume that $p_0=(0:0:1)$, $p_1=(1:0:0)$, $p_2=(1:0:1)$,
$\ell_1$ (resp. $\ell_2$) are defined by $x=0$ (resp. $y=0$).
It follows from elementary calculations that
$C_1$ and $C_2$, up to projective transformations, are given by
$$C_1: y^3+xz^2=0; \ C_2: x^3+y^3+xz^2=0.$$
Conversely, let 
$$\calP = \{s(x^2y)^2 + t(y^3+xz^2)(x^3+y^3+xz^2)\}_{(s:t)\in {\bf P}^1}$$
be a pencil of sextic curves.   Blowing up the base points $p_0, p_1, p_2$ successively, 
we obtain a quasi-elliptic fibration with a multiple fiber of type ${\rm III}^*$ over
the point $(s:t)=(1:0)$ and a fiber $\tilde{C}_1+\tilde{C}_2$ of type ${\rm III}$ over the point $(s:t)=(0:1)$ where $\tilde{C}_i$ is the proper transform of
$C_i$.  Thus we get
a Coble surface $S$ of type $\tilde{E}_7+\tilde{A}_1^{(2)}$ with two boundary components.
Note that the projective transformation 
$$(x:y:z)\to (x:y: x+z)$$
preserves each line $\ell_1, \ell_2$ and interchanging $C_1$ and $C_2$.
A direct calculation shows that ${\rm Aut}(S)$ is generated by this involution.

Next we give a Coble surface of type $\tilde{E}_7+\tilde{A}_1^{(2)}$ as a quotient of 
a rational surface by a derivation.
We use the same notation as in \cite[\S 11.2]{KKM}.
To construct classical Enriques surfaces with the dual graph in Figure \ref{graphE7-2}, we
considered a conic bundle $f : R\to {\bf P}^1$ defined by
$$S(aX_0^2+bX_2^2)+T(X_1^2+aX_1X_2+bX_0X_2)=0, \ (S:T)\in {\bf P}^1,\ a, b \in k^*$$
in the fiber space ${\bf P}^2\times {\bf P}^1\to {\bf P}^1$, and
the following derivation defined by
\begin{equation}\label{E7C-der}
D= {1\over s} \left(a(s^2+c)\frac{\partial}{\partial x} + (as^2x^2 + bc)\frac{\partial}{\partial y}\right), \quad  b\ne a^2c  \in k^*,
\end{equation}
where $(x,y,s)=(X_0/X_2, X_1/X_2, S/T)$ and $c$ is a root of the equation $t^2 +(b/a)t + 1=0$.

The surface $R$ has two rational double points and the derivation has isolated singularities.  By blowing up $R$ successively, we obtain a non-singular surface $Y$ with
$$K_Y= -(F_0+2E_3+E_6+E_7+2E_8+2E_9+2E_{10}+3E_{11}+4E_{12}+4E_{13}+2E_{14})$$
and a divisorial derivation.
In case $b=0, c=1$, we obtained Enriques surfaces of type 
$\tilde{E}_7+\tilde{A}_1^{(2)}$ birationally isomorphic to the quoteint of $Y$ by the derivation.

Now consider the case $b=c=0$, that is, 
$$D= s\left(\frac{\partial}{\partial x} + x^2\frac{\partial}{\partial y}\right).$$
Then we have the following Lemma.

\begin{lemma}\label{E7-2pole}
{\rm (i)} The integral curves with respect to $D$ are 
$F_0, E_4, E_6, E_7, E_8, E_9, E_{11}$ in \cite[Figure 24]{KKM}.

{\rm (ii)} $(D) =-(F_0-E_1-E_2+2E_3++2E_6+2E_7+2E_8+2E_9+2E_{10}+3E_{11}+4E_{12}+4E_{13}+2E_{14})$.

{\rm (iii)} $(D)^2 = -12$. 

{\rm (iv)} $K_{Y}\cdot (D) = -4.$
\end{lemma}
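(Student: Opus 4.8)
The plan is to mimic the proof of Lemma \ref{E8pole} and of \cite[Lemmas 10.6, 10.7]{KKM}, now specialized to the case $b=c=0$ in which the derivation collapses to $D=s(\partial/\partial x + x^2\partial/\partial y)$. The four assertions separate cleanly: (i) is a tangency computation along each component, (ii) is the bookkeeping of the divisorial part $(D)$ as it is propagated through the successive blow-ups $Y\to R$, and (iii), (iv) follow mechanically once the explicit divisor $(D)$ of (ii) is in hand.

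For (i), observe first that the scalar factor $s$ is irrelevant to integrality, so a curve $C$ is integral with respect to $D$ precisely when the vector field $\partial/\partial x + x^2\partial/\partial y$ is tangent to $C$ at its general point. I would test each component by applying this field to a local defining equation of the curve, or of its image, in the coordinates on the conic bundle $R$ and on the intermediate models down to $\mathbf{P}^2$. This is a finite collection of explicit local checks, entirely parallel to \cite{KKM} but with $b=c=0$ substituted, and it singles out exactly $F_0, E_4, E_6, E_7, E_8, E_9, E_{11}$.

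For (ii), I would first read off the divisorial part of $D$ directly on $R$ from the local expression $D=f_i(g_i\,\partial/\partial x_i + h_i\,\partial/\partial y_i)$ of \S\ref{derivation}, the divisor being $(f_i)$; the scalar $s$ together with the specialization $b=c=0$ fixes the poles and zeros along the fibres over $s=0,\infty$ and along the base locus. I would then propagate $(D)$ through each blow-up: blowing up a point $p$ adds a multiple of the exceptional curve $E$ to $(D)$, the coefficient being governed by the order of $D$ at $p$ and by whether $E$ is integral, via Proposition \ref{insep} and the transformation rule for $(D)$ under blow-ups used in \cite{RS} and \cite{KKM}. This propagation is the main obstacle, because $b=c=0$ alters both the singularities of $R$ and the local shape of $D$ at the centres of the blow-ups, so the multiplicities picked up by $F_0, E_1,\ldots,E_{14}$ must be recomputed from scratch rather than transcribed from the classical Enriques computation, and it is precisely here that the difference from \cite{KKM} lives.

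Given the explicit expression for $(D)$ produced in (ii), assertions (iii) and (iv) are immediate evaluations of $(D)^2$ and $K_Y\cdot(D)$ against the intersection matrix of the configuration $\{F_0,E_1,\ldots,E_{14}\}$ and the canonical class $K_Y$ recorded above from \cite[Figure 24]{KKM}, yielding $-12$ and $-4$ respectively. As a final consistency check I would use the Euler-number formula (\ref{euler}) to verify that $\deg\langle D\rangle = c_2(Y)+K_Y\cdot(D)+(D)^2$ vanishes, confirming that $D$ is divisorial; this matches the numerology of Lemma \ref{E8pole} and is exactly what is needed for the quotient $Y^D$ to be smooth in the construction that follows.
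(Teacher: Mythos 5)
Your outline matches the paper's (implicit) approach exactly: the paper states this lemma without proof, treating it as the same routine verification as in \cite[\S 11.2]{KKM} --- local tangency checks for integrality, tracking the divisorial part of $D$ through the blow-ups of $R$ with the multiplicities recomputed for the degenerate case $b=c=0$, and then direct intersection arithmetic for (iii) and (iv). Your final consistency check via the Euler formula (\ref{euler}), giving $\deg\langle D\rangle = c_2(Y)+K_Y\cdot(D)+(D)^2 = 16-4-12=0$, is precisely how the paper uses the lemma immediately afterwards to conclude that $D$ is divisorial and $Y^D$ is smooth.
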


Let $Y^D$ be the quotient of $Y$ by $D$ and $\pi:Y\to Y^D$ the canonical map.  
By the formula (\ref{euler}) and Lemma \ref{E7-2pole}, $Y^D$ is smooth.
It follows from the canonical divisor formula
(\ref{canonical}) and Lemma \ref{E7-2pole} that
$\pi^*K_{Y^D}= -E_1 -E_2 + E_6+E_7$.  Denote by $\bar{E}_i$ the image of $E_i$ on $Y^D$.
Since, by construction, $E_1^2=E_2^2=E_6^2=E_7^2=-2$ (see \cite[Figure 24]{KKM}) and $E_6, E_7$ are integral and $E_1, E_2$ are not,
$\bar{E}_6^2=\bar{E}_7^2=-1$ and $\bar{E}_1^2=\bar{E}_2^2=-4$ (Proposition \ref{insep}).
Again by Proposition \ref{insep}, we have $2K_{Y^D}= -\bar{E}_1-\bar{E}_2+2\bar{E}_6+2\bar{E}_7$.
By contracting $(-1)$-curves $\bar{E}_6, \bar{E}_7$, we obtain a non-singular surface $S$ with $|-2K_S| = \{\bar{E}_1+\bar{E}_2\}$.  The images of the curves 
$E_3,E_8, \ldots, E_{14}$ on $S$ are $(-2)$-curves forming the fiber of type 
${\rm III}^*$.  The images of $F_0, E_5$ are $(-2)$-curves and that of $E_4$ is a $(-1)$-curve meeting with the boundary components $\bar{E}_1, \bar{E}_2$.  Thus we have a Coble surface of type $\tilde{E}_7+\tilde{A}_1^{(2)}$.
Thus we have proved the following theorem.

\begin{theorem}\label{E7B2}
A Coble surface $S$ of type $\tilde{E}_7+\tilde{A}_1^{(2)}$ with two boundary components
is obtained from the pencil $\calP$ as above, and is unique up to isomorphisms.
The automorphism group ${\rm Aut}(S)$ is isomorphic to ${\bf Z}/2{\bf Z}$.
The surface $S$ is a specialization of classical Enriques surfaces of type $\tilde{E}_7+\tilde{A}_1^{(2)}$.
\end{theorem}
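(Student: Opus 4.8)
The plan is to assemble the two explicit constructions of Example~\ref{exE7-2} into a single argument: the blow-up of the pencil of sextics $\calP$ gives existence and, read backwards, uniqueness together with the automorphism group, while the derivation quotient gives the specialization statement.

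For existence I would run the pencil construction forward. Blowing up the base points $p_0=(0:0:1)$, $p_1=(1:0:0)$, $p_2=(1:0:1)$ of
$$\calP = \{s(x^2y)^2 + t(y^3+xz^2)(x^3+y^3+xz^2)\}_{(s:t)\in {\bf P}^1}$$
together with their infinitely near points, I would check directly that the relatively minimal model is a Halphen surface of index $2$ whose quasi-elliptic fibration has a multiple fiber of type ${\rm III}^*$ over $(1:0)$ and a fiber of type ${\rm III}$ over $(0:1)$; by Remark~\ref{HalphenBlupUp} its two components become the boundary curves $B_1,B_2$, and the proper transforms of $E_1,\dots,E_{10}$ together with the $(-1)$-root $E_{11}=2E+\tfrac{1}{2}B_1+\tfrac{1}{2}B_2$ realize the dual graph of Figure~\ref{graphE7-2}, so the resulting $S$ is a Coble surface of type $\tilde{E}_7+\tilde{A}_1^{(2)}$.

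For uniqueness I would run the construction backwards, starting from an arbitrary $S$ of this type, the number of boundary components being forced to be two by Theorem~\ref{Numberboundcomp}. The subdiagrams give, through Lemma~\ref{fibration-isotropic}, the quasi-elliptic fibration $f$ of type ${\rm III}^*$ and the quasi-elliptic Jacobian fibration $f'$ of type ${\rm II}^*$; the section $E$ of $f'$ and the curves $F_1,F_2$ with $2F_i+B_i$ a fiber of $f'$ are then available, and contracting $F_1,F_2,E,E_{10},E_9,\dots,E_6,E_4,E_3,E_2$ in turn lands on ${\bf P}^2$. The images of $B_1,B_2$ are two cuspidal cubics $C_1,C_2$ and those of $E_5,E_1$ two lines $\ell_1,\ell_2$, subject to the incidences: a common point $p_0$ of multiplicity $9$, cusps at $p_1,p_2$, and $\ell_2$ passing through $p_0,p_1,p_2$. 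The crux is to verify that these incidences rigidify the configuration: after normalizing $p_0,p_1,p_2,\ell_1,\ell_2$ by a projective transformation, the equations of $C_1,C_2$ are forced to be $y^3+xz^2=0$ and $x^3+y^3+xz^2=0$, so $S$ is isomorphic to the surface built from $\calP$. This rigidity computation --- the ``elementary but long'' calculation of the example --- is where I expect the main difficulty to lie.

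The automorphism group then follows: ${\rm Aut}(S)$ permutes $\{B_1,B_2\}$ and descends to the projective transformations preserving $\calP$, i.e.\ the configuration $C_1,C_2,\ell_1,\ell_2$, a group one checks to be generated by the involution $(x:y:z)\mapsto(x:y:x+z)$, whence ${\rm Aut}(S)\cong{\bf Z}/2{\bf Z}$. For the final assertion I would invoke the derivation quotient: the classical Enriques surfaces of type $\tilde{E}_7+\tilde{A}_1^{(2)}$ arise from the derivation~(\ref{E7C-der}) (e.g.\ at $b=0,c=1$), and setting $b=c=0$ specializes this derivation to $D=s(\partial/\partial x+x^2\partial/\partial y)$; by Lemma~\ref{E7-2pole} and the canonical divisor formula~(\ref{canonical}) the quotient $Y^D$ is smooth with $2K_{Y^D}=-\bar{E}_1-\bar{E}_2+2\bar{E}_6+2\bar{E}_7$, and contracting the $(-1)$-curves $\bar{E}_6,\bar{E}_7$ yields $S$ with $|-2K_S|=\{\bar{E}_1+\bar{E}_2\}$, exhibiting $S$ as the desired specialization.
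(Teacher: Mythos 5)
Your proposal is correct and follows essentially the same route as the paper: the same pair of fibrations $f$ (quasi-elliptic with multiple $\III^*$ fiber) and $f'$ (Jacobian quasi-elliptic with $\II^*$ fiber), the same contraction sequence $F_1,F_2,E,E_{10},E_9,\ldots,E_6,E_4,E_3,E_2$ to ${\bf P}^2$, the same normalization forcing $C_1: y^3+xz^2=0$, $C_2: x^3+y^3+xz^2=0$, the same involution $(x:y:z)\mapsto (x:y:x+z)$, and the same specialization via the derivation (\ref{E7C-der}) at $b=c=0$ using Lemma \ref{E7-2pole} and the formula (\ref{canonical}). The only difference is that you run the pencil construction forward for existence before reversing it for uniqueness, whereas the paper presents the backward analysis first and the pencil as the converse; this reordering is immaterial, and you correctly identify the ``elementary but long'' rigidity computation as the point carrying the real work, exactly where the paper also leaves the details implicit.
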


\end{example}

\begin{example}\label{exE7-1} {\bf A Coble surface of type $\tilde{E}_7+\tilde{A}_1^{(1)}$ with one boundary component.}

A Coble surface $S$ of type $\tilde{E}_7+\tilde{A}_1^{(1)}$ 
with one boundary component $B$ has a unique quasi-elliptic fibration $f$ 
with a multiple fiber of type ${\rm III}^*$.
It is obtained from a Halphen surface of index 2 with singular fibers of
type $(2{\rm III}^*, {\rm III})$ by blowing up the singular point of a fiber of type 
${\rm II}$.  The proper transform of the fiber is the boundary component $B$.

Note that instead of blowing up the singular point of a fiber of type II of the Halphen surface, by blowing up the singular point of the fiber of type III we obtain a Coble surface $S$ of type $\tilde{E}_7+\tilde{A}_1^{(2)}$.  Thus we can use the construction of the previous Example.  Note that the last blowing-up depends on the choice of
a singular fiber of type II of the Halphen surface.

Next we give a Coble surface of type $\tilde{E}_7+\tilde{A}_1^{(1)}$ as a quotient of a rational surface by a derivation.
We use the same notation as in \cite[\S 11.1]{KKM}.
We assume that $c=0$ and $b\ne 0$ in the equation (\ref{E7C-der}).   Note that the conic bundle $R$
has a parameter $b$ which is the difference between this case and the previous case.
The canonical divisor of $Y$ is given by
$$K_Y=-(F_0+2E_2+E_7+E_8+2E_9+2E_{10}+2E_{11}+3E_{12}+4E_{13}+4E_{14}
+2E_{15}).$$
Then we have the following Lemma.

\begin{lemma}\label{E7-1pole}
{\rm (i)} The integral curves with respect to $D$ are 
$F_0, E_5, E_7, E_8, E_9, E_{10}, E_{12}$ in \cite[Figure 21]{KKM}.

{\rm (ii)} $(D) =-(F_0-E_1+2E_2+E_5+2E_7+2E_8+2E_9+2E_{10}+2E_{11}+3E_{12}+4E_{13}+4E_{14}
+2E_{15})$.

{\rm (iii)} $(D)^2 = -12$. 

{\rm (iv)} $K_{Y}\cdot (D) = -4.$
\end{lemma}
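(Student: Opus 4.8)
The plan is to treat Lemma \ref{E7-1pole} exactly as its two predecessors, Lemmas \ref{E8pole} and \ref{E7-2pole}, since the derivation here has the same shape. Substituting $c=0$ into (\ref{E7C-der}) kills the term $bc$ and leaves $D = as\left(\frac{\partial}{\partial x} + x^2\frac{\partial}{\partial y}\right)$; up to the constant unit $a\in k^*$ this is precisely the additive field $s\left(\frac{\partial}{\partial x} + x^2\frac{\partial}{\partial y}\right)$ already analysed in Example \ref{exE7-2}. The only genuine difference is that now $b\neq 0$, so the underlying conic bundle $R$ and its successive blow-ups follow \cite[\S 11.1]{KKM} (Figure 21) rather than \cite[\S 11.2]{KKM}, and hence the configuration of curves $F_0,E_1,\dots,E_{15}$ is the one recorded there. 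Multiplying a vector field by a nonzero constant changes neither its integral curves nor its divisorial part, so all the content is in carrying the previous computation over to the new curve configuration.

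For part (i) I would apply the tangency criterion of \S\ref{derivation}: after removing the divisorial factor, a curve $C$ is integral precisely when $\frac{\partial}{\partial x} + x^2\frac{\partial}{\partial y}$ is tangent to $C$ at a general point. On the affine chart $(x,y,s)$ of $R$ one checks tangency directly against the defining equation of the conic bundle and against the fibre $s=0$; integrality is then propagated through the blow-ups, using that the proper transform of an integral curve remains integral, while an exceptional curve is integral or not according to whether the lifted field is tangent to it. Carrying this out curve by curve should isolate exactly $F_0,E_5,E_7,E_8,E_9,E_{10},E_{12}$.

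For part (ii) the divisorial part $(D)$ is assembled from the local factors $(f_i)$ as in \S\ref{derivation}. On the chart above, the factor $s$ already contributes along the fibre, and each blow-up at a base point introduces an exceptional curve whose coefficient in $(D)$ is governed by the order of vanishing (or the pole order) of the field at that point. Tracking these orders through the entire blow-up sequence of \cite[\S 11.1]{KKM} pins down every coefficient and should yield the stated expression for $(D)$.

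Granting (ii), parts (iii) and (iv) are bilinear evaluations: $(D)^2=-12$ and $K_Y\cdot(D)=-4$ follow by pairing the explicit divisor with itself and with the given $K_Y$ through the Gram matrix of the configuration $\{F_0,E_i\}$ read off from \cite[Figure 21]{KKM}. As a consistency check, since $D$ is to be divisorial, the Euler-number formula (\ref{euler}) with $\deg\langle D\rangle=0$ predicts $c_2(Y)=-K_Y\cdot(D)-(D)^2=16$, which should match the construction. The main obstacle lies entirely in (i) and (ii): the accurate bookkeeping of tangencies and of pole/zero orders along the exceptional curves through the blow-up sequence. This is mechanical but error-prone, and the numbers in (iii) and (iv) depend on getting (ii) exactly right; everything after (ii) is routine.
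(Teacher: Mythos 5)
Your proposal coincides with the paper's (implicit) argument: the paper likewise observes that setting $c=0$ in (\ref{E7C-der}) reduces $D$ to $as\left(\frac{\partial}{\partial x}+x^2\frac{\partial}{\partial y}\right)$, that the only change from Example \ref{exE7-2} is the conic bundle with parameter $b\neq 0$ and its blow-up configuration from \cite[\S 11.1]{KKM} (Figure 21), and it then states the lemma as the outcome of exactly the curve-by-curve tangency check and pole-order bookkeeping through the blow-up sequence that you describe, with (iii) and (iv) obtained by pairing the explicit divisor against itself and $K_Y$. Your consistency check via the Euler-number formula (\ref{euler}), predicting $c_2(Y)=16$, is a sound safeguard consistent with $Y^D$ being the Coble surface $S$ (with $K_S^2=-1$) blown up at three points, though the paper does not state it.
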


Let $Y^D$ be the quotient of $Y$ by $D$ and $\pi:Y\to Y^D$ the canonical map.  
By the formula (\ref{euler}) and Lemma \ref{E7-2pole}, $Y^D$ is smooth.
It follows from the canonical divisor formula
(\ref{canonical}) and Lemma \ref{E7-1pole} that
$\pi^*K_{Y^D}= -E_1 + E_5 + E_7+E_8$.  Denote by $\bar{E}_i$ the image of $E_i$ on $Y^D$.
Since, by construction, $E_1^2=E_5^2=E_7^2=E_8^2=-2$ (see \cite[Figure 21]{KKM}) and $E_5, E_7, E_8$ are integral and $E_1$ is not,
$\bar{E}_5^2=\bar{E}_7^2=\bar{E}_8^2=-1$ and $\bar{E}_1^2=-4$ (Proposition \ref{insep}).
Again by Proposition \ref{insep}, we have $2K_{Y^D}= -\bar{E}_1+2\bar{E}_5+2\bar{E}_7+2\bar{E}_8$.
By contracting $(-1)$-curves $\bar{E}_5, \bar{E}_7, \bar{E}_8$, we obtain a non-singular surface $S$ with $|-2K_S| = \{\bar{E}_1\}$.  The images of the curves 
$E_2, E_9, \ldots, E_{15}$ on $S$ are $(-2)$-curves forming the fiber of type 
${\rm III}^*$.  The images of $E_3, E_4$ are $(-2)$-curves forming the fiber of type III. 
The image of curve $F_0$ is a $(-2)$-curve and 
that of $E_6$ is a $(-1)$-curve meeting the boundary components $\bar{E}_1$ with multiplicity 2.  
Thus we have a Coble surface with the dual graph of type $\tilde{E}_7+\tilde{A}_1^{(1)}$.
Thus we have proved the following theorem.

\begin{theorem}\label{E7B1}
Coble surfaces $S$ of type $\tilde{E}_7+\tilde{A}_1^{(1)}$ with one boundary component
form a $1$-dimensional irreducible family. 
The automorphism group ${\rm Aut}(S)$ is isomorphic to ${\bf Z}/2{\bf Z}$.
The surfaces $S$ are specializations of classical Enriques surfaces of type $\tilde{E}_7+\tilde{A}_1^{(1)}$.
\end{theorem}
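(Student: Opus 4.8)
The plan is to establish the three assertions---the moduli, the automorphism group, and the specialization---on the basis of the two constructions already given above: the pencil $\calP$ of sextics inherited from Example \ref{exE7-2} (with the last blow-up now performed at the cusp of a fiber of type $\II$ rather than of type $\III$), and the quotient of the conic bundle $R$ by the derivation (\ref{E7C-der}) with $c=0$. Existence of $S$ is furnished by either construction, so the first real task is the moduli statement. I would show that every Coble surface $S$ of type $\tilde{E}_7+\tilde{A}_1^{(1)}$ arises this way: by Lemma \ref{specialfibration} and Proposition \ref{MW-Dolgachev} it carries a special extremal genus $1$ fibration, which, as in the proof of Theorem \ref{dualgraph}(3), is quasi-elliptic with reducible fibers of type $(2\III^*,\III)$, so $S$ is obtained from the Halphen surface $H$ of index $2$ of this type by blowing up the cusp of one fiber of type $\II$. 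Since the type $\tilde{E}_7+\tilde{A}_1^{(2)}$ surface produced from $H$ by blowing up its fiber of type $\III$ is unique (Theorem \ref{E7B2}), the surface $H$ is itself rigid, and the only remaining freedom is the choice of one fiber of type $\II$---an irreducible one-parameter family, matching the parameter $b\in k^*$ of the conic bundle $R$ (the derivation being independent of $b$ once $c=0$). This proves irreducibility and bounds the dimension by $1$.

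For the automorphism group I would first observe that the dual graph of Figure \ref{graphE7-1} satisfies the hypothesis of Theorem \ref{Vinberg}, so $W(S)$ has finite index in ${\rm O}({\rm CM}(S))$ and ${\rm Aut}(S)$ is finite by Proposition \ref{FiniteIndex}; by Proposition \ref{Namikawa} the vertices of this graph are precisely the effective irreducible roots of $S$. Hence ${\rm Aut}(S)$ acts on ${\rm CM}(S)$ by symmetries of the labelled dual graph, and this action is faithful. A direct inspection shows the symmetry group of Figure \ref{graphE7-1} is generated by the single involution interchanging the two components $E_{10}, E_{11}$ of the fiber of type $\III$: the two trivalent vertices $E_4$ and $E_9$ must be fixed because their neighbourhoods are non-isomorphic, and this rigidifies the remainder of the graph. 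Therefore ${\rm Aut}(S)\hookrightarrow {\bf Z}/2{\bf Z}$. To see that this involution is realized, I would invoke the projective transformation $(x:y:z)\mapsto(x:y:x+z)$ of Example \ref{exE7-2}: under $z\mapsto x+z$ the two cubics $C_1, C_2$ are interchanged while the product $C_1C_2$ and the curve $x^2y$ are preserved, so the transformation fixes the pencil $\calP$ fibrewise, hence preserves every fiber of type $\II$ and descends to $S$ for any choice of blown-up cusp, swapping the proper transforms $E_{10}, E_{11}$ of $C_1, C_2$. This produces a nontrivial automorphism and gives ${\rm Aut}(S)\cong {\bf Z}/2{\bf Z}$.

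For the specialization, the point is that the derivation (\ref{E7C-der}) at $c=0$ is precisely the one used for $S$, whereas for $c$ a root of $t^2+(b/a)t+1=0$ (so $c\neq 0$) the same conic bundle $R$ and derivation yield the classical Enriques surfaces of type $\tilde{E}_7+\tilde{A}_1^{(1)}$ of \cite[\S 11.1]{KKM}. I would form the relative quotient of $R$ by this one-parameter family of derivations over the parameter $c$, with the Enriques locus $c\neq 0$ and the Coble locus $c=0$ lying in a common family; this exhibits $S$ as a flat degeneration, and hence as a specialization of classical Enriques surfaces of the same type.

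I expect the genuine obstacle to be the moduli dimension---showing it is exactly $1$, which is what sets this case apart from the rigid surfaces of Examples \ref{exE8} and \ref{exE7-2}. The reduction above bounds the dimension above by $1$, but to rule out dimension $0$ I must prove the family is non-isotrivial, i.e. that generic choices of the blown-up cusp (equivalently, generic values of $b$) yield non-isomorphic surfaces. This calls for an honest separating invariant of the configuration---for instance, the relative position of the blown-up cusp along the base of the quasi-elliptic fibration with respect to the two reducible fibers---together with a check that ${\rm Aut}(H)$ does not already identify all these choices; supplying such an invariant, rather than a mere parameter count, is the delicate step.
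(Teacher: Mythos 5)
Your overall route coincides with the paper's: existence and moduli come from the Halphen surface $H$ of index $2$ with fibers $(2{\rm III}^*,{\rm III})$, whose uniqueness is inherited from the uniqueness of the type $\tilde{E}_7+\tilde{A}_1^{(2)}$ surface (Theorem \ref{E7B2}), with the one-dimensional freedom being the choice of the blown-up fiber of type ${\rm II}$; the derivation construction with $c=0$ and free parameter $b\in k^*$ (the derivation indeed degenerating to $as(\partial/\partial x + x^2\partial/\partial y)$, independent of $b$) realizes the same family and the specialization from the classical Enriques surfaces of \cite[\S 11.1]{KKM}. This is exactly how the paper argues, and your verification that $(x:y:z)\mapsto(x:y:x+z)$ preserves every member of the pencil (since it swaps $C_1\leftrightarrow C_2$ and fixes $x^2y$), hence lifts to the blow-up at the cusp of \emph{any} fiber of type ${\rm II}$, correctly supplies the nontrivial involution for every member of the family.

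Two steps deserve attention. First, your upper bound ${\rm Aut}(S)\hookrightarrow \mathbf{Z}/2\mathbf{Z}$ rests on the assertion that the action of ${\rm Aut}(S)$ on the dual graph is faithful, which you state without proof; a priori there could be a numerically trivial automorphism (one fixing every vertex of Figure \ref{graphE7-1}), and this is precisely what the paper excludes by its ``direct calculation'' in the projective model: an automorphism fixing all of $E_1,\dots,E_{11}$ and $B$ fixes three points of the base, hence acts trivially on ${\bf P}^1$, descends to a projective transformation of ${\bf P}^2$ preserving $\ell_1,\ell_2$ and fixing each cubic $C_1,C_2$, and the stabilizer of that configuration in ${\rm PGL}_3$ fixing the $C_i$ individually is trivial. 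You need this (short) computation; graph symmetries alone do not suffice. Second, the non-isotriviality you flag as ``the delicate step'' closes quickly with material you already cite: since the quasi-elliptic fibration with fibers $(2{\rm III}^*,{\rm III})$ is the unique one of its kind on $S$, any isomorphism $S_b\cong S_{b'}$ descends to an automorphism of $H$ carrying one blown-up fiber of type ${\rm II}$ to the other, and ${\rm Aut}(H)$ is finite (of order $2$, by Theorem \ref{E7B2} again), so each isomorphism class meets the parameter line in at most two points and the family has dimension exactly $1$. With these two points supplied, your argument is complete and agrees with the paper's.
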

\end{example}

\begin{example}\label{exE6-3} {\bf A Coble surface of type $\tilde{E}_6+\tilde{A}_2$ 
with three boundary components.}

A Coble surface $S$ of type $\tilde{E}_6+\tilde{A}_2$ 
with three boundary components $B_1+B_2+B_3$ has a unique elliptic fibration $f$ 
with a multiple fiber of type ${\rm IV}^*$.
It is obtained from a Halphen surface of index 2 with singular fibers of
type $(2{\rm IV}^*, {\rm I}_3, {\rm I}_1)$ by blowing up the singular points of 
the fiber of type ${\rm I}_3$. 
The proper transforms of the components of the fiber of type ${\rm I}_3$ are the boundary components $B_1, B_2, B_3$.  Let $F_1, F_2, F_3$ be $(-1)$-curves such that
$2F_1 + {1\over 2}B_2+{1\over 2}B_3$, $2F_2 + {1\over 2}B_1+{1\over 2}B_3$ or 
$2F_3 + {1\over 2}B_1+{1\over 2}B_2$ gives $E_8, E_9$ or $E_{10}$ in Figure \ref{graphE6},  respectively.

On the other hand, there exist three genus 1 fibrations $f_i$ $(i=1,2,3)$
with  singular fibers of type $(\tilde{E}_7, 2\tilde{A}_1)$.  The linear system
$|2(E_8+E_{11})|$ (resp. $|2(E_9+E_{12})|$, $|2(E_{10}+E_{13})|$)
gives $f_1$ (resp. $f_2$, $f_3$).  
By Propositions \ref{Lang}, \ref{Ito}, they are quasi-elliptic.
Each of them is obtained from a Jacobian fibration with reducible singular fibers of
type $({\rm III}^*, {\rm III})$ by blowing up the singular point (and the infinitely near point) of the fiber of type
${\rm III}$ and the singular point of a fiber of type ${\rm II}$.
Thus there exist three $(-1)$-curves
$F_1', F_2', F_3'$ such that $F_i'$ meets $B_i$ at a point with multiplicity 2 and
$F_i'\cdot E_i=1$ $(i=1,2,3)$.  The Mordell-Weil group of $f_i$ acts on $S$ as automorphisms.  For example, $f_1$
has two sections $F_2, F_3$ which define an involution switching $F_2'$ and $F_3'$ and fixing $F_1'$ pointwisely.  This implies that all $F_1', F_2', F_3'$ meet at one point.
Moreover they generate a symmetric group $\mathfrak{S}_3$.

Now by contracting $F_1, F_2, F_3$ and then $E_{11}, E_{12}, E_{13}, E_1, E_2, E_3, E_5, E_6, E_7$ successively, we obtain ${\bf P}^2$.
The images of $B_1, B_2, B_3$ are smooth conics $Q_1, Q_2, Q_3$ and those of $E_4, F_1', F_2', F_3'$ are lines denoted by $\ell, \ell_1, \ell_2,\ell_3$ respectively.  Two conics $Q_i$ and $Q_j$ meet at a point $p_{ij}$ with multiplicity 4
and $\ell$ passes through three points $p_{12}, p_{13}, p_{23}$. Three lines $\ell_1, \ell_2, \ell_3$ meet at a point $p_0$.  The line $\ell_i$ tangents to $Q_i$ at a point $p_i$
and tangents to both $Q_j$, $Q_k$ at the point $p_{jk}$ $(\{i,j,k\}=\{1,2,3\})$.
By changing coordinates, we may assume that
$$p_1=(1:0:0),\ p_2=(0:1:0),\ p_3=(0:0:1),\ p_0=(1:1:1).$$
Then we have
$$\ell_1: y+z=0,\ \ell_2: x+z=0,\ \ell_3: x+y=0.$$
Since the action of $\mathfrak{S}_3$ descends to the one acting on ${\bf P}^2$ as permutations of coordinates and
$\ell$ is invariant under this action, we have
$$\ell: x+y+z=0.$$
Therefore we have 
$$p_{12}=(1:1:0),\ p_{13}=(1:0:1),\ p_{23}=(0:1:1).$$
Then one can easily see that the conics are given by
$$Q_1: y^2+z^2+xy + yz + zx=0,$$
$$Q_2: x^2+z^2+xy + yz + zx=0,$$
$$Q_3: x^2+y^2+xy + yz + zx=0.$$

Conversely consider the pencil $\calP$ of sextic curves 
\begin{equation}\label{E6pencil}
\calP = \{C_{(s:t)}\}=\{s\ell^6 + tQ_1Q_2Q_3\}_{(s:t)\in {\bf P}^1}.
\end{equation} 
Blowing up the base points of $\calP$, we have a Halphen surface with singular fibers of type $(2{\rm IV}^*, {\rm I}_3, {\rm I}_1)$, and hence we obtain
a Coble surface $X$ of type $\tilde{E}_6+\tilde{A}_2$ 
with three boundary components.
The three fibrations $f_i$ are Jacobian and their Mordell-Weil groups generate
$\mathfrak{S}_3$ a subgroup of ${\rm Aut}(S)$ which is also induced from permutations of the coordinates $(x:y:z)$.
One can see that there are no more projective transformations preserving the lines and conics.

Next we give a Coble surface of type $\tilde{E}_6+\tilde{A}_2$ as a quotient of 
a rational surface by a derivation.
We use the same notation as in \cite[\S 7.2]{KKM}.
To construct classical Enriques surfaces with the dual graph in Figure \ref{graphE6}, we
considered a rational elliptic surface defined by the Weierstrass equation
$$y^2+xy + t^2y = x^3$$
which has a singular of type ${\rm I}_6$ over the point $t=0$, a singular fiber of type ${\rm I}_2$ over the point $t=1$ and
a singular fiber of type ${\rm IV}$ over the point $t=\infty$, and 
the derivation $D$ defined by
\begin{equation}\label{E6-der}
D= (t+a)\frac{\partial}{\partial t} + (x + t^2)\frac{\partial}{\partial x}, \quad  a\in k\setminus \{0,1\}.
\end{equation}

Now consider the case $a=0$.  The derivation has an isolated singularity at the singular point of the fiber of type IV.  We blow up the surface successively and then obtain
a non-singular surface $Y$ with
$$K_Y=-(F_\infty +E_{\infty,1}+E_{\infty,2})-2(E_{\infty,3}+E_{\infty,4}+E_{\infty,5}+E_{\infty,6})$$
where the notations are as in \cite[Figure 7]{KKM}.
Then we have the following Lemma.

\begin{lemma}\label{E63pole}
{\rm (i)} The integral curves with respect to $D$ are 
$E_1, F_0, E_{0,3}, E_{0,4}, F_\infty, E_{\infty,1}$, $E_{\infty,2}$, $E_{\infty,3}$.

{\rm (ii)} $(D) =E_{0,1}+E_{0,2}+E_{0,5}-(E_1+ F_\infty+E_{\infty,1}+E_{\infty,2})- 2(E_{\infty,3}+E_{\infty,4}+E_{\infty,5}+E_{\infty,6})$.

{\rm (iii)} $(D)^2 = -12$. 

{\rm (iv)} $K_{Y}\cdot (D) = -4.$
\end{lemma}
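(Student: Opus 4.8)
The plan is to follow verbatim the method already used for Lemmas \ref{E8pole}, \ref{E7-2pole} and \ref{E7-1pole}: work in the explicit affine charts of the rational elliptic surface $y^2+xy+t^2y=x^3$ and of its successive blow-ups producing $Y$, and read off all four assertions from the local expression of $D=t\frac{\partial}{\partial t}+(x+t^2)\frac{\partial}{\partial x}$, the case $a=0$ of (\ref{E6-der}). Since the blow-up tree over $t=0,1,\infty$ and the expression for $K_Y$ are exactly those recorded in \cite[Figure 7]{KKM}, I may take the self-intersections and incidences of $E_1,F_0,E_{0,j},F_\infty,E_{\infty,j}$ as given data; the only genuinely new computation is how $D$ transforms under each blow-up.

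For part (i) I would apply the tangency criterion behind Proposition \ref{insep}: a curve $C=\{g=0\}$ is integral exactly when the reduced part of $D$ (that is, $D$ divided by its divisorial factor) preserves the ideal of $C$, i.e. $D(g)$ vanishes on $C$. On the Weierstrass model the seeds are clean: $D(t)=t$ and, using $D(t^2)=0$ in characteristic $2$, $D(x+t^2)=D(x)=x+t^2$, while the Weierstrass relation forces $D(y)=x^2+y$ off $\{x+t^2=0\}$. Thus $\{t=0\}$ and the section/cusp-type curves it meets are invariant, and propagating these tangencies through the blow-ups (a curve stays integral iff its strict transform remains tangent to the transformed reduced vector field) should single out precisely the eight listed curves $E_1,F_0,E_{0,3},E_{0,4},F_\infty,E_{\infty,1},E_{\infty,2},E_{\infty,3}$, the other components failing tangency.

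For part (ii) I would compute $(D)$ following the Rudakov--Shafarevich recipe recalled in \S\ref{derivation}: in each chart write $D=f_i\bigl(g_i\frac{\partial}{\partial x_i}+h_i\frac{\partial}{\partial y_i}\bigr)$ with $(g_i,h_i)$ having no common divisor, and collect the divisors $(f_i)$. On the affine Weierstrass chart $D$ is already reduced, so $(D)$ is supported on the exceptional locus; the real work is tracking, blow-up by blow-up, the order to which the pulled-back field acquires or loses a common factor along each new exceptional curve. Assembling these local orders must reproduce
\[
(D)=E_{0,1}+E_{0,2}+E_{0,5}-(E_1+F_\infty+E_{\infty,1}+E_{\infty,2})-2(E_{\infty,3}+E_{\infty,4}+E_{\infty,5}+E_{\infty,6}).
\]
This multiplicity bookkeeping along the chain of exceptional curves over $t=\infty$, where $D$ is most degenerate, is where I expect the main difficulty: a single miscounted pole order propagates into every later computation, so I would cross-check each exceptional multiplicity against the requirement that the strict transforms of the integral curves found in (i) carry the expected coefficients.

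Finally, parts (iii) and (iv) become pure intersection theory on $Y$ once (ii) is in hand. Using the intersection matrix of the curves in \cite[Figure 7]{KKM} I would expand $(D)^2$ and pair $(D)$ against the given $K_Y=-(F_\infty+E_{\infty,1}+E_{\infty,2})-2(E_{\infty,3}+E_{\infty,4}+E_{\infty,5}+E_{\infty,6})$ to obtain $(D)^2=-12$ and $K_Y\cdot(D)=-4$. As an independent consistency check, and to justify the divisoriality used immediately afterwards in the construction, I would feed these into the Euler-number formula (\ref{euler}): with the known Euler number $c_2(Y)$ of this blow-up of a rational elliptic surface, one gets $\deg\langle D\rangle=c_2(Y)+K_Y\cdot(D)+(D)^2=0$, confirming both the computed values and that $D$ is divisorial on $Y$.
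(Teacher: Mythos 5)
Your plan is correct and is essentially the paper's own (implicit) argument: the paper states Lemma \ref{E63pole} without proof, relying on exactly the chart-by-chart computation you describe --- transforming $D=t\frac{\partial}{\partial t}+(x+t^2)\frac{\partial}{\partial x}$ through the blow-up tree of \cite[Figure 7]{KKM}, reading off integral curves via tangency of the reduced field (correctly noting that the divisorial factor must be removed first, which is why $E_{0,1},E_{0,2},E_{0,5}$ fail integrality despite lying in the fiber $t=0$), assembling $(D)$ by the Rudakov--Shafarevich recipe, and deducing (iii), (iv) by intersection theory. Your closing check $\deg\langle D\rangle=c_2(Y)+K_Y\cdot(D)+(D)^2=16-4-12=0$ is likewise exactly how the paper justifies divisoriality and smoothness of $Y^D$ immediately after the lemma, so nothing is missing.
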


Let $Y^D$ be the quotient of $Y$ by $D$ and $\pi:Y\to Y^D$ the canonical map.  
By the formula (\ref{euler}) and Lemma \ref{E7-2pole}, $Y^D$ is smooth.
It follows from the canonical divisor formula
(\ref{canonical}) and Lemma \ref{E63pole} that
$\pi^*K_{Y^D}= -E_{0,1} -E_{0,2}-E_{0,5} + E_1$.  Denote by $\bar{E}_{0,1}, \bar{E}_{0,2}, \bar{E}_{0,5}, \bar{E}_1$ the images of $E_{0,1}, E_{0,2}, E_{0,5}, E_1$ on $Y^D$, respectively.
Since $E_{0,1}, E_{0,2}, E_{0,5}, E_1$ are $(-2)$-curves (see \cite[Figure 7]{KKM}), 
$E_1$ is integral and $E_{0,1}, E_{0,2}, E_{0,5}$ are not (Lemma \ref{E63pole}),
$\bar{E}_{0,1}, \bar{E}_{0,2}, \bar{E}_{0,5}$ are $(-4)$-curves and $\bar{E}_1$ is a
$(-1)$-curve (Proposition \ref{insep}).
Again by Proposition \ref{insep}, we have $2K_{Y^D}= -\bar{E}_{0,1} -\bar{E}_{0,2}-\bar{E}_{0,5} + 2\bar{E}_1$.
By contracting $\bar{E}_1$, we obtain a non-singular surface $S$ with $|-2K_S| = \{\bar{E}_{0,1} +\bar{E}_{0,2}+\bar{E}_{0,5}\}$.  The images of the curves 
$s_1, s_2, s_3, F_\infty, E_{\infty,i} \ (i=1,\ldots, 6)$ on $S$ are $(-2)$-curves. The images of $F_1$ is the fiber of type ${\rm I}_1$. 
Thus we have a Coble surface of type $\tilde{E}_6+\tilde{A}_2$ and with 3 boundary components.

\begin{theorem}\label{E6B3}
The Coble surface $S$ of type $\tilde{E}_6+\tilde{A}_2$ with three 
boundary components is induced from the pencil $\calP$ of sextic curves and is unique up to isomorphisms.
The automorphism group ${\rm Aut}(S)$ is isomorphic to $\mathfrak{S}_3$.
The surface $S$ is a specialization of classical Enriques surfaces of type $\tilde{E}_6+\tilde{A}_2$.
\end{theorem}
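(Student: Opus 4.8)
The plan is to establish the three assertions in turn, drawing on the explicit geometry assembled above. For uniqueness I would start from the abstract dual graph of type $\tilde{E}_6+\tilde{A}_2$ (Figure \ref{graphE6}) together with the three boundary components and reverse the construction. The elliptic fibration $f$ with multiple fiber of type ${\rm IV}^*$ and the three quasi-elliptic fibrations $f_1,f_2,f_3$ of type $(\tilde{E}_7,2\tilde{A}_1)$ are forced by the dual graph via Lemma \ref{fibration-isotropic} and Propositions \ref{Lang}, \ref{Ito}, and they determine the distinguished $(-1)$-curves $F_1,F_2,F_3$ and $F_1',F_2',F_3'$ appearing above. Contracting $F_1,F_2,F_3$ and then $E_{11},E_{12},E_{13},E_1,E_2,E_3,E_5,E_6,E_7$ in the prescribed order yields a birational morphism $S\to{\bf P}^2$ under which the boundary components become three conics $Q_1,Q_2,Q_3$ and the surviving curves $E_4,F_1',F_2',F_3'$ become four lines $\ell,\ell_1,\ell_2,\ell_3$. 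The incidence pattern—the prescribed tangencies and intersection multiplicities—is rigid, so after normalizing by ${\rm PGL}_3$ the conics and lines are pinned to the displayed equations; hence $S$ is recovered as the blow-up of the base locus of the pencil $\calP$ of (\ref{E6pencil}), and is unique up to isomorphism.

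For the automorphism group, the symmetric group $\mathfrak{S}_3$ has already been exhibited, arising from the Mordell--Weil groups of $f_1,f_2,f_3$ and realized on ${\bf P}^2$ as permutations of the coordinates. To see that this is all of ${\rm Aut}(S)$, I would use that ${\rm Aut}(S)$ is finite by Proposition \ref{FiniteIndex} and acts on ${\rm CM}(S)$ preserving the dual graph of Figure \ref{graphE6} and the set $\{B_1,B_2,B_3\}$. Consequently any automorphism permutes the three fibrations $f_i$, fixes the unique fibration $f$, and preserves the collection of $(-1)$-curves contracted above (as a set, possibly after composition with a graph symmetry), so it descends to a projective transformation of ${\bf P}^2$ fixing the configuration of three conics and four lines. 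Since the only such transformations are the coordinate permutations, ${\rm Aut}(S)\cong\mathfrak{S}_3$.

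For the specialization statement I would invoke the derivation construction: the surface $S$ was obtained as a contraction of the quotient $Y^D$ for the derivation $D$ of (\ref{E6-der}) with parameter $a=0$, whereas for $a\in k\setminus\{0,1\}$ the identical recipe yields classical Enriques surfaces of type $\tilde{E}_6+\tilde{A}_2$ as in \cite{KKM}. Regarding $a$ as the coordinate on a one-parameter family of rational surfaces equipped with divisorial derivations and passing to quotients, the Coble surface $S$ appears as the fibre over $a=0$ of a family whose general fibre is such an Enriques surface; thus $S$ is a specialization of classical Enriques surfaces of type $\tilde{E}_6+\tilde{A}_2$.

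The main obstacle I anticipate lies in the second part, namely the descent of automorphisms to ${\bf P}^2$: one must verify that the birational morphism $S\to{\bf P}^2$ is canonical enough that every automorphism preserves the contracted $(-1)$-curves, which rests on the rigidity of the dual graph and the uniqueness of $f$ and of the $f_i$, together with the fact that the contracted set is stable under the $\mathfrak{S}_3$ graph symmetry. The remaining verifications—the projective normalization of the incidence pattern for uniqueness and the flatness of the $a$-family for the specialization—are routine once this descent is in place.
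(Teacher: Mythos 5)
Your proposal matches the paper's own proof in all three parts: uniqueness by contracting $F_1,F_2,F_3$ and then $E_{11},E_{12},E_{13},E_1,E_2,E_3,E_5,E_6,E_7$ to reach the rigid configuration of three conics and four lines in ${\bf P}^2$, pinned by ${\rm PGL}_3$ and recovered from the pencil $\calP$ of (\ref{E6pencil}); the identification ${\rm Aut}(S)\cong\mathfrak{S}_3$ via the Mordell--Weil groups of $f_1,f_2,f_3$ descending to coordinate permutations with no further projective transformations preserving the configuration; and the specialization statement via the derivation (\ref{E6-der}), where $a=0$ yields $S$ while $a\in k\setminus\{0,1\}$ yields the classical Enriques surfaces of type $\tilde{E}_6+\tilde{A}_2$ of \cite{KKM}. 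This is essentially the same approach as the paper, merely with the descent-of-automorphisms step spelled out more explicitly than the paper's brief remark.
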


\end{example}

\begin{example}\label{exE6-1} {\bf A Coble surface of type 
$\tilde{E}_6+\tilde{A}_2$ with one boundary component.}

We consider the same situation as in Example \ref{exE6-3}. 
Blowing up the base points of the pencil $\calP$ given by the equation (\ref{E6pencil}), 
we have obtained a Halphen surface with singular fibers of
type $(2{\rm IV}^*, {\rm I}_3, {\rm I}_1)$.
Then by blowing up the singular point of the fiber of type ${\rm I}_1$, we obtain
a Coble surface $S$ of type 
$\tilde{E}_6+\tilde{A}_2$ with one boundary component.
The proper transform of $C_{(1:1)}$ in (\ref{E6pencil}) is the boundary component of $X$.

Conversely one can easily see that any Coble surface of type 
$\tilde{E}_6+\tilde{A}_2$ with one boundary component is obtained 
from the pencil $\calP$ and hence it is unique.

Next we give a Coble surface of type $\tilde{E}_6+\tilde{A}_2$ with one boundary component  as a quotient of a rational surface by a derivation.
We use the same notation as in the previous Example \ref{exE6-3} and 
consider the case $a=1$.  Then we have the following Lemma.

\begin{lemma}\label{E6-1pole}
{\rm (i)} The integral curves with respect to $D$ are 
$F_1, E_{0,1}, E_{0,2}, E_{0,5}, F_\infty, E_{\infty,1}$, $E_{\infty,2}$, $E_{\infty,3}$.

{\rm (ii)} $(D) =E_1-(E_{0,1}+E_{0,2}+E_{0,5}+ F_\infty+E_{\infty,1}+E_{\infty,2})- 2(E_{\infty,3}+E_{\infty,4}+E_{\infty,5}+E_{\infty,6})$.

{\rm (iii)} $(D)^2 = -12$. 

{\rm (iv)} $K_{Y}\cdot (D) = -4.$
\end{lemma}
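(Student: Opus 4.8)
The plan is to carry out the same local computation as in the $a=0$ case of Lemma~\ref{E63pole}, the only essential difference being that the linear factor $t+a=t+1$ appearing in the $t$-component of $D=(t+1)\frac{\partial}{\partial t}+(x+t^2)\frac{\partial}{\partial x}$ now vanishes at $t=1$, i.e.\ on the fiber of type ${\rm I}_2$, rather than at $t=0$ on the fiber of type ${\rm I}_6$. This single change of the ``degenerate direction'' of the derivation along the base is what exchanges the roles of the curves lying over $t=0$ and $t=1$ and produces the mirror-image answer in parts (i) and (ii).

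For part (i), I would test each relevant curve for integrality using the criterion of \S\ref{derivation}: after clearing denominators $D$ is proportional to $(t+1)\frac{\partial}{\partial t}+(x+t^2)\frac{\partial}{\partial x}$, and a curve $C$ is integral precisely when this vector field is tangent to $C$ at a general point. On the Weierstrass chart the sections and the fiber components over $t=0,1,\infty$ are checked one at a time; because $t+1$ vanishes along the fiber over $t=1$, the tangency analysis there differs from the $a=0$ case and makes $F_1$ integral while $E_1$ is no longer integral (compare Lemma~\ref{E63pole}~(i)). The components over $t=\infty$ are unaffected, since $t+1$ has a pole there for both values of $a$, which is why $F_\infty, E_{\infty,1}, E_{\infty,2}, E_{\infty,3}$ remain on the list.

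For part (ii), in each chart $U_i$ I would write $D=f_i\bigl(g_i\frac{\partial}{\partial x_i}+h_i\frac{\partial}{\partial y_i}\bigr)$ with $(g_i)$ and $(h_i)$ sharing no common component and read off $(f_i)$; by \S\ref{derivation} (Rudakov--Shafarevich) these glue to the global divisor $(D)$. The coefficients are fixed by tracking the order of vanishing of $t+1$ and of $x+t^2$ along each curve and along every exceptional divisor created when resolving the isolated singularity of $D$ at the singular point of the type-${\rm IV}$ fiber over $t=\infty$. The sign pattern $E_1-(E_{0,1}+E_{0,2}+E_{0,5}+\cdots)$ is exactly the reflection of the $a=0$ pattern $E_{0,1}+E_{0,2}+E_{0,5}-(E_1+\cdots)$, again because the zero/pole behaviour over $t=0$ and $t=1$ has been swapped.

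Finally, parts (iii) and (iv) are purely numerical once (ii) is in hand: using the self-intersections and adjacencies of $E_1, E_{0,i}, F_\infty, E_{\infty,i}$ recorded in \cite[Figure~7]{KKM} together with the expression for $K_Y$ stated just before the lemma, one computes $(D)^2$ and $K_Y\cdot(D)$ directly. As a consistency check these must equal $-12$ and $-4$: since $D$ will turn out to be divisorial one has $\deg\langle D\rangle=0$, so formula (\ref{euler}) gives $c_2(Y)=-K_Y\cdot(D)-(D)^2$, and $Y$ (the rational elliptic surface with $c_2=12$, blown up at four further points) has $c_2(Y)=16$, forcing precisely these two values. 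I expect the main obstacle to be the bookkeeping of the divisorial part through the chain of blow-ups over $t=\infty$, where an error in the order of vanishing along a single exceptional curve would corrupt (ii) and hence also (iii) and (iv); the tangency computation over $t=1$ is the other delicate point, being responsible for the $F_1\leftrightarrow E_1$ swap relative to the $a=0$ computation.
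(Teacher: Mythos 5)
Your plan coincides with the paper's (implicit) proof: the lemma is stated without proof, the intended verification being exactly the chart-by-chart tangency test and Rudakov--Shafarevich divisor computation you describe, carried over from the $a=0$ case of Lemma \ref{E63pole} with the vanishing locus of $t+a$ moved from the ${\rm I}_6$ fiber at $t=0$ to the ${\rm I}_2$ fiber at $t=1$ (which swaps the integral curves and flips the signs of $E_1$ and $E_{0,1}+E_{0,2}+E_{0,5}$ in $(D)$, while leaving everything over $t=\infty$ unchanged since $t+a$ differs from $t$ by a unit there), and your cross-check $-K_Y\cdot(D)-(D)^2=c_2(Y)=12+4=16$ via formula (\ref{euler}) is sound. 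One small caution: that Euler-number identity constrains only the sum of (iii) and (iv), not each value separately, so both intersection numbers must still be computed directly from the configuration in \cite[Figure 7]{KKM}, as you indeed propose.
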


Let $Y^D$ be the quotient of $Y$ by $D$ and $\pi:Y\to Y^D$ the canonical map.  
By the formula (\ref{euler}) and Lemma \ref{E7-2pole}, $Y^D$ is smooth.
It follows from the canonical divisor formula
(\ref{canonical}) and Lemma \ref{E63pole} that
$\pi^*K_{Y^D}= E_{0,1} +E_{0,2}+E_{0,5}-E_1$.  Denote by $\bar{E}_{0,1}, \bar{E}_{0,2}, \bar{E}_{0,5}, \bar{E}_1$ the images of $E_{0,1}, E_{0,2}, E_{0,5}, E_1$ on $Y^D$, respectively.
Since $E_{0,1}, E_{0,2}, E_{0,5}, E_1$ are $(-2)$-curves (see \cite[Figure 7]{KKM}), 
$E_{0,1}, E_{0,2}, E_{0,5}$ are integral and $E_1$ is not (Lemma \ref{E63pole}),
$\bar{E}_{0,1}, \bar{E}_{0,2}, \bar{E}_{0,5}$ are $(-1)$-curves and $\bar{E}_1$ is a
$(-4)$-curve (Proposition \ref{insep}).
Again by Proposition \ref{insep}, we have $2K_{Y^D}= 2\bar{E}_{0,1} +2\bar{E}_{0,2}+2\bar{E}_{0,5} - \bar{E}_1$.
By contracting $\bar{E}_{0,1}, \bar{E}_{0,2}, \bar{E}_{0,5}$, we obtain a non-singular surface $S$ with $|-2K_S| = \{\bar{E}_1\}$.  
Thus we have a Coble surface of type $\tilde{E}_6+\tilde{A}_2$ and with one boundary component.

\begin{theorem}\label{E6B1}
The Coble surface $S$ of type $\tilde{E}_6+\tilde{A}_2$ with one
boundary component is induced from the pencil $\calP$  of sextic curves given in 
$(\ref{E6pencil})$, and is unique up to isomorphisms.
The automorphism group ${\rm Aut}(S)$ is isomorphic to $\mathfrak{S}_3$.
The surface $S$ is a specialization of classical Enriques surfaces of type $\tilde{E}_6+\tilde{A}_2$.
\end{theorem}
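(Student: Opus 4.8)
The plan is to assemble the three assertions from the explicit construction carried out at the beginning of this Example together with the projective rigidity already established in Example \ref{exE6-3}. For existence, I would invoke that construction directly: blowing up the base points of the pencil $\calP$ in (\ref{E6pencil}) produces a Halphen surface of index $2$ with singular fibers of type $(2{\rm IV}^*, {\rm I}_3, {\rm I}_1)$, and blowing up the singular point of the fiber of type ${\rm I}_1$ yields a Coble surface of type $\tilde{E}_6+\tilde{A}_2$ whose unique boundary component is the proper transform of $C_{(1:1)}$. For uniqueness, I would start from an arbitrary Coble surface $S$ of this type with one boundary component; by Theorem \ref{dualgraph} it carries the elliptic fibration $f$ with a multiple fiber of type ${\rm IV}^*$, and by Proposition \ref{Halphen} this fibration comes from a Halphen surface of index $2$. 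Contracting exactly as in Example \ref{exE6-3}, the surface maps to ${\bf P}^2$ so that the boundary data and the relevant $(-2)$-curves become the conics $Q_1, Q_2, Q_3$ and the lines $\ell, \ell_1, \ell_2, \ell_3$; the rigidity already proved there shows this configuration is projectively unique, so the pencil is $\calP$ up to a coordinate change. Since one boundary component forces the last blow-up to sit at the singular point of the fiber of type ${\rm I}_1$, the surface $S$ is recovered uniquely.

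For the automorphism group, I would first observe that $\ell = x+y+z$ and the product $Q_1Q_2Q_3$ are invariant under permutations of the coordinates $(x:y:z)$, so $\mathfrak{S}_3$ preserves the pencil $\calP$ and fixes every member, in particular $C_{(1:1)}$. Hence $\mathfrak{S}_3$ lifts to automorphisms of $S$, giving $\mathfrak{S}_3 \subseteq {\rm Aut}(S)$. For the reverse inclusion, any automorphism preserves the unique $(-4)$-curve and the unique fibration $f$, and therefore descends to a projective transformation of ${\bf P}^2$ preserving the conics and the lines; by the computation in Example \ref{exE6-3} the only such transformations are the coordinate permutations, so ${\rm Aut}(S) \cong \mathfrak{S}_3$.

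For the last assertion, I would use the derivation construction: taking $a = 1$ in the derivation (\ref{E6-der}) and carrying out the contractions dictated by Lemma \ref{E6-1pole} and Proposition \ref{insep} realizes $S$ from the quotient $Y^D$. The same data with $a \in k\setminus\{0,1\}$ produces, by \cite[\S 7.2]{KKM}, classical Enriques surfaces of type $\tilde{E}_6+\tilde{A}_2$, so letting $a \to 1$ exhibits $S$ as a specialization of that family.

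I expect the main obstacle to be the uniqueness step, namely verifying that the configuration of the three conics and four lines is rigid up to projective equivalence; fortunately this is precisely the calculation already performed in Example \ref{exE6-3}, so here it reduces to checking that the one-boundary-component case introduces no extra freedom beyond the choice forced by blowing up the fiber of type ${\rm I}_1$. The reverse inclusion for ${\rm Aut}(S)$ is the only other delicate point, and it rests on the same projective rigidity.
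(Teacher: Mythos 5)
Your proposal follows the paper's own treatment essentially step for step: existence by blowing up the base points of the pencil $\calP$ and then the singular point of the fiber of type ${\rm I}_1$, uniqueness by reducing to the projective rigidity of the configuration of $\ell,\ell_1,\ell_2,\ell_3,Q_1,Q_2,Q_3$ established in Example \ref{exE6-3} (a reduction the paper itself compresses into ``one can easily see''), the group $\mathfrak{S}_3$ realized by coordinate permutations with the reverse inclusion resting on that same rigidity, and the specialization exhibited via the derivation (\ref{E6-der}) at $a=1$ degenerating the classical Enriques family $a\in k\setminus\{0,1\}$. The only cosmetic deviation is your phrasing that the ``boundary data'' map to the conics, whereas in the one-boundary case $Q_1,Q_2,Q_3$ arise from the intact $(-2)$-components of the ${\rm I}_3$ fiber and the single boundary component is the proper transform of the sextic $C_{(1:1)}$ --- a point you implicitly acknowledge when flagging that the one-boundary case must introduce no extra freedom.
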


\end{example}

\begin{example}\label{exD8} {\bf A Coble surface of type $\tilde{D}_8$ with one boundary component.}

Let $S$ be a Coble surface of type $\tilde{D}_8$ with one boundary component $B$.
Note that $S$ has a unique quasi-elliptic fibration $f$ with a multiple fiber 
$$F=E_1+E_2+E_8+E_9+2(E_3+E_4+E_5+E_6 +E_7)$$ 
of type ${\rm I}_4^*$ and with a 2-section $E_{10}$, which is obtained from a rational 
quasi-elliptic fibration (a Halphen surface of index 2) 
by blowing up the singular point of a fiber of type ${\rm II}$.
Let $E_0$ be the $(-1)$-curve on $S$ such that $2E_0+B$ is the fiber of $f$.
Also $S$ has two genus 1 fibrations $f_i$ with a singular fiber 
$$F_i= 2E_{10}+4E_9+3E_8+6E_7+5E_6+4E_5+3E_4+2E_3+E_i$$ 
of type ${\rm II}^*$ $(i=1,2)$.  The support of the conductrix is contained in $F_i$ and hence these fibrations are elliptic (Lemma \ref{hascusp}).  Hence the singular fibers of $f_i$ are blown up 
the singular fibers of type $({\rm II}^*)$ or of type $({\rm II}^*, {\rm I}_1)$
(Proposition \ref{Lang}).
Obviously the first case does not occur.
Let $E_{11}$ (resp. $E_{12}$) be the $(-1)$-curve such that 
$2E_{11}+B$ (resp. $2E_{12}+B$) is the fiber of $f_1$ (resp. $f_2$).  
Since $S$ has only three genus 1 fibrations as above, curves with negative self-intersection number are $B$, 
$E_0, E_1,\ldots, E_{12}$.  

Now we show that one of $F_1, F_2$ is a multiple fiber and the other is not.
Assume that both of them are not multiple.  Then $E_{11}$ and $E_{12}$ do not meet
a 2-section $E_0$ of these fibrations.  By contracting $E_0, E_{10}, E_9, E_7, \ldots, E_3$ successively and $E_{11}, E_{12}$, we get a minimal rational surface with the Picard number
$11 -10=1$. On the other hand, the images of $E_1, E_2$ have the 
self-intersection number $0$ which is a contradiction.  Next assume that both of them are
multiple.  Then $E_{11}$ and $E_{12}$ meet $E_0$ with multiplicity 1.
By contracting $E_0, E_{10}, E_9, E_7, \ldots, E_3, E_2$ successively, we get a minimal rational surface with the Picard number
$11 - 9=2$.  Then there exists exactly one non-singular rational curve 
with non-positive self-intersection number (the image of $E_1$) which is a contradiction.

Thus we may assume that $F_2$ is multiple and $F_1$ is not.
We can easily check that $E_{12}\cdot E_0=1$, 
$E_{11}\cdot E_0=0$, $E_{11}\cdot E_2=1$, $E_{12}\cdot E_1=2$.  By contracting the curves $E_0, E_{10}, E_9, E_7, E_6, E_5, E_4, E_3, E_1$ and $E_{11}$ successively, we obtain 
${\bf P}^2$.
Denote by $p$, $q$ the image of $E_0$, $E_{11}$, and by $Q$, $L$, $C$ the image of $E_8$, $E_2$, $B$, respectively.  Then 
$L$ is a line, $Q$ a non-singular conic, 
$C$ a rational sextic curve with two singular points at $p, q$. 
The line $L$ meets $Q$ at $p$ with multiplicity 2, and $L$ meets $C$ at $p$ with multiplicity 4 and at
$q$ with multiplicity 2.  The conic $Q$ meets $C$ at $p$ with multiplicity 12.
We may assume that
$$L: x=0,\ Q: y^2+xz=0, \ p=(0:0:1).$$
It follows from an elementary but long calculation that 
$$q=(0:1:0), \quad C : x^6+(y^2+xz)^2(ax+z)z =0 \quad (a\ne 0 \in k).$$
Thus the pencil  
$$\{sx^2(y^2+xz)^2 + t(x^6+(y^2+xz)^2(x+az)z)\}_{(s:t)\in {\bf P}^1}$$
of sextic curves gives a family of Coble surfaces of type $\tilde{D}_8$ with one boundary component.
The projective transformation
$$(x:y:z)\to (x: \sqrt{a}x + y: ax +z)$$
acts on the pencil as an automorphism of order 2.

Next we give the Coble surface of type $\tilde{D}_8$ with one boundary component as a quotient of a rational surface by a derivation.
We use the same notation as in \cite{KKM}.
To construct classical Enriques surfaces with the dual graph in Figure \ref{graphD8}, we
considered 
the following derivation on a non-singular quadric ${\bf P}^1\times {\bf P}^1$
with homogeneous coordinates $((u_0:u_1),(v_0:v_1))$
 defined by
\begin{equation}\label{D8-der}
D= {1\over xy^2} \left(ax^2y^2\frac{\partial}{\partial x} + (x^4y^4+by^4+x^2y^2+x^2)\frac{\partial}{\partial y}\right), 
\end{equation}
where $a, b \in k, a, b\ne 0$, $x=u_0/u_1, y=v_0/v_1$.  Since $D$ has isolated singularities, we successively blow up the singularities and finally get a non-singular
surface $Y$ with
$$K_Y= -(2F_0+2E_2+E_3+3E_5+4E_6+3E_7+2E_8+2E_9+4E_{10}+E_{11}+2E_{12})$$
(see \cite[Figure 31]{KKM}) and a divisorial derivation denoted by the same symbol $D$.

Now we put $b=0$ in (\ref{D8-der}).
Then we have the following Lemma.

\begin{lemma}\label{D8pole}
{\rm (i)} The integral curves with respect to $D$ are 
$E_2, E_3, E_5, E_7, E_9, E_{11}$ in \cite[Figure 31]{KKM}.

{\rm (ii)} $(D) =-(2F_0-E_1+ 3E_2+2E_3+4E_5+4E_6+3E_7+2E_8+2E_9+4E_{10}+E_{11}+2E_{12})$.

{\rm (iii)} $(D)^2 = -12$. 

{\rm (iv)} $K_{Y}\cdot (D) = -4.$
\end{lemma}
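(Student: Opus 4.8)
The plan is to follow verbatim the strategy used for the companion lemmas \ref{E8pole}, \ref{E7-2pole} and \ref{E63pole} above, specializing the general computation of \cite{KKM} for the derivation (\ref{D8-der}) to the value $b=0$. First I would work on the affine chart $(x,y)$ of the quadric ${\bf P}^1\times{\bf P}^1$ and extract the divisorial part of $D$ before any blow-up. Writing the coefficients as $g=ax^2y^2$ and $h=x^4y^4+x^2y^2+x^2$, their greatest common divisor is $x^2$, so
$$D=\frac{x}{y^2}\left(ay^2\frac{\partial}{\partial x}+(x^2y^4+y^2+1)\frac{\partial}{\partial y}\right),$$
and the reduced field $\tilde D=ay^2\,\partial/\partial x+(x^2y^4+y^2+1)\,\partial/\partial y$ has coefficients with no common factor. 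On this chart the divisorial part restricted to it is $(x)-2(y)$, and the isolated singular points of $D$ are the common zeros of $ay^2$ and $x^2y^4+y^2+1$; I would read off the analogous data on the remaining charts to locate all singular points of $D$ on the quadric.

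Next I would resolve these singularities by the successive blow-ups that produce $Y$, namely the tower recorded in \cite[Figure 31]{KKM}, tracking two things at each stage. For the divisorial part I use that under a blow-up the zero/pole order of $D$ along the new exceptional curve is determined by the local expansion of the reduced field at the centre (cf. \cite[Section 1]{RS}); accumulating these contributions together with the strict transforms of the two rulings assembles the coefficients of $F_0,E_1,\dots,E_{12}$ claimed in (ii). Simultaneously, for (i) I would test each exceptional curve and the relevant strict transforms for invariance under $\tilde D$, i.e.\ whether $\tilde D$ is tangent to the curve at a general point; this tangency test is what should single out $E_2,E_3,E_5,E_7,E_9,E_{11}$ as the integral curves, the remaining components being non-integral.

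Parts (iii) and (iv) are then a finite intersection-number computation: substituting the expression (ii) for $(D)$ and the canonical class $K_Y=-(2F_0+2E_2+E_3+3E_5+4E_6+3E_7+2E_8+2E_9+4E_{10}+E_{11}+2E_{12})$ into the intersection form, whose Gram matrix is fixed by the configuration in \cite[Figure 31]{KKM}, gives $(D)^2=-12$ and $K_Y\cdot(D)=-4$. A useful internal check is the Euler-number formula (\ref{euler}): since $Y$ arises from the quadric ($c_2=4$) by the twelve blow-ups producing $E_1,\dots,E_{12}$, one has $c_2(Y)=16$, whence $\deg\langle D\rangle=c_2(Y)+K_Y\cdot(D)+(D)^2=16-4-12=0$, confirming that $D$ is divisorial, which is exactly what is needed for the smoothness of $Y^D$ and for the application of the canonical divisor formula (\ref{canonical}) and Proposition \ref{insep} in the following paragraphs of the main text.

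The main obstacle is the bookkeeping across the twelve-fold tower of blow-ups: at each centre one must correctly determine the order to which the lifted reduced field vanishes along the new exceptional divisor (fixing its coefficient in $(D)$) and whether that divisor is invariant (fixing its membership in the integral list of (i)). The delicate point is that both data are governed by whether the blown-up point is a zero of $\tilde D$ lying on the current divisorial locus, and a single multiplicity or sign slip at one node propagates through the whole chain. The cross-checks via (\ref{euler}) and via the matching of the degenerate limit with the generic $b\neq0$ computation of \cite{KKM} are what guard against such errors.
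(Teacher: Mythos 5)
Your proposal is correct and takes essentially the same route as the paper, which states Lemma \ref{D8pole} without proof as a direct Rudakov--Shafarevich-style computation in the manner of \cite{KKM}: your chart reduction $D=\frac{x}{y^2}\left(ay^2\frac{\partial}{\partial x}+(x^2y^4+y^2+1)\frac{\partial}{\partial y}\right)$ (note $x^2y^4+y^2+1=(xy^2+y+1)^2$ in characteristic $2$, with no common factor with $ay^2$), the blow-up bookkeeping along \cite[Figure 31]{KKM}, the tangency test for (i), and the Gram-matrix evaluation of (iii), (iv) constitute exactly the verification the paper leaves implicit. Your consistency checks are also accurate: $\deg\langle D\rangle = c_2(Y)+K_Y\cdot(D)+(D)^2 = 16-4-12=0$ confirms divisoriality, and $K_Y-(D)=-E_1+E_2+E_3+E_5$ matches the paper's $\pi^*K_{Y^D}$ in the paragraph following the lemma.
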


Let $Y^D$ be the quotient of $Y$ by $D$ and $\pi:Y\to Y^D$ the canonical map.  
By the formula (\ref{euler}) and Lemma \ref{D8pole}, $Y^D$ is smooth.
It follows from the canonical divisor formula
(\ref{canonical}) and Lemma \ref{D8pole} that
$\pi^*K_{Y^D}= -E_1+E_2 +E_3+E_5$.  Denote by $\bar{E}_i$ the image of $E_i$ on $Y^D$ 
($i=1,2,3,5$).
Since $E_1, E_2, E_3, E_5$ are $(-2)$-curves (see \cite[Figure 7]{KKM}) and
$E_2, E_3, E_5$ are integral and $E_1$ is not (Lemma \ref{D8pole}),
$\bar{E}_2, \bar{E}_3, \bar{E}_5$ are $(-1)$-curves and $\bar{E}_1$ is a
$(-4)$-curve (Proposition \ref{insep}).
Again by Proposition \ref{insep}, we have $2K_{Y^D}= - \bar{E}_1 + 2\bar{E}_2 +2\bar{E}_3+2\bar{E}_5$.
By contracting $\bar{E}_2, \bar{E}_3, \bar{E}_5$, we obtain a non-singular surface $S$ with $|-2K_S| = \{\bar{E}_1\}$.  
Thus we have a Coble surface with the dual graph of type $\tilde{D}_8$ and with one boundary component.
\begin{theorem}\label{D8}
Coble surfaces of type $\tilde{D}_8$ with one boundary component
form a $1$-dimensional irreducible family.  
Its automorphism group is isomorphic to ${\bf Z}/2{\bf Z}$.
The surfaces $S$ are specializations of classical Enriques surfaces of type $\tilde{D}_8$.
\end{theorem}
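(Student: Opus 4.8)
The plan is to assemble the facts already established in Example~\ref{exD8} into the three assertions; the only genuinely new input is the computation of the residual projective automorphisms of the plane configuration and of their action on the parameter $a$.

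\textbf{The family and its dimension.} By the analysis preceding the statement, every Coble surface $S$ of type $\tilde{D}_8$ with one boundary component arises, by blowing up the base points of the pencil
$$\calP_a=\{sx^2(y^2+xz)^2 + t(x^6+(y^2+xz)^2(ax+z)z)\}_{(s:t)\in {\bf P}^1},$$
from some $a\in k^*$, and conversely each such $a$ produces such a surface. Hence these surfaces are parametrized by the irreducible $1$-dimensional scheme $\bbG_m={\rm Spec}\,k[a,a^{-1}]$ and the total space is an irreducible algebraic family over $\bbG_m$; in particular the moduli is irreducible of dimension at most $1$. To upgrade this to dimension exactly $1$ it suffices to prove that $S_a\cong S_{a'}$ forces $a=a'$, which I obtain together with the automorphism group below.

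\textbf{Automorphisms and separation of moduli.} As shown in Example~\ref{exD8}, $S$ carries exactly three genus $1$ fibrations (the quasi-elliptic one with an ${\rm I}_4^*$-fiber and the two elliptic ones with a ${\rm II}^*$-fiber), and the two ${\rm II}^*$-fibrations are distinguished by which of their fibers over the boundary is multiple; they are therefore pairwise non-interchangeable, so every automorphism fixes each fibration and each of the finitely many negative curves $B,E_0,\dots,E_{12}$. Consequently any automorphism preserves the chosen blow-down and descends to a $g\in{\rm PGL}_3(k)$ preserving the line $L:x=0$, the conic $Q:y^2+xz=0$, the tangency point $p=(0:0:1)=L\cap Q$ and the second marked point $q=(0:1:0)\in L\setminus Q$, and sending the boundary sextic $C_a:x^6+(y^2+xz)^2(ax+z)z=0$ to $C_{a'}$. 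A direct computation shows that the subgroup of ${\rm PGL}_3(k)$ fixing $(L,Q,p,q)$ consists exactly of the maps $(x:y:z)\mapsto (x:\,sx+\mu y:\,s^2x+\mu^2 z)$ with $\mu\in k^*$, $s\in k$. Pulling back $C_a$ under such a map and requiring the result to be proportional to $C_{a'}$ yields, after a short calculation in characteristic $2$, the relations $\mu^8=1$ (whence $\mu=1$), $s(s^2+a)=0$, and $a'=a$. This simultaneously separates the moduli ($a'=a$, so distinct $a$ give non-isomorphic surfaces and the family is genuinely $1$-dimensional) and identifies the only nontrivial transformation as $s=\sqrt a$, $\mu=1$, i.e. the involution $(x:y:z)\mapsto(x:\sqrt a\,x+y:ax+z)$ already exhibited; thus ${\rm Aut}(S)\cong {\bf Z}/2{\bf Z}$.

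\textbf{Specialization and the main difficulty.} Finally, the derivation construction of Example~\ref{exD8} realizes $S$ as the smooth model, suitably blown down, of the quotient of the rational surface $Y$ by the derivation (\ref{D8-der}) with $b=0$; for $b\ne 0$ the identical recipe produces, as in \cite{KKM}, classical Enriques surfaces of type $\tilde{D}_8$. Letting $b$ tend to $0$ therefore exhibits $S$ as a flat limit, i.e. a specialization, of these Enriques surfaces, giving the last assertion. I expect the second step to be the main obstacle: one must determine the residual group $(s,\mu)$ precisely and keep careful track of how the coefficient of $z^2$ transforms, since it is exactly the identity $\mu^8=1\Rightarrow\mu=1$ (special to characteristic $2$) that forces both ${\rm Aut}(S)={\bf Z}/2{\bf Z}$ and the injectivity of $a\mapsto S_a$.
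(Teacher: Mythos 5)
Your proposal is correct and follows essentially the same route as the paper: the reduction to the plane configuration $(L,Q,p,q,C_a)$ via the pencil, the involution $(x:y:z)\mapsto(x:\sqrt{a}\,x+y:ax+z)$, and the specialization of the derivation construction at $b=0$ are exactly the paper's argument in Example \ref{exD8}. The only material you add is the explicit stabilizer computation that the paper leaves implicit — identifying the stabilizer of $(L,Q,p,q)$ as the maps $(x:y:z)\mapsto(x:sx+\mu y:s^2x+\mu^2 z)$ and deriving $\mu^8=1$ (hence $\mu=1$ in characteristic $2$), $s(s^2+a)=0$, $a'=a$ — and this computation is correct, simultaneously giving ${\rm Aut}(S)\cong{\bf Z}/2{\bf Z}$ and the injectivity of $a\mapsto S_a$ needed for the family to be genuinely $1$-dimensional.
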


\end{example}

\begin{example}\label{exVII-10} {\bf A Coble surface of type VII with ten boundary components.}

This example was given in \cite[Example 9.8.16]{DK}.
We only state the result.

\begin{theorem}\label{VII10thm}\cite[Theorem 9.8.18]{DK}
There exists a unique Coble surface $S$ of type ${\rm VII}$ with ten boundary components
which is obtained by blowing up the fifteen intersection points of ten lines on the quintic del Pezzo surface.
The automorphism group ${\rm Aut}(S)$ is isomorphic to $\mathfrak{S}_5$.
The surface $S$ is obtained as a quotient of the supersingular $K3$ surface $Y$ with Artin invariant $1$ by a derivation, and it is a specialization of classical Enriques surfaces of type ${\rm VII}$.
\end{theorem}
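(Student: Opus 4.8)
The plan is to realize $S$ concretely by a blow-up, verify directly the defining properties of a Coble surface, and then read off both the automorphism group and the canonical covering from the resulting geometry; this follows the treatment of Dolgachev and Kond\=o \cite[\S 9.8]{DK}. Let $V_5$ be the quintic del Pezzo surface, i.e. $\bbP^2$ blown up at four points in general position. Its ten $(-1)$-curves (lines) form the Petersen configuration: as an intersection graph they give the Petersen graph, with the ten lines as vertices and the fifteen pairwise intersection points as edges, so each line meets exactly three others and each intersection point lies on exactly two lines. Moreover $\Aut(V_5)\cong \mathfrak{S}_5$. First I would let $f: S\to V_5$ be the blow-up of the fifteen intersection points. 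Since each line passes through exactly three of them, the proper transforms $B_1,\dots,B_{10}$ of the ten lines are $(-4)$-curves, and they are pairwise disjoint because all their meetings have been separated.

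The Coble conditions are then a short computation on $V_5$. The ten lines satisfy $\sum_i \ell_i \sim -2K_{V_5}$ (both sides meet $-K_{V_5}$ in $10$ and have self-intersection $20$), so pulling back and subtracting the fifteen exceptional curves $e_j$ — each appearing twice, once for each of the two lines through the corresponding point — yields $\sum_i B_i \sim f^{*}(-2K_{V_5})-2\sum_j e_j \sim -2K_S$. One checks that no anticanonical curve of $V_5$ passes through all fifteen intersection points, giving $|-K_S|=\emptyset$, and that $\sum_i B_i$ is the only member of $|-2K_S|$; together with $B_i^2=-4$ and $B_i\cap B_j=\emptyset$ this shows that $S$ is a Coble surface with ten boundary components. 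Its effective irreducible roots (Lemma \ref{roots}) are the fifteen $(-1)$-roots $2e_j+\tfrac12 B_k+\tfrac12 B_l$ attached to the points $p_{kl}=\ell_k\cap\ell_l$, together with five further $(-2)$-curves forming an $\mathfrak{S}_5$-orbit; by the bookkeeping of Lemma \ref{roots2} these assemble into the dual graph of type $\VII$, in which the five $(-2)$-curves form a complete graph with double edges and the fifteen $(-1)$-roots are joined by single edges.

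Because $f$ is $\mathfrak{S}_5$-equivariant, $\mathfrak{S}_5\hookrightarrow \Aut(S)$. Conversely, since the type $\VII$ graph satisfies the hypothesis of Theorem \ref{Vinberg}, the reflection group $W(S)$ has finite index in ${\rm O}({\rm CM}(S))$ and $\Aut(S)$ is finite by Proposition \ref{FiniteIndex}; every automorphism permutes the boundary components and the effective irreducible roots, hence acts on the dual graph. As the automorphism group of the Petersen graph is exactly $\mathfrak{S}_5$, and the kernel of $\Aut(S)\to\Aut(\Gamma)$ is trivial (contracting the $(-1)$-curves recovers $V_5$ and then $\bbP^2$, on which an automorphism fixing the line configuration is the identity), one concludes $\Aut(S)\cong \mathfrak{S}_5$. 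For the canonical covering, the remark after Theorem \ref{main2} places type $\VII$ in the case $A=0$, so Theorem \ref{main1}$(1)$ identifies the minimal resolution of $X$ as a supersingular $K3$ surface; the large symmetry group $\mathfrak{S}_5$ forces the Artin invariant to equal $1$, that is, the unique supersingular $K3$ with maximal symmetry. Finally, exhibiting $Y$ together with a $2$-closed rational vector field $D$ with $Y^D=S$, and degenerating the derivation used in \cite{KKM} for classical Enriques surfaces of type $\VII$, displays $S$ simultaneously as the quotient $Y^D$ and as a specialization of those Enriques surfaces.

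The main obstacle I expect is the sharp determination $\Aut(S)\cong\mathfrak{S}_5$ together with the identification of the canonical covering as the supersingular $K3$ of Artin invariant exactly $1$: ruling out extra automorphisms requires controlling the kernel of the action on ${\rm CM}(S)$ and the faithfulness of the action on the curve configuration, while pinning down the Artin invariant demands either an explicit model of $Y$ and $D$ or a global argument exploiting the abundance of $(-2)$-curves. By contrast, the verification of the Coble-surface axioms and the combinatorial bookkeeping of the dual graph are routine once the del Pezzo picture is in place.
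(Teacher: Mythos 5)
The paper itself offers no proof of this statement---it explicitly says ``We only state the result'' and cites \cite[Theorem 9.8.18]{DK}---so your proposal must be judged as a reconstruction of the argument of \cite[\S 9.8]{DK}, whose overall shape you do follow: blow up the fifteen intersection points of the Petersen configuration of the ten lines on the quintic del Pezzo surface $V_5$, verify the Coble axioms, identify the dual graph, and invoke Theorem \ref{Vinberg} and Proposition \ref{FiniteIndex}. Your verification of the Coble axioms is correct and genuinely routine: $\sum_i \ell_i \sim -2K_{V_5}$, the proper transforms are disjoint $(-4)$-curves, and $|-K_S|=\emptyset$ because an anticanonical curve meeting each line in three of the fifteen points would have to contain all ten lines. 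But there is a genuine gap at the single sentence asserting ``five further $(-2)$-curves forming an $\mathfrak{S}_5$-orbit.'' Nothing in your argument produces these curves, and they are exactly the characteristic-$2$ content of the theorem. They are the proper transforms of the five curves of classes $h$ and $2h-e_i-e_j-e_k$ on $V_5 = {\rm Bl}_{q_1,\dots,q_4}{\bf P}^2$, each passing through three of the fifteen points, and their existence rests on two phenomena special to $p=2$: the three diagonal points of the complete quadrangle $q_1,\dots,q_4$ are collinear, and for each $i$ there is a conic through the other three points all of whose tangent lines pass through $q_i$ (in characteristic $2$ the tangents of a smooth conic are concurrent). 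As written, your argument is characteristic-free and would ``prove'' the existence of a type ${\rm VII}$ Coble surface in every characteristic, contradicting the paper's remark that type ${\rm VII}$ occurs only for $p=2,5$; without the five $(-2)$-curves the graph of the fifteen $(-1)$-roots alone does not satisfy Vinberg's criterion and the finiteness of ${\rm Aut}(S)$ fails.

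Two further points are asserted rather than proved. First, ``the large symmetry group $\mathfrak{S}_5$ forces the Artin invariant to equal $1$'' is not an argument: a priori a finite group can act on supersingular $K3$ surfaces of various Artin invariants, and the identification requires either the explicit pair $(Y,D)$ with $Y^D\cong S$ (which you defer) or a lattice/discriminant computation on the resolution of the canonical covering; Theorem \ref{main1}(1) only gives supersingularity, not $\sigma=1$. Second, uniqueness of $S$ is nowhere addressed: your construction shows existence, but the theorem claims every type ${\rm VII}$ Coble surface with ten boundary components is this one. The natural route inside this paper is case (6) of the proof of Theorem \ref{Numberboundcomp}: the surface is obtained by blowing up all singular points of the two fibers of type ${\rm I}_5$ of an extremal elliptic fibration with fibers $({\rm I}_5,{\rm I}_5,{\rm I}_1,{\rm I}_1)$ (Proposition \ref{Lang}), and the uniqueness of that extremal rational elliptic surface in characteristic $2$ then pins down $S$; alternatively one argues that the contraction back to ${\bf P}^2$ recovers four points in general position, unique up to ${\rm PGL}_3$. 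Your plan for $\mathfrak{S}_5\subset{\rm Aut}(S)$ and for the reverse inclusion via the action on the dual graph and faithfulness on the configuration is sound, but the theorem as a whole does not follow until the five $(-2)$-curves, the Artin invariant, and uniqueness are actually established.
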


\end{example}

\begin{example}\label{exVII-2} {\bf A Coble surface of type VII with two boundary components.}

This example was also given in \cite[Example 9.8.18]{DK}.
We only state the result.

\begin{theorem}\label{VII2thm}\cite[Theorem 9.8.19]{DK}
There exists a unique Coble surface $S$ of type ${\rm VII}$ with two boundary components
which is obtained by blowing up seven points on the quintic del Pezzo surface.
The automorphism group ${\rm Aut}(S)$ is isomorphic to $\mathfrak{S}_5$.
The surface $S$ is obtained as a quotient of the supersingular $K3$ surface $Y$ with Artin invariant $1$ by a derivation, and it is a specialization of classical Enriques surfaces of type ${\rm VII}$.
\end{theorem}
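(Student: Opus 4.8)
The plan is to mirror the construction of the ten-boundary-component surface in Theorem~\ref{VII10thm} but to blow up fewer points. Recall that the quintic del Pezzo surface $D_5$ is $\bbP^2$ blown up at four points in general position, that $K_{D_5}^2=5$, and that $\Aut(D_5)\cong\mathfrak{S}_5$. First I would exhibit seven points on $D_5$ and blow them up to obtain a rational surface $S$; since $K_S^2=5-7=-2$, one has $K_S^2=-n$ with $n=2$, the expected number of boundary components. Verifying that $S$ is a Coble surface of type ${\rm VII}$ then amounts to checking that $|-2K_S|$ is a single disjoint pair of $(-4)$-curves $B_1+B_2$ and that the $(-2)$-curves realise the type ${\rm VII}$ dual graph. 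Here the genus~$1$ fibrations do the bookkeeping: the parabolic subdiagrams of type $\tilde{A}_4\oplus\tilde{A}_4$ correspond to elliptic fibrations with singular fibers $({\rm I}_5,{\rm I}_5,{\rm I}_1,{\rm I}_1)$ (Proposition~\ref{Lang}), and, exactly as in the proof of Theorem~\ref{Numberboundcomp}(6), the two boundary components arise by blowing up the nodes of two ${\rm I}_1$ fibers, the fifteen single-edge vertices being $(-2)$-curves rather than $(-1)$-roots.

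Next I would determine $\Aut(S)$. The group acts on the Coble--Mukai lattice ${\rm CM}(S)$ preserving the set of effective irreducible roots, hence on the type ${\rm VII}$ graph $\Gamma$. One checks that $\Gamma$ satisfies the hypotheses of Vinberg's criterion (Theorem~\ref{Vinberg}), so that $W(\Gamma)$ has finite index in ${\rm O}({\rm CM}(S))$; thus $\Aut(S)$ is finite by Proposition~\ref{FiniteIndex} and injects into the symmetry group of $\Gamma$ modulo $W(\Gamma)$. Inspecting $\Gamma$ — in particular the complete graph on the five double-edge vertices, which carries all the symmetry — identifies this quotient with $\mathfrak{S}_5$. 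It then remains to realise $\mathfrak{S}_5$ geometrically on $S$, which the $\mathfrak{S}_5$-symmetry of $D_5$ and the chosen seven points provides, forcing $\Aut(S)\cong\mathfrak{S}_5$.

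For the canonical covering I would use that type ${\rm VII}$ is precisely the case $A=0$, so Theorem~\ref{main1}(1) already gives that the minimal resolution $\tilde{X}$ of $X$ is a supersingular $K3$ surface. To pin down the Artin invariant, I would present $S$ as the quotient of $\tilde{X}$ by a rational divisorial derivation, reversing the $\mu_2$-covering $\pi$ in the spirit of \S\ref{derivation} and of the preceding examples, and then identify $\tilde{X}$ with the supersingular $K3$ surface of Artin invariant~$1$ by matching lattices: the abundant configuration of smooth rational curves pulled back from the type ${\rm VII}$ graph, together with the large automorphism group, forces the N\'eron--Severi lattice to be the one of discriminant $-4$ characteristic of Artin invariant~$1$. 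Finally, the specialization statement follows the derivation-quotient pattern of the earlier examples: starting from the family of classical Enriques surfaces of type ${\rm VII}$ produced in \cite{KKM} as quotients of a rational surface by a derivation, one degenerates the derivation so that the quotient acquires the two $(-4)$-curves, yielding $S$.

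The main obstacle is the Artin-invariant computation: while $A=0$ immediately yields a supersingular $K3$ resolution, showing that it is \emph{the} surface with Artin invariant~$1$ requires controlling the full N\'eron--Severi lattice of $\tilde{X}$, equivalently exhibiting enough independent $(-2)$-curves (or an explicit extremal elliptic fibration on $\tilde{X}$) to force the minimal discriminant $-4$. Pinning down the exact seven-point configuration on $D_5$ and verifying its uniqueness up to isomorphism are the secondary technical points, handled by the lattice-theoretic rigidity of the type ${\rm VII}$ graph.
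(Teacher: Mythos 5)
The paper does not actually prove this theorem: Example \ref{exVII-2} consists of the single remark that the example ``was also given in \cite[Example 9.8.18]{DK}. We only state the result.'' The construction of the seven points on the quintic del Pezzo surface, the derivation on the supersingular $K3$ surface, and the computation of ${\rm Aut}(S)$ are all carried out in \cite{DK}, not here. Your outline is a reconstruction of that external argument, and its skeleton is consistent with the paper's machinery: the case analysis in the proof of Theorem \ref{Numberboundcomp} (case (6), type VII) does show that for $n=2$ the fifteen single-edge vertices of the graph are $(-2)$-curves and the five double-edge vertices are $(-1)$-roots sharing the two boundary components, the relevant fibrations having fibers $({\rm I}_5,{\rm I}_5,{\rm I}_1,{\rm I}_1)$ with the two ${\rm I}_1$ nodes blown up; Theorem \ref{Vinberg} together with Proposition \ref{FiniteIndex} gives finiteness of ${\rm Aut}(S)$; and since type VII is the $A=0$ case, Theorem \ref{main1}(1) gives the supersingular $K3$ resolution. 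All of this matches what the paper (and \cite{DK}) intend.

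However, as a proof your proposal has genuine gaps, and they sit exactly where the mathematical content is. First, you never exhibit the seven points on the quintic del Pezzo surface nor verify that blowing them up produces $|-2K_S|=\{B_1+B_2\}$ and the full type VII configuration; ``verifying that $S$ is a Coble surface of type VII then amounts to checking\dots'' is the theorem, not a proof of it, and uniqueness of the configuration is likewise asserted rather than derived. Second, for ${\rm Aut}(S)\cong\mathfrak{S}_5$ the injection of ${\rm Aut}(S)$ into the symmetry group of the graph is not automatic: an automorphism could act trivially on the dual graph, and the paper's own examples (see the end of Example \ref{exVIII}, where the stabilizer of the pencil is computed in the projective model) show that this kernel must be killed by an explicit geometric computation, which you do not supply; you also assert without verification that the graph symmetry group is $\mathfrak{S}_5$. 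Third, you correctly flag the Artin invariant as the crux, but ``the abundant configuration of rational curves forces discriminant $-4$'' is not an argument --- one needs either the explicit derivation on the Artin invariant $1$ surface (as in \cite[Theorem 9.8.19]{DK}) or an explicit count of $22$ independent classes together with the discriminant computation, e.g.\ from an extremal fibration on $\tilde{X}$. Finally, the specialization statement requires an actual family degenerating the type VII classical Enriques surfaces of \cite{KKM} to $S$, not just the observation that the derivation-quotient ``pattern'' applies. In short: the route is the same as the one the paper delegates to \cite{DK}, but the steps you defer are precisely the proof.
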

 
\end{example}

\begin{example}\label{exVIII} {\bf A Coble surface of type VIII with four boundary components.}

The dual graph is given in Figure \ref{graphVIII}.
Recall that the automorphism group of this graph is $\mathfrak{S}_4$ (\cite[Theorem 9.4]{KKM}).  It contains parabolic subdiagrams of type 
$\tilde{D}_5\oplus \tilde{A}_3$, $\tilde{E}_6\oplus \tilde{A}_2$ and $\tilde{D}_6\oplus \tilde{A}_1\oplus \tilde{A}_1$ which correspond to
elliptic fibrations with singular fibers of type $({\rm I}_1^*,{\rm I}_4)$,
$({\rm IV}^*,{\rm I}_3, {\rm I}_1)$ (or $({\rm IV}^*,{\rm IV})$) and a quasi-elliptic fibration of type $({\rm I}^*_2,{\rm III},{\rm III})$, respectively, by Propositions \ref{Lang}, \ref{Ito}.  
Since $S$ has four boundary components, two fibers of genus 1 fibrations with singular fibers of type 
$({\rm IV}^*,{\rm I}_3, {\rm I}_1)$  and  of type $({\rm I}^*_2,{\rm III},{\rm III})$
are blown up.  Hence these are Jacobian fibrations.  Their Mordell-Weil groups generate 
$\mathfrak{S}_4$.  

Recall that the ten vertices $E_1,\ldots, E_{10}$ in Figure
\ref{graphVIII} are represented by $(-2)$-curves and the remaining six vertices
$E_{11},\ldots, E_{16}$ are represented by $(-1)$-roots.
There are six $(-1)$-curves $e_{11},\ldots, e_{16}$ and four boundary components such that
$$E_{11}=2e_{11}+{1\over 2}B_1+{1\over 2}B_3,\ E_{12}=2e_{12}+{1\over 2}B_1+{1\over 2}B_2,
\ E_{13}=2e_{13}+{1\over 2}B_3+{1\over 2}B_4,$$
$$E_{14}=2e_{14}+{1\over 2}B_2+{1\over 2}B_3,\ E_{15}=2e_{15}+{1\over 2}B_1+{1\over 2}B_4,
\ E_{16}=2e_{16}+{1\over 2}B_2+{1\over 2}B_4.$$
Note that by contracting $e_{13}, e_{15}, e_{16}$ and $B_4$, we obtain 
a cycle of type IV which defines a Halphen surface of index 2 with a multiple fiber
$E_5+E_7+E_9+2(E_2+E_3+E_4)+3E_1$ of type ${\rm IV}^*$ and the fiber of type IV.
Then by contracting $e_{11}, e_{12}, e_{14}, E_5, E_7, E_9, E_2, E_3, E_4$ successively,
we obtain ${\bf P}^2$.  Denote by $\ell_0, \ell_1, \ell_2, \ell_3, Q_1, Q_2, Q_3$ the
image of $E_1$, $E_6$, $E_8$, $E_{10}$, $B_1$, $B_2$, $B_3$, respectively.
Then $\ell_i$ are lines and $Q_j$ are conics.  
Let $q_0, q_1, q_2, q_3$ be the images of $e_{11}, e_{12}, e_{14}, B_4$, respectively.
We may assume that 
$$q_0=(1:1:1),\ q_1=(0:1:1),\ q_2=(1:0:1),\ q_3=(1:1:0).$$
Then 
$$\ell_0: x+y+z=0,\ \ell_1: y+z=0,\ \ell_2: x+z=0,\ \ell_3: x+y=0.$$
A simple calculation shows that
$$Q_1: x^2+y^2+z^2+yz=0,\ Q_2:  x^2+y^2+z^2 + zx=0,\ Q_3: x^2+y^2+z^2+xy =0.$$
Note that $Q_i$ and $Q_j$ meet at $q_k$ $(\{i,j,k\}=\{1,2,3\})$ with multiplicity 3 and
at $q_0$ transverselly.

Conversely
we define a pencil $\calP$ of sextic curves by
$$\calP = \{C_{(s,t)}\}_{(s:t)\in {\bf P}^1}=
\{t\ell_0^6+ sQ_1Q_2Q_3\}_{(s:t)\in {\bf P}^1}.$$
By blowing up the base points, we obtain a Halphen surface with a multiple fiber of type ${\rm IV}^*$ and a fiber of type IV.

As mentioned above ${\rm Aut}(S)$ contains $\mathfrak{S}_4$.
The stabilizer subgroup of $B_4$ in $\mathfrak{S}_4$ is isomorphic to $\mathfrak{S}_3$ which induces the permutation group of
coordinates on ${\bf P}^2$.  A direct calculation shows that the stabilizer of the pencil is $\mathfrak{S}_3$.  On the other hand, any automorphism of $S$ acting trivially on the dual graph given in Figure \ref{graphVIII} is contained in $\mathfrak{S}_3$.  Thus we have ${\rm Aut}(S) \cong \mathfrak{S}_4$.

Next we give the Coble surface of type ${\rm VIII}$ with four boundary components as a quotient of a rational surface $Y$ by a derivation.
We use the same notation as in \cite{KKM}.
To construct classical Enriques surfaces with the dual graph in Figure \ref{graphVIII}, we
considered a rational elliptic surface defined by the Weierstrass equation
$$y^2+txy + ty = x^3+x^2$$
which has a singular of type ${\rm III}$ over the point $t=0$ and
a singular fiber of type ${\rm I}_8$ over the point $t=\infty$, and
the following derivation $D$ defined by
\begin{equation}\label{E6-der}
D= t(at+1)\frac{\partial}{\partial t} + (x + 1)\frac{\partial}{\partial x}, \quad  a\ne 0 \in k.
\end{equation}
The derivation $D$ has an isolated singularity at the singular point of the fiber of type III.  We successively blow up the singularities of derivations 
and obtain a non-singular surface $Y$
with
$$K_Y= -(F_0+F_1+F_2+2F_3)$$
(see \cite[Figure 11]{KKM}) and a divisorial derivation denoted by the same symbol $D$.

Now consider the case $a=0$.  Then we have the following Lemma.

\begin{lemma}\label{VIIIpole}
{\rm (i)} The integral curves with respect to $D$ are 
$E_0, E_3, E_4, E_7, F_0, F_1, F_2, b_1, b_2$ in \cite[Figure 11]{KKM}.

{\rm (ii)} $(D) =-(F_0+F_1 + F_2 + 2F_3 - E_1 - E_2 - E_5 - E_6)$.

{\rm (iii)} $(D)^2 = -12$. 

{\rm (iv)} $K_{Y}\cdot (D) = -4.$
\end{lemma}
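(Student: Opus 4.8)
The plan is to compute the derivation $D$ completely in affine coordinates, read off its divisorial part on the relatively minimal rational elliptic surface, and then propagate this data through the explicit sequence of blow-ups recorded in \cite[Figure 11]{KKM}. First I would determine the missing component $D(y)$ from the Weierstrass relation $y^2+txy+ty=x^3+x^2$: applying $D=t\,\partial/\partial t+(x+1)\,\partial/\partial x$ and using that in characteristic $2$ one has $D(y^2)=D(x^2)=0$, the relation collapses to $t(x+1)D(y)=x^3+x^2$, so that $D(y)=x^2/t$. Clearing denominators, $tD=t^2\,\partial/\partial t+t(x+1)\,\partial/\partial x+x^2\,\partial/\partial y$ has coefficients with no common factor, which determines the divisorial part of $D$ on the affine chart and, after examining the chart over $t=\infty$ and the section at infinity, on the whole minimal model.

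For part (i), I would test each of the curves appearing in \cite[Figure 11]{KKM} against the tangency condition of \S\ref{derivation}: a curve $C$ is integral precisely when the vector field $g_i\,\partial/\partial x_i+h_i\,\partial/\partial y_i$ obtained after stripping the functional factor is tangent to $C$ at its general point, in which case $C=\pi^*(\pi(C))$ by Proposition \ref{insep}. On the minimal model this is a direct check against $tD$; on the exceptional curves it is a local computation in the blow-up coordinates. Part (ii) is the heart of the lemma: I would track $(D)$ through each blow-up, using that blowing up a point in the support of $\langle D\rangle$ introduces the exceptional curve into $(D)$ with a multiplicity governed by the order of vanishing of the local coefficients of $D$ at the center. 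Carrying this out over the successive blow-ups that resolve the isolated singularities of $D$ yields the stated expression $(D)=-(F_0+F_1+F_2+2F_3-E_1-E_2-E_5-E_6)$.

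Once (ii) is in hand, parts (iii) and (iv) become bilinear computations. Using the self-intersections and incidences of $F_0,F_1,F_2,F_3,E_1,E_2,E_5,E_6$ read off from the configuration, together with $K_Y=-(F_0+F_1+F_2+2F_3)$, I would expand $(D)^2$ and $K_Y\cdot(D)$ directly to obtain $-12$ and $-4$. As an independent check, the four blow-ups make $D$ divisorial, so $\langle D\rangle=0$ and the Euler-number formula (\ref{euler}) reads $c_2(Y)=-K_Y\cdot(D)-(D)^2$; since $Y$ is a rational surface for which the same intersection data give $K_Y^2=-4$, hence $c_2(Y)=16$, this is consistent with $-(-4)-(-12)=16$.

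The main obstacle will be the bookkeeping in part (ii): correctly determining, at each of the successive blow-ups (including those at infinitely near points), the local form of $D$ and hence the exact multiplicity with which each new exceptional curve enters $(D)$, as well as how the strict transforms of the previously identified components evolve. This is the delicate local analysis underlying all of the analogous lemmas \ref{E8pole}, \ref{E7-2pole}, \ref{E7-1pole}, \ref{E63pole} and \ref{D8pole}; once it is executed faithfully, statements (i), (iii) and (iv) follow with little further effort.
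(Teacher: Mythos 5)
Your proposal takes essentially the same route as the paper: the paper states Lemma \ref{VIIIpole} without an explicit proof, treating it as a direct local computation in the style of the analogous lemmas in \cite{KKM}, and your plan---deriving $D(y)=x^2/t$ from the Weierstrass relation in characteristic $2$, testing each curve of \cite[Figure 11]{KKM} for integrality, tracking the divisorial part of $D$ through the successive blow-ups, and then obtaining (iii) and (iv) by expanding $(D)=K_Y+(E_1+E_2+E_5+E_6)$ against the configuration data---is exactly that computation. Your consistency check via the Euler-number formula (\ref{euler}), namely $c_2(Y)=16=-(-4)-(-12)$ with $K_Y^2=-4$, is correct and confirms the stated values.
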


Let $Y^D$ be the quotient of $Y$ by $D$ and $\pi:Y\to Y^D$ the canonical map.  
By the formula (\ref{euler}) and Lemma \ref{VIIIpole}, $Y^D$ is smooth.
It follows from the canonical divisor formula
(\ref{canonical}) and Lemma \ref{VIIIpole} that
$\pi^*K_{Y^D}= -E_1-E_2 -E_5-E_6$.  Denote by $\bar{E}_i$ the images of $E_i$ on $Y^D$.
Since $E_1, E_2, E_5, E_6$ are $(-2)$-curves (see \cite[Figure 11]{KKM}) and
they are non-integral (Lemma \ref{D8pole}),
$\bar{E}_1, \bar{E}_2, \bar{E}_5, \bar{E}_6$ are 
$(-4)$-curve (Proposition \ref{insep}).
Again by Proposition \ref{insep}, we have $|-2K_{Y^D}|= \{\bar{E}_1 + \bar{E}_2 + 
\bar{E}_5 + \bar{E}_6\}$.
Thus we have a Coble surface $S=Y^D$ with the dual graph of type VIII and with four boundary components.
Thus we have obtained the following theorem.

\begin{theorem}\label{VIIIthm}
The surface $S$ is a unique Coble surface of type ${\rm VIII}$ with four boundary components.
The automorphism group ${\rm Aut}(S)$ is isomorphic to $\mathfrak{S}_4$.
The surface $S$ is a specialization of classical Enriques surfaces of type ${\rm VIII}$.
\end{theorem}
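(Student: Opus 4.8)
The statement bundles three assertions---uniqueness of $S$, the isomorphism ${\rm Aut}(S)\cong\mathfrak{S}_4$, and the fact that $S$ is a specialization of classical Enriques surfaces of type ${\rm VIII}$---and the plan is to read each one off the two explicit constructions already set up in this example. Existence is immediate from either construction: blowing up the base points of the pencil $\calP$ yields a Halphen surface of index $2$ with singular fibers of type $(2{\rm IV}^*,{\rm IV})$, whose associated Coble surface realizes the dual graph of Figure \ref{graphVIII} with the four boundary components $B_1,\ldots,B_4$; alternatively, Lemma \ref{VIIIpole} together with Proposition \ref{insep} shows that the divisorial quotient $Y^D$ at $a=0$ is smooth and that $|-2K_{Y^D}|$ is the disjoint union $\{\bar E_1+\bar E_2+\bar E_5+\bar E_6\}$ of four $(-4)$-curves, so $Y^D$ is a Coble surface of type ${\rm VIII}$ with four boundary components.

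For uniqueness I would start from an abstract Coble surface of type ${\rm VIII}$ with four boundary components and reconstruct the pencil. By Lemma \ref{fibration-isotropic} the parabolic subdiagrams $\tilde D_5\oplus\tilde A_3$, $\tilde E_6\oplus\tilde A_2$ and $\tilde D_6\oplus\tilde A_1\oplus\tilde A_1$ of the dual graph give genus $1$ fibrations with fibers of type $({\rm I}_1^*,{\rm I}_4)$, $({\rm IV}^*,{\rm I}_3,{\rm I}_1)$ and $({\rm I}_2^*,{\rm III},{\rm III})$ (Propositions \ref{Lang}, \ref{Ito}); since $S$ has four boundary components, the last two fibrations must be Jacobian. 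Contracting the six $(-1)$-curves $e_{11},\ldots,e_{16}$, the boundary component $B_4$, and the chain $E_5,E_7,E_9,E_2,E_3,E_4$ produces ${\bf P}^2$, and normalizing the images $q_0,\ldots,q_3$ to the four distinguished points fixes the lines $\ell_0,\ldots,\ell_3$. The prescribed tangency and intersection-multiplicity data ($Q_i$ and $Q_j$ meeting at $q_k$ with multiplicity $3$ and at $q_0$ transversally) then force the conics $Q_1,Q_2,Q_3$ up to a projective transformation, so every such $S$ is induced from $\calP$ and is therefore unique.

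For the automorphism group, the torsion Mordell--Weil groups of the three Jacobian fibrations act faithfully on $S$ and generate a subgroup isomorphic to $\mathfrak{S}_4$, realized on ${\bf P}^2$ as the permutations of the homogeneous coordinates. For the reverse inclusion, ${\rm Aut}(S)$ maps to the automorphism group $\mathfrak{S}_4$ of the dual graph; the stabilizer of $B_4$ coincides with the stabilizer $\mathfrak{S}_3\subset{\rm PGL}_3$ of $\calP$, and any automorphism acting trivially on the graph lies in this $\mathfrak{S}_3$ and hence is the identity. This gives ${\rm Aut}(S)\cong\mathfrak{S}_4$. Finally, the specialization claim follows by placing $S$ in the one-parameter family of quotients: the derivation $D$ at $a=0$ is the limit of the derivations on $y^2+txy+ty=x^3+x^2$ producing the classical Enriques surfaces of type ${\rm VIII}$ in \cite{KKM} for $a\neq0$, so $S=Y^D|_{a=0}$ lies in the closure of that family.

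The main obstacle is the uniqueness step: the \emph{elementary but long} verification that the incidence data rigidify the conics $Q_1,Q_2,Q_3$ up to ${\rm PGL}_3$, and, in tandem, the confirmation that the stabilizer of $\calP$ in ${\rm PGL}_3$ is exactly $\mathfrak{S}_3$ with no extra hidden symmetry. This single calculation is what simultaneously pins down uniqueness and supplies both the surjectivity and the injectivity needed for the computation of ${\rm Aut}(S)$, so it must be carried out carefully to exclude any spurious projective automorphism of the configuration.
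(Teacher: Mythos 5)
Your proposal is correct and follows essentially the same route as the paper: existence from the pencil $\calP$ and from the derivation quotient via Lemma \ref{VIIIpole} and Proposition \ref{insep}, uniqueness by contracting $e_{11},\ldots,e_{16}$, $B_4$ and the chain of $(-2)$-curves down to ${\bf P}^2$ and rigidifying the line--conic configuration by its incidence data, ${\rm Aut}(S)\cong\mathfrak{S}_4$ from the Mordell--Weil actions together with the computation that the stabilizer of $\calP$ in ${\rm PGL}_3$ is exactly $\mathfrak{S}_3$, and the specialization claim by viewing $S=Y^D$ at $a=0$ as a limit of the derivation quotients that give classical Enriques surfaces of type ${\rm VIII}$ for $a\ne 0$. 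One small slip: only the fibrations with fibers of type $({\rm IV}^*,{\rm I}_3,{\rm I}_1)$ and $({\rm I}_2^*,{\rm III},{\rm III})$ are Jacobian (two of their fibers are blown up, as you correctly state in the uniqueness step), while the $({\rm I}_1^*,{\rm I}_4)$ fibration has only its ${\rm I}_4$ fiber blown up and so arises from a Halphen pencil of index $2$; hence \emph{three Jacobian fibrations} should read \emph{two}, which does not affect the generation of $\mathfrak{S}_4$.
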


\end{example}

\end{document}